\newtheorem{Theorem}{Theorem}[section]
\newtheorem{Proposition}[Theorem]{Proposition}
\newtheorem{Lemma}[Theorem]{Lemma}
\newtheorem{Corollary}[Theorem]{Corollary}
\newtheorem{Claim}[Theorem]{Claim}
\theoremstyle{definition}
\newtheorem{Definition}[Theorem]{Definition}
\newtheorem{Remark}[Theorem]{Remark}
\newtheorem{Construction}[Theorem]{Construction}
\newtheorem{Example}[Theorem]{Example}
\newcommand{\ZZ}{\mathbf{Z}}
\newcommand{\RR}{\mathbb{R}}
\newcommand{\RRR}{\mathbf{R}}
\newcommand{\PP}{\mathbb{P}}
\newcommand{\NNNN}{\operatorname{N}}
\newcommand{\HH}{\operatorname{\mathcal{HH}}}
\newcommand{\HHC}{\operatorname{\mathcal{HH}}^\bullet}
\newcommand{\HHH}{\operatorname{\mathcal{HH}}_\bullet}
\newcommand{\FF}{\mathcal{F}}
\newcommand{\AAA}{\mathcal{A}}
\newcommand{\BBB}{\mathcal{B}}
\newcommand{\LFun}{\operatorname{Fun}^{\textup{L}}}
\newcommand{\Diskd}{\Disk^\dagger}
\newcommand{\ZZZ}{\mathbb{Z}}
\newcommand{\MM}{\mathcal{M}}
\newcommand{\DDD}{\mathcal{D}}
\newcommand{\CCC}{\mathcal{C}}
\newcommand{\RPerf}{\operatorname{RPerf}}
\newcommand{\PR}{\operatorname{Pr}^{\textup{L}}}
\newcommand{\SSSS}{\mathbb{S}}
\newcommand{\OO}{{\mathcal{O}}}
\newcommand{\MMM}{\mathcal{M}}
\newcommand{\PPP}{\mathcal{P}}
\newcommand{\Hom}{\operatorname{Hom}}
\newcommand{\Comp}{\operatorname{Comp}}
\newcommand{\Perf}{\operatorname{Perf}}
\newcommand{\SP}{\operatorname{Sp}}
\newcommand{\SPS}{\operatorname{Sp}^{\Sigma}}
\newcommand{\Mod}{\operatorname{Mod}}
\newcommand{\SSS}{\mathcal{S}}
\newcommand{\colim}{\operatorname{colim}}
\newcommand{\CAT}{\operatorname{Cat}}
\newcommand{\Cat}{\textup{Cat}_{\infty}}
\newcommand{\Map}{\operatorname{Map}}
\newcommand{\Fun}{\operatorname{Fun}}
\newcommand{\Alg}{\operatorname{Alg}}
\newcommand{\End}{\operatorname{End}}
\newcommand{\sSet}{\operatorname{Set}_{\Delta}}
\newcommand{\FIN}{\operatorname{Fin}_\ast}
\newcommand{\wCat}{\widehat{\textup{Cat}}_{\infty}}
\newcommand{\CAlg}{\operatorname{CAlg}}
\newcommand{\LM}{\operatorname{\mathcal{LM}}}
\newcommand{\Ind}{\operatorname{Ind}}
\newcommand{\AADG}{\mathsf{A}}
\newcommand{\PRST}{\PR_{\textup{St}}}
\newcommand{\assoc}{\operatorname{As}}
\newcommand{\triv}{\operatorname{Triv}}
\newcommand{\cyl}{\mathbf{Cyl}}
\newcommand{\cylm}{\overline{\mathbf{Cyl}}}
\newcommand{\dcylm}{\overline{\mathbf{DCyl}}}
\newcommand{\eone}{\mathbf{E}_1}
\newcommand{\etwo}{\mathbf{E}_2}
\newcommand{\dtwo}{\mathbf{D}}
\newcommand{\dcyl}{\mathbf{DCyl}}
\newcommand{\KS}{\mathbf{KS}}
\newcommand{\CYL}{\operatorname{Cyl}}
\newcommand{\DCYL}{\operatorname{DCyl}}
\newcommand{\CYLM}{\overline{\operatorname{Cyl}}}
\newcommand{\DCYLM}{\overline{\operatorname{DCyl}}}
\newcommand{\mul}{\operatorname{Mult}}
\newcommand{\cbar}{C_M}
\newcommand{\eonem}{\overline{\mathbf{E}}_1}
\newcommand{\LMod}{\operatorname{LMod}}
\newcommand{\RMod}{\operatorname{RMod}}
\newcommand{\ST}{\operatorname{\mathcal{S}t}}
\newcommand{\Mfld}{\mathsf{Mfld}}
\newcommand{\Disk}{\mathsf{Disk}}
\newcommand{\lsr}{\big\langle S^1\big\rangle}
\newcommand{\OP}{\operatorname{Oper}_\infty}
\newcommand{\CATI}{\operatorname{Cat}_\infty}
\newcommand{\AB}{\mathbb{A}}
\newcommand{\Proof}{{\sl Proof.}\quad}
\begin{document}
\allowdisplaybreaks

\newcommand{\arXivNumber}{1904.02359}

\renewcommand{\thefootnote}{}

\renewcommand{\PaperNumber}{097}

\FirstPageHeading

\ShortArticleName{Differential Calculus of Hochschild Pairs for Infinity-Categories}

\ArticleName{Differential Calculus of Hochschild Pairs \\ for Infinity-Categories\footnote{This paper is a~contribution to the Special Issue on Primitive Forms and Related Topics in honor of~Kyoji Saito for his 77th birthday. The full collection is available at \href{https://www.emis.de/journals/SIGMA/Saito.html}{https://www.emis.de/journals/SIGMA/Saito.html}}}

\Author{Isamu IWANARI}

\AuthorNameForHeading{I.~Iwanari}

\Address{Mathematical Institute, Tohoku University, 6-3 Aramakiaza, Sendai, Miyagi, 980-8578, Japan}
\Email{\href{mailto:isamu.iwanari.a2@tohoku.ac.jp}{isamu.iwanari.a2@tohoku.ac.jp}}

\ArticleDates{Received February 25, 2020, in final form September 04, 2020; Published online October 02, 2020}

\Abstract{In~this paper, we provide a~conceptual new construction of~the algebraic structure on the pair of~the Hoch\-schild cohomology spectrum (cochain complex) and Hoch\-schild homology spectrum, which is analogous to the structure of~calculus on a~manifold. This algebraic structure is encoded by~a two-colored operad introduced by~Kontsevich and Soibelman. We~prove that for a~stable idempotent-complete infinity-category, the pair of its Hoch\-schild cohomology and homology spectra naturally admits the structure of~algebra over the operad. Moreover, we prove a~generalization to the equivariant context.}

\Keywords{Hoch\-schild cohomology; Hoch\-schild homology; operad; $\infty$-category}

\Classification{16E40; 18N60; 18M60}

\begin{flushright}
{\it To Kyoji Saito on his 77th birthday}
\end{flushright}

\renewcommand{\thefootnote}{\arabic{footnote}}
\setcounter{footnote}{0}

\section{Introduction}

Let $M$ be a~smooth real manifold.
Let $T_M^\bullet=\bigoplus_{p\ge0}\wedge^{p}T_M$
and $\Omega_{M}^{\bullet}=\bigoplus_{q\ge0}\Omega_M^q$
denote the graded vector space of~multivector fields
and differential forms on $M$, respectively.
By convention, $T_M^p=\wedge^{p}T_M$ has homological degree $-p$
while $\Omega^q_M$ has homological degree $q$:
we adopt the reverse grading.
There are several algebraic structures on the pair $(T_M^\bullet,\Omega_{M}^{\bullet})$.
The graded vector space $T_M^\bullet$ has a~graded commutative
(associative) product given by~$\wedge$. Further,
the~shifted graded vector space $T_M^{\bullet+1}$ inherits the structure of~a~graded Lie algebra defined by~the
Schouten--Nijenhuis bracket $[-,-]$.
On the other hand, $\Omega_{M}^{\bullet}$ has the
de Rham differential ${\rm d}_{\rm DR}$
(we do not consider the obvious graded commutative algebra structure on $\Omega_{M}^{\bullet}$ because it is irrelevant to the noncommutative context).
Since $\Omega_M^{q}$ is the dual vector space of~$T^q_M$, the
contraction morphisms $T^p_M\otimes \Omega_M^q\to \Omega_M^{q-p}$
give rise to a~$(T_M^{\bullet},\wedge)$-module structure on $\Omega_{M}^{\bullet}$:
\begin{gather*}
i\colon\ (T_M^{\bullet},\wedge)\otimes \Omega^{\bullet}_{M}\to \Omega_{M}^{\bullet},
\end{gather*}
where we regard $(T_M^{\bullet},\wedge)$ as the graded algebra determined by~$\wedge$.
The Lie derivative on $M$ defines a~Lie algebra action of~$\big(T_M^{\bullet+1},[-,-]\big)$
 on $\Omega_M^\bullet$:
\begin{gather*}
l\colon\ \big(T_M^{\bullet+1},[-,-]\big)\otimes \Omega_M^\bullet \to \Omega_M^\bullet.
\end{gather*}
The tuple
$(\wedge, [-,-], {\rm d}_{\rm DR}, i,l)$
constitutes fundamental calculus operations on the manifold $M$.
These operations are subject under certain relations such as
${\rm d}_{\rm DR}^2=0$, the
Cartan homotopy/magic formula, the compatibility between the Lie algebra
action $l$ and the de Rham differential
${\rm d}_{\rm DR}$, and so on. If~$X$ is a~smooth algebraic variety over a~field
of~characteristic zero,
the pair $(\mathcal{T}_X^\bullet,\Omega_{X}^\bullet)$ of~sheaves
of~multivector fields (given by~the
exterior products of~the tangent sheaf) and differential forms admit such an~algebraic
structure of~calculus.

Let us shift our interest to noncommutative algebraic geometry
in which stable $\infty$-categories, (pretriangulated)
differential graded (dg) categories or the likes play the roles of~fundamental geometric objects.
From the Hoch\-schild--Kostant--Rosenberg theorem,
an analogue of~$(\mathcal{T}_X^\bullet,\Omega_{X}^\bullet)$
is the Hoch\-schild pair, that is,
the pair of~Hoch\-schild cohomology (cochain complex) and~Hoch\-schild homology (chain complex).
In~\cite[Section~11.2]{KS},
Kontsevich and Soibelman introduced a~two-colored topological operad which we shall refer to as the Kontsevich--Soibelman operad and denote here by~$\KS$.
It~can be used to encode all the structures on
the Hoch\-schild pair, which
are analogous to the structure of~calculus.
The operad $\KS$ generalizes the little $2$-disks (cubes) operad, i.e., the $\etwo$-operad.
It~contains two colors, $D$ and $C_M$, such that
the full suboperad spanned by~the color $D$ is the $\etwo$-operad.
In~\cite{DTT}, it was shown that a~combinatorial~dg operad~$\mathcal{KS}_{{\rm comb}}$,
which is a~certain dg version of~$\KS$, acts
on the pair of~the Hoch\-schild cochain complex and the Hoch\-schild chain complex
of~an associative algebra ($A_\infty$-sense),
and in~\cite{H} an~algebraic structure on the Hoch\-schild pair over $\KS$
was constructed for a~ring spectrum by~means of~the Swiss-cheese operad conjecture \cite{Thomas}.

In~this paper, we provide a~conceptual new construction of~the structure
of~an algebra over $\KS$ that yields the following result (see Theorem~\ref{mainconstruction}):

\begin{Theorem}\label{main1}
Let $R$ be a~commutative ring spectrum.
Let $\CCC$ be a~small $R$-linear stable
ind\-em\-potent-complete $\infty$-category.
Let $\HHC(\CCC)$ be the Hoch\-schild cohomology $R$-module spectrum and
$\HHH(\CCC)$ the Hoch\-schild homology $R$-module spectrum.
Then the pair $(\HHC(\CCC),\HHH(\CCC))$ is~promoted to
an algebra over $\KS$, namely, it is
a $\KS$-algebra in the $\infty$-category of~$R$-module spectra $\Mod_R$.
\end{Theorem}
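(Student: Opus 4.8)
The plan is to construct a map of $\infty$-operads from a model of $\KS$ as an $\infty$-operad to the endomorphism $\infty$-operad $\End(\HHC(\CCC),\HHH(\CCC))$ of the two-colored pair in $\Mod_R$, building it in layers corresponding to the structure $\KS$ carries over its colors $D$ and $C_M$. Since the suboperad on $D$ is $\etwo$, the first task is to realize $\HHC(\CCC)$ as an $\etwo$-$R$-algebra. Setting $\mathcal D=\Ind(\CCC)$, one has $\HHC(\CCC)\simeq\End_{\LFun_R(\mathcal D,\mathcal D)}(\mathrm{id}_{\mathcal D})$, the endomorphism $R$-algebra of the unit object of the composition-monoidal $\infty$-category $\LFun_R(\mathcal D,\mathcal D)$, hence an $\eone$-$R$-algebra; horizontal (whiskering) composition of endomorphisms of the identity supplies a second, compatible multiplication, and the Dunn--Lurie additivity theorem upgrades the pair to an $\etwo$-$R$-algebra. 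This is the $\infty$-categorical form of Deligne's conjecture and it determines the restriction of the desired action to the color $D$.

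For the color $C_M$ I would realize $\HHH(\CCC)$ as the trace of the identity functor, namely the coend $\int^{c\in\CCC}\Map_\CCC^R(c,c)$ in $\Mod_R$, equivalently the cyclic bar construction of the $\Mod_R$-enriched $\infty$-category $\CCC$, equivalently $\mathrm{diag}\otimes_{\CCC\otimes_R\CCC^{\mathrm{op}}}\mathrm{diag}$ where $\mathrm{diag}$ denotes the diagonal bimodule. In this form it carries at once a module action of $\HHC(\CCC)\simeq\End_{\CCC\otimes_R\CCC^{\mathrm{op}}}(\mathrm{diag})$, from functoriality of the coend in $\mathrm{diag}$, and a natural $S^1$-action, from the cyclic symmetry of the bar construction; the de Rham differential of the calculus is the canonical square-zero degree-one self-map of $\HHH(\CCC)$ classified by this $S^1$-action.

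The conceptual ingredient that glues the pieces and forces the mixed higher operations and the compatibility between contraction and differential is the $\etwo$-center: $\HHC(\CCC)$ is the $\etwo$-center of $\CCC$, so $\CCC$ is canonically linear over $\HHC(\CCC)$, and transporting this along the symmetric monoidal functor ``Hochschild homology'' $\HHH(-)$, valued in $S^1$-equivariant $R$-modules $\Fun(BS^1,\Mod_R)$, exhibits $\HHH(\CCC)$ as an $S^1$-equivariant module over the Hochschild homology $R$-algebra $\HHH(\HHC(\CCC))$ --- itself an $\etwo$-$R$-algebra with compatible $S^1$-action. Restricting along the unit map $\HHC(\CCC)\to\HHH(\HHC(\CCC))$ then endows $\HHH(\CCC)$ with an $S^1$-equivariant action of $\HHC(\CCC)$, which packages the contraction operations and, infinitesimally, the Cartan magic formula $L=[B,i_{-}]$.

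It remains to organize all of this into a genuine $\KS$-algebra, and this is the principal obstacle. I would replace the topological operad $\KS$ by an equivalent $\infty$-operad and work from a presentation of it over $\etwo$: morally, $\KS$ is generated over $\etwo$ by the module color $C_M$, the unary $S^1$-operation on $C_M$, and their $\etwo$-compatible composites, subject to the square-zero relation for the differential, the Cartan homotopy formula, and the compatibility of the Lie action with the differential. The remaining work is to verify that the data assembled above satisfies exactly these relations coherently; the Cartan formula should be automatic from the $S^1$-equivariance established above, but pinning down the higher coherences $\infty$-categorically, and --- most delicately --- checking that the presentation one adopts recovers $\KS$ itself rather than merely a sub- or quotient operad, is where the real difficulty lies. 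Two subsidiary points, routine but worth recording, are that idempotent-completeness of $\CCC$ makes passage among $\CCC$, $\Ind(\CCC)$ and $\LFun_R(\Ind(\CCC),\Ind(\CCC))$ harmless for both $\HHC$ and $\HHH$ by Morita invariance, and that the equivariant generalization follows by rerunning the construction internally to a category of $G$-objects, with the only new input being functoriality in $G$.
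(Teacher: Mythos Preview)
Your construction up through ``$\HHH(\CCC)$ as an $S^1$-equivariant module over $\HHH(\HHC(\CCC))$'' matches the paper's: both obtain the $\etwo$-structure on $\HHC(\CCC)$ as the endomorphism algebra of the identity in the monoidal $\infty$-category of $R$-linear endofunctors of $\Ind(\CCC)$ (Definition~\ref{cohomologydef}), both promote $\CCC$ to a left $\RPerf^\otimes_{\HHC(\CCC)}$-module (Construction~\ref{constmodule}, Proposition~\ref{smallleftmodule}), and both push this through the symmetric monoidal functor $\HHH(-)\colon\ST_R\to\Fun(BS^1,\Mod_R)$ (Proposition~\ref{hochschildhomologyfunctor}). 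The divergence, and the gap, is precisely at the step you flag as the principal obstacle.

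Your move ``restrict along the unit map $\HHC(\CCC)\to\HHH(\HHC(\CCC))$'' discards the structure that matters: it yields only an associative action of $\HHC(\CCC)$ (with trivial $S^1$-action) on $\HHH(\CCC)$ in $\Fun(BS^1,\Mod_R)$, and the link to the $\etwo$-structure is severed. More fundamentally, the paper does not attempt a generators-and-relations presentation of $\KS$. Its substitute is a decomposition theorem (Theorem~\ref{main2}, proved as Corollary~\ref{algebraequivalence} via the auxiliary operads $\cyl$, $\cylm$, $\dcyl$, $\dcylm$ and Propositions~\ref{cyleones}, \ref{cylmeonems}, \ref{another}, \ref{another2}):
\[
\Alg_{\KS}(\MM)\ \simeq\ \Alg_{\etwo}(\MM)\times_{\Alg_{\eone}(\Fun(BS^1,\MM))}\LMod\bigl(\Fun(BS^1,\MM)\bigr),
\]
where the leg $\Alg_{\etwo}(\MM)\to\Alg_{\eone}(\Fun(BS^1,\MM))$ is operadic left Kan extension along $i\colon\etwo^\otimes\hookrightarrow\dcyl$ followed by restriction to $\cyl$. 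A $\KS$-algebra is then \emph{exactly} an $\etwo$-algebra $A$, a pair $(B,M)\in\LMod(\Fun(BS^1,\MM))$, and an equivalence $B\simeq i_!(A)_C$; no relations remain to be verified by hand. To land in this fiber product with $A=\HHC(\CCC)$ and $(B,M)=(\HHH(\HHC(\CCC)),\HHH(\CCC))$ the missing ingredient is not a restriction but an identification $\HHH(A)\simeq i_!(A)_C$ in $\Alg_{\assoc}(\Fun(BS^1,\Mod_R))$ for an $\etwo$-algebra $A$; this is Proposition~\ref{identificationprop}, established by comparing the cyclic bar construction with factorization homology over $(0,1)\times S^1$ through a cofinality argument. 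That single comparison, together with the fiber-product equivalence, replaces the entire ``verify higher coherences against a presentation of $\KS$'' programme you sketch.
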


The structure of~the $\KS$-algebra on
$(\HHC(\CCC),\HHH(\CCC))$ in Theorem~\ref{main1} induces
the action morphism $u\colon\HHC(\CCC)\otimes \HHH(\CCC)\to \HHH(\CCC)$,
which is a~counterpart to $i$ in the aforementioned tuple.
In~the classical differential graded algebraic situation,
we further prove that $u$ can be described by~means of~well-known algebraic constructions.
Thus, $u$ can be considered
a noncommutative contraction morphism, see Section~\ref{actionsec}, Theorem~\ref{maincontraction} and Proposition~\ref{explicitdg}.
In~other words, the underlying morphism given by~actions is an~expected one.
By considering Cartan's homotopy formula built in $\KS$,
an analogue $L\colon\HHC(\CCC)[1]\otimes \HHH(\CCC)\to \HHH(\CCC)$ of~the Lie derivative map $l$ is also an~expected morphism, cf.~Remark~\ref{otheraction}.

In~Section~\ref{strategysec},
we briefly describe the idea and approach of~our
construction, which is based on a~simple observation.
To achieve this,
we prove the following (see Corollary~\ref{algebraequivalence} for details):

\begin{Theorem}\label{main2}Let $\MM^\otimes$ be a~symmetric monoidal $\infty$-category
such that it admits small colimits and the tensor product functor $\otimes\colon\MM\times \MM\to \MM$
preserves small colimits separately in each variable.
$($The typical examples of~$\MM^\otimes$ we should keep in mind are the $\infty$-category of~spectra, and the
derived $\infty$-category of~vector spaces.$)$

Let $\Fun\big(BS^1\!,\MM\big)\!$ be the functor category from the classifying space
$BS^1$ of~the circle~$S^1$~to~$\MM$, which inherits a~pointwise symmetric monoidal structure from the structure on $\MM^\otimes$. Namely,
an object of~$\Fun\big(BS^1,\MM\big)$ can be viewed as an~object $M$ of~$\MM$
equipped with an $S^1$-action.
Let $\Alg_{\KS}(\MM)$ be the $\infty$-category of~$\KS$-algebras
in $\MM$.
Let $\Alg_{\etwo}(\MM)$ be the $\infty$-cate\-gory of~$\etwo$-algebras
$($i.e., algebras over the little $2$-disks operad$)$ in $\MM$.
Similarly, $\Alg_{\assoc}\big(\Fun\big(BS^1,\MM\big)\big)$
denotes the $\infty$-category of~associative algebras $($$\eone$-algebras$)$
in $\Fun\big(BS^1,\MM\big)$.
We~denote by~$\LMod\big(\Fun\big(BS^1,\MM\big)\big)$ the $\infty$-category of~pairs $(A,M)$ such that $A\in \Alg_{\assoc}\big(\Fun\big(BS^1,\MM\big)\big)$
and $M$ is a~left $A$-module object in $\Fun\big(BS^1,\MM\big)$.

Then there exists a~canonical equivalence of~$\infty$-categories{\samepage
\begin{gather*}
\Alg_{\KS}(\MM)\simeq \Alg_{\etwo}(\MM)\times_{\Alg_{\assoc}(\Fun(BS^1,\MM))}\LMod\big(\Fun\big(BS^1,\MM\big)\big).
\end{gather*}
See Section~{\rm \ref{operadsec}} for the definition of~the fiber product on the right-hand side.}
\end{Theorem}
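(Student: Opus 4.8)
The plan is to obtain the equivalence by unwinding what a $\KS$-algebra structure amounts to, using the combinatorial structure of the Kontsevich--Soibelman operad $\KS$ described in Section~\ref{operadsec}. Recall that $\KS$ has two colors $D$ and $C_M$; the full suboperad on $D$ is the $\etwo$-operad, so an algebra over $\KS$ in $\MM^\otimes$ in particular restricts to an $\etwo$-algebra $A$ on the $D$-component. The first step is to analyze the operations whose output color is $C_M$. The key observation is that the spaces of $\KS$-operations with output $C_M$ and inputs a string of $D$'s and $C_M$'s organize themselves, after taking into account the $S^1$ (rotation) symmetry built into $\KS$, into exactly the data of a module over the $\eone$-algebra underlying $A$, but internal to the category $\Fun(BS^1,\MM)$ rather than $\MM$ itself. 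So I would first make precise the map from $\Alg_{\KS}(\MM)$ to the right-hand fiber product: send a $\KS$-algebra to the pair consisting of (i) its restriction to the $D$-color, an $\etwo$-algebra, and (ii) the $C_M$-component equipped with its $S^1$-action and with the left-module structure over the underlying $\eone$-algebra of (i), these matching in $\Alg_{\assoc}(\Fun(BS^1,\MM))$ after applying the forgetful functor $\Alg_{\etwo}(\MM)\to\Alg_{\assoc}(\Fun(BS^1,\MM))$ that remembers the circle action coming from $\etwo$-rotation.

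The heart of the argument is to show this functor is an equivalence, and here I would argue at the level of operads rather than algebras, which is cleaner. The strategy is to exhibit $\KS$ (or rather its $\infty$-operadic incarnation) as built from the $\etwo$-operad and the "left-module" operad $\LM$ by a pushout/amalgamation construction in the $\infty$-category of $\infty$-operads, performed internally over the diagram category $BS^1$: concretely, I expect $\KS$ to be equivalent to a relative tensor/pushout that encodes "an $\etwo$-algebra $A$ together with a left module $M$ over the underlying $\eone$-algebra $A_1$, where $M$ additionally carries an $S^1$-action compatible with the one on $A_1$ induced from $\etwo$". Granting such a description, the equivalence of $\infty$-categories of algebras follows formally from the universal property of pushouts of $\infty$-operads together with the adjunction between algebras over a diagram-shaped operad and diagrams of algebras, which identifies $\Alg_{\mathcal{O}}(\Fun(BS^1,\MM))$ with $\Fun(BS^1,\Alg_{\mathcal{O}}(\MM))$ for the relevant $\mathcal{O}$. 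Combining these identifications with the definition of the fiber product in Section~\ref{operadsec} yields the displayed equivalence.

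The main obstacle I anticipate is the first, more geometric step: establishing rigorously that the colored operad $\KS$ decomposes in the claimed way, i.e., that the spaces of $\KS$-operations with output $C_M$ are equivariantly contractible onto the "module" operations and that the circle symmetry is the correct one. This requires a careful look at Kontsevich--Soibelman's definition of $\KS$ in \cite[Section~11.2]{KS}---analyzing the configuration spaces of little discs inside a cylinder (or annulus), tracking the rotational degree of freedom, and showing the relevant inclusions are homotopy equivalences---and then upgrading this to a statement about $\infty$-operads, presumably by passing through a cofibrant model or via the operadic nerve. Once that structural fact is in place, the passage to algebras and to the fiber product is essentially formal manipulation with universal properties of $\infty$-operads and with the free/forgetful adjunctions, together with the observation (used implicitly above) that $\Fun(BS^1,-)$ commutes with the formation of algebras over any $\infty$-operad since $BS^1$ is just an index category and the monoidal structure on $\Fun(BS^1,\MM)$ is pointwise.
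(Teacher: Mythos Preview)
There is a genuine gap in your identification of the map $\Alg_{\etwo}(\MM)\to\Alg_{\assoc}(\Fun(BS^1,\MM))$ that forms one leg of the fiber product. You describe it as ``the forgetful functor \ldots\ that remembers the circle action coming from $\etwo$-rotation,'' which would send an $\etwo$-algebra $A$ to $A$ itself equipped with some $S^1$-action. That is not the map in the theorem. The statement points to Section~\ref{operadsec} precisely for this: the map is the composite
\[
\Alg_{\etwo}(\MM)\xrightarrow{\ i_!\ }\Alg^{\dtwo}_{\dcyl}(\MM)\longrightarrow\Alg_{\cyl}(\MM)\simeq\Alg_{\assoc}\bigl(\Fun(BS^1,\MM)\bigr),
\]
where $i_!$ is the operadic left Kan extension along $i\colon\etwo^\otimes\hookrightarrow\dcyl$ and the second arrow is restriction. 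On underlying objects this sends $A$ to its factorization homology $\int_{(0,1)\times S^1}A$ (equivalently, to $\HHH(A)$ with its rotation $S^1$-action; see Proposition~\ref{identificationprop}), which is not $A$. This matters structurally: since one leg of the fiber product is a left adjoint followed by a restriction rather than a pure restriction, the fiber product cannot arise from a pushout of $\infty$-operads of the sort you propose. There is no map of $\infty$-operads $\cyl\to\etwo^\otimes$ inducing this leg.

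Consequently your strategy of exhibiting $\KS$ as an amalgamation of $\etwo$ and $\LM$ over a common piece will not produce the correct answer. The paper's route is more indirect and uses different tools. It introduces two auxiliary colored operads $\dcyl$ (colors $D,C$) and $\dcylm$ (colors $D,C,C_M$) interpolating between $\etwo^\otimes$, $\cyl$, $\cylm$, and $\KS$. It first proves $\Alg_{\dcylm}(\MM)\simeq\Alg_{\dcyl}(\MM)\times_{\Alg_{\cyl}(\MM)}\Alg_{\cylm}(\MM)$ by the Lurie--Barr--Beck monadicity criterion (Proposition~\ref{another}), computing and comparing free functors via explicit operadic colimit diagrams; this is not a pushout-of-operads argument. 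It then shows that restriction along $\KS\hookrightarrow\dcylm$ identifies $\Alg_{\KS}(\MM)$ with the full subcategory $\Alg^{\dtwo}_{\dcylm}(\MM)$ lying over the essential image of $i_!$ (Proposition~\ref{another2}). The equivalences $\Alg_{\cyl}(\MM)\simeq\Alg_{\assoc}(\Fun(BS^1,\MM))$ and $\Alg_{\cylm}(\MM)\simeq\LMod(\Fun(BS^1,\MM))$ are established separately by a weak-approximation argument for $\eone^\otimes\times BS^1\to\cyl$ (Proposition~\ref{cyleones}). Your observation that $\Fun(BS^1,-)$ commutes with forming algebras is used there, but the heart of the proof is the monadicity step together with the presence of the extra color $C$, which does not belong to $\KS$ itself.
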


This result means that $\Alg_{\etwo}(\MM)$, $\Alg_{\assoc}\big(\Fun\big(BS^1,\MM\big)\big)$
and $\LMod\big(\Fun\big(BS^1,\MM\big)\big)$ form building blocks for $\KS$-algebras. This allows us to describe the structure of~a~$\KS$-algebra
as a~collection of~more elementary algebraic data involving associative algebras, left modules, circle actions, and $\etwo$-algebras.
As for $\etwo$-algebras, thanks to Dunn additivity theorem
for $\infty$-operads proved by~Lurie \cite{HA}, a~canonical equivalence
$\Alg_{\etwo}(\MM)\simeq \Alg_{\assoc}(\Alg_{\assoc}(\MM))$ exists.
While we make use of~Theorem~\ref{main2} in the construction process, it would be generally useful in the theory of~$\KS$-algebras since
the notion of~$\KS$-algebras is complicated as is.
For example, when $\MM$ is the derived $\infty$-category $\DDD(k)$ of~vector spaces over a~field $k$ of~characteristic zero,
i.e., in the differential graded context, there is a~quite elementary
interpretation.
One may take $\Fun\big(BS^1,\DDD(k)\big)$ to be the $\infty$-category obtained from the category of~mixed complexes in the sense of~Kassel (see, e.g.,~\cite{L})
by~localizing quasi-isomorphisms.
Therefore, an~object of~$\Alg_{\assoc}\big(\Fun\big(BS^1,\DDD(k)\big)\big)$ may be regarded
as an~associative algebra in the monoidal ($\infty$-)ca\-tegory of~mixed complexes.
Objects of~$\LMod\big(\Fun\big(BS^1,\DDD(k)\big)\big)$ can be described in a~similar way.
Moreover, dg $\etwo$-operad is
formal in characteristic zero.

As we will describe in the following section,
our method consists of~only natural procedures.
In~particular, by~contrast with previous work, it does not
involve/use complicated resolutions
of~operads or genuine chain complexes.
Thus, we hope that our proposed approach can be applicable
 to other settings and generalizations
such as $(\infty,n)$-categories.
Indeed, the method allows us to prove
an equivariant generalization of~Theorem~\ref{main1}
(see Theorem~\ref{mainequivariant}):

\begin{Theorem}\label{main3}
Let $G$ be a~group object in the $\infty$-category $\SSS$ of~spaces,
that is, a~\mbox{group-like} \mbox{$\eone$-space}.
Let $\CCC$ be a~small $R$-linear stable
idempotent-complete $\infty$-category.
Suppose that $G$ acts on $\CCC$ $($namely, it gives a~left action$)$.
Then $(\HHC(\CCC),\HHH(\CCC))$ is promoted to
a~\mbox{$\KS$-algebra} in~$\Fun(BG,\Mod_R)$.
Namely, $(\HHC(\CCC),\HHH(\CCC))$ is a~$\KS$-algebra in
$\Mod_R$, which comes equip\-ped with a~left action of~$G$.
\end{Theorem}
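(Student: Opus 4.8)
The strategy is to observe that the construction underlying Theorem~\ref{main1} (namely Theorem~\ref{mainconstruction}) is natural in $\CCC$ for equivalences, and then to incorporate the $G$-action by a formal limit argument. A left action of $G$ on $\CCC$ is a functor $\rho\colon BG\to\CAT_R$ carrying the unique object of $BG$ to $\CCC$, where $\CAT_R$ denotes the $\infty$-category of small $R$-linear stable idempotent-complete $\infty$-categories and $R$-linear exact functors. Since $G$ is group-like, every morphism of $BG$ is an equivalence, so $\rho$ factors through the maximal subgroupoid $\CAT_R^{\simeq}$. Hence it suffices to upgrade the assignment $\CCC\mapsto(\HHC(\CCC),\HHH(\CCC))$, together with its $\KS$-algebra structure from Theorem~\ref{main1}, to a functor $\Phi\colon\CAT_R^{\simeq}\to\Alg_{\KS}(\Mod_R)$; the composite $\Phi\circ\rho$ is then an object of $\Fun(BG,\Alg_{\KS}(\Mod_R))$.

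The next ingredient is the equivalence $\Fun(BG,\Alg_{\KS}(\Mod_R))\simeq\Alg_{\KS}(\Fun(BG,\Mod_R))$. For any $\MM^\otimes$ as in Theorem~\ref{main2}, the functor category $\Fun(BG,\MM)$ with its pointwise symmetric monoidal structure is the limit in $\CAlg(\wCat)$ of the constant diagram at $\MM^\otimes$ indexed by the $\infty$-groupoid $BG$. Since forming $\KS$-algebras carries limits of symmetric monoidal $\infty$-categories to limits, we get $\Alg_{\KS}(\Fun(BG,\MM))\simeq\lim_{BG}\Alg_{\KS}(\MM)\simeq\Fun(BG,\Alg_{\KS}(\MM))$, naturally in $\MM$. (Alternatively one runs this through Theorem~\ref{main2}: each of $\Alg_{\etwo}(-)$, $\Alg_{\assoc}(\Fun(BS^1,-))$ and $\LMod(\Fun(BS^1,-))$ commutes with $\Fun(BG,-)$, as do fibre products.) Taking $\MM=\Mod_R$ turns $\Phi\circ\rho$ into a $\KS$-algebra in $\Fun(BG,\Mod_R)$ whose underlying pair is $(\HHC(\CCC),\HHH(\CCC))$ equipped with the $G$-action coming from $\rho$, which is the claim.

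It remains to explain the functoriality of $\Phi$. By Theorem~\ref{main2} (Corollary~\ref{algebraequivalence}) a $\KS$-algebra amounts to an $\etwo$-algebra $B$, an associative algebra $A$ in $\Fun(BS^1,\MM)$, a left $A$-module $M$ in $\Fun(BS^1,\MM)$, and an identification of $A$ with the associative algebra attached to $B$ by the comparison functor of Theorem~\ref{main2}. So it is enough to produce, naturally in $\CCC\in\CAT_R^{\simeq}$: (i) $\HHC(\CCC)$ as an $\etwo$-$R$-algebra; (ii) $\HHH(\CCC)$ as an object of $\Fun(BS^1,\Mod_R)$, i.e.\ with its $S^1$-action; (iii) the left-module structure of $\HHH(\CCC)$ over the associative algebra in $\Fun(BS^1,\Mod_R)$ attached to $\HHC(\CCC)$; and (iv) the coherences identifying these data. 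For (i), $\HHC(\CCC)$ is the endomorphism object of $\CCC$ regarded as an algebra in the relevant Morita-type symmetric monoidal $\infty$-category, hence an $\etwo$-algebra by Dunn additivity and manifestly functorial for equivalences of $\CCC$. For (ii), $\HHH(\CCC)$ with its $S^1$-action is the usual cyclic (mixed) structure, realized, e.g., as a cyclic bar construction or as the trace of $\CCC$ viewed as a dualizable object; in either description it is functorial for $R$-linear equivalences. Items (iii) and (iv) are exactly the data assembled in the proof of Theorem~\ref{main1}: since that construction is built only from the structure maps of these Hochschild invariants, and since for equivalences no variance issue for $\HHC$ intervenes, it refines to a functor out of $\CAT_R^{\simeq}$.

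The main obstacle is therefore organisational rather than conceptual: one must revisit the proof of Theorem~\ref{main1} and verify that every step---the identification of the relevant associative algebra with the one produced by Theorem~\ref{main2}, the construction of the $S^1$-equivariant module, and the gluing---was performed by a functor of $\CCC$ (for equivalences), with coherently chosen homotopies, so that the whole assembles into a single functor $\CAT_R^{\simeq}\to\Alg_{\KS}(\Mod_R)$ rather than a pointwise statement. Since that construction was designed to use only natural procedures, with no ad hoc resolutions of operads or chain complexes, this reorganisation presents no real difficulty, and the passage to $G$-equivariance is then formal by the limit argument above. (An alternative that avoids any restatement of Theorem~\ref{main1} is to apply Theorem~\ref{main2} to $\MM=\Fun(BG,\Mod_R)$ directly, after first exhibiting $\HHC(\CCC)$ and $\HHH(\CCC)$ as $G$-equivariant objects from the $G$-action; this replaces the functoriality verification above by the equivalent check that the Hochschild invariants carry the $G$-action compatibly.)
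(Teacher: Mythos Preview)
Your proposal is correct. Your main strategy—establish a functor $\CAT_R^{\simeq}\to\Alg_{\KS}(\Mod_R)$ and then compose with $\rho\colon BG\to\CAT_R^{\simeq}$, using $\Fun(BG,\Alg_{\KS}(\Mod_R))\simeq\Alg_{\KS}(\Fun(BG,\Mod_R))$—is exactly what the paper records in Remark~\ref{remarkfunctoriality} (for arbitrary Kan complexes $K$ in place of $BG$). However, the paper's \emph{primary} proof (Construction~\ref{equivariantconstruction}) is your ``alternative'': it reruns each step of the construction of Theorem~\ref{mainconstruction} with $\Mod_R$ replaced by $\Fun(BG,\Mod_R)$ and $\PR_R$ by $\Fun(BG,\PR_R)$, explicitly forming the $G$-equivariant internal hom $\mathcal{M}{\rm or}_R^G(\DDD,\DDD)$, applying $\Fun(BG,-)$ to the adjunction $(I,E)$, and so on. The paper then derives functoriality for equivalences as a remark, by specializing that equivariant construction to a contractible groupoid $I$ (or any Kan complex).

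So the two routes are the same in content but inverted in order: the paper builds the equivariant version directly and reads off functoriality; you package functoriality first and deduce equivariance formally. Your route is conceptually cleaner but, as you note, pushes all the work into verifying that the construction of Theorem~\ref{mainconstruction} is genuinely functorial in $\CCC\in\CAT_R^{\simeq}$ with coherent homotopies—precisely the ``organisational'' step you flag. The paper's route trades that verification for a step-by-step reworking, which is more explicit about where the $G$-action enters (e.g.\ the conjugation action on $\mathcal{M}{\rm or}_R(\DDD,\DDD)$ via $g\mapsto(g^{-1},g)$) but slightly more repetitive.
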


We~would like to invite the reader's attention to the noteworthy features of~our method:
\begin{itemize}\itemsep=0pt
\item Our construction of~the structure of~an algebra
on the Hoch\-schild pair over $\KS$
 starts with an~$R$-linear stable $\infty$-category $\mathcal{C}$.
 Consequently,
if we have an~equivalence $\CCC\simeq \CCC'$,
we~have a~canonical equivalence $(\HHC(\CCC),\HHH(\CCC))\simeq (\HHC(\CCC'),\HHH(\CCC'))$ as algebras over $\KS$ (see Remark~\ref{remarkfunctoriality}).
Consider the situation that the associative algebra $A$ and $A'$
in the $\infty$-category of~$R$-modules have the equivalent module
category $\LMod_A$ and $\LMod_{A'}$, that is, $A$ and $A'$
are (derived) Morita equivalent to one another.
Here $\LMod_A$ and $\LMod_{A'}$ denote the $\infty$-categories
of~left $A$-module spectra and left $A'$-module spectra, respectively
(cf.~Section~\ref{NC}).
Then $\LMod_A\simeq \LMod_{A'}$ induces the canonical equivalence
of~Hoch\-schild pairs as algebras over $\KS$.
In~other words,
our method provides a~natural Morita invariant structure.
This invariance has a~fundamental importance in
noncommutative geometry and is reasonable to expect,
 whereas the Morita invariant property
of~algebra
structures in~\cite{DTT} and \cite{H} remains an~open problem.
We~would like to mention a~recent work \cite{AK}
in which the authors prove the Morita invariance of~the calculus structure at the level of~graded vector spaces of~homology.

\item The base ring can be any commutative ring spectrum.
Note that it is not possible to use
the dg operad $\mathcal{KS}_{{\rm comb}}$ in the generalization
to the ``spectral" setting.

\item
As stated Theorem~\ref{main3}, our functorial method allows us to generalize
to equivariant situ\-a\-tions. The $\infty$-categories with group actions
naturally appear in future applications (see~below).

\item
As revealed the outline in Section~\ref{strategysec},
our construction works well not with algebras but with $\infty$-categories.
Thus, even when one is ultimately interested in algebras,
it is important to~consider the $\infty$-category of~their modules.

\end{itemize}

We~would like to view our results from the perspective
of~noncommutative algebraic geometry.
As mentioned above, the notion of~$\KS$-algebra structures
is a~counterpart to the calculus on~manifolds.
Thus, $\KS$-algebras are central objects in ``noncommutative calculus".
We~refer the reader to~\cite{DTTex} and references therein
for this point of~view.

\looseness=1
Recall the algebro-geometric interpretations of~the Hoch\-schild cohomology
$\HHC(\CCC)$ and Hoch\-schild homology $\HHH(\CCC)$ for stable $\infty$-categories $\CCC$
or dg categories (somewhat more precisely, we assume that they are ``linear'' over a~field of~characteristic zero).
The $\etwo$-algebra $\HHC(\CCC)$ governs the deformations theory of~the stable $\infty$-category $\CCC$ in the derived geometric formulation.
The Hoch\-schild homology $\HHH(\CCC)$
(more precisely, the Hoch\-schild chain complex)
inhe\-rits an~$S^1$-action that corresponds to the Connes operator.
Then $\HHH(\CCC)$ with $S^1$-action gives rise to an~analogue of~the Hodge
filtration: the pair of~the negative cyclic homology and the peri\-o\-dic
cyclic homology can be thought of~as such a~structure.
(These algebraic structures are contained in the $\KS$-algebra $(\HHC(\CCC),\HHH(\CCC))$.)
As revealed in~\cite{I} in the case of~associative (dg) algebras $A$,
the action of~$\HHC(A)$ on $\HHH(A)$ encoded by~the $\KS$-algebra
structure at~the operadic level
is a~key algebraic datum that describes
variations of~the (analogue of) Hodge filtration along
noncommutative (curved) deformations. Namely,
the period map for noncommutative deformations (of~an associative
algebra) is controlled by~the $\KS$-algebra of~the Hoch\-schild cohomology and Hoch\-schild homology.
Therefore, the $\KS$-algebra $(\HHC(\CCC),\HHH(\CCC))$ will provide~a crucial algebraic input for the theory of~period maps for deformations of~the
stable $\infty$-category~$\CCC$.
The~significance of~the generalization to the equivariant context
(Theorem~\ref{main3})
will be seen when one comes to consider fruitful examples.
The motivations
partly come from mirror symmetry.
For example, stable $\infty$-categories endowed with $S^1$-actions
or some algebraic actions, that
are interesting from the viewpoint of~$S^1$-equivariant deformation theory,
naturally appear from Landau--Ginzburg models in the context
of~matrix factorizations. Its equivariant deformations together with the
associated Hodge structure should provide
a categorification of~the theory of~Landau--Ginzburg models.
As a~second example, if $X$ is a~sufficiently nice algebraic stack (more generally, a~derived stack),
one can consider the derived free loop space
$LX=\Map\big(S^1,X\big)$ of~$X$ (see, e.g., \cite{BFN}).
The stable $\infty$-category $\Perf(LX)$ of~perfect complexes on $LX$ comes equipped
with the natural $S^1$-action.
Finally, we would also like to mention that main results in this paper form the basis for
our recent work~\cite{IwF}.

\section{Strategy and organization}
\label{strategysec}

The purpose of~this section is to outline the strategy of~a construction of~a~$\KS$-algebra structure on the pair of~Hoch\-schild cohomology
and Hoch\-schild homology and to give the brief organization of~this paper.
This section is something like the second part of~introduction.
We~hope that the following outline will be helpful in understanding
the content of~the sequel. However, this section is independent with the
rest of~this paper so that the reader can skip it.

{\bf 2.1.} We~will give an~outline of~the construction.
Let $\CCC$ be a~small stable $\infty$-category.
If~$\CCC$ is not idempotent-complete,
we replace $\CCC$ by~its idempotent-completion: we assume that~$\CCC$ is idempotent-complete.
While we work with stable $\infty$-categories over a~commutative
ring spectrum $R$ in the paper, for simplicity
 we here work with plain stable idempotent-complete
$\infty$-categories (equivalently, we assume that $R$ is the sphere
spectrum). We~let $\DDD=\Ind(\CCC)$ denote the Ind-category
that is a~compactly generated stable $\infty$-category.
The $\infty$-category $\DDD$ is~also equivalent to
the functor category $\Fun^{\textup{ex}}\big(\CCC^{\rm op},\SP\big)$
of~exact functors, where $\SP$ is the stable \mbox{$\infty$-category} of~spectra.

Let $\Fun^{\textup{L}}(\DDD,\DDD)$ be the functor category from $\DDD$
to itself that
consists of~those functors which preserve small colimits.
This functor category is a~compactly generated stable $\infty$-category
and
inherits an~associative monoidal structure
given by~the composition of~functors.
We~denote by~$\mathcal{E}\textup{nd}(\DDD)^\otimes$ the
presentable stable $\infty$-category $\Fun^{\textup{L}}(\DDD,\DDD)$
endowed with the (associative) monoidal structure.

Let $\Alg_{\assoc}(\SP)$ denote the $\infty$-category of~associative ring spectra.
Given $A\in \Alg_{\assoc}(\SP)$, we define $\RMod_A$ to be
the $\infty$-category of~right $A$-module spectra.
Let us regard $A$ as a~right $A$-module in an~obvious way.
Then there is an~essentially unique colimit-preserving functor $p_A\colon\SP \to \RMod_A$ which sends the sphere spectrum $\mathbb{S}$ in $\SP$ to $A$.
Let $\PRST$ be the $\infty$-category of~presentable stable $\infty$-categories
in which morphisms are colimit-preserving functors.
The~assi\-g\-n\-ment $A\mapsto \{p_A\colon\SP\to \RMod_A\}$
induces $I\colon\Alg_{\assoc}(\SP)\to \big(\PRST\big)_{\SP/}$.
The right adjoint $E\colon\big(\PRST\big)_{\SP/}\to \Alg_{\assoc}(\SP)$
of~$I$ carries $p\colon\SP\to \mathcal{P}$ in $\big(\PRST\big)_{\SP/}$
to the endomorphism (asso\-ci\-a\-tive) algebra $\End_{\mathcal{P}}(P)$,
where $P$ is the image $p(\mathbb{S})$.
Note that $\Alg_{\assoc}(\SP)$ has a~natural symmetric monoidal
structure whose tensor product is induced by~the
tensor product $A\otimes B$ in $\SP$.
The $\infty$-category $\PRST$ also admits an~appropriate symmetric monoidal
structure in which $\SP$ is a~unit object,
and $I\colon\Alg_{\assoc}(\SP)\to \big(\PRST\big)_{\SP/}$
can be promoted to a~symmetric monoidal functor.
Applying $\Alg_{\assoc}(-)$ to
$I\colon\Alg_{\assoc}(\SP)\leftrightarrows \big(\PRST\big)_{\SP/}\,{\colon}\!E$, we obtain
\begin{gather*}
I\colon\ \Alg_{\etwo}(\SP)\simeq \Alg_{\assoc}(\Alg_{\assoc}(\SP)) \rightleftarrows \Alg_{\assoc}\big(\big(\PRST\big)_{\SP/}\big) \simeq \Alg_{\assoc}\big(\PRST\big)\ {\colon}E,
\end{gather*}
where $\Alg_{\etwo}(\SP)$
is the $\infty$-category of~$\etwo$-algebras, and
$\Alg_{\etwo}(\SP)\simeq \Alg_{\assoc}(\Alg_{\assoc}(\SP))$
and the~left equivalence
follows from Dunn additivity theorem. Here we abuse notation by~writing $I$ and $E$ for the induced functors.
Let us regard $\Alg_{\assoc}\big(\PRST\big)$ as the $\infty$-category
of~monoidal presentable stable $\infty$-categories.
The left adjoint sends an~$\etwo$-algebra $A$ to
the associative monoidal $\infty$-category $\RMod_A^\otimes$.
The right adjoint carries a~monoidal presentable stable \mbox{$\infty$-category}
$\mathcal{M}^\otimes$ to the endomorphism spectrum $\End_{\mathcal{M}}(\mathsf{1}_{\mathcal{M}})$ of~the unit object $\mathsf{1}_{\mathcal{M}}$, endowed with an~$\etwo$-algebra structure.

We~define Hoch\-schild cohomology spectrum $\HHC(\CCC)\!=\HHC(\DDD)$
as $E\big(\mathcal{E}\textup{nd}(\DDD)^\otimes\big)\!\in \Alg_{\etwo}(\SP)$.
The underlying associative algebra
$\HHC(\CCC)$ is the endomorphism algebra of~the identity functor
$\DDD\to \DDD$ in $\Fun^{\textup{L}}(\DDD,\DDD)$.

Consider the counit map of~the adjunction:
\begin{gather*}
\RMod_{\HHC(\DDD)}^\otimes \longrightarrow \mathcal{E}\textup{nd}(\DDD)^\otimes,
\end{gather*}
which is a~monoidal functor.
Since $\mathcal{E}\textup{nd}(\DDD)^\otimes$ naturally acts on $\DDD$,
it gives rise to an~action of~$\RMod_{\HHC(\DDD)}^\otimes$ on
$\DDD$:
\begin{gather*}
\mathcal{E}\textup{nd}(\DDD)^\otimes \curvearrowright \DDD \ \ \Rightarrow \ \ \RMod_{\HHC(\DDD)}^\otimes \curvearrowright \DDD.
\end{gather*}
In~other words, $\DDD$ is a~left $\RMod_{\HHC(\DDD)}^\otimes$-module
object in $\PRST$.
Let $\RPerf_{\HHC(\CCC)}^\otimes\subset \RMod_{\HHC(\DDD)}^\otimes$ be the monoidal full subcategory that consists of~compact objects.
By the restrictions,
it gives rise to a~left $\RPerf_{\HHC(\CCC)}^\otimes$-module object $\CCC$:
\begin{gather*}
\RPerf_{\HHC(\CCC)}^\otimes \curvearrowright \CCC,
\end{gather*}
in the $\infty$-category $\ST$ of~small stable idempotent-complete $\infty$-categories in which morphisms are exact functors ($\ST$ also admits
a suitable symmetric monoidal structure).
Informally, we think of~it as a~categorical associative action of~$\HHC(\CCC)$ on $\CCC$.
This is induced by~the adjunction so that it has an~evident universal property.

Construct a~functor $\ST\to \SP$ which carries
$\CCC$ to the Hoch\-schild homology spectrum $\HHH(\CCC)$.
In~the classical differential graded context,
Hoch\-schild chain complex comes equipped with the Connes operator.
In~our general setting, it is natural to encode such structures
by~means of~circle actions: Hoch\-schild homology spectrum $\HHH(\CCC)$
is promoted to a~spectrum with an~\mbox{$S^1$-action}, that is, an~object of~$\Fun\big(BS^1,\SP\big)$. Thus we configure the assignment
$\CCC\mapsto \HHH(\CCC)$ as a~symmetric monoidal functor
\begin{gather*}
\HHH(-)\colon\ \ST^\otimes \longrightarrow \Fun\big(BS^1,\SP\big)^\otimes,
\end{gather*}
where $\Fun\big(BS^1,\SP\big)$ inherits a~pointwise symmetric monoidal
structure from the structure on~$\SP$.

Applying the symmetric monoidal functor $\HHH(-)$ to
the left $\RPerf_{\HHC(\CCC)}^\otimes$-module object $\CCC$,
we obtain a~left $\HHH\big(\!\RPerf_{\HHC(\CCC)}\!\big)$-module $\HHH(\CCC)$ in $\Fun\big(\!BS^1\!,\SP\!\big)$. Note that $ \HHH(\HHC(\CCC))$ $\simeq \HHH\big(\RPerf_{\HHC(\CCC)}\big)$
is an~associative algebra object in $\Fun\big(BS^1,\SP\big)$. In~other words,
it is an~associative ring spectrum equipped with an~$S^1$-action.

There is a~topological operad ($\infty$-operad) $\cyl$ (defined in a~geometric way). We~have a~cano\-ni\-cal equivalence $\Alg_{\cyl}(\SP)\simeq \Alg_{\assoc}\big(\Fun\big(BS^1,\SP\big)\big)$ between $\infty$-categories of~algebras,
where $\Alg_{\cyl}(\SP)$ is the $\infty$-category of~algebras over $\cyl$.
There is another two-colored topological operad ($\infty$-operad)
$\dcyl$ such that operads $\etwo^\otimes$ and $\cyl$ are contained
in $\dcyl$ as full suboperads: $\etwo^\otimes \subset \dcyl \supset \cyl$.
Let $i\colon\etwo^\otimes\hookrightarrow \dcyl$
denote the inclusion, and let $i_!\colon\Alg_{\etwo}(\SP)\to \Alg_{\dcyl}(\SP)$
be the left adjoint of~the forgetful functor $i^*\colon\Alg_{\dcyl}(\SP)\to \Alg_{\etwo}(\SP)$.
Consider the sequence
\begin{gather*}
\Alg_{\etwo}(\SP)\stackrel{i_!}{\longrightarrow} \Alg_{\dcyl}(\SP) \stackrel{\textup{forget}}{\longrightarrow} \Alg_{\cyl}(\SP)\simeq \Alg_{\assoc}\big(\Fun\big(BS^1,\SP\big)\big)
\end{gather*}
and denote by~$i_!(\HHC(\CCC))_C$ the image of~$\HHC(\CCC)$
under the composite.
We~construct a~canonical equivalence $i_!(\HHC(\CCC))_C\simeq \HHH(\HHC(\CCC))$
in $\Alg_{\assoc}\big(\Fun\big(BS^1,\SP\big)\big)$.

Assembling the constructions, we obtain a~triple
\begin{gather*}
(\HHC(\CCC), i_!(\HHC(\CCC))_C\simeq \HHH(\HHC(\CCC)), \HHH(\CCC)),
\end{gather*}
where $\HHC(\CCC)$ is the $\etwo^\otimes$-algebra, $\HHH(\CCC)$
is the left $\HHH(\HHC(\CCC))$-module (in $\Fun\big(BS^1,\SP\big)$).
As mentioned in Theorem~\ref{main2}, we prove that the triple exactly amounts to a~$\KS$-algebra
$(\HHC(\CCC),\HHH(\CCC))$, that is, the structure of~an algebra over
the Kontsevich--Soibelman operad $\KS$ on the pair $(\HHC(\CCC),\HHH(\CCC))$.
This completes the construction.

{\bf 2.2.} This paper is organized as follows:
Section~\ref{NC} collects conventions and some of~the notation that we will use.
In~Section~\ref{operadsec}, we discuss algebras over the Kontsevich--Soibelman
operad. The~main result of~Section~\ref{operadsec} is Corollary~\ref{algebraequivalence}
(=~Theorem~\ref{main2}).
Along the way,
we introduce several topological colored operads ($\infty$-operads).
In~Section~\ref{cohomologysec}, we give a~brief review of~Hoch\-schild cohomology
spectra that we will use.
In~Section~\ref{homologysec}, we give a~construction of~the assignment
$\CCC\mapsto \HHH(\CCC)$ which satisfies the requirements for our goal
(partly
because we are not able to find a~suitable construction in the literature).
The results of~this section will be quite useful for various purposes other than the subject of~this
paper.
In~Section~\ref{constructionsec}, we prove Theorem~\ref{mainconstruction} (=~Theorem~\ref{main1}).
Namely, we construct a~$\KS$-algebra $(\HHC(\CCC),\HHH(\CCC))$.
In~Section~\ref{actionsec}, we study the action morphisms
determined by~the structure of~the $\KS$-algebra on~$(\HHC(\CCC),\HHH(\CCC))$.
In~Section~\ref{equivariantsec}, we give a~generalization to
an equivariant setting (cf.~Theo\-rem~\ref{main3}): $\CCC$ is endowed with the action of~a~group (a group
object in the $\infty$-category of~spaces).

\section{Notation and convention}\label{NC}

Throughout this paper we use the theory of~{\it quasi-categories}.
We~assume that the reader is familiar with this theory
and operads.
A quasi-category is a~simplicial set which
satisfies the weak Kan condition of~Boardman--Vogt.
The theory of~quasi-categories from the viewpoint of~models of~$(\infty,1)$-categories
were extensively developed by~Joyal and Lurie \cite{Jo, HA, HTT}.
Following \cite{HTT}, we shall refer to quasi-categories
as {\it $\infty$-categories}.
Our main references are~\cite{HA} and~\cite{HTT}.
Given an~ordinary category $\CCC$, by~passing to the nerve $\NNNN(\CCC)$,
we think of~$\CCC$ as the $\infty$-category $\NNNN(\CCC)$.
We~usually abuse notation by~writing $\CCC$ for $\NNNN(\CCC)$
even when $\CCC$ should be thought of~as a~simplicial set
or an~$\infty$-category.

We~use the theory of~{\it $\infty$-operads} which is
thoroughly developed in~\cite{HA}.
The notion of~$\infty$-ope\-rads gives
one of~the models of~colored operads.
Thanks to Hinich \cite{Hin}, there is a~comparison between
algebras over differential graded operads
and algebras over $\infty$-operads in values in~chain complexes.
In~particular, in characteristic zero, \cite{Hin} establishes
an equivalence between two notions of~algebras, see {\it loc.\ cit}.

Here is a~list of~some of~the conventions and notation that we will use:
\begin{itemize}\itemsep=0pt

\item $\ZZ$: the ring of~integers, $\RRR$ denotes the set of~real numbers which we regard as either a~topo\-logical space or a~ring.

\item $\Delta$: the category of~linearly ordered non-empty
finite sets (consisting of~[0], [1], \dots, \mbox{$[n]=\{0,\dots,n\}$}, \dots).

\item $\Delta^n$: the standard $n$-simplex.

\item $\textup{N}$: the simplicial nerve functor (cf.~\cite[Section~1.1.5]{HTT}).

\item $\SSS$: $\infty$-category of~small spaces. We~denote by~$\widehat{\SSS}$
the $\infty$-category of~large spaces (cf.~\cite[Section~1.2.16]{HTT}).

\item $\CCC^\simeq$: the largest Kan subcomplex of~an $\infty$-category $\CCC$.

\item $\CCC^{\rm op}$: the opposite $\infty$-category of~an $\infty$-category. We~also use the superscript ``op" to~indicate the opposite category for ordinary categories and enriched categories.

\item $\operatorname{Cat}_\infty$: the $\infty$-category of~small $\infty$-categories.

\item $\SP$: the stable $\infty$-category of~spectra.

\item $\Fun(A,B)$: the function complex for simplicial sets $A$ and $B$.

\item $\Fun_C(A,B)$: the simplicial subset of~$\Fun(A,B)$ classifying
maps which are compatible with
given projections $A\to C$ and $B\to C$.

\item $\Map(A,B)$: the largest Kan subcomplex of~$\Fun(A,B)$ when $A$ and $B$ are $\infty$-categories.

\item $\Map_{\mathcal{C}}(C,C')$: the mapping space from an~object $C\in\mathcal{C}$ to $C'\in \mathcal{C}$, where $\mathcal{C}$ is an~$\infty$-category.
We~usually view it as an~object in $\mathcal{S}$ (cf.~\cite[Section~1.2.2]{HTT}).

\item $\FIN$: the category of~pointed finite
sets $\langle 0 \rangle, \langle 1 \rangle,\dots \langle n \rangle,\dots$,
where $\langle n, \rangle=\{*,1,\dots, n \}$ with the base point $*$.
We~write $\Gamma$ for $\NNNN(\FIN)$. $\langle n\rangle^\circ=\langle n\rangle\backslash*$.
Notice that the (nerve of) Segal's gamma category is the opposite category
of~our $\Gamma$.

\item $\mathcal{P}^{\textup{act}}$: If~$\mathcal{P}$ is an~$\infty$-operad,
we write $\mathcal{P}^{\textup{act}}$ for the subcategory
of~$\mathcal{P}$ spanned by~active morphisms.

\item $\triv^\otimes$: the trivial $\infty$-operad \cite[Example~2.1.1.20]{HA}.

\item $\assoc^\otimes$: the associative operad \cite[Section~4.1.1]{HA}, we use
the notation slightly different from~{\it loc.~cit.}
Informally,
an $\assoc$-algebra (an algebra over $\assoc^\otimes$)
is an~unital associative algebra.
For a~symmetric monoidal $\infty$-category $\CCC^\otimes$,
we write $\Alg_{\assoc}(\CCC)$ for the $\infty$-category of~$\assoc$-algebra objects.
We~refer to an~object of~$\Alg_{\assoc}(\CCC)$
as an~associative algebra object in~$\CCC^\otimes$.
We~refer to a~monoidal $\infty$-category over $\assoc^\otimes$ as an~associative monoidal $\infty$-category.

\item $\LM^\otimes$: the $\infty$-operad defined in~\cite[Definition~4.2.1.7]{HA}.
An algebra over $\LM^\otimes$ is a~pair $(A,M)$
such that an~unital associative algebra $A$
and a~left $A$-module $M$.
For a~symmetric monoidal $\infty$-category $\CCC^\otimes \to \Gamma$,
we write $\LMod\big(\CCC^\otimes\big)$ or $\LMod(\CCC)$
for $\Alg_{\LM^\otimes}\big(\CCC^\otimes\big)$.

\item $\mathbf{E}^\otimes_n$: the $\infty$-operad of~little $n$-cubes.
For a~symmetric monoidal $\infty$-category $\CCC^\otimes$,
we write $\Alg_{\mathbf{E}_n}(\CCC)$ for
the $\infty$-category of~$\mathbf{E}_n$-algebra objects.

\end{itemize}

\section{Operads}\label{operadsec}

{\bf 4.1.} We~will define several simplicial colored operads which are relevant to us.
By a~simplicial colored operad, we mean a~colored operad in the
symmetric monoidal category of~simplicial sets.
A simplicial colored operad is also referred to as
a symmetric multicategory enriched over the
category of~simplicial sets.

\begin{Definition}
\label{recti}
Let $(0,1)$ denote the open interval $\{x\in \RRR\,|\, 0<x<1\}$.
For $n \ge0$,
let $(0,1)^{n}$ be the $n$-fold product, i.e., the $n$-dimensional cube.
An open embedding $f\colon(0,1)^{n}\to (0,1)^{n}$ is~said to be
rectilinear if it is given by
\begin{gather*}
f(x_1,\dots,x_n)=(a_1x_1+b_1,\dots,a_nx_n+b_n)
\end{gather*}
for some real constants $0<a_1,\dots,a_n\le 1,\ 0\le b_1,\dots,b_n<1$, provided that
the formula defines an~embedding.
An embedding $f\colon(0,1)^{n}\to (0,1)^{n}$ is said to be
shrinking if it is given by~$f(x_1,\dots,x_n)=(a_1x_1,\dots,a_nx_n)$
for some $0<a_1,\dots,a_n\le1$.

Let $S^1$ denote the circle $\RRR/\ZZ$ which we regard as a~topological space.
A continuous map $f\colon(0,1)^n\times S^1 \to (0,1)^n\times S^1$
is said to be rectilinear (resp.~shrinking) if $f=(\phi,\psi)$
such that $\phi\colon(0,1)^n\to (0,1)^n$ is rectilinear (resp.~shrinking)
and $\psi\colon S^1\to S^1$ is given by~a rotation
\begin{gather*}
S^1=\RRR/\ZZ\ni x \mapsto x+r \in \RRR/\ZZ=S^1
\end{gather*}
with $r\in\RRR/\ZZ$. In~particular, when $n=0$,
$f\colon S^1 \to S^1$ is rectilinear if it is given by~a rotation.

Let $n\ge1$. If~a~continuous map $f\colon(0,1)^n\to (0,1)^{n-1}\times S^1$ is
said to be rectilinear if it factors as $(0,1)^n\stackrel{g}{\to} (0,1)^{n-1}\times \RRR \stackrel{h}{\to} (0,1)^{n-1}\times \RRR/\ZZ$,
where $h$ is the projection, and
$g$ is given by~$f(x_1,\dots,x_n)=(a_1x_1+b_1,\dots,a_nx_n+b_n)$
for some real constants $0<a_1,\dots,a_n\le1$, $0\le b_1,\dots,b_{n-1}<1$, $b_n\in \RRR$, provided
that
the formula defines an~open embedding.

\end{Definition}

\begin{Definition}
Let $\CYL$ be a~simplicial colored operad defined as follows:
\begin{enumerate}\itemsep=0pt
\renewcommand{\labelenumi}{(\roman{enumi})}

\item The set of~colors of~$\CYL$ has a~single element,
which we will denote by~$C$.

\item Let $I=\langle r \rangle^\circ$ be a~finite set and let
$\{C\}_{I}$ be a~set of~colors indexed by~$I$.
By abuse of~notation, we write $C^{\sqcup r}$
for $\{C\}_{I}$, where $r$ is the number of~elements of~$I$.
We~remark that $C^{\sqcup r}$ does not mean the coproduct.
We~define $\mul_{\CYL}(\{C\}_I,C)=\mul_{\CYL}\big(C^{\sqcup r},C\big)$ to be the singular simplicial complex of~the space
\begin{gather*}
\textup{Emb}^{\rm rec}\big(\big((0,1)\times S^1\big)^{\sqcup r}, (0,1)\times S^1\big)
\end{gather*}
of~embeddings $\big((0,1)\times S^1\big)^{\sqcup r}\to (0,1)\times S^1$
such that the restriction to each component
$(0,1)\times S^1 \to (0,1)\times S^1$ is rectilinear.
Here $\big((0,1)\times S^1\big)^{\sqcup r}$
is the disjoint union of~$(0,1)\times S^1$, whose set of~connected
components is identified with
$I$. The space $\textup{Emb}^{\rm rec}\big(\big((0,1)\times S^1\big)^{\sqcup r}, (0,1)\times S^1\big)$ is endowed with the standard topology, that is,
the subspace of~the mapping space
with compact-open topology.

\item The composition law in $\CYL$ is given by~the composition of~rectilinear
embeddings, and a~unit map is the identity map.

\end{enumerate}
The color $C$ together with $\mul_{\CYL}(\{C\}_I,C)$
constitutes a~fibrant simplicial colored operad.
By~a~fibrant simplicial colored operad we mean that
every simplicial set $\mul_{\CYL}(\{C\}_I,C)$ is a~Kan complex.
Note that the singular simplicial complex of~a~topological space is a~Kan complex.
\end{Definition}

\begin{Definition}
\label{cylmdef}
Let $\CYLM$ be a~simplicial colored operad defined as follows:
\begin{enumerate}\itemsep=0pt
\renewcommand{\labelenumi}{(\roman{enumi})}

\item The set of~colors of~$\CYLM$ has two elements denoted by~$C$ and $\cbar$.

\item Let $I=\langle r\rangle^\circ $ be a~finite set and let
$\{C,\cbar\}_{I}$ be a~set of~colors indexed by~$I$,
which we think of~as a~map $p\colon I\to \{C,\cbar\}$.
We~also write $C^{\sqcup m}\sqcup \cbar^{\sqcup n}$ for
$\{C,\cbar\}_{I}$ when $p^{-1}(C)$ (resp.~$p^{-1}(\cbar)$)
has $m$ elements
(resp.~$n$ elements).
Let
\begin{gather*}
\textup{Emb}^{\rm rec}\big(\big((0,1)\times S^1\big)^{\sqcup m}\sqcup \big((0,1)\times S^1\big)^{\sqcup n}, (0,1)\times S^1\big)
\end{gather*}
denote the space of~embeddings $\big((0,1)\times S^1\big)^{\sqcup m}\sqcup \big((0,1)\times S^1\big)^{\sqcup n}\to (0,1)\times S^1$
such that the restriction to each component
is rectilinear (the topology is induced by~compact-open topology).
We~refer to it as the space of~rectilinear embeddings.
For $n\ge1$, let
\begin{gather*}
\mul_{\CYLM}^{t}\big(C^{\sqcup{m}}\sqcup \cbar^{\sqcup n},\cbar\big)
\\ \qquad{} :=
{\textup{Emb}^{\rm rec}\big(\big((0,1)\times S^1\big)\times p^{-1}(C)\sqcup \big((0,1)\times S^1\big)\times p^{-1}(C_M), (0,1)\times S^1\big)}
\end{gather*}
be its subspace that consists of~those rectilinear embeddings $f$
such that each restriction to any component in
$\big((0,1)\times S^1\big)\times p^{-1}(C_M)$ is (not only rectilinear but also)
shrinking.
Here $\big((0,1)\times S^1\big)\times p^{-1}(C)\sqcup \big((0,1)\times S^1\big)\times p^{-1}(C_M)$ denotes the finite disjoint union
of~$(0,1)\times S^1$ indexed by~$p^{-1}(C)\sqcup p^{-1}(C_M)\simeq I$,
but we distinguish between
components indexed by~$p^{-1}(C)$ and those indexed in $p^{-1}(C_M)$
since they play different roles. Notice that $\mul_{\CYLM}^{t}\big(C^{\sqcup{m}}\sqcup \cbar^{\sqcup n},\cbar\big)$
is the empty space for $n\ge2$.
We~define $\mul_{\CYLM}\big(C^{\sqcup{m}}\sqcup \cbar^{\sqcup n},\cbar\big)$ to be the singular
simplicial complex of~$\mul_{\CYLM}^{t}\big(C^{\sqcup{m}}\sqcup \cbar^{\sqcup n},\cbar\big)$.
When $n=0$, we define $\mul_{\CYLM}\big(C^{\sqcup{m}},\cbar\big)$ to be the empty simplicial set.

\item We~set $\mul_{\CYLM}(C^{\sqcup{m}},C)=\mul_{\CYL}\big(C^{\sqcup{m}},C\big)$.
If~$n\neq 0$, $\mul_{\CYLM}\big(C^{\sqcup{m}}\sqcup \cbar^{\sqcup n},C\big)$ is the empty set.

\item The composition law is given by~the composition of~rectilinear embeddings, and a~unit map is the identity map.
\end{enumerate}
The colors $C$, $\cbar$ together with simplicial sets of~maps constitute a~fibrant simplicial colored operad.
\end{Definition}

\begin{Definition}\label{dcyldef}
Let $\DCYL$ be a~simplicial colored operad defined as follows:
\begin{enumerate}\itemsep=0pt
\renewcommand{\labelenumi}{(\roman{enumi})}

\item The set of~colors of~$\DCYL$ has two elements, which we denoted by~$D$
and $C$.

\item Let $I=\langle r \rangle^\circ$ be a~finite set and let
$\{D,C\}_{I}$ be a~set of~colors indexed by~$I$,
that is, a~map $p\colon I\to \{D,C\}$.
By abuse of~notation we write $D^{\sqcup{l}}\sqcup C^{\sqcup m}$ for
$\{D,C\}_{I}$ when $p^{-1}(D)$ (resp.~$p^{-1}(C)$)
has $l$ elements
(resp.~$m$ elements).
We~define $\mul_{\DCYL}\big(D^{\sqcup{l}},D\big)$ to be the singular simplicial complex of~the space
\begin{gather*}
\textup{Emb}^{\rm rec}\big(\big((0,1)^2\big)^{\sqcup l},(0,1)^2\big)
\end{gather*}
of~embeddings from the disjoint union $(0,1)^2 \times p^{-1}(D)$ to $(0,1)^2$
such that the restriction to~each component
is rectilinear, where the space comes equipped with
the subspace topology of~the mapping space
with compact-open topology.

If~$m\ge 1$,
$\mul_{\CYLM}\big(D^{\sqcup{l}}\sqcup C^{\sqcup m},D\big)$ is the empty
set.

\item We~define $\mul_{\DCYL}\big(D^{\sqcup{l}}\sqcup C^{\sqcup m},C\big)$
to be the singular complex of~the space of~embeddings
\begin{gather*}
\textup{Emb}^{\rm rec}\big((0,1)^2\times p^{-1}(D) \sqcup \big((0,1)\times S^1\big)\times p^{-1}(C),(0,1)\times S^1\big)
\end{gather*}
such that the rescriction to a~component $(0,1)^2$ is rectilinear,
and the restriction to a~component $(0,1)\times S^1$ is rectilinear.
\item The composition law and the unit are defined in an~obvious way.
\end{enumerate}
The colors $D$, $C$ together with simplicial sets of~maps
constitute a~fibrant simplicial colored operad.
\end{Definition}

\begin{Definition}Let $\DCYLM$ be a~simplicial colored operad defined as follows.
\begin{enumerate}\itemsep=0pt
\renewcommand{\labelenumi}{(\roman{enumi})}

\item The set of~colors of~$\DCYLM$ has three elements,
which we denote by~$D$, $C$, and $\cbar$.

\item Let $I=\langle r \rangle^\circ$ be a~finite set and let
$\{D,C,\cbar\}_{I}$ be a~set of~colors indexed by~$I$,
that is, a~map $p\colon I\to \{D,C,\cbar\}$.
By abuse of~notation, we write $D^{\sqcup{l}}\sqcup C^{\sqcup m}\sqcup \cbar^{\sqcup n}$ for
$\{D,C,\cbar\}_{I}$ when $p^{-1}(D)$ (resp.~$p^{-1}(C)$, $p^{-1}(\cbar)$)
has $l$ elements
(resp.~$m$ elements, $n$ elements).
We~set $\mul_{\DCYLM}\big(D^{\sqcup{l}},D\big)=\mul_{\DCYL}\big(D^{\sqcup{l}},D\big)$.
If~$m+n\ge 1$,
$\mul_{\CYLM}\big(D^{\sqcup{l}}\sqcup C^{\sqcup m}\sqcup \cbar^{\sqcup n},D\big)$ is the empty
set.

\item We~set $\mul_{\DCYLM}\big(D^{\sqcup{l}}\sqcup C^{\sqcup m},C\big)=\mul_{\DCYL}\big(D^{\sqcup{l}}\sqcup C^{\sqcup m},C\big)$. If~$n\ge1$,
\begin{gather*}
\mul_{\DCYLM}\big(D^{\sqcup{l}}\sqcup C^{\sqcup m}\sqcup \cbar^{\sqcup n},C\big)
\end{gather*}
is the empty set.

\item For $n\neq 1$,
$\mul_{\DCYLM}\big(D^{\sqcup{l}}\sqcup C^{\sqcup m}\sqcup \cbar^{\sqcup n},\cbar\big)$
is the empty set.
For $n=1$,
\begin{gather*}
\mul_{\DCYLM}\big(D^{\sqcup{l}}\sqcup C^{\sqcup m}\sqcup \cbar,\cbar\big)
\end{gather*}
is the singular simplicial complex of~the space of~embeddings
\begin{gather*}
(0,1)^2\times p^{-1}(D) \sqcup \big((0,1)\times S^1\big)\times p^{-1}(C) \sqcup \big((0,1)\times S^1\big)\times p^{-1}(C_M) \to (0,1)\times S^1
\end{gather*}
such that the restriction to a~component in $(0,1)^2\times p^{-1}(D)$ is rectilinear,
the restriction to~each component in $\big((0,1)\times S^1\big)\times p^{-1}(C)$
is rectilinear, and the restriction to
$\big((0,1)\times S^1\big)\times p^{-1}(C_M) \simeq(0,1)\times S^1$
is shrinking.
By definition, if $l=0$, $\mul_{\DCYLM}(C^{\sqcup m}\sqcup \cbar,\cbar)=\mul_{\CYLM}(C^{\sqcup m}\sqcup \cbar,\cbar)$.

\item The composition law and the unit map are defined
in an~obvious way.
\end{enumerate}
The colors $D$, $C$, $\cbar$ together with simplicial sets of~maps constitute a~fibrant simplicial colored operad.
\end{Definition}

\begin{Remark}\label{coloredinclusion}
There is a~commutative diagram of~inclusions of~simplicial colored operads:
\begin{gather*}
\xymatrix{
\CYL \ar[r] \ar[d] & \CYLM \ar[d] \\
\DCYL \ar[r] & \DCYLM.
}
\end{gather*}
Each inclusion determines a~simplicial colored full suboperad.
\end{Remark}

We~obtain an~$\infty$-operad from a~fibrant simplicial
colored operad.
We~recall the construction from \cite[Notation~2.1.1.22]{HA}.

\begin{Definition}
\label{associatedinfoperad}
Let $P$ be a~simplicial colored operad. Let $P_{\rm col}$ be the set of~colors
of~$P$.
We~let~$P_{\Delta}$ be a~simplicial category defined as follows:
\begin{enumerate}\itemsep=0pt
\renewcommand{\labelenumi}{(\roman{enumi})}

\item The objects of~$P_{\Delta}$ are maps
$a\colon\langle n\rangle^\circ\to P_{\rm col}$, that is,
pairs $(\langle n\rangle,(C_1,\dots,C_n))$,
where \mbox{$\langle n\rangle \in \FIN$} and
$(C_1,\dots,C_n)$
is a~finite sequence $(a(1),\dots,a(n))$ of~colors.

\item Let $C=(\langle n\rangle,(C_1,\dots,C_n))$ and $C'=(\langle m\rangle,(C_1',\dots,C_m'))$ be two objects.
The hom simplicial set $\Map_{P_{\Delta}}(C,C')$
is given by~\begin{gather*}
\coprod_{\alpha\colon\langle n\rangle\to \langle m\rangle}\prod_{j\in \langle m\rangle^{\circ}}\mul_{P}\left(\{C_i\}_{i\in \alpha^{-1}(j)},C'_j\right)\!.
\end{gather*}

\item Composition is determined by~the composition laws on $\FIN$
and on $P$ in an~obvious way.

\end{enumerate}
There is a~canonical simplicial functor $P_{\Delta}\to \FIN$
which sends $(\langle n\rangle,(C_1,\dots,C_n))$
to $\langle n\rangle$.
If~$P$ is fibrant,
the map of~simplicial nerves $\mathbf{P}:=\NNNN(P_{\Delta})\to \NNNN(\FIN)=\Gamma$
constitutes an~$\infty$-operad (cf.~\cite[Proposition~2.1.1.27]{HA}).
We~shall refer to $\NNNN(P_{\Delta})\to \NNNN(\FIN)=\Gamma$
(or $\NNNN(P_{\Delta})$) as the operadic nerve of~$P$.
We~shall denote by~$\mathsf{P}_{\langle n\rangle}$ the fiber
$\mathbf{P}\times_\Gamma\{\langle n \rangle\}$ over $\langle n \rangle$.
We~usually identify colors with objects in $\mathbf{P}_{\langle1\rangle}$.
\end{Definition}

\begin{Definition}
We~apply the construction in Definiton~\ref{associatedinfoperad}
to $\CYL$, $\CYLM$, $\DCYL$, and $\DCYLM$ to~obtain $\infty$-operads.
\begin{itemize}\itemsep=0pt

\item Let $\cyl$ be the operadic nerve of~$\CYL$.

\item Let $\cylm$ be the operadic nerve of~$\CYLM$.

\item Let $\dcyl$ be the operadic nerve of~$\DCYL$.

\item Let $\dcylm$ be the operadic nerve of~$\DCYLM$.

\end{itemize}
\end{Definition}

We~now recall Kontsevich--Soibelman operad \cite{KS}.

\begin{Definition}
Let $\textup{KS}$ be the simplicial colored full suboperad of~$\DCYLM$ which
consists of~colors~$D$, $\cbar$.
We~refer to $\textup{KS}$ as Kontsevich--Soibelman operad.
Let $\KS$ be the operadic nerve of~$\textup{KS}$ (the notation is slightly
diffrent
from Introduction).
We~abuse terminology by~referring to it as Kontsevich--Soibelman operad.
In~a~nutshell, $\KS\subset \dcylm$ is the maximal
simplicial subcomplex
spanned by~vertices correponding to those tuples which do not contain the color~$C$.
It~is not difficult to check that $\textup{KS}$ is equivalent to that of~\cite{H} or \cite[Section~11.2]{KS}.
In~\cite{DTT}, a~version of~$\textup{KS}$ is called the cylinder operad.
\end{Definition}

\begin{Remark}
We~note that $\DCYL$ has a~simplicial full suboperad $\langle D \rangle \subset \DCYL$
which consists of~the single color $D$.
This operad $\langle D \rangle$ is a~version of~the little $2$-cube operad (e.g., \cite[Definition~5.1.0.1]{HA}).
Let $\etwo^\otimes$ be the operadic nerve of~$\langle D \rangle$,
 which we shall refer to as the $\infty$-operad of~little $2$-cubes.
\end{Remark}

\begin{Remark}
We~have the diagram in Remark~\ref{coloredinclusion}
and inlusions $\textup{KS} \subset \DCYLM$, $\langle D \rangle \subset \DCYL$. These inclusions determine
the following diagram of~$\infty$-operads:
\begin{gather*}
\xymatrix{
\etwo^\otimes \ar[dr] & & \cyl \ar[ld] \ar[rd] & \\
 & \dcyl \ar[rd] & & \cylm \ar[ld] \\
 & \KS \ar[r] & \dcylm.&
}
\end{gather*}
\end{Remark}

Let $\textup{E}_1$ be the simplicial operad of~little $1$-cubes.
The definition is similar to the case of~little $2$-cubes (see, e.g.,~\cite[Proposition~5.1.0.3]{HA}).
Namely, $\textup{E}_1$ has a~single color $D_1$, and for a~finite sequence $(D_1,\dots,D_1)$,
the simplicial set $\mul_{\textup{E}_1}\big(D_1^{\sqcup n},D_1\big)$ is defined to be
the singular simplicial complex of~the space
$\textup{Emb}^{\rm rec}\big((0,1)^{\sqcup n},(0,1)\big)$ of~rectilinear embeddings.
The composition law and the unit
are defined in the obvious way.
Let $\eone^\otimes$ denote the $\infty$-operad of~little $1$-cubes,
that is, the operadic nerve of~$\textup{E}_1$.

\begin{Definition}\label{operadeonemod}
Let $\overline{\textup{E}}_1$ be a~simplicial colored
operad defined as follows.
\begin{enumerate}\itemsep=0pt
\renewcommand{\labelenumi}{(\roman{enumi})}

\item The set of~colors of~$\overline{\textup{E}}_1$ has two elements
which we denote by~$D_1$ and $M$.

\item Let $I=\langle r\rangle^\circ$ be a~finite set and let
$\{D_1,M\}_{I}$ be a~set of~colors indexed by~$I$,
which is a~map $p\colon I\to \{D_1,M\}$.
We~write $D_1^{\sqcup{m}}\sqcup M^{\sqcup n}$ for
$\{D_1,M\}_{J}$ when $p^{-1}(D_1)$ (resp.~$p^{-1}(M)$)
has $m$ elements
(resp.~$n$ elements).
Let $\textup{Emb}^{\rm rec}\big((0,1)^{\sqcup m}\sqcup (0,1), (0,1)\big)$
be the topological space of~embeddings $(0,1)^{\sqcup m} \sqcup (0,1) \to (0,1)$
such that the restriction to each component
is rectilinear.
We~define $\mul_{\overline{\textup{E}}_1}\big(D_1^{\sqcup{m}}\sqcup M,M\big)$ to be the singular simplicial complex of~the subspace
\begin{gather*}
\textup{Emb}^{\rm rec}\big((0,1)\times p^{-1}(D_1) \sqcup (0,1)\times p^{-1}(M), (0,1)\big)\subset \textup{Emb}^{\rm rec}\big((0,1)^{\sqcup m}\sqcup (0,1), (0,1)\big).\!
\end{gather*}
The subspace consists of~those rectilinear embeddings
such that the restriction to $(0,1)\times p^{-1}(M) \simeq (0,1)$
is shrinking.
If~$n\neq 1$,
$\mul_{\overline{\textup{E}}_1}\big(D_1^{\sqcup{m}}\sqcup M^{\sqcup n},M\big)$ is the empty
set.

\item We~set $\mul_{\overline{\textup{E}}_1}\big(D_1^{\sqcup{n}},D_1\big)=\mul_{\textup{E}_1}\big(D_1^{\sqcup{n}},D_1\big)$.
If~$m\neq 0$,
$\mul_{\overline{\textup{E}}_1}\big(D_1^{\sqcup{n}}\sqcup M^{\sqcup m},D_1\big)$ is the empty set.

\item The composition law is given by~the composition of~embeddings, and a~unit map is the identity map.

\end{enumerate}
Let $\eonem^\otimes$ be the operadic nerve of~$\overline{\textup{E}}_1$.

\end{Definition}

\begin{Remark}
\label{eoneass}
There is an~equivalence from
the $\infty$-operad $\eone^\otimes$ to the associative
$\infty$-operad $\textup{Assoc}^\otimes$, see \cite[Definition~4.1.1.3]{HA}.
Indeed, if $f\colon(0,1)^{\sqcup n}\to (0,1)$ is a~rectilinear map,
then it determines a~linear ordering on the set of~connected component
$\pi_0\big((0,1)^{\sqcup n}\big)$ such that $I_1>I_2$ for two components $I_1$ and $I_2$
in $(0,1)^{\sqcup n}$ if $a<b$ for any $a\in f(I_1)$ and any $b\in f(I_2)$.
It~gives rise to a~map from $\textup{Emb}^{\rm rec}\big((0,1)^{\sqcup n},(0,1)\big)$
to the set of~linear ordering on $\pi_0\big((0,1)^{\sqcup n}\big)$.
It~is a~homotopy equivalence so that we have an~equivalence
$\eone^\otimes\to \textup{Assoc}^\otimes$
(for details, see \mbox{\cite[Example~5.1.0.7]{HA}}).

This equivalence
$\eone^\otimes\stackrel{\sim}{\to} \textup{Assoc}^\otimes$ is extended to an~equivalence $\eonem^\otimes\stackrel{\sim}{\to} \LM^\otimes$ of~$\infty$-operads,
where $\LM^\otimes$ is the $\infty$-operad (having two colors) which we use
to describe pairs of~associative algebras and left modules \cite[Definition~4.2.1.7]{HA}.
Indeed, as above, any map
$(0,1)^{\sqcup n}\sqcup (0,1)\to (0,1)$
in (ii) Definition~\ref{operadeonemod} determines a~linear ordering
on $\pi_0((0,1)^{\sqcup n}\sqcup (0,1))$ such that the black component
is the maximal element. It~is easy to see that
this ordering induces an~equivalence $\eonem^\otimes\to \LM^\otimes$
which extends $\eone^\otimes\stackrel{\sim}{\to} \textup{Assoc}^\otimes$.

\end{Remark}

{\bf 4.2.}
Following \cite{HA},
we recall the notion of~algebras over an~$\infty$-operad.
Let $\mathcal{O} \to \Gamma$ be an~$\infty$-operads.
Let $\mathcal{M}^\otimes\to \Gamma$ be a~symmetric monoidal $\infty$-category
whose underlying $\infty$-category is $\mathcal{M}=\mathcal{M}_{\langle 1\rangle}$.
An $\mathcal{O}$-algebra in $\mathcal{M}$
is a~map $f\colon\OO\to \mathcal{M}^\otimes$ over $\Gamma$
which preserve inert morphisms, that is, a~map of~$\infty$-operads.
We~define
$\Alg_{\mathcal{O}}\big(\mathcal{M}^\otimes\big)\subset \Fun_{\Gamma}\big(\mathcal{O},\mathcal{M}^\otimes\big)$ to be the full subcategory of~$\Fun_{\Gamma}\big(\mathcal{O},\mathcal{M}^\otimes\big)$ spanned by~$\OO$-algebras.
We~often write $\Alg_{\mathcal{O}}(\mathcal{M})$
for $\Alg_{\mathcal{O}}\big(\mathcal{M}^\otimes\big)$
when the structure on $\mathcal{M}$ is clear.
We~refer to $\Alg_{\mathcal{O}}\big(\mathcal{M}^\otimes\big)$
as the $\infty$-category of~$\OO$-algebra objects in $\mathcal{M}$,
cf.~\cite[Definition~2.1.3.1]{HA}.
When $\mathcal{O}$ is $\Gamma$,
we write $\CAlg\big(\mathcal{M}^\otimes\big)$
for $\Alg_{\Gamma}\big(\mathcal{M}^\otimes\big)$.

Let $\eone^\otimes\to \Gamma$ be the $\infty$-operad of~little $1$-cubes
with the natural projection.
Let $\big(BS^1\big)_{\Delta}$ be the simplicial category
having a~single object $*$ and
Hom simplicial set $\Hom_{(BS^1)_{\Delta}}(*,*)$.
The~simplicial set $\Hom_{(BS^1)_{\Delta}}(*,*)$
is the simplicial complex of~$S^1=\RRR/\ZZ$, and
the composition is induced by~the ordinary
multiplication $S^1\times S^1\to S^1$.
We~denote by~$BS^1$ the simplicial nerve of~$\big(BS^1\big)_{\Delta}$.
It~can also be regarded as the classifying space of~$S^1$ in $\SSS$.
Let $p\colon\eone^\otimes\times BS^1\to \Gamma$
be the composite $\eone^\otimes\times BS^1 \stackrel{\textup{pr}_1}{\to} \eone^\otimes\to \Gamma$. We~note that $\eone^\otimes\times BS^1\to \Gamma$ is not
an $\infty$-operad.
Let $\MM^\otimes\to \Gamma$ be a~symmetric monoidal $\infty$-category.
Though the above definition of~algebra objects is not applicable to
$\eone^\otimes\times BS^1\to \Gamma$,
we define $\Alg_{\eone^\otimes\times BS^1}\big(\MM^\otimes\big)$ as
follows (cf.~\cite[Definition~2.3.3.20]{HA}).
Let $\rho_i\colon\langle n\rangle \to \langle 1\rangle$ be
the unique inert morphism which sends $i\in \langle n\rangle$ to $1\in \langle 1\rangle$.
Then $\Alg_{\eone^\otimes\times BS^1}\big(\MM^\otimes\big)$ is the full subcategory of~$\Fun_{\Gamma}\big(\eone^\otimes\times BS^1,\MM^\otimes\big)$
spanned by~those
maps $F\colon\eone^\otimes\times BS^1\to \MM^\otimes$
satisfying the condition:
If~$C$ is an~object of~$\eone^\otimes\times BS^1$ lying over~$\langle n\rangle$,
and for $1\le i\le n$
$\alpha_i\colon C\to C_i$ is a~locally $p$-coCartesian morphism
covering
$\rho_i\colon\langle n\rangle \to \langle 1\rangle$,
then $F(\alpha_i)$ is an~inert morphism
in $\MM^\otimes$.
Let $\Delta^0\to BS^1$ be the natural functor which induces
$\eone^\otimes \to \eone^\otimes\times BS^1$.
By the construction of~$\eone^\otimes\times BS^1$,
$F$ belongs to $\Alg_{\eone^\otimes\times BS^1}\big(\MM^\otimes\big)$
if and only if the composite
$\eone^\otimes \to \eone^\otimes\times BS^1\stackrel{F}{\to} \MM^\otimes$
carries any inert morphism in $\eone^\otimes$ lying over
$\langle n\rangle \to \langle 1\rangle$ to an~inert morphism
in $\MM^\otimes$.
As observed in~\cite[Remark~2.1.2.9]{HA}, it is equivalent to the
condition that
$\eone^\otimes \to \eone^\otimes\times BS^1\stackrel{F}{\to} \MM^\otimes$
carries any inert morphism in $\eone^\otimes$ to an~inert morphism
in $\MM^\otimes$.

\begin{Lemma}
\label{eonesequiv}
Let $\MM^\otimes$ be a~symmetric monoidal $\infty$-category
whose underlying category we denote by~$\MM$.
Let $\Fun\big(BS^1,\MM\big)$ denote the functor category $($function complex$)$
which is endowed with the pointwise symmetric monoidal structure induced
by~that of~$\MM^\otimes$. Namely,
the symmetric monoidal structure on $\Fun\big(BS^1,\MM\big)$ is given by~the projection
$\Fun\big(BS^1,\MM\big)^\otimes:=\Fun\big(BS^1,\MM^\otimes\big)\times_{\Fun(BS^1,\Gamma)}\Gamma \to \Gamma$.
Then there is a~canonical equivalence of~$\infty$-categories
\begin{gather*}
\Alg_{\eone^\otimes\times BS^1}\big(\MM^\otimes\big)\simeq \Alg_{\eone}\big(\Fun\big(BS^1,\MM\big)^\otimes\big).
\end{gather*}
Similarly, there is a~canonical equivalence
$\Alg_{\eonem\times BS^1}\big(\MM^\otimes\big)\simeq \Alg_{\eonem}\big(\Fun\big(BS^1,\MM\big)^\otimes\big)$.
\end{Lemma}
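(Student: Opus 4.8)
The plan is to deduce the equivalence from the exponential adjunction between $(-)\times BS^1$ and $\Fun(BS^1,-)$ at the level of simplicial sets, together with the fact that (co)Cartesian edges in a functor $\infty$-category are detected objectwise, using crucially that $BS^1$ has a single vertex, so as to reduce to the reformulation of $\Alg_{\eone^\otimes\times BS^1}(\MM^\otimes)$ already recorded just above the statement.

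First I would unwind both sides formally. Write $\pi\colon\MM^\otimes\to\Gamma$ for the structure map and $p\colon\eone^\otimes\times BS^1\to\Gamma$ for the map in the statement. By definition $\Fun(BS^1,\MM)^\otimes=\Fun(BS^1,\MM^\otimes)\times_{\Fun(BS^1,\Gamma)}\Gamma$, the map $\Gamma\to\Fun(BS^1,\Gamma)$ being the constant-diagram functor. Thus a functor $\eone^\otimes\to\Fun(BS^1,\MM)^\otimes$ over $\Gamma$ is the datum of a functor $a\colon\eone^\otimes\to\Fun(BS^1,\MM^\otimes)$ whose composite with $\Fun(BS^1,\pi)$ is the constant diagram at the projection $\eone^\otimes\to\Gamma$. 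Under the exponential isomorphism $\Fun\big(\eone^\otimes,\Fun(BS^1,\MM^\otimes)\big)\cong\Fun\big(\eone^\otimes\times BS^1,\MM^\otimes\big)$ such an $a$ corresponds to a functor $A\colon\eone^\otimes\times BS^1\to\MM^\otimes$, and the constancy condition on $\Fun(BS^1,\pi)\circ a$ becomes exactly $\pi\circ A=p$. This yields a natural isomorphism of simplicial sets
\begin{gather*}
\Fun_\Gamma\big(\eone^\otimes,\Fun(BS^1,\MM)^\otimes\big)\ \cong\ \Fun_\Gamma\big(\eone^\otimes\times BS^1,\MM^\otimes\big),
\end{gather*}
and since $\Alg_{\eone}(\Fun(BS^1,\MM)^\otimes)$ and $\Alg_{\eone^\otimes\times BS^1}(\MM^\otimes)$ are \emph{full} subcategories of the two sides, it then suffices to check that this isomorphism matches their objects.

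Next I would observe that inert morphisms in $\Fun(BS^1,\MM)^\otimes$ are detected by evaluation at the unique vertex of $BS^1$. Since $\MM^\otimes$ is genuinely symmetric monoidal, $\pi$ is a coCartesian fibration, so $\Fun(BS^1,\pi)$ is a coCartesian fibration whose coCartesian edges are the objectwise $\pi$-coCartesian ones (a standard property of functor categories, cf.\ \cite[Section~3.1]{HTT}); pulling back along the constant-diagram functor, $\Fun(BS^1,\MM)^\otimes\to\Gamma$ is again coCartesian, its coCartesian edges being those mapping to objectwise $\pi$-coCartesian edges of $\Fun(BS^1,\MM^\otimes)$. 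As $BS^1$ is the simplicial nerve of a simplicial category with one object, it has a single vertex, so ``objectwise $\pi$-coCartesian'' means precisely that the image under $\operatorname{ev}_\ast\colon\Fun(BS^1,\MM^\otimes)\to\MM^\otimes$ is $\pi$-coCartesian. Hence a morphism of $\Fun(BS^1,\MM)^\otimes$ lying over an inert morphism of $\Gamma$ is inert if and only if its $\operatorname{ev}_\ast$-image is inert in $\MM^\otimes$.

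Finally, let $j\colon\Delta^0\to BS^1$ select the unique vertex, inducing $\mathrm{id}\times j\colon\eone^\otimes\to\eone^\otimes\times BS^1$; by construction $\operatorname{ev}_\ast\circ a=A\circ(\mathrm{id}\times j)$. For an inert morphism $\gamma$ of $\eone^\otimes$, its image in $\Gamma$ is inert and, as $a$ lies over $\Gamma$, so does $a(\gamma)$; by the previous paragraph $a(\gamma)$ is inert exactly when $\operatorname{ev}_\ast(a(\gamma))=A\big((\mathrm{id}\times j)(\gamma)\big)$ is inert in $\MM^\otimes$. Therefore $a$ preserves inert morphisms, i.e.\ lies in $\Alg_{\eone}(\Fun(BS^1,\MM)^\otimes)$, if and only if the composite $\eone^\otimes\xrightarrow{\mathrm{id}\times j}\eone^\otimes\times BS^1\xrightarrow{A}\MM^\otimes$ carries inert morphisms of $\eone^\otimes$ to inert morphisms of $\MM^\otimes$; and by the reformulation of membership in $\Alg_{\eone^\otimes\times BS^1}(\MM^\otimes)$ recorded just above the statement (which rests on \cite[Remark~2.1.2.9]{HA}), this is precisely the condition $A\in\Alg_{\eone^\otimes\times BS^1}(\MM^\otimes)$. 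Combined with the isomorphism of the second paragraph this gives the first equivalence; the second equivalence follows verbatim with $\eone^\otimes$ replaced by $\eonem^\otimes$ and $\Alg_{\eonem\times BS^1}(\MM^\otimes)$ defined by the evident analogue. The only point that is not pure bookkeeping is the objectwise detection of inert morphisms for the pointwise monoidal structure; I expect no genuine obstacle beyond keeping straight which structure map to $\Gamma$ is being used on each side.
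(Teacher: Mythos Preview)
Your proposal is correct and follows essentially the same approach as the paper. The paper phrases the argument by comparing $L$-points of both sides for an arbitrary simplicial set $L$ (thereby obtaining an isomorphism of simplicial sets directly), while you first establish the isomorphism $\Fun_\Gamma\big(\eone^\otimes,\Fun(BS^1,\MM)^\otimes\big)\cong\Fun_\Gamma\big(\eone^\otimes\times BS^1,\MM^\otimes\big)$ via the exponential law and then match the full subcategories on vertices; both routes hinge on the same observation that $BS^1$ has a single vertex, so the inertness condition reduces to the reformulation recorded just before the lemma.
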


\begin{proof}
We~prove
that there is an~isomorphism of~simplicial sets
\begin{gather*}
\Alg_{\eone^\otimes\times BS^1}\big(\MM^\otimes\big)\simeq \Alg_{\eone}\big(\Fun\big(BS^1,\MM\big)\big).
\end{gather*}
Observe that the symmetric monoidal $\infty$-category
$\Fun\big(BS^1,\MM\big)^\otimes$ is defined by~the following
universal property: for a~simplicial set $K$, there is a~natural
bijection of
\[ \Hom_{\sSet}\big(K,\Fun\big(BS^1,\MM\big)^\otimes\big)\] with the set of~pairs $(\alpha,\beta)$ which makes the diagram commute
\begin{gather*}
\xymatrix{
 & & \MM^\otimes \ar[d] \\
 BS^1\times K \ar[r]_{\textup{pr}_2} \ar[rru]^{\beta} & K \ar[r]^{\alpha} & \Gamma.
}
\end{gather*}
The assignment $(\alpha,\beta)\mapsto \alpha$ induces
$\Fun\big(BS^1,\MM\big)^\otimes\to \Gamma$.
Therefore, for a~simplicial set $L$, a~map $L\to \Alg_{\eone}\big(\Fun\big(BS^1,\MM\big)^\otimes\big)$ amounts to
a map $f\colon BS^1\times L\times \eone^\otimes\to \MM^\otimes$ over $\Gamma$
such that for any vertex $(a,l)$ in $BS^1\times L$ and for any inert
morphism $i$ in $\eone^\otimes$, the image $f((a,l,i))$ is an~inert morphism in
$\MM^\otimes$ (note also that
by~construction $BS^1$ has a~single vertex).
Next, we~consider the universal property of~$\Alg_{\eone^\otimes\times BS^1}\big(\MM^\otimes\big)$.
By the observation before this lemma, for a~simplicial set $L$,
a map $L\to \Alg_{\eone^\otimes\times BS^1}\big(\MM^\otimes\big)$
amounts to a~map $g$ such that the diagram
\begin{gather*}
\xymatrix{
 & & \MM^\otimes \ar[d] \\
L\times \eone^\otimes \times BS^1 \ar[r] \ar[rru]^{g} & \eone^\otimes \times BS^1 \ar[r] & \Gamma
}
\end{gather*}
commutes and
for any vertex $(l,a)$ in $L\times BS^1$ and for any inert
morphism $i$ in $\eone^\otimes$, the image $g((l,i,a))$ is an~inert morphism in
$\MM^\otimes$. Comparing universal properties of~$\Alg_{\eone}\!\big(\Fun\big(BS^1\!,\MM\big)\big)$ and $\Alg_{\eone^\otimes\times BS^1}\big(\MM^\otimes\big)$,
we have a~canonical isomorphism of~simplicial sets
$\Alg_{\eone}\!\big(\Fun\big(BS^1\!,\MM\big)\big)$ $\simeq \Alg_{\eone^\otimes\times BS^1}\big(\MM^\otimes\big)$.
The final assertion also follows from an~argument similar to this proof.
\end{proof}

\begin{Construction}
We~will define a~functor $\eone^\otimes \times BS^1\to \cyl$ over $\Gamma$.
To this end,
we consider the following simplicial categories $\big(\textup{E}_1\times BS^1\big)_{\Delta}$
and $\CYL_{\Delta}$.
Let $\big(\textup{E}_1\times BS^1\big)_{\Delta}$
be a~simplicial category defined as follows.
\begin{itemize}\itemsep=0pt

\item The objects of~$\big(\textup{E}_1\times BS^1\big)_{\Delta}$ are
objects $\langle n\rangle$ in $\FIN$ (which we regard as formal symbols).

\item For $\langle n\rangle, \langle m\rangle \in \big(\textup{E}_1\times BS^1\big)_{\Delta}$,
the Hom simplicial set $\Hom_{(\textup{E}_1\times BS^1)_{\Delta}}(\langle n\rangle, \langle m\rangle)$ is the singular complex of~the space
\begin{gather*}
\coprod_{\alpha\colon\langle n\rangle\to \langle m\rangle}
\bigg(
\prod_{1\le j\le m}\textup{Emb}^{\rm rec}\big((0,1)\times \alpha^{-1}(j),(0,1)\big)
\bigg)\times S^1.
\end{gather*}

\item The composition is determined by~the composition of~embeddings
and the multiplication $S^1\times S^1\to S^1$.

\end{itemize}
There is a~canoncial projection $\big(\textup{E}_1\times BS^1\big)_{\Delta}\to \FIN$.
The simplicial nerve $\NNNN\big(\big(\textup{E}_1\times BS^1\big)_{\Delta}\big)\to \NNNN(\FIN)=\Gamma$ is
$p\colon\eone^\otimes \times BS^1\to \Gamma$.

\begin{itemize}\itemsep=0pt

\item The objects of~$\CYL_{\Delta}$ are
objects $\langle n\rangle$ in $\FIN$ (which we regard as formal symbols).

\item For $\langle n\rangle, \langle m\rangle \in \CYL_{\Delta}$,
the Hom simplicial set $\Hom_{\CYL_{\Delta}}(\langle n\rangle, \langle m\rangle)$ is the singular simplicial complex of~the space
\begin{gather*}
\coprod_{\alpha\colon\langle n\rangle\to \langle m\rangle} \prod_{1\le j\le m}\textup{Emb}^{\rm rec}\big((0,1)\times S^1\times \alpha^{-1}(j),(0,1)\times S^1\big).
\end{gather*}
The composition is determined by~the composition of~embeddings.

\end{itemize}
Passing to simplicial nerves, we get $\NNNN(\CYL_{\Delta})\simeq \cyl\to \Gamma$.
Note that there is a~canonical
homeomorphism of~spaces
\begin{gather*}
\textup{Emb}^{\rm rec}\big((0,1)\times S^1\times \alpha^{-1}(j),(0,1)\times S^1\big)\simeq \textup{Emb}^{\rm rec}\big((0,1)\times \alpha^{-1}(j),(0,1)\big)\times \big(S^1\big)^{\times \sharp \alpha^{-1}(j)}
\end{gather*}
(each factor $a\in (S^1)^{\times \sharp \alpha^{-1}(j)}$
determines the rotation of~the restriction $(0,1)\times S^1 \to (0,1)\times S^1$). The diagonal map $S^1\to \big(S^1\big)^{\times \sharp \alpha^{-1}(j)}$
induces a~map
\begin{gather*}
\Hom_{(\textup{E}_1\times BS^1)_{\Delta}}(\langle n\rangle, \langle m\rangle)\to \Hom_{\CYL_{\Delta}}(\langle n\rangle, \langle m\rangle)
\end{gather*}
in the natural way. It~gives rise to a~functor
$Z\colon\big(\textup{E}_1\times BS^1\big)_{\Delta}\to \CYL_{\Delta}$ of~simplicial categories.
Then it induces a~map
\begin{gather*}
z\colon\ \eone^\otimes\times BS^1\to \cyl
\end{gather*}
over $\Gamma$.
Let $\MM^\otimes$ be a~symmetric monoidal $\infty$-category
whose underlying category is $\MM$.
The restriction along $\eone^\otimes \times BS^1\to \cyl$ induces
a functor $z^*\colon\Alg_{\cyl}\big(\MM^\otimes\big)\to \Alg_{\eone^\otimes \times BS^1}\big(\MM^\otimes\big)$.
\end{Construction}

\begin{Proposition}\label{cyleones}The functor
\begin{gather*}
z^*\colon\ \Alg_{\cyl}\big(\MM^\otimes\big)\to \Alg_{\eone^\otimes \times BS^1}\big(\MM^\otimes\big)
\end{gather*}
is an~equivalence of~$\infty$-categories.
\end{Proposition}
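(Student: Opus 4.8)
The plan is to deduce the equivalence from the combinatorial structure of $\cyl$ together with Lemma~\ref{eonesequiv}. The key observation is that the map $z\colon \eone^\otimes \times BS^1 \to \cyl$ is, on each mapping simplicial set, obtained by applying the singular complex functor to a map of topological spaces that is a homotopy equivalence. Concretely, for fixed $\alpha\colon \langle n\rangle \to \langle m\rangle$ the induced map on $\alpha$-components is
\[
\Big(\prod_{1\le j\le m}\textup{Emb}^{\rm rec}\big((0,1)\times \alpha^{-1}(j),(0,1)\big)\Big)\times S^1 \longrightarrow \prod_{1\le j\le m}\textup{Emb}^{\rm rec}\big((0,1)\times S^1\times \alpha^{-1}(j),(0,1)\times S^1\big),
\]
which, under the homeomorphism already recorded in the Construction, is the product over $j$ of the maps $E_j \times S^1 \to E_j \times (S^1)^{\times \sharp\alpha^{-1}(j)}$ induced by $\mathrm{id}_{E_j}$ times the diagonal $S^1 \to (S^1)^{\times \sharp\alpha^{-1}(j)}$ (here $E_j := \textup{Emb}^{\rm rec}((0,1)\times\alpha^{-1}(j),(0,1))$). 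Since each $E_j$ is contractible (it is homotopy equivalent to the discrete set of linear orderings on $\alpha^{-1}(j)$, but more to the point each nonempty such space deformation retracts to a point up to the reordering), the diagonal $S^1 \to (S^1)^{\times k}$ is the inclusion of a direct summand up to homotopy only when $k\le 1$; for $k\ge 2$ it is not an equivalence. So $z$ itself is not an equivalence of $\infty$-operads on the nose, and this is exactly where the main subtlety lies.

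The way around this is to not ask $z$ to be an equivalence of operads but only to induce an equivalence on algebra categories, and for that I would use Lemma~\ref{eonesequiv} to rewrite the target: $\Alg_{\eone^\otimes\times BS^1}(\MM^\otimes)\simeq \Alg_{\eone}(\Fun(BS^1,\MM)^\otimes)\simeq \Alg_{\assoc}(\Fun(BS^1,\MM)^\otimes)$ via Remark~\ref{eoneass}. On the other side I would identify $\Alg_{\cyl}(\MM^\otimes)$ with associative algebra objects in $\Fun(BS^1,\MM)^\otimes$ directly from the definition of $\CYL$: an algebra over $\cyl$ is a single object $A\in\MM$ together with multiplication operations parametrized by $\textup{Emb}^{\rm rec}(((0,1)\times S^1)^{\sqcup r},(0,1)\times S^1)$. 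Using the same homeomorphism as above with $\alpha$ the active map $\langle r\rangle\to\langle 1\rangle$, this space is $\textup{Emb}^{\rm rec}((0,1)^{\sqcup r},(0,1))\times (S^1)^{\times r}$, i.e.\ the $r$-ary operations of $\eone$ tensored with $r$ copies of the circle; an algebra over this structure is precisely an $\eone$-algebra (equivalently an associative algebra) in objects of $\MM$ equipped with an $S^1$-action, compatibly — which is the statement that $\cyl$-algebras are associative algebras in $\Fun(BS^1,\MM)$. Concretely, $\cyl \simeq \eone^\otimes \otimes BS^1$ in the appropriate sense (a "$BS^1$-worth of $\eone$-operad"), and the circle enters each arity $r$ as the full torus $(S^1)^{\times r}$ rather than through the diagonal.

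Thus the proof would run: (1) produce a direct equivalence $\Alg_{\cyl}(\MM^\otimes)\simeq \Alg_{\eone}(\Fun(BS^1,\MM)^\otimes)$ by comparing the defining universal properties, exactly as in the proof of Lemma~\ref{eonesequiv} but now with the torus appearing in each mapping space of $\CYL_\Delta$; (2) invoke Lemma~\ref{eonesequiv} for the identification $\Alg_{\eone^\otimes\times BS^1}(\MM^\otimes)\simeq \Alg_{\eone}(\Fun(BS^1,\MM)^\otimes)$; (3) check that the composite equivalence agrees with the functor $z^*$, which amounts to observing that the diagonal $S^1\to(S^1)^{\times r}$ used to define $z$ is precisely the map that, after passing to algebras, sends an $\eone$-algebra-with-$S^1$-action to its underlying $\cyl$-algebra structure — i.e.\ the nonequivalence of $z$ at the operad level is "corrected" because an algebra only uses the active/multiplication operations, on which the diagonal recovers exactly the needed coherences. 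The main obstacle is step (3): one must be careful that restricting a $\cyl$-algebra along $z$ genuinely reproduces the $\eone^\otimes\times BS^1$-algebra whose associated associative-algebra-in-$\Fun(BS^1,\MM)$ is the original one, which requires tracking how the $S^1$-action threads through composition of operations in both operads and verifying the two resulting coherence data coincide. Once that compatibility square is established, $z^*$ is a composite of equivalences, hence an equivalence.
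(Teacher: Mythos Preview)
Your plan has a genuine gap at step (1), which is in fact where the entire content of the proposition lives. The proof of Lemma~\ref{eonesequiv} is a purely formal simplicial-set manipulation that works because $\eone^\otimes\times BS^1$ is literally a product: a map $L\to \Alg_{\eone^\otimes\times BS^1}(\MM^\otimes)$ unpacks to a map $L\times \eone^\otimes\times BS^1\to \MM^\otimes$, and one reassociates the three factors. There is no analogous decomposition of $\cyl$. As you yourself note, the $\alpha$-component of the hom-space in $\CYL_\Delta$ is $\prod_j E_j\times (S^1)^{\sharp\alpha^{-1}(j)}$, and these tori are woven into the composition law (precomposing rotations on inputs, postcomposing on the output) in a way that does not factor as ``$\eone$ times a single $BS^1$'' at the level of simplicial categories. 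The heuristic that the extra $(S^1)^{r-1}$ factors are forced by the unary $S^1$-action together with composition is exactly right, but making it precise $\infty$-categorically---i.e.\ showing that an algebra over the larger operation spaces is determined by its restriction along the diagonals---is the proposition itself. Your proposed method does not supply this.

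The paper handles this with the weak approximation machinery of \cite[Definition~2.3.3.6, Theorem~2.3.3.23]{HA}, which is tailor-made for maps of generalized $\infty$-operads that are not equivalences but still induce equivalences on algebras. One checks two things: that $z$ is an equivalence on fibers over $\langle 1\rangle$ (both are $BS^1$, easy), and that $z$ is a weak approximation. For the latter, the criterion \cite[Proposition~2.3.3.14]{HA} reduces the question to showing that certain active-overcategory comparison maps are weak equivalences; a direct computation shows the mapping spaces on both sides are contractible (on the $\eone^\otimes\times BS^1$ side one gets $(S^1\times\Sigma_m)\times_{S^1\times\Sigma_m}\{f\}$, on the $\cyl$ side $((S^1)^m\times\Sigma_m)\times_{(S^1)^m\times\Sigma_m}\{z(f)\}$). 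This is the rigorous incarnation of your ``the extra circle factors are absorbed by composition'' intuition, and it simultaneously handles your step (3), since the equivalence is produced directly as $z^*$.
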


Observe that
$\mul_{\CYLM}\big(C^{\sqcup{n}}\sqcup \cbar,\cbar\big)$ is the singular complex of~the space which is homeomorphic to $\mul_{\overline{E}_1}\big(D_1^{\sqcup{n}}\sqcup M,M\big)\times \big(S^{1}\big)^{\times n+1}$.
As in the case of~$\eone^\otimes \times BS^1\to \cyl$, using $\mul_{\overline{E}_1}\big(D_1^{\sqcup{n}}\sqcup M,M\big)\times S^{1}\to \mul_{\overline{E}_1}\big(D_1^{\sqcup{n}}\sqcup M,M\big)\times \big(S^{1}\big)^{\times n+1}$
induced by~the diagonal map $S^1\to \big(S^{1}\big)^{\times n+1}$
we obtain a~morphism $\bar{z}\colon\eonem^\otimes \times BS^1\to \cylm$ over $\Gamma$. The restriction along $\bar{z}$ gives rise to
$\bar{z}^*\colon\Alg_{\cylm}\big(\MM^\otimes\big)\to \Alg_{\eonem^\otimes \times BS^1}\big(\MM^\otimes\big)$.

\begin{Proposition}\label{cylmeonems}The functor
\begin{gather*}
\bar{z}^*\colon\ \Alg_{\cylm}\big(\MM^\otimes\big)\to \Alg_{\eonem^\otimes \times BS^1}\big(\MM^\otimes\big)
\end{gather*}
is an~equivalence of~$\infty$-categories.
\end{Proposition}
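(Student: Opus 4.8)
The plan is to run the argument of Proposition~\ref{cyleones} with the module colour carried along, bootstrapping off Proposition~\ref{cyleones} itself for the underlying algebra. First I would assemble the comparison square. As in the Construction preceding Proposition~\ref{cyleones}, the morphism $\bar{z}\colon\eonem^\otimes\times BS^1\to\cylm$ is induced, at the level of the underlying simplicial categories, by the diagonal maps $S^1\to\big(S^1\big)^{\times k}$: on the algebra colour $C$ (resp.\ $D_1$) via the homeomorphisms used there for $\cyl$, and on the module colour $\cbar$ (resp.\ $M$) via the homeomorphism recalled just before this proposition, which expresses $\mul_{\CYLM}\big(C^{\sqcup n}\sqcup\cbar,\cbar\big)$ in terms of $\mul_{\overline{E}_1}\big(D_1^{\sqcup n}\sqcup M,M\big)$ and $\big(S^1\big)^{\times n+1}$, the shrinking condition on the $\cbar$-strand matching that on the $M$-strand. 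By construction $\bar{z}$ is compatible over $\Gamma$ with the full-suboperad inclusions $\eone^\otimes\times BS^1\hookrightarrow\eonem^\otimes\times BS^1$ and $\cyl\hookrightarrow\cylm$, and restricts on the algebra colours to the map $z$ of that Construction. Restricting algebras along these inclusions therefore produces a commutative square
\begin{gather*}
\xymatrix{
\Alg_{\cylm}\big(\MM^\otimes\big) \ar[r]^{\bar{z}^*} \ar[d] & \Alg_{\eonem^\otimes\times BS^1}\big(\MM^\otimes\big) \ar[d] \\
\Alg_{\cyl}\big(\MM^\otimes\big) \ar[r]^{z^*} & \Alg_{\eone^\otimes\times BS^1}\big(\MM^\otimes\big)
}
\end{gather*}
whose bottom row is an equivalence by Proposition~\ref{cyleones}.

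Next I would reduce to the fibres. In both pairs $\cyl\subset\cylm$ and $\eone^\otimes\times BS^1\subset\eonem^\otimes\times BS^1$ the module colour occurs only linearly, so the two vertical functors are the ``forget the module object'' functors, and each is a cartesian fibration whose cartesian pushforwards are given by restricting a module structure along a map of algebras --- for the right-hand functor this is the standard fact that $\LMod\big(\Fun\big(BS^1,\MM\big)\big)\to\Alg_{\assoc}\big(\Fun\big(BS^1,\MM\big)\big)$ is a cartesian fibration, transported through Lemma~\ref{eonesequiv} and Remark~\ref{eoneass}; for the left-hand one it is read off from the explicit simplicial description of the $\cbar$-operations. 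Since $\bar{z}$ respects the algebra/module distinction, $\bar{z}^*$ carries cartesian morphisms to cartesian morphisms. Hence, identifying the two base $\infty$-categories via $z^*$, it suffices to show that $\bar{z}^*$ induces an equivalence on each fibre.

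Finally I would identify the fibres. For a fixed $A\in\Alg_{\cyl}\big(\MM^\otimes\big)$, the fibre on the left over $A$ has objects $N\in\MM$ together with coherently compatible operations $\mul_{\CYLM}\big(C^{\sqcup n}\sqcup\cbar,\cbar\big)\to\Map_\MM\big(A^{\otimes n}\otimes N,N\big)$, the $C$-inputs fed by $A$; the fibre on the right over $z^*A$ is, again via Lemma~\ref{eonesequiv} and Remark~\ref{eoneass}, the $\infty$-category of left $z^*A$-modules in $\Fun\big(BS^1,\MM\big)$. Under the homeomorphisms above these two descriptions differ only in that the $\big(S^1\big)^{\times n+1}$ labelling the $n$-ary module operations is replaced by a single diagonal $S^1$, and $\bar{z}^*$ is precisely this collapse. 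That the collapse to the diagonal is an equivalence is the module analogue --- carried out in the presence of an extra shrunk $\cbar$-strand and with $A$ frozen in the algebra slots --- of the assertion proved for the algebra operations in Proposition~\ref{cyleones}.

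The main obstacle is this last point: one cannot invoke Proposition~\ref{cyleones} as a black box, but must re-use the mechanism behind it, namely that the per-strand circle rotations of a $\cyl$-algebra (here of a $\cylm$-algebra) are freely and coherently reconstructed from the underlying $\eone$-structure over $BS^1$ (here $\eonem$-structure) together with the single diagonal operation. Because that reconstruction proceeds operation by operation, it survives unchanged when a linear module strand is adjoined and the algebra inputs are frozen; the remaining ingredients --- matching colours and shrinking conditions, and the cartesian-fibration formalism --- are routine.
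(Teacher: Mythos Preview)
The paper's proof is the one-line remark that the argument is ``similar to Proposition~\ref{cyleones}'': one checks directly that $\bar{z}\colon\eonem^\otimes\times BS^1\to\cylm$ satisfies the hypotheses of \cite[Theorem~2.3.3.23]{HA}. Condition~(ii), equivalence on fibres over $\langle 1\rangle$, is immediate since on each side the fibre consists of two objects (the algebra and module colours) with no maps between them and with automorphism space $S^1$. Condition~(i), weak approximation, is verified via \cite[Proposition~2.3.3.14]{HA} by the same contractibility computation of mapping spaces in the overcategories $A(X)$ and $B(\bar{z}(X))$ as in Proposition~\ref{cyleones}, with one additional case where the target colour is $\cbar$ rather than $C$.

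Your fibration bootstrap is a genuinely different organisation, but it does not shorten the argument. The cartesian-fibration claim for $\Alg_{\cylm}(\MM)\to\Alg_{\cyl}(\MM)$ is plausible but is not supplied by the paper and needs its own justification; in particular you cannot invoke Corollary~\ref{equivariant}, which depends on the present proposition. More importantly, your final fibrewise step --- that the diagonal collapse $\big(S^1\big)^{\times n+1}\to S^1$ induces an equivalence on module operations over a fixed algebra --- still requires a mapping-space comparison of exactly the kind carried out in the proof of Proposition~\ref{cyleones}, as you yourself concede in the last paragraph. So after adding a layer of fibration machinery you are back to redoing that computation at the module colour anyway; the paper's direct route, applying the weak-approximation criterion to $\bar{z}$ itself, is more economical and is what ``similar to Proposition~\ref{cyleones}'' means.
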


\begin{Corollary}\label{equivariant}There are canonical equivalences of~$\infty$-categories
\begin{gather*}
\Alg_{\cyl}\big(\MM^\otimes\big)\simeq \Alg_{\eone^\otimes \times BS^1}\big(\MM^\otimes\big)\simeq \Alg_{\eone}\big(\Fun\big(BS^1,\MM\big)^\otimes\big)\\
\hphantom{\Alg_{\cyl}\big(\MM^\otimes\big)}{} \simeq \Alg_{\textup{Assoc}}\big(\Fun\big (BS^1,\MM\big)^\otimes\big)
\end{gather*}
and
\begin{gather*}
\Alg_{\cylm}\big(\MM^\otimes\big)\simeq \Alg_{\eonem^\otimes \times BS^1}\big(\MM^\otimes\big)\simeq \Alg_{\eonem}\big(\Fun\big(BS^1,\MM\big)^\otimes\big)\\
\hphantom{\Alg_{\cylm}\big(\MM^\otimes\big)}{}\simeq \LMod\big(\Fun\big(BS^1,\MM\big)^\otimes\big).
\end{gather*}
Here $\LMod\big(\Fun\big(BS^1,\MM\big)^\otimes\big)=\Alg_{\LM^\otimes}\big(\Fun\big(BS^1,\MM\big)^\otimes\big)$.
\end{Corollary}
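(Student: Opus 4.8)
The plan is to obtain Corollary~\ref{equivariant} purely by concatenating the equivalences already established, together with the operad-level identifications recalled earlier, so the proof is essentially bookkeeping. First I would record the general principle to be used repeatedly: for an equivalence of $\infty$-operads $\varphi$, restriction along $\varphi$ induces an equivalence on the $\infty$-categories of algebras valued in any symmetric monoidal $\infty$-category. This is standard and follows from the fact that composition with an equivalence of $\infty$-operads preserves and reflects maps of $\infty$-operads, cf.~\cite{HA}.

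For the first chain, the equivalence $\Alg_{\cyl}(\MM^\otimes)\simeq \Alg_{\eone^\otimes\times BS^1}(\MM^\otimes)$ is exactly Proposition~\ref{cyleones}, realized by the restriction functor $z^*$ along the map $z\colon\eone^\otimes\times BS^1\to \cyl$ constructed above. The equivalence $\Alg_{\eone^\otimes\times BS^1}(\MM^\otimes)\simeq \Alg_{\eone}(\Fun(BS^1,\MM)^\otimes)$ is Lemma~\ref{eonesequiv}. Finally, applying the general principle to the equivalence of $\infty$-operads $\eone^\otimes\xrightarrow{\sim}\textup{Assoc}^\otimes$ recalled in Remark~\ref{eoneass} yields $\Alg_{\eone}(\Fun(BS^1,\MM)^\otimes)\simeq \Alg_{\textup{Assoc}}(\Fun(BS^1,\MM)^\otimes)$. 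Composing the three gives the first displayed chain.

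For the second chain I would proceed identically with the pointed/module variants. Proposition~\ref{cylmeonems} gives $\Alg_{\cylm}(\MM^\otimes)\simeq \Alg_{\eonem^\otimes\times BS^1}(\MM^\otimes)$ via $\bar{z}^*$; the second assertion of Lemma~\ref{eonesequiv} gives $\Alg_{\eonem^\otimes\times BS^1}(\MM^\otimes)\simeq \Alg_{\eonem}(\Fun(BS^1,\MM)^\otimes)$; and the general principle applied to the equivalence $\eonem^\otimes\xrightarrow{\sim}\LM^\otimes$ of Remark~\ref{eoneass} (which extends $\eone^\otimes\xrightarrow{\sim}\textup{Assoc}^\otimes$) gives $\Alg_{\eonem}(\Fun(BS^1,\MM)^\otimes)\simeq \Alg_{\LM^\otimes}(\Fun(BS^1,\MM)^\otimes)=\LMod(\Fun(BS^1,\MM)^\otimes)$.

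Since every constituent equivalence has already been proved or recalled in the preceding material, there is no substantive obstacle; the main point meriting a sentence of care is the passage from an equivalence of $\infty$-operads to an equivalence of algebra $\infty$-categories, and — if one wants the two rows to be compatible via the forgetful functors induced by $\cyl\subset\cylm$ and $\textup{Assoc}^\otimes\subset\LM^\otimes$ — checking that all four terms in each row carry the pointwise symmetric monoidal structure on $\Fun(BS^1,\MM)$ coherently, which is immediate from the definitions underlying Lemma~\ref{eonesequiv} and the Constructions preceding Propositions~\ref{cyleones} and~\ref{cylmeonems}.
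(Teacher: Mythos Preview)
Your proposal is correct and follows exactly the same approach as the paper: the paper's proof is a one-line citation of Lemma~\ref{eonesequiv}, Propositions~\ref{cyleones} and~\ref{cylmeonems}, and Remark~\ref{eoneass}, which is precisely the concatenation you spell out. Your write-up simply makes the bookkeeping explicit.
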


\begin{proof}It~follows from Lemma~\ref{eonesequiv}, Propositions~\ref{cyleones} and~\ref{cylmeonems}, and Remark~\ref{eoneass}.
\end{proof}

The proof~of~Proposition~\ref{cylmeonems} is similar to
Proposition~\ref{cyleones}.
We~prove Proposition~\ref{cyleones}.

\begin{proof}[Proof~of~Proposition~\ref{cyleones}]
We~use the notion of~a~weak approximation
in the sense of~\cite[Definition~2.3.3.6]{HA}
(since $z\colon\eone^\otimes\times BS^1 \to \cyl$ is not an~equivalence of~$\infty$-operads, it is necessary to use more delicate notion).
According to~\cite[Theorem~2.3.3.23]{HA}, if two conditions
\begin{enumerate}\itemsep=0pt
\renewcommand{\labelenumi}{(\roman{enumi})}

\item $\eone^\otimes\times BS^1 \to \cyl$ is a~weak approximation,

\item $z\colon\eone^\otimes\times BS^1 \to \cyl$
induces an~equivalence between the fiber over $\langle 1\rangle$
\end{enumerate}
hold,
then $z^*\colon\Alg_{\cyl}\big(\MM^\otimes\big)\to \Alg_{\eone^\otimes \times BS^1}\big(\MM^\otimes\big)$ is an~equivalence.
We~first prove (ii).
Let
$z_{\langle1\rangle}\colon\big(\eone^\otimes\times BS^1\big)_{\langle 1\rangle} \to \cyl_{\langle 1\rangle}$
be the map of~fibers over $\langle 1\rangle$.
Both fibers consist of~a~unique object (here we denote it by~$*$
whose mapping space $\Map(*,*)$ is (homotopy) equivalent to $S^1$).
Taking into account our construction of~$Z\colon\big(\textup{E}_1\times BS^1\big)_{\Delta}\to \CYL_{\Delta}$, we see that $z_{\langle1\rangle}$ is a~homotopy equivalence $BS^1\to BS^1$.
This proves~(ii).
Next we will prove~(i).
Let $p\colon\eone^\otimes\times BS^1\to \Gamma$ be the projection.
Let $\textup{Tup}_n$ be the subcategory of~$\Gamma_{/\langle n\rangle}$
whose objects are active morphisms $\langle m\rangle \to \langle n\rangle$
and whose morphisms are equivalences.
According to a~criterion \cite[Proposition~2.3.3.14]{HA},
to~prove (i), it is enough to prove that
for any $X\in \eone^\otimes\times BS^1$ with $\langle n\rangle=p(X)$,
$z$ induces a~weak homotopy equivalence
\begin{gather*}
u\colon\ A(X):=\big(\eone^\otimes\times BS^1\big)_{/X}\times_{\Gamma_{/\langle n\rangle}}\textup{Tup}_n\to
\cyl_{/z(X)}\times_{\Gamma_{/\langle n\rangle}}\textup{Tup}_n=:B(z(X)).
\end{gather*}
Note that both domain and target are $\infty$-categories.
Consequently, it will suffice to show that
$u$ is a~categorical equivalence.
Clearly, $u$ is essentially surjective.
We~prove that $u$ is fully faithful.
The general case is essentially the same as the case $n=1$
except for a~more complicated notation, so that
we treat the case of~$n=1$.
We~think of~$D_1$ as the unique object of~$\big(\eone^\otimes\times BS^1\big)_{\langle 1\rangle}$.
Also, we write $D_1^m$ for the unique object of~the fiber $\big(\eone^\otimes\times BS^1\big)_{\langle m\rangle}$ over $\langle m\rangle$ (namely, $D_1=D_1^1$).
Let $f\colon D_1^m \to D_1$ be a~map
in $\eone^\otimes\times BS^1$ lying over an~active morphism
$\alpha\colon\langle m\rangle \to \langle 1\rangle$ of~$\Gamma$.
We~regard~$f$ as the product
$g\times h\colon(0,1)^{\sqcup m}\times S^1\to (0,1)\times S^1$
of~a~rectilinear map $\phi\colon(0,1)^{\sqcup m}\to (0,1)$
and a~rectilinear map $h\colon S^1\to S^1$.
Let $f'\colon D_1^m \to D_1$ be another map
in $\eone^\otimes\times BS^1$ lying over~$\alpha$.
We~have
\begin{gather*}\begin{split}&
\Map_{A(D_1)}(f,f')\simeq \Map^{\rm eq}_{\eone^\otimes\times BS^1}\big(D_1^m,D_1^m\big)\times_{\Map_{\eone^\otimes\times BS^1} (D_1^m,D_1)}\{f\}
\\ & \hphantom{\Map_{A(D_1)}(f,f')}
{}\simeq \big(S^1\times\Sigma_m\big)\times_{(S^1\times \Sigma_m)}\{f\},\end{split}
\end{gather*}
where $\Map^{\rm eq}_{\eone^\otimes\times BS^1}\big(D_1^m,D_1^m\big)$
is the full subcategory of~$\Map_{\eone^\otimes\times BS^1}\big(D_1^m,D_1^m\big)$
spanned by~equi\-va\-len\-ces, and
$\Map^{\rm eq}_{\eone^\otimes\times BS^1}\big(D_1^m,D_1^m\big) \to \Map_{\eone^\otimes\times BS^1}\big(D_1^m,D_1\big)$ is induced by~the composition with~$f'$.
Here $\Sigma_m$ denotes the symmetric group (which comes from permutations
of~components).
Thus, the mapping space is contractible because $(S^1\times \Sigma_m)\to \big(S^1\times \Sigma_m\big)$ is an~equivalence.
Next, we regard the color $C$ in $\CYL$ as an~object in $\cyl$ that lies
over $\langle 1 \rangle$. We~denote by~$C^m$ the unique object of~$\cyl$
that lies over $\langle m\rangle$.
Let $z(f),z(f')\colon C^m\to C$ be the images of~$f$ and~$f'$,
respectively.
Then we have equivalences in $\SSS$
\begin{gather*}
\Map_{B(C)}(z(f),z(f')) \simeq \Map^{\rm eq}_{\cyl}\big(C^m,C^m\big)\times_{\Map_{\cyl}(C^m,C)}\{z(f)\}
\\ \hphantom{\Map_{B(C)}(z(f),z(f'))}
{}\simeq \big(\big(S^1\big)^{\times m}\times\Sigma_m\big)\times_{((S^1)^{\times m} \times \Sigma_m)}\{z(f)\},
\end{gather*}
where
$\Map^{\rm eq}_{\cyl}\big(C^m,C^m\big)$ is the full subcategory of~$\Map_{\cyl}(C^m,C^m)$ spanned by~equivalences.
It~follows from the canonical equivalence
$\big(S^1\big)^{\times m}\times\Sigma_m\to\big(S^1\big)^{\times m}\times \Sigma_m$ that $\Map_{B(C)}(z(f),z(f'))$ is contractible. Thus,
$\Map_{A(D_1)}(f,f')\to \Map_{B(C)}(z(f),z(f'))$ is an~equivalence. We~conclude that $u$ is a~categorical equivalence.
\end{proof}

{\bf 4.3.}
Let $\MM^\otimes$ be a~symmetric monoidal $\infty$-category.
If~$\mathbf{P}^\otimes$ and $\mathbf{Q}^\otimes$ are small
$\infty$-operads
and $c\colon\mathbf{P}^\otimes\to \mathbf{Q}^\otimes$ is a~morphism
of~$\infty$-operads (over $\Gamma$), then we denote
by~$c^*\colon\Alg_{\mathbf{Q}}(\MM)\to \Alg_{\mathbf{P}}(\MM)$
the restriction/forgetful functor
along $c$.
If~there exists a~left adjoint of~$c^*$,
we denote it by~$c_{!}\colon \Alg_{\mathbf{P}}(\MM)\to \Alg_{\mathbf{Q}}(\MM)$.
If~$\MM^\otimes$ admits small colimits
and its monoidal multiplication functor
$\MM\times \MM\to \MM$ preserves small colimits separately
in each variable,
then there exists a~left adjoint $c_!$, cf.~\cite[Corollary~3.1.3.5]{HA}.

The diagram $\dcyl \stackrel{a}{\leftarrow} \cyl \stackrel{b}{\to} \cylm$
induces
$a^*\colon\Alg_{\dcyl}(\MM)\to \Alg_{\cyl}(\MM)$ and
$b^*\colon$ \linebreak $\Alg_{\cylm}(\MM)\to \Alg_{\cyl}(\MM)$.
It~gives rise to the fiber product $\Alg_{\dcyl}(\MM)\times_{\Alg_{\cyl}(\MM)}\Alg_{\cylm}(\MM)$.
We~also have the inclusions $\dcyl \to \dcylm$ and $\cylm\to \dcylm$.
Then
the restriction/forgetful functors induce
\begin{gather*}
\Alg_{\dcylm}(\MM)\to \Alg_{\dcyl}(\MM)\times_{\Alg_{\cyl}(\MM)}\Alg_{\cylm}(\MM).
\end{gather*}
Now suppose that
$\MM^\otimes$ admits small colimits
and the tensor product functor
$\MM\times \MM\to \MM$ preserves small colimits separately
in each variable.
Let $i\colon\etwo^\otimes \to \dcyl$ be the canonical inclusion.
We~then have an~adjoint pair
\begin{gather*}
i_!\colon\ \Alg_{\etwo}(\MM)\rightleftarrows \Alg_{\dcyl}(\MM)\ \,{\colon}i^*,
\end{gather*}
where $i_!$ is fully faithful (indeed, the left adjoint
is given by~the operadic left Kan extension so that
the unit map of~the adjunction is the identity).
We~write $\Alg^{\dtwo}_{\dcyl}(\MM)$ for the essential image of~$i_!$.
Note that $\Alg_{\etwo}(\MM)\simeq \Alg^{\dtwo}_{\dcyl}(\MM)$.
We~set $\Alg^{\dtwo}_{\dcylm}(\MM):=\Alg_{\dcylm}(\MM)\times_{\Alg_{\dcyl}(\MM)}\Alg^{\dtwo}_{\dcyl}(\MM)$.

\begin{Proposition}
\label{another}
Suppose that $q\colon\MM^\otimes \to \Gamma$ is a~symmetric monoidal $\infty$-category
such that the underlying $\infty$-category $\MM$ has small colimits and
the tensor product functor $\otimes\colon\MM \times \MM\to \MM$
preserves small colimits separately in each variable.
Then the functor
\begin{gather*}
\Alg_{\dcylm}(\MM)\to \Alg_{\dcyl}(\MM)\times_{\Alg_{\cyl}(\MM)}\Alg_{\cylm}(\MM)
\end{gather*}
is an~equivalence of~$\infty$-categories.
Moreover, it induces an~equivalence of~$\infty$-categories
\begin{gather*}
\Alg^{\dtwo}_{\dcylm}(\MM)\stackrel{\sim}{\to} \Alg^{\dtwo}_{\dcyl}(\MM)\times_{\Alg_{\cyl}(\MM)}\Alg_{\cylm}(\MM).
\end{gather*}
\end{Proposition}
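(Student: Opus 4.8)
The plan is to realize the functor in question as restriction along a single morphism of $\infty$-operads and then invoke the weak-approximation machinery, just as in the proof of Proposition~\ref{cyleones}; the second assertion will then be formal. By Remark~\ref{coloredinclusion} the operads $\dcyl$ and $\cylm$ sit inside $\dcylm$ as full suboperads sharing the full suboperad $\cyl$, and their colors $\{D,C\}$ and $\{C,\cbar\}$ exhaust those of $\dcylm$. Form the pushout $\mathcal{Q}^\otimes := \dcyl \amalg_{\cyl}\cylm$; since the inclusions $\CYL\hookrightarrow\DCYL$ and $\CYL\hookrightarrow\CYLM$ adjoin a color and are cofibrations, this is computed by the pushout of simplicial colored operads and is a pushout in the $\infty$-category of $\infty$-operads, so that
\[
\Alg_{\mathcal{Q}}(\MM)\simeq \Alg_{\dcyl}(\MM)\times_{\Alg_{\cyl}(\MM)}\Alg_{\cylm}(\MM).
\]
The two inclusions into $\dcylm$ then assemble into a map $\phi\colon\mathcal{Q}^\otimes\to\dcylm$ of $\infty$-operads whose induced functor $\phi^*$ is the functor in the statement. (One could equally avoid $\mathcal{Q}^\otimes$ and compare universal properties of the two sides directly, as in Lemma~\ref{eonesequiv}.)

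By \cite[Theorem~2.3.3.23]{HA}, to prove $\phi^*$ is an equivalence it is enough to check that $\phi$ is a weak approximation in the sense of \cite[Definition~2.3.3.6]{HA} and restricts to an equivalence on the fibers over $\langle 1\rangle$. The second condition is immediate, since $\phi$ is the identity on colors and, on unary operation spaces, is built from identities and the homotopy equivalences already used in Propositions~\ref{cyleones} and~\ref{cylmeonems}. For the first condition we apply the criterion \cite[Proposition~2.3.3.14]{HA}: for every object $X$ of $\mathcal{Q}^\otimes$ over $\langle n\rangle$, $\phi$ must induce a weak homotopy equivalence of the associated slice $\infty$-categories. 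If the colors appearing in $X$ avoid $\cbar$ this holds because $\dcyl\subset\dcylm$ is a full suboperad; the real content is the case where $X$ carries exactly one entry of color $\cbar$ together with entries of colors $D$ and $C$, in which case it reduces to comparing $\mul_{\DCYLM}\big(D^{\sqcup l}\sqcup C^{\sqcup m}\sqcup\cbar,\cbar\big)$ with the corresponding space of formal composites of a $\cylm$-operation with output $\cbar$ and $\dcyl$-operations plugged into its $C$-colored inputs.

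The geometric input behind this comparison, which I expect to be the main obstacle, is the observation that a rectilinear (or shrinking) embedding of a cylinder $(0,1)\times S^1$ into $(0,1)\times S^1$ necessarily has image of the form $J\times S^1$ for an open subinterval $J\subset(0,1)$, because the rotation factor is surjective. Hence the images of the $m$ cylinder inputs of color $C$ and of the single shrinking input of color $\cbar$ occupy pairwise disjoint subintervals of $(0,1)$, while each little $2$-cube (an input of color $D$), having an image that does not wrap around $S^1$, is confined to one of the complementary ``gap'' subintervals. Grouping the $D$-cubes according to the gap containing them, and enveloping the $D$-cubes lying in each nonempty gap by an auxiliary cylinder, exhibits every configuration canonically as a composite of a $\cylm$-operation (the stacked arrangement of the $C$-cylinders, the $\cbar$-cylinder and the auxiliary cylinders) with little-$2$-cube configurations plugged into the auxiliary $C$-slots and identities on the others. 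One then checks, stratum by stratum of the combinatorial type, that the relevant embedding spaces are contractible and that the decomposition is compatible with the $S^1$-rotation data through the diagonal maps $S^1\to(S^1)^{\times k}$, exactly as in the proof of Proposition~\ref{cyleones}; putting these together yields the desired weak homotopy equivalence. The partition bookkeeping and the matching of rotation factors are the only laborious points.

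For the second assertion, recall $\Alg^{\dtwo}_{\dcylm}(\MM)=\Alg_{\dcylm}(\MM)\times_{\Alg_{\dcyl}(\MM)}\Alg^{\dtwo}_{\dcyl}(\MM)$ by definition, and that the equivalence just produced is compatible with the restriction functors to $\Alg_{\dcyl}(\MM)$. Substituting it and using the pasting law for pullback squares gives
\[
\Alg^{\dtwo}_{\dcylm}(\MM)\simeq\big(\Alg_{\dcyl}(\MM)\times_{\Alg_{\cyl}(\MM)}\Alg_{\cylm}(\MM)\big)\times_{\Alg_{\dcyl}(\MM)}\Alg^{\dtwo}_{\dcyl}(\MM)\simeq\Alg^{\dtwo}_{\dcyl}(\MM)\times_{\Alg_{\cyl}(\MM)}\Alg_{\cylm}(\MM),
\]
as claimed; the hypotheses on $\MM^\otimes$ are what make the operadic left Kan extension $i_!$, hence $\Alg^{\dtwo}_{\dcyl}(\MM)$, available.
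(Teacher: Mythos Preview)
Your strategy is genuinely different from the paper's. The paper proves the equivalence by an application of the Lurie--Barr--Beck theorem \cite[Corollary~4.7.3.16]{HA}: it sets up conservative functors $G,G'$ from the two sides to $\Alg_{\dcyl}(\MM)\times\MM$, constructs their left adjoints $F,F'$ as operadic left Kan extensions along $\dcyl\boxplus\triv^\otimes\to\dcylm$ and $\cyl\boxplus\triv^\otimes\to\cylm$, and then compares $GF$ with $G'F'$ by an explicit cofinality argument (both relevant over-categories admit a cofinal full subcategory equivalent to $B\ZZ$). The colimit hypotheses on $\MM^\otimes$ enter essentially there, through the existence of these operadic Kan extensions and the computation of their values.

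Your route via the pushout $\mathcal{Q}^\otimes=\dcyl\amalg_{\cyl}\cylm$ and a weak-approximation argument for $\phi\colon\mathcal{Q}^\otimes\to\dcylm$ is attractive: if it succeeds it shows the first equivalence for \emph{every} symmetric monoidal $\infty$-category $\MM^\otimes$, not only those with compatible colimits. But two substantial gaps remain. First, you do not actually compute $\mathcal{Q}^\otimes$. Asserting that the inclusions $\CYL\hookrightarrow\DCYL$ and $\CYL\hookrightarrow\CYLM$ are cofibrations in a model structure whose homotopy category models $\infty$-operads, and that the strict pushout of simplicial colored operads therefore computes the $\infty$-categorical pushout, requires justification; pushouts of operads are governed by tree-level free constructions and are rarely as simple as gluing full suboperads. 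Second, and more seriously, the weak-approximation check is only sketched. The crux is the comparison of $\mul_{\DCYLM}(D^{\sqcup l}\sqcup C^{\sqcup m}\sqcup\cbar,\cbar)$ with the corresponding space of formal composites in $\mathcal{Q}^\otimes$, and here the ``partition bookkeeping'' you defer is exactly the difficulty: two $D$-cubes whose $(0,1)$-projections overlap but whose $S^1$-arcs are disjoint cannot be separated by auxiliary cylinders, so the decomposition into $\cylm$-root plus $\dcyl$-subtrees is not combinatorially rigid, and one must show that the space of such factorizations has the correct homotopy type after taking the bar-type quotient by $\mul_{\CYL}(C,C)$-reparametrizations. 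This is a genuine computation, comparable in weight to the paper's cofinality argument; without it the proof is incomplete.

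Your deduction of the second assertion from the first by pasting pullback squares is correct and matches the paper's concluding remark.
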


\begin{Proposition}
\label{another2}
The restriction along the inclusion $\KS\to \dcylm$
induces an~equivalence of~$\infty$-categories
\begin{gather*}
\Alg^{\dtwo}_{\dcylm}(\MM)\to \Alg_{\KS}(\MM).
\end{gather*}
\end{Proposition}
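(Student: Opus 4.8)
The plan is to realize $\Alg^{\dtwo}_{\dcylm}(\MM)$ as the essential image of the operadic left Kan extension along the inclusion $\delta\colon\KS\hookrightarrow\dcylm$, and then to note that restriction along a full suboperad inclusion restricts to an equivalence on such an essential image. Write also $\alpha\colon\etwo^\otimes\hookrightarrow\dcyl$, $\beta\colon\dcyl\hookrightarrow\dcylm$ and $\gamma\colon\etwo^\otimes\hookrightarrow\KS$ for the evident full suboperad inclusions, so that $\beta\circ\alpha$ and $\delta\circ\gamma$ both equal the inclusion of the full suboperad on the single color $D$. Under the standing hypotheses on $\MM$ the left adjoints $\alpha_!\dashv\alpha^*$ and $\delta_!\dashv\delta^*$ exist by~\cite[Corollary~3.1.3.5]{HA}, and since $\alpha$ and $\delta$ are fully faithful on colors and on multimorphism spaces the units of these adjunctions are equivalences; hence $\alpha_!$ and $\delta_!$ are fully faithful. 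By definition $\Alg^{\dtwo}_{\dcyl}(\MM)=\Image(\alpha_!)$, so the goal is to show $\Alg^{\dtwo}_{\dcylm}(\MM)=\Image(\delta_!)$; granting this, $\delta^*$ restricts to an inverse of $\delta_!\colon\Alg_{\KS}(\MM)\stackrel{\sim}{\to}\Image(\delta_!)=\Alg^{\dtwo}_{\dcylm}(\MM)$, which is the assertion.

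The technical core is a base-change equivalence $\beta^*\circ\delta_!\simeq\alpha_!\circ\gamma^*$ of functors $\Alg_{\KS}(\MM)\to\Alg_{\dcyl}(\MM)$. To establish it I would use the pointwise formula for operadic left Kan extensions from~\cite[Section~3.1.3]{HA}: for $B_0\in\Alg_{\KS}(\MM)$ and a color $X$ of $\dcyl$, the object $(\delta_!B_0)(X)$ is an operadic colimit of $B_0$ indexed by the category of active operations of $\dcylm$ with target $X$ and all inputs colored in $\{D,\cbar\}$, whereas $(\alpha_!\gamma^*B_0)(X)$ is the analogous operadic colimit indexed by active operations of $\dcyl$ with target $X$ and all inputs colored $D$. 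Inspecting Definition~\ref{dcyldef} and the definition of $\DCYLM$, one sees that $\dcylm$ carries no multi-operation having a $\cbar$-colored input and a $D$- or $C$-colored output; consequently, for $X\in\{D,C\}$ the two indexing categories literally agree --- each is the full subcategory of $\dcyl^{\mathrm{act}}_{/X}$ spanned by operations all of whose inputs are colored $D$ --- and the colimit diagrams coincide, giving $\beta^*\delta_!\simeq\alpha_!\gamma^*$. I expect reconciling these two pointwise colimit presentations, via the combinatorics of the colored operads, to be the only substantial point; everything else is formal.

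Granting the base change, the inclusion $\Image(\delta_!)\subseteq\Alg^{\dtwo}_{\dcylm}(\MM)$ is immediate: for $B_0\in\Alg_{\KS}(\MM)$ one has $(\delta_!B_0)|_{\dcyl}=\beta^*\delta_!B_0\simeq\alpha_!(\gamma^*B_0)=\alpha_!(B_0|_{\etwo^\otimes})\in\Image(\alpha_!)=\Alg^{\dtwo}_{\dcyl}(\MM)$, so $\delta_!B_0$ lies in the full subcategory $\Alg^{\dtwo}_{\dcylm}(\MM)=\Alg_{\dcylm}(\MM)\times_{\Alg_{\dcyl}(\MM)}\Alg^{\dtwo}_{\dcyl}(\MM)$. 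For the reverse inclusion, take $B\in\Alg^{\dtwo}_{\dcylm}(\MM)$ and consider the counit $c\colon\delta_!\delta^*B\to B$. As evaluation at the three colors $D$, $C$, $\cbar$ is jointly conservative on $\Alg_{\dcylm}(\MM)$ (as for algebras over any $\infty$-operad), it suffices to check that $c$ is an equivalence at each of them. At $D$ and $\cbar$ --- the colors of $\KS$ --- the morphism $\delta^*c$ is an equivalence by the triangle identity together with $\delta^*\delta_!\simeq\mathrm{id}$. At $C$: applying $\beta^*$ and invoking the base-change equivalence, together with $\beta^*B\simeq\alpha_!(B|_{\etwo^\otimes})$ (which holds because $B\in\Alg^{\dtwo}_{\dcylm}(\MM)$), the morphism $\beta^*c$ is identified with an endomorphism of $\alpha_!(B|_{\etwo^\otimes})$ whose image under $\alpha^*$ is the equivalence already found at $D$; full faithfulness of $\alpha_!$ then forces $\beta^*c$ to be an equivalence, in particular at $C$. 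Hence $c$ is an equivalence and $B\simeq\delta_!\delta^*B\in\Image(\delta_!)$, so $\Alg^{\dtwo}_{\dcylm}(\MM)=\Image(\delta_!)$ and the proposition follows. (This route does not invoke Proposition~\ref{another}; alternatively, one could match the fiber-product description of $\Alg^{\dtwo}_{\dcylm}(\MM)$ in Proposition~\ref{another} against a parallel decomposition of $\Alg_{\KS}(\MM)$, but passing through $\delta_!$ seems cleaner.)
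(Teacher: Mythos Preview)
Your proposal is correct and follows essentially the same approach as the paper: both use the adjunction $\delta_!\dashv\delta^*$ (the paper writes $j_!\dashv j^*$), note $\delta_!$ is fully faithful, and identify $\Alg^{\dtwo}_{\dcylm}(\MM)$ with the essential image of $\delta_!$ via the key observation that no active operation of $\dcylm$ with a $\cbar$-colored input has output colored $D$ or $C$. The paper states this comparison directly at the level of the indexing categories $\etwo^\otimes\times_{\dcyl}\dcyl^{\textup{act}}_{/C}\stackrel{\sim}{\to}\KS\times_{\dcylm}\dcylm^{\textup{act}}_{/C}$, whereas you package the same computation as the base-change equivalence $\beta^*\delta_!\simeq\alpha_!\gamma^*$; your treatment of the reverse inclusion (reducing the counit at $C$ to a morphism in $\Image(\alpha_!)$ detected by $\alpha^*$) is somewhat more explicit than the paper's one-line appeal to ``comparison of operadic $q$-colimit diagrams,'' but the content is the same.
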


Now we consider the diagram
\begin{gather*}
\xymatrix{
\Alg_{\KS}(\MM) & \ar[l] \Alg_{\dcylm}(\MM) \ar[r]^(0.3){\simeq} & \Alg_{\dcyl}(\MM)\times_{\Alg_{\cyl}(\MM)}\Alg_{\cylm}(\MM) \\
}
\end{gather*}
and its restriction to $\Alg_{\dcylm}^{\dtwo}(\MM)$.

\begin{Corollary}\label{algebraequivalence}
We~have canonical categorical equivalences
\begin{gather*}
\Alg_{\etwo}(\MM)\times_{\Alg_{\cyl}(\MM)}\Alg_{\cylm}(\MM) \simeq \Alg^{\dtwo}_{\dcyl}(\MM)\times_{\Alg_{\cyl}(\MM)}\Alg_{\cylm}(\MM)\simeq \Alg_{\KS}(\MM).
\end{gather*}
Moreover, by~Corollary~{\rm \ref{equivariant}},
the $\infty$-category on the left-hand side
is equivalent to
\begin{gather*}
\Alg_{\etwo}(\MM)\times_{\Alg_{\eone}(\Fun(BS^1,\MM))}\LMod\big(\Fun\big(BS^1,\MM\big)\big),
\end{gather*}
where $\Alg_{\etwo}(\MM)\to\Alg_{\eone}\big(\Fun\big(BS^1,\MM\big)\big)$
is the composite
\begin{gather*}
\Alg_{\etwo}(\MM)\stackrel{i_!}{\simeq}\Alg^{\dtwo}_{\dcyl}(\MM)\to \Alg_{\cyl}(\MM)\simeq \Alg_{\eone}\big(\Fun\big(BS^1,\MM\big)\big).
\end{gather*}
In~particular, we have an~equivalence of~$\infty$-categories
\begin{gather*}
\Alg_{\KS}(\MM)\simeq \Alg_{\etwo}(\MM)\times_{\Alg_{\eone}(\Fun(BS^1,\MM))}\LMod\big(\Fun\big(BS^1,\MM\big)\big).
\end{gather*}
This equivalence commutes with projections to $\Alg_{\etwo}(\MM)$
in the natural way.
\end{Corollary}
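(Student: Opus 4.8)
The plan is to obtain the corollary by concatenating the results of this subsection, so that no genuinely new argument is required. First I would record that the adjunction $(i_!,i^*)$ attached to $i\colon\etwo^\otimes\to\dcyl$ has $i_!$ fully faithful, with essential image $\Alg^{\dtwo}_{\dcyl}(\MM)$ by definition; hence $i_!$ restricts to an equivalence $\Alg_{\etwo}(\MM)\stackrel{\sim}{\to}\Alg^{\dtwo}_{\dcyl}(\MM)$. Since the functor $\Alg_{\etwo}(\MM)\to\Alg_{\cyl}(\MM)$ entering the left-hand fiber product is by construction the composite of this equivalence with $\Alg^{\dtwo}_{\dcyl}(\MM)\hookrightarrow\Alg_{\dcyl}(\MM)\stackrel{a^*}{\to}\Alg_{\cyl}(\MM)$, and since an $\infty$-categorical fiber product is unchanged when one of its legs is replaced by an equivalent $\infty$-category together with the transported functor, the equivalence
\begin{gather*}
\Alg_{\etwo}(\MM)\times_{\Alg_{\cyl}(\MM)}\Alg_{\cylm}(\MM)\simeq\Alg^{\dtwo}_{\dcyl}(\MM)\times_{\Alg_{\cyl}(\MM)}\Alg_{\cylm}(\MM)
\end{gather*}
drops out immediately, compatibly with projection onto the first factor.

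For the middle equivalence I would invoke Propositions~\ref{another} and~\ref{another2} in turn. The ``moreover'' clause of Proposition~\ref{another} identifies $\Alg^{\dtwo}_{\dcyl}(\MM)\times_{\Alg_{\cyl}(\MM)}\Alg_{\cylm}(\MM)$ with $\Alg^{\dtwo}_{\dcylm}(\MM)=\Alg_{\dcylm}(\MM)\times_{\Alg_{\dcyl}(\MM)}\Alg^{\dtwo}_{\dcyl}(\MM)$ via the forgetful functor, and Proposition~\ref{another2} then identifies $\Alg^{\dtwo}_{\dcylm}(\MM)$ with $\Alg_{\KS}(\MM)$ via restriction along $\KS\hookrightarrow\dcylm$. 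Concatenating these with the equivalence of the previous paragraph produces the displayed chain
\begin{gather*}
\Alg_{\etwo}(\MM)\times_{\Alg_{\cyl}(\MM)}\Alg_{\cylm}(\MM)\simeq\Alg^{\dtwo}_{\dcyl}(\MM)\times_{\Alg_{\cyl}(\MM)}\Alg_{\cylm}(\MM)\simeq\Alg_{\KS}(\MM).
\end{gather*}

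Finally I would rewrite the fiber product using Corollary~\ref{equivariant}, which supplies $\Alg_{\cyl}(\MM)\simeq\Alg_{\eone}(\Fun(BS^1,\MM))$ and $\Alg_{\cylm}(\MM)\simeq\LMod(\Fun(BS^1,\MM))$; transporting the two legs along these equivalences, and identifying the resulting functor $\Alg_{\etwo}(\MM)\to\Alg_{\eone}(\Fun(BS^1,\MM))$ with the composite through $i_!$ displayed in the statement, yields the $\Fun(BS^1,\MM)$-description, and in particular the equivalence $\Alg_{\KS}(\MM)\simeq\Alg_{\etwo}(\MM)\times_{\Alg_{\eone}(\Fun(BS^1,\MM))}\LMod(\Fun(BS^1,\MM))$. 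The only parts requiring real care --- as opposed to the substantive input of Propositions~\ref{another} and~\ref{another2}, which I may assume here and which carry the actual content --- are bookkeeping: verifying that the functor $\Alg_{\etwo}(\MM)\to\Alg_{\cyl}(\MM)$ used throughout is indeed the one factoring through $i_!$ (so that the first step is an honest base change), and tracking that every equivalence in the chain commutes with the forgetful functors down to $\Alg_{\etwo}(\MM)$ --- on the operadic side these come from the inclusions $\langle D\rangle\hookrightarrow\DCYLM$ and its analogues, on the fiber-product side from the first projection --- so that the final equivalence commutes with the projections to $\Alg_{\etwo}(\MM)$. I expect this last compatibility check to be the most tedious point, though it poses no conceptual obstacle.
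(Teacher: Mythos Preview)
Your proposal is correct and follows essentially the same approach as the paper: the corollary is stated there without a separate proof, as an immediate concatenation of the equivalence $i_!\colon\Alg_{\etwo}(\MM)\simeq\Alg^{\dtwo}_{\dcyl}(\MM)$, Propositions~\ref{another} and~\ref{another2}, and Corollary~\ref{equivariant}, exactly as you outline. Your write-up is simply more explicit about the bookkeeping (compatibility of the fiber-product legs and of the projections to $\Alg_{\etwo}(\MM)$) than the paper, which leaves these points to the reader.
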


The proof~of~Proposition~\ref{another} requires
Lurie--Barr--Beck theorem \cite[Corollary 4.7.3.16]{HA}.
Let us consider the comutative diagram
\begin{gather*}
\xymatrix{
\Alg_{\dcylm}(\MM) \ar[rr]^U \ar[dr]_G & & \Alg_{\dcyl}(\MM)\times_{\Alg_{\cyl}(\MM)}\Alg_{\cylm}(\MM) \ar[ld]^{G'} \\
 & \Alg_{\dcyl}(\MM) \times \MM. &
}
\end{gather*}
The functor $U$ is the functor in the statement in Proposition~\ref{another},
and $G$ is determined by~the forgetful/restriction functor $\Alg_{\dcylm}(\MM)\to \Alg_{\dcyl}(\MM)$
and the functor $\Alg_{\dcylm}(\MM)\to \MM$
given by~the evaluation at $C_M$. The functor $G'$ is determined by~the first projection and the functor $\Alg_{\cylm}(\MM) \to \MM$
given by~the evaluation at $C_M$.
According to Lurie--Barr--Beck theorem \cite[Corollary 4.7.3.16]{HA},
$U$ is a~categorical equivalence if the following
conditions are satisfied:
\begin{enumerate}\itemsep=0pt
\renewcommand{\labelenumi}{(\roman{enumi})}

\item The functors $G$ and $G'$ admit left adjoints $F$ and $F'$, respectively.

\item $\Alg_{\dcylm}(\MM)$ admits geometric realizations of~simplicial objects, which are preserved by~$G$.

\item $\Alg_{\dcyl}(\MM)\times_{\Alg_{\cyl}(\MM)}\Alg_{\cylm}(\MM)$ admits geometric realizations of~simplicial objects, which are preserved by~$G'$.

\item The functors $G$ and $G'$ are conservative.

\item For any object $X$ in $\Alg_{\dcyl}(\MM)\times \MM$,
the unit map $X\to GF(X)\simeq G'UF(X)$ induces an~equivalence
$G'F'(X)\to GF(X)$ in
$\Alg_{\dcyl}(\MM)\times \MM$.

\end{enumerate}

\begin{proof}[Proof~of~Proposition~\ref{another}]
We~will prove the conditions (i), (ii), (iii), (iv), (v).

A morphism $f$ in $\Alg_{\dcylm}(\MM)$ is an~equivalence
if and only if the evaluations at $D$, $C$, $C_M$
are equivalences.
Similarly,
a morphism $f$ in $\Alg_{\dcyl}(\MM)$ is an~equivalence
if and only if the evaluations at objects $D, C$
are equivalences.
 It~follows that
$G$ is conservative. Similarly, we~see that $G'$ is conservative.
Hence (iv) is proved.
The conditions (ii) and (iii) follow from
the existence and the compatibility of~sifted colimits \cite[Proposition~3.2.3.1]{HA}
and the conservativity in (iv).
(If~$\MM^\otimes$ is a~presentably symmetric monoidal $\infty$-category,
the condition (i) follows from adjoint functor theorem
since $G$ and $G'$ preserve small limit and filtered colimit. We~prove (i) in full generality below.)

We~prove the conditions (v) and (i).
For this purpose, we first consider the left adjoint~$F'$ of~$G'$.
The values under~$F'$ can be described in terms of~operadic colimits
if we assume the existence of~a~left adjoint $F'$.
Let $j\colon\cyl\to \cylm$ be the canonical inclusion.
Let $j'\colon\triv^\otimes \to \cylm$ be a~morphism from the trivial
$\infty$-operad to $\cylm$ that is determined by~$C_M$.
Let $\cyl\boxplus \triv^\otimes$ denote the coproduct of~$\cyl$ and $\triv^\otimes$. Namely, it is a~coproduct of~$\cyl$ and $\triv^\otimes$
in the $\infty$-category of~$\infty$-operads, but
we use its explicit construction in~\cite[Construction~2.2.3.3]{HA}.
By the universal property, the morphism $j$ and $j'$ induces
$k\colon\cyl\boxplus \textup{Triv}^\otimes\to \cylm$.
By \cite[Corollary~3.1.3.5]{HA},
we have an~adjoint pair
\begin{gather*}
k_!\colon\ \Alg_{\cyl}(\MM)\times \MM \simeq \Alg_{\cyl\boxplus \textup{Triv}^\otimes}(\MM) \rightleftarrows\Alg_{\cylm}(\MM)\ {\colon}\!k^*.
\end{gather*}
Here we use the canonical categorical
equivalence $\Alg_{\triv}(\MM)\stackrel{\sim}{\to}\MM$.
There are categorical fibrations $\textup{pr}_1\colon\Alg_{\cyl}(\MM)\times \MM\to \Alg_{\cyl}(\MM)$ and $\Alg_{\cylm}(\MM)\to \Alg_{\cyl}(\MM)$
induced by~the inclusion $\cyl\hookrightarrow \cylm$.
The right adjoint $k^*$ commutes with these projections to $\Alg_{\cyl}(\MM)$.
Let $Y\colon\cyl\boxplus \textup{Triv}^\otimes\to \MM^\otimes$ be a~map
of~$\infty$-operads, which we regard as an~object~$Y$ of~$\Alg_{\cyl\boxplus \textup{Triv}^\otimes}(\MM)$.
For $A\in \cylm$,
the evaluation of~$k_!(Y)$ at $A$
is an~operadic $q$-colimit of~the~map
\begin{gather*}
\big(\cyl\boxplus \triv^\otimes\big)_{/A}^{\textup{act}}:=\big(\cyl\boxplus \triv^\otimes\big)\times_{\cylm}\cylm_{/A}^{\textup{act}}\to \cyl\boxplus \triv^\otimes\stackrel{Y}{\to} \MM^\otimes,
\end{gather*}
lying over $\big((\cyl\boxplus \triv^\otimes)_{/A}^{\textup{act}}\big)^{\triangleright}\to \Gamma$.
See \cite[Sections~3.1.1--3.1.3]{HA} for operadic left Kan extensions and operadic
colimits.
Note that
$\mul_{\CYLM}\big(C^{\sqcup{n}}\sqcup \cbar^{\sqcup m},C\big)$ is the empty set
for $m\neq 0$.
Hence
$\big(\cyl\boxplus \triv^\otimes\big)\times_{\cylm}\cylm_{/C}^{\textup{act}}\simeq \cyl\times_{\cylm}\cylm_{/C}^{\textup{act}}$ so that there is a~final object
determined by~the identity
$C\to C$.
It~follows that
$\textup{pr}_1(Y) \to \textup{pr}_1k^*k_!(Y)$ is an~equivalence in $\Alg_{\cyl}(\MM)$ for each $Y \in \Alg_{\cyl\boxplus \textup{Triv}^\otimes}(\MM)\simeq \Alg_{\cyl}(\MM)\times \MM$.
Thus $(k_!,k^*)$ is an~adjunction relative to $\Alg_{\cyl}(\MM)$.
See
\cite[Definition~7.3.2.2]{HA} for the notion of~relative adjunctions.
The base change of~$(k_!,k^*)$ along $\Alg_{\dcyl}(\MM)\to\Alg_{\cyl}(\MM)$
gives rise to an~adjunction
\begin{gather*}
l_!\colon \Alg_{\dcyl}(\MM)\!\times_{\Alg_{\cyl}(\MM)}(\Alg_{\cyl}(\MM)
\!\times \MM)\rightleftarrows \Alg_{\dcyl}(\MM)\!\times_{\Alg_{\cyl}(\MM)}\Alg_{\cylm}(\MM)\ {\colon}\!l^*
\end{gather*}
relative to $\Alg_{\dcyl}(\MM)$,
where the right adjoint is $G'$.
This shows that
there exists a~left adjoint $F'$ of~$G'$.
In~addition, $F'\simeq l_!$.
We~denote informally by~$X=(A,B,M)$ an~object
of~$\Alg_{\dcyl}(\MM)\times \MM\simeq \Alg_{\cyl\boxplus \textup{Triv}^\otimes}(\MM)$, where $(A,B)\in \Alg_{\dcyl}(\MM)$, $M\in \MM$,
$A$ is the restriction to $\etwo^\otimes\subset \dcyl$,
and $B$ is the restriction to $\cyl\subset \dcyl$.
We~compute the image of~$(A,B,M)$ under the left adjoint $l_!$.
The left adjoint $F'=l_!$
is induced by~$k_!$ so that
$l_!(A,B,M)=((A,B),k_!(B,M))$,
where $((A,B),k_!(B,M))$ indicates the object of~the fiber product on the right-hand side.
The pair $(B,M)$ is an~object of~$\Alg_{\cyl}(\MM)\times \MM$.
Thus we will compute $k_!(B,M)$
in terms of~the operadic left Kan extension;
we describe it as a~colimit of~a~certain diagram.
Let $(B,M)\colon\cyl \boxplus \triv^\otimes\to \MM^\otimes$ be
a morphism of~$\infty$-operads which corresponds to $(B,M)\in \Alg_{\cyl}(\MM)\times \MM$. Let $\cyl \boxplus \triv^\otimes \to \cylm$ be a~morphism
determined by~$\cyl\to \cylm$ and the morphism $\triv^\otimes\to \cylm$
classified by~the object $C_M$ in the fiber $\cylm_{\langle 1\rangle}$
(by~using the same symbol $C_M$
we abuse notation).
Let $\cylm^{\textup{act}}$ be the subcategory spanned by~those morphisms
whose images in $\Gamma$ are active (cf.~\cite[Definition~2.1.2.1]{HA}).
Let us consider
$\big(\cyl \boxplus \triv^\otimes\big)^{\textup{act}}_{/C_M}=\big(\cyl \boxplus \triv^\otimes\big)\times_{\cylm}\cylm^{\textup{act}}_{/C_M}$.
We~have a~morphism
\begin{gather*}
p\colon\ K:=\big(\cyl \boxplus \triv^\otimes\big)^{\textup{act}}_{/C_M}\to \cyl \boxplus \triv^\otimes\stackrel{(B,M)}{\longrightarrow} \MM^\otimes
\end{gather*}
that extends $(\cyl\boxplus\triv^\otimes)^{\textup{act}}_{/C_M}\to \Gamma$.
According to~\cite[Proposition~3.1.3.2]{HA}, the operadic left Kan extension $k_!(B,M)\colon\cylm\to \MM^\otimes$
of~$k$ carries $C_M$ to a~colimit of~$p$, that is, the image of~the cone point
under an~operadic $q$-colimit diagram $p'\colon K^{\triangleright}\to \MM^\otimes$
that extends $p$.
Let $\star$ denote the unique object of~$\triv^\otimes$
lying over $\langle1\rangle$.
Let $\textup{Sub}$ be a~category defined as follows \mbox{\cite[Definition~2.2.3.2]{HA}}: The objects of~$\textup{Sub}$ are triples $(\langle n \rangle, S,T)$
such that $\langle n \rangle \in \FIN$, and $S,T\subset \langle n \rangle$
are subsets such that $S\cap T=\ast$ and $S\cup T=\langle n \rangle$. A morphism $(\langle n \rangle, S,T)\to (\langle n' \rangle, S',T')$ is a~morphism $\langle n \rangle \to \langle n \rangle$ in $\FIN$ such that $f(S)\subset S'$ and $f(T)\subset T'$.
Let
\begin{gather*}
(\langle n \rangle, S,T,C^{n-1}=(C,\dots,C),\star)
\end{gather*}
be an~object
of~$\cyl\boxplus\triv^\otimes$ lying over $\langle n\rangle$
such that $(\langle n \rangle, S,T)$ is an~object of~$\textup{Sub}$,
$C^{n-1}$ is the unique
 object of~$\cyl_{\langle n-1\rangle}$ (lying over $\langle n-1\rangle$),
 and $\star\in \triv=\triv^\otimes_{\langle 1\rangle}$.
This presentation is based on the explicit construction of~the coproducts in~\cite[Construction~2.2.3.3]{HA}.
For our purpose below,
we may assume that $T\subset \langle n\rangle$ is of~the form $T=\{*,i\}$
so that by~default $T$ in the above object is
of~the form $T=\{*,i\}$.
The mapping space from
$\big(\langle n \rangle, S,T,C^{n-1},\star\big)$ to~$\big(\langle m \rangle, S',T',C^{m-1},\star\big)$
is given by
\begin{gather*}
\Bigg(\coprod_{\begin{subarray}{c}\alpha\colon\langle n\rangle\to \langle m\rangle\\\alpha(S)\subset S',\ \alpha(T)\subset T'\end{subarray}}
\prod_{j\in \langle m\rangle\backslash T'}
\textup{Emb}^{\rm rec}\big(\big((0,1)\times S^1\big)\times\alpha^{-1}(j), (0,1)\times S^1\big)
\Bigg)\times *,
\end{gather*}
which we regard as an~object in $\SSS$, and $*$ indicates the contractible
space which we regard as the mapping space from $\star$ to $\star$.
Using this description we consider mapping spaces in
$(\cyl\boxplus\triv^\otimes)^{\textup{act}}_{/C_M}$.
We~abuse notation by~writing
$\big(\langle n \rangle, S,T,C^{n-1},\star,\big(C^{n-1},C_M\big)\to C_M\big)$
for an~object of~$K$, where
$\big(\langle n \rangle, S,T,C^{n-1},\star\big)\in \cyl\boxplus\triv^\otimes$
and $(C,\dots,C,C_M)=\big(C^{n-1},C_M\big)\to C_M$ is a~morphism
in $\cylm^{\textup{act}}$
lying over the active morphism $\langle n \rangle\to \langle 1\rangle$,
where $\big(C^{n-1},C_M\big)$ is a~sequence of~$n-1$ $C$'s and a~single $C_M$
which we regard as an~object in $\cylm_{\langle n \rangle}$.
Now it is easy to compute the mapping space
from
$H=\big(\langle n \rangle, S,T,C^{n-1},\star,\big(C^{n-1},C_M\big)\stackrel{f}{\to} C_M\big)$
to $H'=\big(\langle m \rangle, S',T',C^{m-1},\star,\big(C^{m-1},C_M\big)\stackrel{g}{\to} C_M\big)$
in
$\big(\cyl\boxplus\triv^\otimes\big)\times_{\cylm}\cylm^{\textup{act}}_{/C_M}$.
We~think of~$f$ as an~embedding
$\big((0,1)\times S^1\big)^{\sqcup n-1}\sqcup (0,1)\times S^1\to (0,1)\times S^1$
that belongs to
$\mul_{\cylm}\big(C^{\sqcup n-1}\sqcup C_M,C_M\big)$
(namely,
the restricition to the ``right component" $(0,1)\times S^1$ is shrinking).
Consider the restriction $\big((0,1)\times S^1\big)^{\sqcup n-1}\to (0,1)\times S^1$
and its projection $\overline{f}\colon(0,1)^{\sqcup n-1}\to (0,1)$ obtained
by~forgetting the $S^1$-factor, which is
a rectilinear embedding.
If~we denote by~$D_1^n$
the unique object in the fiber $\big(\eone^\otimes\big)_{\langle n\rangle}$
over $\langle n\rangle\in \Gamma$,
we can regard $\overline{f}$ as
a map $D_1^{n-1} \to D_1:=D_1^1$ in $\eone^\otimes$.
Let $\Map_{(\eone^\otimes)_{/D_1}}(\overline{f},\overline{g})$
be the discrete mapping space from $\overline{f}\colon D_1^{n-1} \to D_1$ to $\overline{g}\colon D_1^{m-1} \to D_1$.
Given a~morphism $H\to H'$, by~applying the same procedure to
the induced morphism $C^{n-1}\to C^{m-1}$,
we obtain a~map $\Map_K(H,H')\to \Map_{(\eone^\otimes)_{/D_1}}(\overline{f},\overline{g})$.
Note that $\Map_{\cylm^{\textup{act}}_{/C_M}}(C_M,C_M)$ is contractible.
Consequently, the restriction to the component $C_M$ gives rise to
\begin{gather*}
\Map_K(H,H')\to \Map_{\triv}(\star,\star)\times_{\Map_{\cylm}(C_M,C_M)}\Map_{\cylm^{\textup{act}}_{/C_M}}(C_M,C_M) \simeq *\times_{S^1}*=\ZZ.
\end{gather*}
Taking account of~definitions of~$\infty$-operads
$\cyl\boxplus\triv^\otimes$ and $\cylm$,
we see that
\begin{gather*}
\Map_K(H,H')\to \Map_{(\eone^\otimes)_{/D_1}}(\overline{f},\overline{g})\times \ZZ
\end{gather*}
is an~equivalence in $\SSS$.
Let $L$
be the full subcategory of~$K$ spanned by~the single object
\begin{gather*}
\mathsf{Z}=\big(\langle 2 \rangle, S\simeq \langle 1\rangle,T\simeq \langle 1\rangle,C,\star,(C,C_M)\stackrel{j}{\to} C_M\big)
\end{gather*}
(a morphism $j$ is uniquely determined up to homotopy).
We~now claim that $L\subset K$ is cofinal.
It~will suffice to prove that for each $\mathsf{V}\in K$, the $\infty$-category
$L\times_KK_{\mathsf{V}/}$ is weakly contractible, see \cite[Definition~4.1.3.1]{HA}.
Let $\mathsf{V}=\big(\langle n \rangle,S,T, C^{n-1},\star,\big(C^{n-1},C_M\big)\stackrel{f}{\to} C_M\big)$.
By the above discussion about mapping spaces,
a morphism $u\colon\mathsf{V}\to \mathsf{Z}$ is
uniquely determined by~$a\in \ZZ$
since $\Map_{(\eone^\otimes)_{/D_1}}(\overline{f},\overline{j})$
is contractible.
Let $u'\colon\mathsf{V}\to \mathsf{Z}$
be another object of~$L\times_KK_{\mathsf{V}/}$ that is determined by~$a'\in \ZZ$. Note that $\Map_K(\mathsf{Z},\mathsf{Z})\simeq \ZZ$
and the composition $\Map_K(\mathsf{Z},\mathsf{Z})\times \Map_{K}(\mathsf{V},\mathsf{Z})\simeq \ZZ\times \ZZ\to \Map_{K}(\mathsf{V},\mathsf{Z})\simeq \ZZ$
can be identified with the additive operation $+\colon\ZZ\times \ZZ\to \ZZ$
(up~to~automorphisms of~$\ZZ$).
It~follows that
$\Map_{L\times_KK_{\mathsf{V}/}}(u,u')$ is a~contractible space
so that the $\infty$-category $L\times_KK_{\mathsf{V}/}$ is contractible.
Let $p'\colon K^{\triangleright}\to \MM^\otimes$ be the operadic $q$-colimit
diagram. Let $p''\colon K^{\triangleright}\to \MM$ be the diagram
obtained by~a $q$-coCartesian natural transformation from $p'$.
Since we assume that $\otimes\colon\MM\times \MM\to \MM$
preserves small colimits separately in each variable, then
by~\cite[Propositions~3.1.1.15 and~3.1.1.16]{HA},
$p''$ is a~colimit diagram of~$p''|_{K}\colon$ $K\to \MM$,
and the image of~the cone point under $p''$
is naturally equivalent to the image of~the cone point under $p'$.
Since $L\subset K$ is cofinal, we have a~canonical equivalence
$\operatorname{colim} p''|_{K}\simeq \operatorname{colim} p''|_{L}$.

Indeed, $\operatorname{colim} p''|_{K}$ is equivalent to
$B\otimes M\otimes S^1$
 (this computation is not necessary to
the proof~so that the reader may skip this paragraph, but it may be helpful to get feeling for the operadic left Kan extension $F'$).
By construction, the composite
$L\hookrightarrow K\stackrel{p}{\to}\MM^\otimes$
is equivalent to a~contant diagram so that $p''|_L\colon L\to \MM$ is
a contant diagram taking the value $B\otimes M$.
Note that $\Map_K(\mathsf{Z},\mathsf{Z})=\ZZ$
and there is a~categorical equivalence $L\simeq B\ZZ\simeq S^1$.
We~deduce that $\operatorname{colim} p''|_{L}\simeq (B\otimes M)\otimes S^1$.
Namely, the evaluation $F'(A,B,M)(C_M)$ of~$F'(A,B,M)$ at $C_M$ is
$\operatorname{colim} p''|_{K}\simeq \operatorname{colim} p''|_{L}\simeq (B\otimes M)\otimes S^1$.
Another way to compute it is as follows.
By Corollary~\ref{equivariant}, we have $\Alg_{\cyl}(\MM)\simeq \Alg_{\assoc}\big(\Fun\big(BS^1,\MM\big)\big)$
and $\Alg_{\cylm}(\MM)\simeq \LMod\big(\Fun\big(BS^1,\MM\big)\big)$.
These equivalences commute with forgetful functors arising from
the inclusions $\assoc\to \LM$ and $\cyl\to \cylm$.
The adjunction $(k_!,k^*)$ can be identified with the composite of~adjunctions
\begin{gather*}
\Alg_{\assoc}\big(\Fun\big(BS^1,\MM\big)\big)\times \MM\rightleftarrows \Alg_{\assoc}\big(\Fun\big(BS^1,\MM\big)\big)\times \Fun\big(BS^1,\MM\big)
\\ \hphantom{\Alg_{\assoc}\big(\Fun\big(BS^1,\MM\big)\big)\times \MM}
{}\rightleftarrows \LMod\big(\Fun\big(BS^1,\MM\big)\big),
\end{gather*}
where the left adjunction is induced by~the adjunction $\MM\rightleftarrows \Fun\big(BS^1,\MM\big)$ which consists of~the forgetful functor
$\Fun\big(BS^1,\MM\big)\to \MM$ and the left adjoint free functor which
sends $M$ to~$S^1\otimes M$.
The right adjoint in the right adjunction is given
by~the evaluation of~the module objects
$\LMod\big(\Fun\big(BS^1,\MM\big)\big)\!\to\! \Fun\big(BS^1,\MM\big)$
and
$\LMod\big(\Fun\big(BS^1,\MM\big)\big)\!\to\! \Alg_{\assoc}\big(\Fun\big(BS^1,\MM\big)\big)$
induced by~$\assoc\hookrightarrow \LM$.
The left adjoint functors carry $(B,M)$ to
\begin{gather*}
\big(B,B\otimes S^1\otimes M\big)\in \LMod\big(\Fun\big(BS^1,\MM\big)\big).
\end{gather*}

Next we will consider $F(A,B,M)$.
Let $r\colon\dcyl\boxplus \textup{Triv}^\otimes \to \dcylm$ be a~morphism
of~$\infty$-ope\-rads induced by~$\dcyl \hookrightarrow \dcylm$
and $\triv^\otimes\to \dcylm$ determined by~$C_M\in \dcylm_{\langle1\rangle}$
correpondings to the color $C_M$ (we slightly abuse notation again).
By \cite[Corollary~3.1.3.5]{HA},
we have an~adjunction $r_!\colon\Alg_{\dcyl}(\MM)\times \MM\rightleftarrows \Alg_{\dcylm}(\MM){\colon}\!r^*$. This shows that there exists a~left adjoint $F=r_!$ of~$G$.
Consider
\begin{gather*}
e\colon\ P:=\dcyl\boxplus \textup{Triv}^\otimes\times_{\dcylm}\dcylm^{\textup{act}}_{/C_M} \stackrel{\textup{pr}_1}{\longrightarrow} \dcyl\boxplus \textup{Triv}^\otimes \stackrel{(A,B,M)}{\longrightarrow} \MM^\otimes,
\end{gather*}
where the right arrow classifies the algebra object $(A,B,M)\in \Alg_{\dcyl}(\MM)\times \MM$.
The evaluation of~$r_!(A,B,M)$ at $C_M$ is a~colimit of~$e$, that is, the image of~the cone point under an~operadic $q$-colimit
diagram $e'\colon P^{\triangleright}\to \MM^\otimes$ that extends $e$.
The computation of~the colimit of~$e$ is similar to that of~$p$.
We~infomally denote by~$(\langle n \rangle, S,T,D^{d},C^{c},\star)$
an object of~$\dcyl\boxplus \textup{Triv}^\otimes$,
where $\star\in \triv$, and $(D^{d},C^{c})$
 indicates the sequence of~colors which consists of~$d$ $D$'s
and $c$ $C$'s which we regard as
an object in $\dcyl_{\langle n-1\rangle}$ ($d+c=n-1$).
By abuse of~notation, we write
\begin{gather*}
R=\big(\langle n \rangle, S,T,D^{d},C^{c},\star, (D^d,C^c,C_M)\stackrel{f}{\to}C_M\big)
\end{gather*}
for an~object of~$P$, where $f\colon\big(D^d,C^c,C_M\big)\to C_M$ is a~morphism
in $\dcylm^{\textup{act}}$ that lies over the active morphism $\langle n\rangle \to \langle 1\rangle$.
We~compute the mapping space from $R$ to another object
\begin{gather*}
R'=\big(\langle m \rangle, S',T',D^{d'},C^{c'},\star, (D^{d'},C^{c'},C_M)\stackrel{g}{\to}C_M\big).
\end{gather*}
Given a~morphism $\phi\colon R\to R'$ we have the induced morphism
$\big(D^d,C^c,C_M\big)\to \big(D^{d'},C^{c'},C_M\big)$ in $\dcylm^{\textup{act}}_{/C_M}$.
Notice that it is given by~the union of~$\big(D^d,C^c\big)\to \big(D^{d'},C^{c'}\big)$
and~$C_M\to C_M$ over~$C_M$. Moreover, we can think of~$\big(D^d,C^c\big)\to \big(D^{d'},C^{c'}\big)$ as
a rectilinear
embedding $\big((0,1)^2\big)^{\sqcup d}\sqcup \big((0,1)\times S^1\big)^{\sqcup c}\to
\big((0,1)^2\big)^{\sqcup d'}\sqcup \big((0,1)\times S^1\big)^{\sqcup c'}$
over $(0,1)\times S^1$. In~this way,
 we obtain the induced morphism
\begin{gather*}
\Map_P(R,R')\to \Map_{\dcyl^{\textup{act}}_{/C}}\big(\big(D^d,C^c\big),\big(D^{d'},C^{c'}\big)\big).
\end{gather*}
As in the case of~$K$,
the restriction to $C_M$ gives rise to a~morphism
\begin{gather*}
\Map_P(R,R')\to \Map_{\triv}(\star,\star)\times_{\Map_{\dcylm}(C_M,C_M)}\Map_{\dcylm^{\textup{act}}_{/C_M}}(C_M,C_M) \simeq *\times_{S^1}*=\ZZ.
\end{gather*}
It~gives rise to an~equivalence in $\SSS$:
\begin{gather*}
\Map_P(R,R')\to \Map_{\dcyl^{\textup{act}}_{/C}}\big(\big(D^d,C^c\big),\big(D^{d'},C^{c'}\big)\big)\times \ZZ.
\end{gather*}
Let $Q\subset P$ be the full subcategory spanned
by~$\mathsf{Z}$
which we think of~as an~object of~$P$
in the obvious way.
As in the case of~$K$,
using the above description of~$\Map_P(R,R')$ we see that
for any $\mathsf{V}\in P$, $Q\times_{P}P_{\mathsf{V}/}$ is weakly contractible
so that
$Q\subset P$ is
cofinal.
Let $e'\colon P^{\triangleright}\to \MM^\otimes$ be an~operadic
$q$-colimit diagram that extends $e$.
Let $e''\colon P^{\triangleright}\to \MM=\MM_{\langle 1\rangle}$ be the diagram obtained by~a $q$-coCartesian natural transformation from $e'$.
Then the image of~the cone point under $e'$ is
$\colim e''|_P \simeq \colim e''|_{Q}$.
(We~can also deduce that $r_!(A,B,M)(C_M)=F(A,B,M)(C_M)$
is $B\otimes M\otimes S^1\in \MM$ in the same way as described above.)

The projection of~$G'F'(A,B,M)\to GF(A,B,M)$ to $\MM$
is the canonical map $\colim p''|_K \to \colim e''|_{P}$.
We~note that
the canonical functor $K\to P$ induces an~equivalence
$L\stackrel{\sim}{\to} Q$.
Consequently, $\colim p''|_K \to \colim e''|_{P}$
can be identified with the equivalence
$\colim p''|_L \stackrel{\sim}{\to} \colim e''|_{Q}$.
Next we consider the projection of~$G'F'(A,B,M)\to GF(A,B,M)$
to $\Alg_{\dcyl}(\MM)$.
Taking into account
the equivalences $\dcyl\boxplus \textup{Triv}^\otimes\times_{\dcylm}\dcylm^{\textup{act}}_{/D}\simeq \dcyl\times_{\dcylm}\dcylm^{\textup{act}}_{/D}$
and $\dcyl\boxplus \textup{Triv}^\otimes\times_{\dcylm}\dcylm^{\textup{act}}_{/C}\simeq \dcyl\times_{\dcylm}\dcylm^{\textup{act}}_{/C}$
and the presentation of~$F$ in terms of~operadic colimits,
we see that
the evaluations of~unit maps
$A\to GF(A,B,M)(D)$ and $B\to GF(A,B,M)(C)$ are equivalence
so that
the evaluations $A\simeq G'F'(A,B,M)(D)\to GF(A,B,M)(D)$
and $B\simeq G'F'(A,B,M)(C)\to GF(A,B,M)(C)$ of~$G'F'(A,B,M)\to GF(A,B,M)$
are also
equivalences.
Consequently, $G'F'(A,B,M)\to GF(A,B,M)$ is an~equivalence since
evaluations at $D$, $C$, and the projection to $\MM$ are equivalences.
This proves (v). We~also have proved the existence of~$F$ and $F'$, that is, (i).
The final assertion is clear.
\end{proof}

\begin{proof}[Proof~of~Proposition~\ref{another2}]
The inclusion $j\colon \KS\to \dcylm$ induces an~adjunction
\begin{gather*}
j_!\colon\ \Alg_{\KS}(\MM)\rightleftarrows \Alg_{\dcylm}(\MM)\ {\colon}\!j^*.
\end{gather*}
Since $j_!$ is fully faithful, it is enough to prove that
the essential image of~$j_!$ is $\Alg_{\dcylm}^{\mathbf{D}}(\MM)$.
Let $X=(A,B,M)\colon \dcylm\to \MM^\otimes $ be an~object of~$\Alg_{\dcylm}(\MM)$
whose evaluations at $D$, $C$ and $C_M$ are $A$, $B$ and $M$, respectively.
Note that the forgetful functor
$\Alg_{\dcylm}(\MM) \to \MM\times \MM\times \MM$
induced by~evaluations at $D$, $C$, and $C_M$
is conservative.
Thus if we write $q\colon \MM^\otimes \to \Gamma$ for the structure
map, $X$ belongs to $\Alg_{\dcylm}^{\mathbf{D}}(\MM)$
if and only if
\begin{gather*}
p'\colon\ K^{\triangleright}:=\big(\etwo^\otimes\times_{\dcyl}\dcyl^{\textup{act}}_{/C}\big)^{\triangleright}\to \dcylm\to \MM^\otimes
\end{gather*}
is an~operadic $q$-colimit diagram that extends $p=p'|_{K}$, see \cite[Definition~3.1.2.1]{HA}.
Let $Y\colon \KS\to \MM^\otimes$ be an~object of~$\Alg_{\KS}(\MM)$.
Let us consider the image of~$C$ under
$j_!(Y)\colon \dcylm\to \MM^\otimes$.
It~is an~operadic $q$-colimit of~\begin{gather*}
e\colon\ L:=\KS\times_{\dcylm}\dcylm^{\textup{act}}_{/C}\to \KS\to \MM^\otimes.
\end{gather*}
In~view of~\cite[Propositions~3.1.1.15 and 3.1.1.16]{HA},
to prove the image of~$j_!$ is contained in $\Alg_{\dcylm}^{\mathbf{D}}(\MM)$,
it is enough to observe that the natural functor
$\etwo^\otimes\times_{\dcyl}\dcyl^{\textup{act}}_{/C}\to \KS\times_{\dcylm}\dcylm^{\textup{act}}_{/C}$ is a~categorical equivalence.
It~follows from the fact that
 if
a sequence $E=(D,\dots,D$, $C,\dots,C,C_M,\dots C_M)$ (regarded as an~object
of~$\dcylm$) contains $C_M$, then
there is no morphism from $E$ to $C$ in $\dcylm^{\textup{act}}$.
Consequently, we have a~new adjunction
$j_!\colon \Alg_{\KS}(\MM)\rightleftarrows \Alg_{\dcylm}^{\mathbf{D}}(\MM)\, {\colon}\!j^*$.
By the comparison of~operadic $q$-colimit diagrams
from $K^{\triangleright}$ and $L^{\triangleright}$, the counit map of~this adjunction is an~equivalence.
Therefore, we obtain
a categorical equivalence $\Alg_{\KS}(\MM)\simeq \Alg_{\dcylm}^{\mathbf{D}}(\MM)$ induced by~$j_!$ (or $j^*$).
\end{proof}

\section{Hoch\-schild cohomology}
\label{cohomologysec}

In~this section, we recall Hoch\-schild cohomology spectra of~stable $\infty$-categories $\CCC$.
The definition is based on the principle that,
under a~suitable condition on $\CCC$,
Hoch\-schild cohomology of~$\CCC$ is the endomorphism
algebra of~the identity functor $\CCC\to \CCC$.
Moreover, since $\Fun(\CCC,\CCC)$ has the monoidal structure
given by~the composition,
Hoch\-schild cohomology is the endomorphism algebra
of~the unit object of~$\Fun(\CCC,\CCC)$
so that it comes equipped with the structure
of~an $\etwo$-algebra, cf.~\cite{Bat, KT}
(see also references cited in {\it loc.~cit.} for Deligne conjecture concerning Hoch\-schild cochains).
We~establish some notation.
Let $R$ be a~commutative ring spectrum.
Let $\Mod_R^\otimes$ be the symmetric monoidal $\infty$-category
of~$R$-module spectra whose underlying category we denote by~$\Mod_R$.
Let $\Alg_{\assoc}(\Mod_R)$ be the $\infty$-category
of~the associative algebra objects
in $\Mod_R$.
Let $\PR$ be the $\infty$-category of~presentable $\infty$-categories
whose morphisms are those functors that preserve small colimits.
This category $\PR$ admits a~symmetric monoidal structure,
see \cite[Notation~4.8.1.7 and Proposition~4.8.1.15]{HA}.
The $\infty$-category of~small spaces $\SSS$ is a~unit object in $\PR$.
For $\DDD, \DDD'\in \PR$,
the tensor product $\DDD\otimes \DDD'$
comes equipped with a~functor $\DDD\times \DDD'\to \DDD\otimes\DDD'$
which preserves small colimits separately in each variable and satisfies
the following universal property: for any $\FF\in \PR$, the composition induces a~fully faithful functor
\begin{gather*}
\LFun(\DDD\otimes \DDD',\FF)\to \Fun(\DDD\times \DDD',\FF)
\end{gather*}
whose essential image is spanned by~those functors $\DDD\times \DDD'\to \FF$
which preserves small colimits separately in each variable,
where $\LFun(-,-)$ indicates the full subcategory of~$\Fun(-,-)$
spanned by~those functors which preserves small colimits.
The underlying associative monoidal $\infty$-category $\Mod_R^\otimes$
can be regarded as an~associative algebra object in
$\PR$ since $\Mod_R$ is presentable and the
tensor product functor $\Mod_R\times \Mod_R\to \Mod_R$
preserves small colimits separately in each variable.
We~denote by~$\LMod_{\Mod_R^\otimes}\left(\PR\right)$ the $\infty$-category
of~left $\Mod_R^\otimes$-module \mbox{objects} in $\PR$.
Given $A\in \Alg_{\assoc}(\Mod_R)$, we write $\RMod_A:=\RMod_A(\Mod_R)$
for the \mbox{$\infty$-cate\-gory} of~right module objects of~$A$ in $\Mod_R$.
We~remark that the forgetful lax symmetric monoidal functor
$\Mod_R\to \SP$ induces $\Mod_A(\Mod_R)\to \Mod_A(\SP)$,
where we use the same notation $A$ to indicate the image in
$\Alg_{\assoc}(\SP)$.
The functor $\Mod_A(\Mod_R)\to \Mod_A(\SP)$ is an~equivalence
of~$\infty$-categories (for example, apply Lurie--Barr--Beck theorem
to this functor endowed with projections to $\SP$) so
 that the notation $\RMod_A$ is consistent with that of~\cite{HA}.
The category $\RMod_A$ has a~natural left module structure
$\Mod_R\times \RMod_A\to \RMod_A$ informally given by~$(M,N)\mapsto M\otimes_R N$. In~what follows, when we treat the tensor product of~objects in~$\Mod_R$
(over $R$), we write $\otimes$ for $\otimes_R$.
The assignment $A\mapsto \RMod_A$ gives rise to a~functor
\begin{gather*}
\Alg_{\assoc}(\Mod_R)\to \LMod_{\Mod_R^\otimes}\left(\PR\right)
\end{gather*}
that sends $A$ to $\RMod_A$ and carries a~morphism $f\colon A\to B$ to
the base change functor $\RMod_A\to \RMod_B;\ N\mapsto N\otimes_AB$, that is,
a left adjoint of~the forgetful functor $\RMod_B\to \RMod_A$,
see \cite[Section~4.8.3, Notation~4.8.5.10 and Theorem~4.8.5.11]{HA}.
We~have the induced functor
\begin{gather*}
I\colon\ \Alg_{\assoc}(\Mod_R)\simeq \Alg_{\assoc}(\Mod_R)_{R/} \to \LMod_{\Mod_R^\otimes}\left(\PR\right)_{\Mod_R/},
\end{gather*}
which sends $A$ to the base change functor
$\Mod_R=\RMod_R\to \RMod_A$.
The functor $I$ is fully faithful and admits
a right adjoint $E$.
A morphism $f\colon \Mod_R\to \DDD$ in $\LMod_{\Mod_R^\otimes}\left(\PR\right)$
is determined by~the image $f(R)$ of~$R\in \Mod_R$
in an~essentially unique way (up to a~contractible space of~choices).
Therefore, an~object of~$\LMod_{\Mod_R^\otimes}\left(\PR\right)_{\Mod_R/}$
is regarded as a~pair $(\DDD,D)$
such that $\DDD$ belongs to $\LMod_{\Mod_R^\otimes}\left(\PR\right)$
and $D$ is an~object of~$\DDD$.
The essential image of~$I$ can naturally be identified with
$\Alg_{\assoc}(\Mod_R)$. Namely, it
consists of~pairs of~the form $(\RMod_A,A)$:
$I$ carries $A$ to $(\RMod_A,A)$.
Put another way, the essential image is spanned by~pairs $(\DDD,D)$ such that $\DDD$ is a~compactly generated stable
$\infty$-category equipped with a~single compact generator $D$.
The right adjoint $E$ sends $(\DDD,D)$
to an~endormorphism algebra object $\textup{End}(D) \in \Alg_{\assoc}(\Mod_R)$
\cite[Theorem~4.8.5.11]{HA}.
Since the left adjoint $I$ is fully faithful,
the unit map $\textup{id}\to E\circ I$
is a~natural equivalence. Namely, the adjunction $(I,E)$
is a~colocalization.
If~we denote
$\mathcal{A}\subset \LMod_{\Mod_R^\otimes}\left(\PR\right)_{\Mod_R/}$
by~the essential image of~$I$, then
$E$ induces a~categorical equivalence
$\mathcal{A}\stackrel{\sim}{\to} \Alg_{\assoc}(\Mod_R)$.

The functor $I$ is extended to a~symmetric monoidal functor.
To explain this, note that $\Alg_{\assoc}(\Mod_R)$ comes equipped with a~symmetric monoidal structure induced by~that of~$\Mod_R^\otimes$,
see \cite[Section~3.2.4]{HA} or Construction~\ref{symfunctorsym}.
Since $\Mod_R^\otimes$ is a~symmetric monoidal $\infty$-category
such that $\Mod_R$ has small colimits and the tensor product functor
$\Mod_R\times \Mod_R\to \Mod_R$ preserves small colimits separately in each
variable,
we define $\Mod^\otimes_{\Mod_R^\otimes}\left(\PR\right)$
to be the symmetric monoidal $\infty$-category of~$\Mod_R^\otimes$-module objects in $\PR$, cf.~\cite[Section~3.3.3]{HA}.
If~we denote the underlying $\infty$-category
by~$\Mod_{\Mod_R^\otimes}\left(\PR\right)$, then
$\Mod_{\Mod_R^\otimes}\left(\PR\right)\simeq \LMod_{\Mod_R^\otimes}\left(\PR\right)$.
Hence $\LMod_{\Mod_R^\otimes}\left(\PR\right)_{\Mod_R/}\simeq \Mod_{\Mod_R^\otimes}\left(\PR\right)_{\Mod_R/} \simeq \Alg_{\mathbf{E}_0^{\otimes}}(\Mod_{\Mod_R^\otimes}\left(\PR\right))$ inherits a~symmetric monoidal structure.
In~summary, we have the adjunction
\begin{gather*}
I\colon\ \Alg_{\assoc}(\Mod_R)\rightleftarrows \LMod_{\Mod_R^\otimes}\left(\PR\right)_{\Mod_R/}\ {\colon}\!E
\end{gather*}
whose left adjoint is symmetric monoidal and fully faithful, and whose right adjoint is
lax symmetric monoidal.
It~gives rise to an~adjunction
\begin{gather*}
\begin{array}{ll}
I\colon\ \Alg_{\assoc}(\Alg_{\assoc}(\Mod_R))\!\!&\rightleftarrows \Alg_{\assoc}\big(\Mod_{\Mod_R^\otimes}\big(\PR\big)_{\Mod_R/}\big)\vspace{.5ex}
\\ \hphantom{I\colon \Alg_{\assoc}(\Alg_{\assoc}(\Mod_R))}
&\simeq \Alg_{\assoc}\big(\Mod_{\Mod_R^\otimes}\big(\PR\big)\big)\ {\colon}\!E,
\end{array}
\end{gather*}
where we abuse notation by~writing $(I,E)$ for the
induced adjunction.
By virtue of~the cano\-nical equivalence $\eone^\otimes\simeq \assoc^\otimes$
and the $\infty$-operad version of~Dunn additivity theorem \cite[Theorem~5.1.2.2]{HA},
we have a~canonical equivalence
\begin{gather*}
\Alg_{\etwo}(\Mod_R)\simeq \Alg_{\assoc}(\Alg_{\assoc}(\Mod_R))
\end{gather*}
(we can also use additivity theorem
to the equivalence on the right-hand side).

We~refer to an~object of~$\PR_R:=\Mod_{\Mod_R^\otimes}\left(\PR\right)$ as
an $R$-linear presentable $\infty$-category.
Note that the underlying
$\infty$-category of~an $R$-linear presentable $\infty$-cartegory
is stable.

\begin{Lemma}
\label{linearfunctorcategory}
Let $\DDD$ be an~$R$-linear presentable $\infty$-category.
Let $\otimes_R\colon \PR_R\times \PR_R\to \PR_R$ be the tensor product functor.
There exists a~morphism object from $\DDD$ to itself $($i.e., an~internal hom object$)$
$\operatorname{\mathcal{M}{\rm or}}_R(\DDD,\DDD)\in \PR_R$
equipped with $e\colon \operatorname{\mathcal{M}{\rm or}}_R(\DDD,\DDD)\otimes_R \DDD\to \DDD$
for $\DDD$.
Moreover, $(\operatorname{\mathcal{M}{\rm or}}_R(\DDD,\DDD),e)$ is
promoted to an~object of~$\mathcal{E}\in \Alg_{\assoc}\left(\PR_R\right)$
together with a~left module action $\mathcal{E}\otimes_R \DDD\to \DDD$.
\end{Lemma}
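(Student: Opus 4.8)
The plan is to obtain $\operatorname{\mathcal{M}{\rm or}}_R(\DDD,\DDD)$ as an internal hom object for a closed symmetric monoidal structure on $\PR_R$, and then to upgrade it to an associative algebra by invoking Lurie's construction of endomorphism algebras \cite[Section~4.7.1]{HA}. First I would observe that $\PR$ is closed symmetric monoidal: the internal hom of $\DDD,\DDD'\in\PR$ is $\LFun(\DDD',\DDD)$, which is essentially recorded by the universal property of $\DDD\otimes\DDD'$ displayed above. Since $\Mod_R^\otimes$ is a commutative algebra object of $\PR$ and $\PR$ has all small colimits, the $\infty$-category $\PR_R=\Mod_{\Mod_R^\otimes}(\PR)$ again carries a symmetric monoidal structure for which $\otimes_R$ preserves small colimits separately in each variable, and it is again closed (cf.~\cite[Sections~4.5 and~4.8.3]{HA} on module categories and their symmetric monoidal structures). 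Write $\operatorname{\mathcal{M}{\rm or}}_R(\DDD,\DDD)$ for the internal hom from $\DDD$ to $\DDD$ and $e\colon\operatorname{\mathcal{M}{\rm or}}_R(\DDD,\DDD)\otimes_R\DDD\to\DDD$ for the evaluation morphism. By construction $\operatorname{\mathcal{M}{\rm or}}_R(\DDD,\DDD)$ corepresents the functor $\mathcal{E}'\mapsto\Map_{\PR_R}(\mathcal{E}'\otimes_R\DDD,\DDD)$, and $e$ is the map corresponding to the identity of $\operatorname{\mathcal{M}{\rm or}}_R(\DDD,\DDD)$.

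For the algebra structure I would regard the monoidal $\infty$-category underlying $\PR_R^\otimes$ as left-tensored over itself. Then $\DDD$ becomes an object of a left-tensored $\infty$-category which admits a morphism object in the sense of \cite[Section~4.7.1]{HA}, namely the object $\operatorname{\mathcal{M}{\rm or}}_R(\DDD,\DDD)$ produced above, the required corepresentability being exactly the universal property noted at the end of the previous paragraph. Consequently \cite[Section~4.7.1]{HA} endows $\operatorname{\mathcal{M}{\rm or}}_R(\DDD,\DDD)$ with a canonical associative algebra structure, producing $\mathcal{E}\in\Alg_{\assoc}(\PR_R)$ which is terminal among associative algebras of $\PR_R$ equipped with a compatible action on $\DDD$; moreover $\DDD$ is simultaneously promoted to a left $\mathcal{E}$-module object of $\PR_R$ whose structure morphism $\mathcal{E}\otimes_R\DDD\to\DDD$ refines $e$. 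This is precisely the asserted conclusion.

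The conceptual content lies entirely in the cited machinery, so the residual work is bookkeeping; the one genuine point needing care is set-theoretic. Since $\PR$ is not a small $\infty$-category, treating it as closed symmetric monoidal, forming module categories over $\Mod_R^\otimes$, and running the endomorphism-algebra construction all require fixing a hierarchy of universes and checking that the relevant constructions are stable under the necessary enlargements, as in \cite[Section~5.5.3]{HTT} and \cite[Section~4.8.1]{HA}. A minor secondary point to record is that the ``morphism object'' of \cite[Section~4.7.1]{HA}, defined through corepresentability of a space-valued functor, agrees with the internal hom coming from the closed monoidal structure on $\PR_R$; this is immediate by comparing universal properties, but it is what links the two halves of the argument.
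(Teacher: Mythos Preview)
Your treatment of the second assertion is exactly the paper's: both invoke \cite[Corollaries~4.7.1.40 and~4.7.1.41]{HA} to pass from the existence of a morphism object to the endomorphism algebra structure together with its left module action on $\DDD$.

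The difference is in the first assertion. You assert that $\PR_R=\Mod_{\Mod_R^\otimes}(\PR)$ is closed symmetric monoidal and cite \cite[Sections~4.5 and~4.8.3]{HA}. Those sections give the symmetric monoidal structure and the fact that $\otimes_R$ preserves colimits, but they do not state the closedness of $\PR_R$; nor can one simply invoke the adjoint functor theorem, since $\PR_R$ is not presentable. The paper instead \emph{constructs} $\operatorname{\mathcal{M}{\rm or}}_R(\DDD,\DDD')$ directly: when $\DDD=F(\CCC)=\CCC\otimes\Mod_R$ is free over $\PR$, one checks that $\LFun(\CCC,\DDD')$ (with its $R$-action inherited from $\DDD'$) represents the required functor; for general $\DDD$, one writes $\DDD$ as a colimit of free objects (e.g., via the bar resolution for the monad $U\circ F$) and takes the corresponding limit of morphism objects. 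This is precisely the standard proof that module categories over a commutative algebra in a closed symmetric monoidal $\infty$-category are again closed, and it is what your citation is implicitly relying on.

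So your argument is correct, but what you call ``bookkeeping'' is in fact the substance of the paper's proof. The set-theoretic point you raise is real and not merely a matter of universe hygiene: after enlarging universes one must still verify that the internal hom produced actually lies in $\PR_R$ rather than in some enlargement, and the paper's explicit free-resolution construction is exactly what pins this down. If you want to keep your high-level formulation, you should either supply the bar-resolution argument yourself or give a precise reference for the closedness of $\Mod_A(\PR)$ for $A\in\CAlg(\PR)$.
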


\begin{proof}
According to~\cite[Corollaries~4.7.1.40 and 4.7.1.41]{HA},
the second assertion follows from the first assertion.
We~will show the existence of~a~morphism object $\operatorname{\mathcal{M}{\rm or}}_R(\DDD,\DDD)$.
Recall that a~morphism object for $\DDD$ and $\DDD'$
is an~$R$-linear presentable
$\infty$-category $\CCC$ together with
a~morphism $\CCC\otimes_R \DDD\to \DDD'$ such that the composition
induces
an equivalence
\begin{gather*}
\Map_{\PR_R}(\FF,\CCC)\simeq \Map_{\PR_R}(\FF\otimes_R \DDD,\DDD')
\end{gather*}
for each $\FF\in \PR_R$,
which informally carries $\FF\to \CCC$ to $\FF\otimes_R \DDD \to \CCC\otimes_R\DDD\to \DDD$.
We~first consider the case where $R$ is the sphere spectrum $\SSSS$.
Let $\LFun(\DDD,\DDD')$ be the full subcategory of~$\Fun(\DDD,\DDD')$
that consists of~colimit-preserving functors.
Then $\LFun(\DDD,\DDD')$ together with the
evaluation functor
$\LFun(\DDD,\DDD')\times \DDD\to \DDD'$ exhibits
$\LFun(\DDD,\DDD')$ as an~internal hom object.
Thus we have a~morphism object $\LFun(\DDD,\DDD')$.
Next, we consider the general case.
Let $F\colon \PR\rightleftarrows \Mod_{\Mod_R^\otimes}\left(\PR\right)\,{\colon}\!U$ be
an adjunction which consists of~the forgetful functor $U$
and the free functor $F$ given informally by~$\CCC\mapsto \CCC\otimes \Mod_R$.
Here $\otimes$ indicates the tensor product in $\PR$.
If~we suppose that $\DDD$ is a~free object, i.e. $\DDD=F(\CCC)=\CCC\otimes \Mod_R$, then there is a~morphism object for $\DDD$ and $\DDD'$.
Indeed, we observe that
$\operatorname{\mathcal{M}{\rm or}}_R(\DDD,\DDD')=\LFun(\CCC,\DDD')$ together with
\begin{gather*}
\LFun(\CCC,\DDD')\otimes_R(\CCC\otimes\Mod_R) \simeq \LFun(\CCC,\DDD') \otimes\CCC \to \DDD'
\end{gather*}
constitutes a~morphism object for $\DDD$ and $\DDD'$.
To prove that
\begin{gather*}
\theta\colon\ \Map_{\PR_R}\big(\PPP,\LFun\big(\CCC,\DDD'\big)\big)\to \Map_{\PR_R}(\PPP\otimes_R(\CCC\otimes \Mod_R),\DDD')
\end{gather*}
is an~equivalence,
we may and will assume that $\PPP$ is a~free object
since the
tensor operation functor $\otimes_R$ preserves small colimits separately in
each variable (see the proof~of~\cite[Proposition~5.1.2.9]{HA}), and
$\PPP$ is a~(small) colimit of~the diagram of~free objects:
for example, using the adjunction $(F,U)$ we have a~simplicial diagram
of~free objects whose colimit is $\PPP$.
When $\PPP=\CCC'\otimes \Mod_R$,
by~the adjunction we see that $\theta$ is an~equivalence.
We~put $\DDD=\colim_{i\in I}\DDD_i$, where each $\DDD_i$ is a~free object.
Then for any $\PPP\in \PR_R$ there exist natural equivalences
\begin{gather*}
\Map_{\PR_R}(\PPP\otimes_R(\colim_{i\in I} \DDD_i),\DDD') \simeq \Map_{\PR_R}(\colim_{i\in I}(\PPP\otimes_R\DDD_i),\DDD')
\\ \hphantom{\Map_{\PR_R}(\PPP\otimes_R(\colim_{i\in I} \DDD_i),\DDD')}
{}\simeq \lim_{i\in I}\Map_{\PR_R}(\PPP\otimes_R\DDD_i,\DDD')
\\ \hphantom{\Map_{\PR_R}(\PPP\otimes_R(\colim_{i\in I} \DDD_i),\DDD')}
{}\simeq \lim_{i\in I}\Map_{\PR_R}(\PPP,\operatorname{\mathcal{M}{\rm or}}_R(\DDD_i,\DDD'))
\\ \hphantom{\Map_{\PR_R}(\PPP\otimes_R(\colim_{i\in I} \DDD_i),\DDD')}
{}\simeq \Map_{\PR_R}(\PPP,\lim_{i\in I}\operatorname{\mathcal{M}{\rm or}}_R(\DDD_i,\DDD')).
\end{gather*}
Hence there exists a~morphism object $\operatorname{\mathcal{M}{\rm or}}_R(\DDD,\DDD')$, that is, $\lim_{i\in I}\operatorname{\mathcal{M}{\rm or}}_R(\DDD_i,\DDD')$.
\end{proof}

\begin{Remark}
Let $\operatorname{\mathcal{M}{\rm or}}_R(\DDD,\DDD')^{\simeq}$
be the largest Kan subcomplex
of~the underlying $\infty$-category of~$\operatorname{\mathcal{M}{\rm or}}_R(\DDD,\DDD')$, which we regarded as an~object in $\SSS$. By the above proof,
$\operatorname{\mathcal{M}{\rm or}}_R(\DDD,\DDD')^{\simeq}$ is~equivalent to the mapping space $\Map_{\PR_R}(\DDD,\DDD')$.
\end{Remark}

We~shall write $\operatorname{\mathcal{E}{\rm nd}}_R(\DDD)$ for $\mathcal{E} \in \Alg_{\assoc}\left(\PR_R\right)$.

\begin{Definition}
\label{cohomologydef}
Let $\DDD$ be an~$R$-linear presentable $\infty$-category.
Applying $E\colon \Alg_{\assoc}\left(\PR_R\right)\to \Alg_{\etwo}(\Mod_R)$,
we define the Hoch\-schild cohomology $R$-module spectrum of~$\DDD$
to be
\begin{gather*}
\HH^\bullet_R(\DDD):=E(\operatorname{\mathcal{E}{\rm nd}}_R(\DDD))
\end{gather*}
in $\Alg_{\etwo}(\Mod_R)$.
We~often abuse notation by~identifying $\HH^\bullet_R(\DDD)$
with its image in $\Mod_R$.
If~no confusion can arise, we write $\HH^\bullet(\DDD)$ for $\HH^\bullet_R(\DDD)$.
\end{Definition}

Let $\ST$ be the $\infty$-category of~small
stable idempotent-complete $\infty$-categories
whose morphisms are exact functors.
Let $\CCC$ be a~small
stable idempotent-complete $\infty$-category
and let
$\Ind(\CCC)$ denote the $\infty$-category of~Ind-objects.
Then $\Ind(\CCC)$ is a~compactly generated stable $\infty$-category.
The inclusion $\CCC\to \Ind(\CCC)$ identifies
the essential image with the full subcategory $\Ind(\CCC)^\omega$ spanned by~compact obejcts in $\Ind(\CCC)$.
Given $\CCC,\CCC'\in \ST$, if
we write $\Fun^{\textup{ex}}(\CCC,\CCC')$ for the full subcategory
spanned by~exact functors,
the left Kan extension \cite[Proposition~5.3.5.10]{HTT} gives rise to
a fully faithful functor $\Fun^{\textup{ex}}(\CCC,\CCC')\to \Fun^{\textup{L}}(\Ind(\CCC),\Ind(\CCC'))$ whose essential image consists of~those functors that carry $\CCC$ to $\CCC'$.
We~set $\PR_{\textup{St}}=\Mod_{\SP^\otimes}\left(\PR\right)$,
which can be regarded as the full subcategory of~$\PR$
that consists of~stable presentable $\infty$-categories.
The assignment $\CCC\mapsto \Ind(\CCC)$
identifies $\ST$ with the subcategory
of~$\PR_{\textup{St}}$ whose objects are
compactly generated stable $\infty$-categories,
and whose morphisms are those functors that preserve compact objects.
The $\infty$-category $\ST$ inherits a~symmetric monoidal structure
from the structure on $\PR_{\textup{St}}$.
The stable $\infty$-category of~compact spectra is a~unit object in $\ST$.
Given two objects $\CCC$ and $\CCC'$ of~$\ST$, the tensor product
$\CCC\otimes \CCC'$ is naturally equivalent to
the full subcategory
$(\Ind(\CCC)\otimes \Ind(\CCC'))^{\omega}\subset \Ind(\CCC)\otimes \Ind(\CCC')$ spanned by~compact objects.
If~we let $\textup{Cgt}_{\textup{St}}^{\textup{L}}$ denote the full subcategory of~$\PR_{\textup{St}}$ spanned by~compactly generated stable
$\infty$-categories, then we have a~sequence
\begin{gather*}
\ST\to \textup{Cgt}_{\textup{St}}^{\textup{L}} \subset \PR_{\textup{St}},
\end{gather*}
where $\textup{Cgt}_{\textup{St}}^{\textup{L}} \subset \PR_{\textup{St}}$ is closed under
the tensor product so that
$\textup{Cgt}_{\textup{St}}^{\textup{L}}$ inherits a~symmetric monoidal structure from the
structure on $\PR_{\textup{St}}$,
and the left arrow is a~symmetric monoidal faithful functor given by~$\CCC\mapsto \Ind(\CCC)$.
In~$\textup{Cgt}_{\textup{St}}^{\textup{L}}$, every object is dualizable.
For more details, we~refer the readers to~\cite[Section~3]{BGT1}, \cite[Section~4.8]{HA}.

Consider $\RMod_A$ for $A\in \Alg_{\assoc}(\Mod_R)$.
We~let $\RPerf_{A}$ be the full subcategory
of~$\RMod_{A}$ spanned by~compact objects.
This subcategory is the smallest stable subcategory which
contains $A$ (regarded as a~right module)
and is closed under retracts.
When~$A$ belongs to $\CAlg(\Mod_R)$,
we write $\Perf_A$ for $\RPerf_{A}$.
In~this case, $\Perf_A$ is closed under taking tensor product
so that it inherits a~symmetric monoidal structure from
that of~$\Mod_A^\otimes$.
We~usually regard the symmetric monoidal $\infty$-category
$\Perf^\otimes_R$ as an~object of~$\CAlg(\ST)$, and
we write $\ST_R$ for $\Mod_{\Perf_R^\otimes}(\ST)$.
We~refer to an~object of~$\ST_R$
as a~small $R$-linear stable $\infty$-category.
Since $\Mod_R^\otimes\simeq \Ind(\Perf_R)^\otimes$,
there is a~natural symmetric monoidal functor $\ST_R\to \PR_R=\Mod_{\Mod_R^\otimes}\left(\PR\right)$ which carries $\CCC$ to~$\Ind(\CCC)$.

\begin{Definition}\label{cohomologydefsmall}
Given $\CCC\in \ST_R$,
we define the Hoch\-schild cohomology $R$-module spectrum $\HH^\bullet_R(\CCC)$
to be
$\HH_R^\bullet(\Ind(\CCC))$.
If~no confusion can arise, we write $\HH^\bullet(\CCC)$ for $\HH^\bullet_R(\CCC)$.
\end{Definition}

\section{Hochschild homology}\label{homologysec}

Let $R$ be a~commutative ring spectrum.
Suppose that we are given a~small $R$-linear stable \mbox{$\infty$-cate\-gory}
$\CCC$.
In~this section, we assign to $\CCC\in \ST_R$ the Hoch\-schild homology $R$-module
spectrum $\HH_{\bullet}(\CCC)\in \Mod_R$.
For the main purpose of~this paper,
we require the following additional structures:

\begin{itemize}\itemsep=0pt
\item the $R$-module spectrum $\HH_{\bullet}(\CCC)$ has
an action of~the circle $S^1$.
Namely, $\HH_{\bullet}(\CCC)$ is promoted to an~object
of~$\Fun\big(BS^1,\Mod_R\big)$, and the assignment $\CCC \mapsto \HHH(\CCC)$
gives rise to a~functor $\ST_R\to \Fun\big(BS^1,\Mod_R\big)$,

\item if $\Fun\big(BS^1,\Mod_R\big)$ is equipped with
a pointwise symmetric monoidal strcuture induced by~that of~$\Mod_R$,
then the above functor $\ST_R\to \Fun\big(BS^1,\Mod_R\big)$
is promoted to
a~symmetric monoidal functor from $\ST_R$ to $\Fun\big(BS^1,\Mod_R\big)$.
\end{itemize}
To this end,
we will use enriched models of~stable idempotent-complete
$\infty$-categories, i.e., spectral categories.

{\it Symmetric spectra.}
We~give a~minimal review of~the theory of~symmetric spectra,
introduced and developed
in~\cite{HSS}.
This theory provides a~nice foundation of~the homotopy theory of~highly structured ring spectra as well as a~theoretical basis
for spectral categories.
We~let $\SPS$ be the closed symmetric monoidal category of~symmetric spectra.
We~write $\mathbb{S}$ for the unit object which we call the sphere spectrum.
We~use the notation slightly different from \cite{HSS, S}:
$\mathbb{S}$ is $S$ in~\cite{HSS}.
We~use a~symmetric monoidal proper combinatorial model category structure on
$\SPS$ satisfying the monoid axiom in the sense of~\cite[Definition 3.3]{SS},
in which a~weak equivalence is a~stable equivalence.
There are several versions of~such model structures.
We~here focus on two model structures.
One is described in~\cite[Theorems~3.4.4 and~5.4.2, Corollaries~5.3.8 and~5.5.2]{HSS}
which is called the stable model structure.
In~\cite{S}, it is proved that there is
another model structure called the stable $\mathbb{S}$-model structure.
The difference (relevant to us)
between stable model structure and stable $\mathbb{S}$-model structure
 is that
cofibrations in the stable model structure \cite[Theorem~3.4.4]{HSS}
are contained in the class of~cofibrations in
the stable $\mathbb{S}$-model structure while
both have the same class of~weak equivalences.
Let $\CAlg\big(\SPS\big)$ denote the category of~commutative algebra objects
in $\SPS$. We~refer to an~object of~$\CAlg\big(\SPS\big)$
as a~commutative symmetric ring spectrum.
The category $\CAlg\big(\SPS\big)$ admits a~model category structure:
we use the model structure on $\CAlg\big(\SPS\big)$, defined in~\cite[Theorem~3.2]{S} in which a~morphism is a~weak equivalence if the underlying morphism
in $\SPS$ is a~stable
equivalence.
The stable $\mathbb{S}$-model structure on $\SPS$ has
the following pleasant property:
if $\RR$ is a~cofibrant object in $\CAlg\big(\SPS\big)$, then
the underlying object $\RR$ in $\SPS$ is cofibrant
with respect to the stable $\mathbb{S}$-model structure,
see \cite[Section~4]{S}.

Let $\RR$ be a~commutative symmetric ring spectrum,
which we think of~as a~model of~$R\in \CAlg(\SP)$.
Unless otherwise stated, we assume that
$\RR$ is cofibrant in $\CAlg\big(\SPS\big)$.
We~let $\SPS(\RR)$ denote the category of~$\RR$-module objects
in $\SPS$, which is endowed with the natural symmetric monoidal
structure induced by~the structure on $\SPS$.
In~virtue of~\cite[Theorem~2.6]{S} (or \cite[Theorem~4.1]{SS}),
there is a~combinatorial symmetric monoidal
projective model structure on $\SPS(\RR)$ satisfying the monoid axiom,
in which a~morphism is a~weak equivalence (resp.~a fibration)
if the underlying morphism in $\SPS$ is a~stable equivalence
(resp.~a fibration with respect to stable $\mathbb{S}$-model structure).
We~refer to this model structure as the stable $\RR$-model structure.

\begin{Definition}
Let $\RR$ be a~commutative symmetric ring spectrum.
An $\RR$-spectrum category is a~category
enriched over $\SPS(\RR)$.
More explicitly,
a (small) $\RR$-spectrum category $\AAA$ consists of~the data:
\begin{itemize}\itemsep=0pt

\item A (small) set of~objects,

\item An $\RR$-module symmetric spectrum $\AAA(X,Y)\in \SPS(\RR)$ for each ordered pair of~objetcs $(X,Y)$,

\item The composition law $\AAA(Y,Z)\wedge_{\RR} \AAA(X,Y)\to \AAA(X,Z)$
satisfying the standard associativity axiom,

\item $\mathbb{S}\to \mathcal{A}(X,X)$ for each object $X$ that
satisfies the standard unit axiom.

\end{itemize}
Here $\wedge_{\RR}$ denotes the wedge product
over $\RR$, which defines the tensor product in $\SPS(\RR)$.
A~fun\-c\-tor of~$\RR$-spectral categories is an~enriched functor,
that is, a~functor as enriched categories.
We~refer to them as $\RR$-spectral functors.
We~write $\CAT_{\RR}$ for
the category of~$\RR$-spectral categories whose morphisms
are $\RR$-spectral functors.
We~refer to an~$\SSSS$-spectral category
(resp.~an $\SSSS$-spectral functor)
as a~spectral category (resp.~a spectral functor). We~write $\wedge$ for $\wedge_{\SSSS}$.
\end{Definition}

Thanks to works \cite[Corollary~2.4]{BGT1}, \cite[Theorem~1.1]{Muro}, \cite[Theorem~7.25]{Tab},
$\CAT_{\RR}$ admits a~combinatorial model structure
whose weak equivalences are Dwyer--Kan equivalences (DK-equivalences for short).
See, e.g., \cite[Definition 2.1]{BGT1} for DK-equivalences.

Let us recall the notion of~Morita equivalences in the context
of~spectral categories, see
\cite[Sections 2 and~4]{BGT1} for an~excellent account.
Let $\AAA$ be a~small spectral category and
let $\Fun_{\SSSS}\left(\AAA^{\rm op},\SPS\right)$ be the spectral
category of~spectral functors.
There is a~combinatorial spectral model structure
where the class of~weak equivalences (resp.~fibrations)
are objectwise stable equivalences
(resp.~objectwise fibrations with respect to the stable $\mathbb{S}$-model structure) \cite[Appendix]{SS2}.
The enriched Yoneda embedding
$\AAA\to \Fun_{\SSSS}\left(\AAA^{\rm op},\SPS\right)$
is contained in the full subcategory of~cofibrant objects.
If~we replace $\AAA$ by~a fibrant object in $\CAT_{\SSSS}$,
the embedding lands in
the full subcategory $\Fun_{\SSSS}\left(\AAA^{\rm op},\SPS\right)^{\rm cf}$
that consists of~cofibrant and fibrant objects.
Let $F\colon \AAA\to \BBB$ be a~spectral functor of~spectral categories.
Then we have a~Quillen adjunction
\begin{gather*}
F_!\colon\ \Fun_{\SSSS}\left(\AAA^{\rm op},\SPS\right)\rightleftarrows \Fun_{\SSSS}\left(\BBB^{\rm op},\SPS\right)\ {\colon}\!F^*,
\end{gather*}
where $F^*$ is determined by~the composition with
$\AAA^{\rm op}\to \BBB^{\rm op}$, see \cite[Appendix]{SS2}.
Let $\DDD(\AAA)$ be the homotopy category of~$\DDD^{\Sigma}(\AAA):=\Fun_{\SSSS}\left(\AAA^{\rm op},\SPS\right)^{\rm cf}$. It~constitutes a~triangulated category.
Let $\DDD_{\rm pe}(\AAA)$ be the smallest thick subcategory of~$\DDD(\AAA)$
that contains the image of~$\AAA$ under the Yoneda embedding.
The subscript ``pe" stands for ``perfect".
We~write $\DDD^{\Sigma}_{\rm pe}(\AAA)$ for the full subcategory
of~$\Fun_{\SSSS}\left(\AAA^{\rm op},\SPS\right)^{\rm cf}$ spanned by~those
objects that belong to $\DDD_{\rm pe}(\AAA)$.
If~$F\colon \AAA\to \BBB$ is a~spectral functor,
we have
the induced (left-derived) functor $\mathbb{L}F_!\colon \DDD(\AAA)\to \DDD(\BBB)$.
Since
$\mathbb{L}F_!$ is an~exact functor of~triangulated categories,
it follows that the restrcition of~$\mathbb{L}F_!$ induces
$\mathbb{L}F_!\colon \DDD_{\rm pe}(\AAA)\to \DDD_{\rm pe}(\BBB)$.

\begin{Definition}
We~say that a~spectral functor $F\colon \AAA\to \BBB$ is a~Morita equivalence
if the induced functor $\mathbb{L}F_!\colon \DDD_{\rm pe}(\AAA)\to \DDD_{\rm pe}(\BBB)$
is an~equivalence of~categories.
When $F\colon \AAA\to \BBB$ is an~$\RR$-spectral functor,
$F$ is said to be a~Morita equivalence
if $F$ is a~Morita equivalence as a~spectral functor.
\end{Definition}

Let $\RR$ be a~commutative symmetric ring spectrum.
Let us recall the tensor product of~$\RR$-spectral categories.
Suppose that we are given $\AAA, \BBB\in \CAT_{\RR}$.
The tensor product $\AAA\wedge_\RR\BBB$ is defined by~the following data:
\begin{itemize}\itemsep=0pt

\item The set of~objects of~$\AAA\wedge_{\RR}\BBB$ is the set of~pairs
$(A,B)$, where $A$ is an~object of~$\AAA$, and $B$ is an~object
of~$\BBB$,

\item $\AAA\wedge_\RR\BBB((a,b),(a',b'))=\AAA(a,a')\wedge_{\RR}\BBB(b,b')$
for $(a,b)$, $(a',b')\in \AAA\wedge_\RR\BBB$.

\end{itemize}
This tensor product determines a~symmetric monoidal structure
on $\CAT_{\RR}$.
A unit object is defined as follows:
Let $B\RR$ be the spectral category which has a~single object $\ast$
together with the morphism ring spectrum $B\RR(\ast,\ast)=\RR$.
The composition $\RR\wedge\RR\to \RR$ and the unit
$\mathbb{S}\to \RR$ are determined by~the algebra structure
on $\RR$ in an~obvious way. Clearly, $B\RR$
is a~unit object in $\CAT_{\RR}$.
Since $\RR$ is commutative, we can also think of~$B\RR$ as a~symmetric monoidal
spectral category. Namely, it is a~commutative algebra object
in the symmetric monoidal category $\CAT_{\SSSS}$.
Note that an~$\RR$-spectral category $\AAA$ is
regarded as a~$B\RR$-module in $\SPS$.
Namely, there is a~canonical equivalence of~categories
$\CAT_{\RR}\stackrel{\sim}{\rightarrow} \Mod_{B\RR}(\CAT_{\mathbb{S}})$,
where the target is the category of~$B\RR$-module objects
in $\CAT_{\mathbb{S}}$.

For technical reasons, we use the notion of~pointwise-cofibrant spectral categories, cf.~\cite[Sec\-tion~4]{BGT2}.
We~say that an~$\RR$-spectral category $\AAA$ is pointwise-cofibrant
if each morphism spectrum $\AAA(X,Y)$ is cofibrant in $\SPS(\RR)$
with respect to the stable $\RR$-model structure.
Using the same argument as that in the proof~in~\cite[Proposition~4.1]{BGT2},
we have:

\begin{Proposition}[\cite{BGT2}]\quad
\label{flatspectral}
\begin{enumerate}\itemsep=0pt
\renewcommand{\labelenumi}{(\roman{enumi})}

\item[{\rm (i)}] Every $\RR$-spectral category is functorially Morita equivalent to
a pointwise-cofibrant $\RR$-spec\-t\-ral category with the same objects.

\item[{\rm (ii)}] The subcategory of~pointwise-cofibrant $\RR$-spectral category is closed under the tensor pro\-duct.

\item[{\rm (iii)}] If~$\AAA$ is a~pointwise-cofibrant $\RR$-spectral category, the tensor operation $\AAA\wedge_\RR(-)$ preserves Morita equivalences and colimits.

\item[{\rm (iv)}] If~$\AAA$ and $\BBB$ are both pointwise-cofibrant $\RR$-spectral categories, then the $\AAA\wedge_{\RR}\BBB$ computes the derived tensor product.

\end{enumerate}
\end{Proposition}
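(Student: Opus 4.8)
The plan is to reduce all four assertions to the already-known case $\RR=\SSSS$, which is \cite[Proposition~4.1]{BGT2}, using the equivalence $\CAT_{\RR}\simeq \Mod_{B\RR}\big(\CAT_{\SSSS}\big)$ recalled above together with the facts that the stable $\RR$-model structure on $\SPS(\RR)$ is a symmetric monoidal model structure satisfying the monoid axiom and that its unit $\RR$ is cofibrant. Everything then goes through as in \emph{loc.\ cit.}, with the $B\RR$-module structure carried along.

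For part~(i), first I would fix a functorial cofibrant replacement $Q\colon\SPS(\RR)\to \SPS(\RR)$ for the stable $\RR$-model structure which is \emph{lax symmetric monoidal} and comes with a lax monoidal natural transformation $\epsilon\colon Q\Rightarrow\mathrm{id}$ that is an objectwise stable equivalence; such a $Q$ is produced by the standard comonad/bar resolution attached to a suitable adjunction, exactly as in the $\SSSS$-linear argument. Given $\AAA\in\CAT_{\RR}$, I would define $Q\AAA$ to have the same objects and morphism spectra $(Q\AAA)(X,Y):=Q(\AAA(X,Y))$; lax monoidality of $Q$ supplies composition maps $Q(\AAA(Y,Z))\wedge_{\RR}Q(\AAA(X,Y))\to Q\big(\AAA(Y,Z)\wedge_{\RR}\AAA(X,Y)\big)\to Q(\AAA(X,Z))$ and units $\RR\to Q\RR\to Q(\AAA(X,X))$, with the coherence axioms inherited from those of $Q$ and $\AAA$. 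Then $\epsilon$ gives a spectral functor $Q\AAA\to\AAA$ which is the identity on objects and an objectwise stable equivalence, hence a DK-equivalence and a fortiori a Morita equivalence; $Q\AAA$ is pointwise-cofibrant by construction, and the whole assignment is manifestly functorial in $\AAA$.

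Part~(ii) is immediate: for pointwise-cofibrant $\AAA$ and $\BBB$ one has $(\AAA\wedge_{\RR}\BBB)\big((a,b),(a',b')\big)=\AAA(a,a')\wedge_{\RR}\BBB(b,b')$, and the smash over $\RR$ of two cofibrant objects of $\SPS(\RR)$ is cofibrant since the model structure is symmetric monoidal and $\RR$ is cofibrant. For part~(iii), that $\AAA\wedge_{\RR}(-)$ preserves colimits I would check directly from the construction of colimits in $\CAT_{\RR}$, using that on object-sets it is a product with a fixed set and on morphism spectra it is $\AAA(X,Y)\wedge_{\RR}(-)$, a colimit-preserving functor on the closed symmetric monoidal category $\SPS(\RR)$; that it preserves Morita equivalences is the crux, and I would argue it as in \cite{BGT2}: pointwise-cofibrancy of $\AAA$ together with the monoid axiom forces $\AAA\wedge_{\RR}(-)$ to send objectwise stable equivalences of $\RR$-spectral modules to objectwise stable equivalences, and unwinding the definition of $\DDD_{\rm pe}$ this carries a Morita equivalence $\BBB\to\BBB'$ to a Morita equivalence $\AAA\wedge_{\RR}\BBB\to\AAA\wedge_{\RR}\BBB'$. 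Finally, part~(iv) combines (i) and (iii): by (iii), tensoring with a pointwise-cofibrant category preserves Morita equivalences in each variable separately, so $\AAA\wedge_{\RR}\BBB$ is invariant up to Morita equivalence under replacing either factor by another pointwise-cofibrant model; by (i) any $\RR$-spectral category admits such a replacement, which is exactly the statement that $\AAA\wedge_{\RR}\BBB$ computes the derived smash product.

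The step I expect to be the main obstacle is the construction in~(i) of a functorial replacement that is simultaneously lax symmetric monoidal, objectwise weakly equivalent to the identity, and valued in cofibrant objects, so that it can be applied hom-object-wise to produce a genuine $\RR$-spectral category with the same objects; this, together with the related monoid-axiom bookkeeping in~(iii) needed to see that mere pointwise-cofibrancy (rather than cofibrancy in $\CAT_{\RR}$) already makes the tensor operation homotopically well behaved on all of $\CAT_{\RR}$, is precisely the technical content of \cite[Proposition~4.1]{BGT2}, and the $\RR$-linear versions require no new ideas.
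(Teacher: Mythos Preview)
Your proposal is correct and matches the paper's approach exactly: the paper does not give an independent proof but simply states that the result follows by ``using the same argument as that in the proof~in~\cite[Proposition~4.1]{BGT2},'' which is precisely the reduction you describe. Your sketch in fact supplies more detail than the paper itself does.
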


We~denote by~$\CAT_{\SSSS}^{\rm pc}$ the category of~small pointwise-cofibrant
spectral categories.
By Proposition~\ref{flatspectral},
$\CAT_{\SSSS}^{\rm pc}$ admits a~symmetric monoidal structure
given by~tensor products, and the tensor products
preserves Morita equivalences in each variable.
Similarly, we denote by~$\CAT_{\RR}^{\rm pc}$ the category of~small
pointwise-cofibrant
$\RR$-spectral categories.

{\it Inverting morphisms.}
We~recall the notion of~$\infty$-categories obtained from an~$\infty$-category
endowed with a~set of~morphisms.
We~refer the readers to~\cite[Sections~1.3.4 and~4.1.3]{HA} for more details.
Let $\CCC$ be an~$\infty$-category.
Suppose that we are given a~set $S$ of~edges (morphisms)
(we~assume all equivalences are contained in $S$).
Then there exists
an $\infty$-category
$\CCC\big[S^{-1}\big]$ together with $\xi\colon \CCC\to \CCC\big[S^{-1}\big]$
such that for any $\infty$-category $\DDD$ the composition
induces a~fully faithful functor
\begin{gather*}
\Map\big(\CCC\big[S^{-1}\big],\DDD\big)\to \Map(\CCC,\DDD)
\end{gather*}
whose essential image consists of~those functors $F\colon \CCC\to\DDD$ which carry edges in $S$
to equivalences in $\DDD$.
We~shall refer to $\CCC\big[S^{-1}\big]$
as the $\infty$-category obtained from $\CCC$
by~inverting $S$. We~note that
$\CCC\big[S^{-1}\big]$ is generally not locally small even when $\CCC$ is so.
When $\CCC$ is an~ordinary category,
an explicit construction of~$\CCC\big[S^{-1}\big]$
is given by~the hammock localization \cite{DK}.
Let~$\CCC^\otimes$ be a~symmetric monoidal $\infty$-category.
Let~$S$ be a~set of~edges in~$\CCC$
such that all equivalences are contained in~$S$.
Assume that
for any object $C\in \CCC$ and any morphism $C_1\to C_2$ in $S$,
the induced morphisms $C\otimes C_1\to C\otimes C_2$
and $C_1\otimes C\to C_2\otimes C$
belong to $S$.
Then there exists a~symmetric monoidal $\infty$-category $\CCC\big[S^{-1}\big]^\otimes$
together with a~symmetric monoidal functor
$\tilde{\xi}\colon \CCC^\otimes\to \CCC\big[S^{-1}\big]^\otimes$ whose underlying functor is equivalent to $\xi$.
There is a~universal property: for any symmetric
monoidal $\infty$-category $\DDD^\otimes$
the composition induces a~fully faithful functor
$\Map^\otimes\big(\CCC\big[S^{-1}\big]^\otimes,\DDD^\otimes\big)\to \Map^\otimes\big(\CCC^\otimes,\DDD^\otimes\big)$
whose essential image consists of~those
functors $F\colon \CCC^\otimes\to\DDD^\otimes$ which carry morphisms in $S$
to equivalences in $\DDD$.
Here $\Map^\otimes(-,-)$ indicates the space of~symmetric monoidal functors.

\begin{Example}\label{spectralalgebra}
Let $\SPS(\RR)^c$ be the full subcategory that consists of~cofibrant objects. The ten\-sor product $\wedge_{\RR}$ given by~the wedge product over $\RR$ preserves cofibrant objects.
In~addition, if $C\in \SPS(\RR)^c$ and $f\colon C_1\to C_2$
is a~weak equivalence (i.e., stable equivalence), then
$C\wedge_{\RR}f$ is a~weak equivalence.
Consequently, we have the symmetric monoidal
$\infty$-category $\SPS(\RR)^c\big[W^{-1}\big]^\otimes$
obtained by~inverting weak equivalence.
In~the case of~$\RR=\mathbb{S}$,
by~the characterization of~$\SP^\otimes$ \cite[Corollary~4.8.2.19]{HA},
there is a~canonical (unique) symmetric monoidal equivalence
$\SPS(\mathbb{S})^c\big[W^{-1}\big]^\otimes\simeq \SP^\otimes$.
In~addition, if we denote $R$ by~the image of~$\RR$
in $\CAlg(\SP)$, then by~\cite[Theorem~4.3.3.17]{HA} there is a~canonical symmetric monoidal
equivalence
$\SPS(\RR)^c\big[W^{-1}\big]^\otimes$ $\stackrel{\sim}{\to} \Mod_R^\otimes(\SPS(\mathbb{S})^c[W^{-1}]^\otimes) \simeq \Mod_R^\otimes(\SP^\otimes)=\Mod_R^\otimes$.
Let $\Alg_{\assoc}\big(\SPS(\RR)^c\big)$ be the category of~associative
algebra objects in $\SPS(\RR)^c$, which is endowed with the symmetric
monoidal structure induced by~that of~$\SPS(\RR)^c$.
Then if $\Alg_{\assoc}\big(\SPS(\RR)^c\big)\big[W^{-1}\big]^\otimes$
denotes the associated symmetric monoidal
$\infty$-category
obtained by~inverting weak equivalences,
then
we have equivalences of~symmetric monoidal $\infty$-categories
\begin{gather*}
\Alg_{\assoc}\big(\SPS(\RR)^c\big)\big[W^{-1}\big]^\otimes\simeq
\Alg_{\assoc}^\otimes\big(\big(\SPS(\RR)^c\big)\big[W^{-1}\big]\big)\simeq \Alg_{\assoc}^\otimes(\Mod_R),
\end{gather*}
where the left equivalence follows from
the rectification result \cite[Theorem~4.1.8.4]{HA}.
In~particular, given an~associative algebra $A\in \Alg_{\assoc}(\Mod_R)$,
there is an~associative algebra $\mathbb{A}\in \Alg_{\assoc}\big(\SPS(\RR)^c\big)$
together with an~equivalence $\sigma\colon \mathbb{A}\simeq A$ in $\Alg_{\assoc}(\Mod_R)$. In~this case, we say that $\mathbb{A}$ (together with~$\sigma$) represents~$A$.
\end{Example}

\begin{Example}We~use the symbol $M$ to indicate the class of~Morita equivalences in
$\CAT_{\SSSS}^{\rm pc}$ or~$\CAT_{\RR}^{\rm pc}$.
By Proposition~\ref{flatspectral}, we can invert
the class of~Morita equivalences $M$ to obtain
symmetric monoidal $\infty$-categories $\CAT_{\SSSS}^{\rm pc}\big[M^{-1}\big]^\otimes$ and
$\CAT_{\RR}^{\rm pc}\big[M^{-1}\big]^\otimes$.
Thanks to multiplicative Morita theory \cite[Theorem~4.6]{BGT2},
there is a~canonical equivalence of~symmetric monoidal
\mbox{$\infty$-categories}
\begin{gather*}
\ST^{\otimes}\simeq \CAT_{\SSSS}^{\rm pc}\big[M^{-1}\big]^\otimes.
\end{gather*}
\end{Example}

\begin{Construction}\label{laxsymmetric}There is a~sequence of~symmetric monoidal $\infty$-categories
\begin{gather*}
\CAT_{\RR}^{\rm pc}\to \CAT_{\SSSS}^{\rm pc}\to \CAT_{\SSSS}^{\rm pc}\big[M^{-1}\big]
\end{gather*}
such that $\CAT_{\RR}^{\rm pc}$ and $\CAT_{\SSSS}^{\rm pc}$ are ordinary symmetric
monoidal categories, the left arrow is the forgetful
lax symmetric monoidal
functor,
and the right arrow is the canonical symmetric monoidal functor.
Consequently, if we write $R$
for the image of~$\RR$ in $\CAlg(\SP)$,
it gives rise to a~lax symmetric monoidal functor
\begin{gather*}
\pi\colon\ \CAT_{\RR}^{\rm pc}\to \Mod_{B\RR}\big(\CAT_{\SSSS}^{\rm pc}\big[M^{-1}\big]\big)\simeq \Mod_{\Perf_R}(\ST).
\end{gather*}
To obtain $\CAT_{\RR}^{\rm pc}\to \CAT_{\SSSS}^{\rm pc}$, we need to check that
the essential image of~$\CAT_{\RR}^{\rm pc}\to \CAT_{\SSSS}$
is contained in $\CAT_{\SSSS}^{\rm pc}$.
It~is enough to show that the forgetful functor $\SPS(\RR)\to \SPS(\mathbb{S})=\SPS$ carries cofibrant objects
to cofibrant objects in $\SPS$.
For this purpose, we recall that cofibrations in $\SPS$
with respect to the stable $\mathbb{S}$-model structure
is the smallest weakly saturated class \cite[Definition~A.1.2.2]{HTT} of~morphisms
that contains $\{\mathbb{S}\otimes i\}_{i\in \operatorname{Mon}}$, where $\operatorname{Mon}$ is
the class of~monomorphisms of~symmetric sequences, and
$\mathbb{S}\otimes i$ denotes the morphism of~symmetric spectrum
induced by~$i$, namely, $\mathbb{S}\otimes(-)$ is the left adjoint of~the forgetful functor from $\SPS$ to the category of~symmetric sequences, see~\cite{S}.
The class of~cofibrations in $\SPS(\RR)$
with respect to the stable $\mathbb{\RR}$-model structure
is the smallest weakly saturated class of~morphisms
containing $\{\RR\otimes i=\RR\wedge (\SSSS\otimes i)\}_{i\in \operatorname{Mon}}$.
Note that we assume that $\RR$ is a~cofibrant object in
$\CAlg\big(\SPS\big)$ so that the underlying object~$\RR$ is cofibrant in~$\SPS$.
It~follows that the underlying morphisms
$\RR\otimes i$ in $\SPS$ are cofibrations.
Since $\SPS(\RR)\to \SPS$ preserves colimits,
$\SPS(\RR)\to \SPS$ preserves cofibrations.
\end{Construction}

The following is a~rectification result for $\ST^\otimes_R$.

\begin{Proposition}
\label{symequivalence}
$\pi\colon \CAT_{\RR}^{\rm pc}\to \Mod_{B\RR}\big(\CAT_{\SSSS}^{\rm pc}\big[M^{-1}\big]\big)$ in Construction~\ref{laxsymmetric} induces
 a~symmetric monoidal equivalence
 \begin{gather*}
 \CAT_{\RR}^{\rm pc}\big[M^{-1}\big]^\otimes\simeq \Mod^\otimes_{B\RR}\big(\CAT_{\SSSS}^{\rm pc}\big[M^{-1}\big]\big).
 \end{gather*}
 In~particular, we have
 a~canonical symmetric monoidal equivalence
 \begin{gather*}
 \CAT_{\RR}^{\rm pc}\big[M^{-1}\big]^\otimes\simeq \Mod_{\Perf_R}^\otimes(\ST)=\ST_R^\otimes.
 \end{gather*}
\end{Proposition}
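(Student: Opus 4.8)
The plan is to deduce the $R$-linear statement from the already-established non-relative equivalence $\CAT_{\SSSS}^{\rm pc}[M^{-1}]^\otimes\simeq \ST^\otimes$ by viewing an $\RR$-spectral category as a $B\RR$-module in $\CAT_{\SSSS}$. First observe that the lax symmetric monoidal functor $\pi\colon\CAT_{\RR}^{\rm pc}\to \Mod_{B\RR}(\CAT_{\SSSS}^{\rm pc}[M^{-1}])\simeq \Mod_{\Perf_R}(\ST)$ of Construction~\ref{laxsymmetric} carries Morita equivalences to equivalences: a Morita equivalence of $\RR$-spectral categories is by definition one whose underlying spectral functor is a Morita equivalence, and an equivalence in a module category is detected on underlying objects. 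Hence $\pi$ factors through $\bar\pi\colon\CAT_{\RR}^{\rm pc}[M^{-1}]^\otimes\to \Mod^\otimes_{\Perf_R}(\ST)$, and the proposition is the assertion that $\bar\pi$ is a symmetric monoidal equivalence; the ``in particular'' is then immediate from the definition $\ST_R^\otimes=\Mod^\otimes_{\Perf_R}(\ST)$, once one notes that the image of $B\RR$ under $\CAT_{\SSSS}^{\rm pc}[M^{-1}]\simeq \ST$ is $\Perf_R$ (this is Morita theory for the one-object spectral category with endomorphism ring $\RR\simeq R$: its category of perfect modules is $\Perf_R$).

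The one genuine ingredient is a commutation statement: inverting Morita equivalences commutes with the formation of $B\RR$-module categories, i.e.\ $\Mod_{B\RR}(\CAT_{\SSSS}^{\rm pc})[M_{B\RR}^{-1}]\simeq \Mod_{B\RR}(\CAT_{\SSSS}^{\rm pc}[M^{-1}])$, where $M_{B\RR}$ denotes the $B\RR$-linear morphisms whose underlying spectral functor lies in $M$. Granting this, and using that the equivalence $\CAT_{\RR}\simeq \Mod_{B\RR}(\CAT_{\SSSS})$ restricts --- after inverting Morita equivalences and invoking Proposition~\ref{flatspectral}(i) to pass to pointwise-cofibrant representatives --- to $\CAT_{\RR}^{\rm pc}[M^{-1}]\simeq \Mod_{B\RR}(\CAT_{\SSSS}^{\rm pc})[M_{B\RR}^{-1}]$, one identifies $\CAT_{\RR}^{\rm pc}[M^{-1}]$ with $\Mod_{B\RR}(\ST)=\Mod_{\Perf_R}(\ST)$ on underlying $\infty$-categories. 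I would prove the commutation by monadicity: the forgetful functor $\Mod_{B\RR}(\CAT_{\SSSS}^{\rm pc})\to \CAT_{\SSSS}^{\rm pc}$ is monadic with monad $B\RR\wedge(-)$, which preserves Morita equivalences since $B\RR$ is pointwise-cofibrant (Proposition~\ref{flatspectral}(ii)--(iii)); hence it descends to a monadic adjunction between the localizations whose monad is $\Perf_R\otimes(-)$ on $\ST$, the algebras over which are exactly $\Perf_R$-modules. Alternatively --- and this route delivers the symmetric monoidal refinement at once --- one can use that $\CAT_{\SSSS}$ and $\CAT_{\RR}$ carry Morita symmetric monoidal model structures \cite{BGT1,Muro,Tab} with $B\RR$ a (cofibrant) commutative algebra object of $\CAT_{\SSSS}$, and apply the rectification comparison \cite[Theorems~4.3.3.17 and~4.1.8.4]{HA}, exactly as in Example~\ref{spectralalgebra}.

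It remains to promote the underlying equivalence to a symmetric monoidal one. Since $\CAT_{\RR}^{\rm pc}[M^{-1}]^\otimes$ is the symmetric monoidal localization of $\CAT_{\RR}^{\rm pc}$, it suffices to check that the lax structure maps $\bar\pi(\AAA)\otimes_{\Perf_R}\bar\pi(\BBB)\to \bar\pi(\AAA\wedge_{\RR}\BBB)$ are equivalences for pointwise-cofibrant $\AAA,\BBB$; but $\AAA\wedge_{\RR}\BBB$ computes the derived tensor product (Proposition~\ref{flatspectral}(iv)), and under $\CAT_{\SSSS}^{\rm pc}[M^{-1}]\simeq \ST$ the operation $B\RR\wedge(-)$ corresponds to $\Perf_R\otimes(-)$, so this reduces to the non-relative multiplicative Morita theorem \cite[Theorem~4.6]{BGT2}. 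The main obstacle is precisely this bookkeeping: matching the relative tensor over $B\RR$ inside the localized module category with the $\ST_R$-tensor product, and arranging that the monadicity/rectification comparison is carried out compatibly with the symmetric monoidal structures rather than merely on underlying $\infty$-categories. Once that is in place, $\bar\pi$ is a symmetric monoidal functor which is an equivalence on underlying categories, hence a symmetric monoidal equivalence.
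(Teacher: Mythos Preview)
Your outline is broadly correct and follows the same two-step skeleton as the paper (establish the underlying equivalence, then upgrade to a symmetric monoidal one), but the execution of each step differs from what the paper does.

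For the underlying equivalence, the paper factors exactly as you do through $\Mod_{B\RR}(\CAT_{\SSSS}^{\rm pc})[M^{-1}]$, but rather than arguing by monadicity or by invoking \cite[Theorem~4.3.3.17]{HA}, it simply cites \cite[Theorem~5.1]{C} for the right-hand equivalence $\Mod_{B\RR}(\CAT_{\SSSS}^{\rm pc})[M^{-1}]\simeq \Mod_{B\RR}(\CAT_{\SSSS}^{\rm pc}[M^{-1}])$. Your monadicity route is a reasonable alternative, though to make it rigorous you would need to verify the Barr--Beck conditions for the \emph{localized} adjunction, not just the strict one; in practice this is where Cohn's argument does the work.

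The more substantive divergence is in the symmetric monoidal step. You propose to deduce that $\bar\pi(\AAA)\otimes_{\Perf_R}\bar\pi(\BBB)\to\bar\pi(\AAA\wedge_\RR\BBB)$ is an equivalence by matching bar-type relative tensor products under the non-relative equivalence \cite[Theorem~4.6]{BGT2}. The paper instead argues directly: it writes $\AAA$ (and then $\BBB$) as a filtered colimit of full subcategories $\AAA_\lambda$ each Morita equivalent to a one-object spectral category $B\mathbb{A}$, uses that both $\otimes_{\Perf_R}$ in $\ST_R$ and $\wedge_\RR$ preserve filtered colimits, and thereby reduces to the case $\AAA=B\mathbb{A}$, $\BBB=B\mathbb{B}$; there the claim is the concrete identification $(\RMod_A\otimes_R\RMod_B)^\omega\simeq(\RMod_{A\otimes_R B})^\omega$ from \cite[Proposition~4.1(2)]{BFN} or \cite[Theorem~4.8.5.16]{HA}. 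Your abstract route is cleaner conceptually, but the step ``$\AAA\wedge_\RR\BBB$ computes the derived tensor (Proposition~\ref{flatspectral}(iv)), hence agrees with the $\infty$-categorical relative tensor over $\Perf_R$'' is exactly the bookkeeping you flag as the obstacle, and it is not supplied by \cite[Theorem~4.6]{BGT2} alone; you would need to know that the strict quotient $\wedge_\RR$ models the bar construction in $\ST$ after Morita localization. The paper sidesteps this by the filtered-to-algebras reduction, which trades the general rectification statement for an explicit check in the generating case.
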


\begin{proof}The underlying functor is an~equivalence of~$\infty$-categories.
In~fact, there are sequence of~categorical equivalences
\begin{gather*}
\CAT_{\RR}^{\rm pc}\big[M^{-1}\big]\to \Mod_{B\RR}(\CAT_{\SSSS}^{\rm pc})\big[M^{-1}\big]\to \Mod_{B\RR}\big(\CAT_{\SSSS}^{\rm pc}\big[M^{-1}\big]\big) ,
\end{gather*}
where the middle $\infty$-category is obtained from
the ordinary category $\Mod_{B\RR}\big(\CAT_{\SSSS}^{\rm pc}\big)$
by~inverting Morita equivalences (we do not claim that
the middle $\infty$-category
admits a~monoidal structure).
The right arrow is induced by~$\Mod_{B\RR}\big(\CAT_{\SSSS}^{\rm pc}\big)\to \Mod_{B\RR}\big(\CAT_{\SSSS}^{\rm pc}\big[M^{-1}\big]\big)$.
The left arrow is induced by~the inclusion
$\CAT_{\RR}^{\rm pc}\hookrightarrow \Mod_{B\RR}\big(\CAT_{\SSSS}^{\rm pc}\big)\subset \Mod_{B\RR}\big(\CAT_{\SSSS}\big)\simeq \CAT_{\RR}$. The functorial cofibrant resolution
$\CAT_{\RR}\to \CAT_{\RR}^{\rm pc}$ induces an~inverse equivalence of~$\CAT_{\RR}^{\rm pc}\big[M^{-1}\big]\allowbreak \to \Mod_{B\RR}\big(\CAT_{\SSSS}^{\rm pc}\big)\big[M^{-1}\big]$
(a cofibrant one is pointwise-cofibrant).
It~is proved in~\cite[Theorem~5.1]{C}
that the right arrow is a~categorical equivalence.
Next, we prove that $\pi\colon \CAT_{\RR}^{\rm pc}\to \Mod_{B\RR}\big(\CAT_{\SSSS}^{\rm pc}\big[M^{-1}\big]\big)$
is symmetric monoidal.
Namely, we show that the natural morphisms
$\Perf_R\to \pi(B\RR)$ and
$\pi(\AAA)\otimes_{R}\pi(\BBB)\to\pi(\AAA\wedge_{\RR}\BBB)$
are equivalences in $\Mod_{B\RR}\big(\CAT_{\SSSS}^{\rm pc}\big[M^{-1}\big]\big)\simeq \Mod_{\Perf_R}(\ST)$, where $\pi(\AAA)\otimes_{R}\pi(\BBB)$ indicates the tensor product
in $\Mod_{\Perf_R}(\ST)$.
By construction,
the morphism $\Perf_R\to \pi(B\RR)$ is an~equivalence.
We~prove that $\pi(\AAA)\otimes_{R}\pi(\BBB)\to\pi(\AAA\wedge_{\RR}\BBB)$
is an~equivalence.
We~here write $\widehat{\AAA}_{\rm pe}$
for the $\RR$-spectral full subcategory of~$\Fun_{\RR}\big(\AAA^{\rm op},\SPS(\RR)\big)^{\rm cf}$ that consists of~cofibrant-fibrant objects lying over $\DDD_{\rm pe}(\AAA)_{\RR}$
($\widehat{\AAA}_{\rm pe}$ is DK-equivalent to $\DDD_{\rm pe}^\Sigma(\AAA)$
as spectral categories,
see the proof~of~Lemma~\ref{Moritainvert} and Claim~\ref{Moritaclaim}
for the notation).
For any other $\RR$-spectral category $\mathcal{P}$,
we define $\widehat{\mathcal{P}}_{\rm pe}$ in the same way.
By Claim~\ref{Moritaclaim} below, the image of~$\widehat{(\AAA\wedge_{\RR}\BBB)}_{\rm pe}$ in $\Mod_{B\RR}\big(\CAT_{\SSSS}^{\rm pc}\big[M^{-1}\big]\big)$ is equivalent to $\pi(\AAA\wedge_{\RR}\BBB)$ in the natural way. Let $\{ \AAA_{\lambda}\}_{\lambda\in \Lambda}$ be
the filtered family (poset) of~$\RR$-spectral full subcategories
of~$\AAA$ such that for any $\AAA_\lambda$,
$\DDD(\AAA_{\lambda})$
(or $\Fun_{\RR}\big(\AAA_{\lambda}^{\rm op},\SPS(\RR)\big)^{\rm cf}$) admits a~single compact generator (so that
$(\widehat{\AAA}_{\lambda})_{\rm pe}$ is Morita equivalent to $B\mathbb{A}$
for some $\mathbb{A}\in \Alg_{\assoc}\big(\SPS(\RR)^c\big)$, where $B\mathbb{A}$
has one object $\ast$ with the morphism ring spectrum $B\mathbb{A}(*,*)=\mathbb{A}$).
Then we have
the filtered family (poset) of~$\RR$-spectral full subcategories
$\big\{\widehat{(\AAA_{\lambda}\wedge_{\RR}\BBB)}_{\rm pe}\big\}_{\lambda\in \Lambda}$
of~$\widehat{(\AAA\wedge_{\RR}\BBB)}_{\rm pe}$. The filtered colimit
of~this family in $\CAT_{\RR}^{\rm pc}\big[M^{-1}\big]$
is $\widehat{(\AAA\wedge_{\RR}\BBB)}_{\rm pe}$.
Indeed, from the categorical
equivalence $\CAT_{\RR}^{\rm pc}\big[M^{-1}\big]\simeq \Mod_{\Perf_R}(\ST)$
and the conservative colimit-preserving functor $\Mod_{\Perf_R}(\ST)\to \ST$, if we transfer $\big\{\widehat{(\AAA_{\lambda}\wedge_{\RR}\BBB)}_{\rm pe}\big\}_{\lambda\in \Lambda}$ into the filtered family of~stable idempotent-complete
$\infty$-categories, we are reduced to proving that the colimit of~the resulting diagram in $\ST$ is a~stable $\infty$-category
obtained from the spectral category $\widehat{(\AAA\wedge_{\RR}\BBB)}_{\rm pe}$.
We~think of~$\big\{\widehat{(\AAA_{\lambda}\wedge_{\RR}\BBB)}_{\rm pe}\big\}_{\lambda\in \Lambda}$
as a~filtered diagram (poset) of~full subcategories of~$\widehat{(\AAA\wedge_{\RR}\BBB)}_{\rm pe}$. Since $\cup_{\lambda\in \Lambda} \widehat{(\AAA_{\lambda}\wedge_{\RR}\BBB)}_{\rm pe}=\widehat{(\AAA\wedge_{\RR}\BBB)}_{\rm pe}$, a~colimit of~this diagram in~$\Cat$ is naturally equivalent to
$\widehat{(\AAA\wedge_{\RR}\BBB)}_{\rm pe}$.
Note that a~filtered colimit of~stable $\infty$-categories in $\ST$ naturally
coincides with a~colimit which is taken in $\Cat$ \cite[Proposition~1.1.4.6]{HA}.
It~follows that $\widehat{(\AAA\wedge_{\RR}\BBB)}_{\rm pe}$ is a~colimit of~$\big\{\widehat{(\AAA_{\lambda}\wedge_{\RR}\BBB)}_{\rm pe}\big\}_{\lambda\in \Lambda}$
in $\ST$.
We~then deduce that $\colim_{\lambda\in \Lambda}\widehat{(\AAA_{\lambda}\wedge_{\RR}\BBB)}_{\rm pe} \to \widehat{(\AAA\wedge_{\RR}\BBB)}_{\rm pe}$
is an~equivalence in $\Mod_{\Perf_R}(\ST)$.
In~addition, we note that the tensor product in
$\Mod_{\Perf_R}(\ST)$ preserves small colimits
in each variable since $\ST^\otimes$ is a~symmetric
monoidal compactly generated $\infty$-category whose tensor product preserves
small colimits in each variable (see \cite[Corollary~4.25]{BGT1}).
Therefore, taking into account Proposition~\ref{flatspectral}(3),
we may and will suppose that $\AAA=B\mathbb{A}$ for some $\mathbb{A}\in \Alg_{\assoc}\big(\SPS(\RR)^c\big)$.
Taking the same procedure to $\BBB$,
we may and will suppose that $\BBB=B\mathbb{B}$
for some $\mathbb{B}\in \Alg_{\assoc}\big(\SPS(\RR)^c\big)$.
We~write $A$ and $B$ for the images of~$\mathbb{A}$ and $\mathbb{B}$ in $\Alg_{\assoc}(\Mod_R)$, respectively.
In~this situation, we have
a canonical equivalence
$\pi(\AAA)\otimes_{R}\pi(\BBB)\simeq (\RMod_A\otimes_R\RMod_{B})^\omega\simeq (\RMod_{A\otimes_{R}B})^{\omega} \simeq \widehat{(B\mathbb{A}\wedge_{\RR}\mathbb{B})}_{\rm pe} \simeq \pi(\AAA\wedge_{\RR}\BBB)$
(cf.~\cite[Proposition~4.1(2)]{BFN}, \cite[Theorem~4.8.5.16]{HA}),
where $(-)^\omega$ indicates the full subcategory of~compact objects. Hence $\pi\colon \CAT_{\RR}^{\rm pc}\to \Mod_{B\RR}\big(\CAT_{\SSSS}^{\rm pc}\big[M^{-1}\big]\big)$ is symmetric monoidal, which induces
a symmetric monoidal functor
$\CAT_{\RR}^{\rm pc}\big[M^{-1}\big]^\otimes\to\Mod_{B\RR}^\otimes\big(\CAT_{\SSSS}^{\rm pc}\big[M^{-1}\big]\big)$ whose underlying functor
is a~categorical equivalence. Thus we have the desired symmetric monoidal equivalence
$\CAT_{\RR}^{\rm pc}\big[M^{-1}\big]^\otimes\stackrel{\sim
}{\to}\Mod_{B\RR}^\otimes\big(\CAT_{\SSSS}^{\rm pc}\big[M^{-1}\big]\big)$.
\end{proof}

Using the equivalence in Proposition~\ref{symequivalence},
we obtain equivalences of~symmetric monoidal \mbox{$\infty$-categories}
\begin{gather*}
\theta\colon\ \ST_R^\otimes=\Mod_{\Perf_R}^\otimes(\ST)\simeq \Mod_{\Perf_R}^\otimes\big(\CAT_{\SSSS}^{\rm pc}\big[M^{-1}\big]\big)\simeq\CAT_{\RR}^{\rm pc}\big[M^{-1}\big]^\otimes.
\end{gather*}

\looseness=1
Next, to an~$\RR$-spectral category we assign
Hoch\-schild homology $R$-module spectrum endowed with circle action.
The construction is based on the Hoch\-schild--Mitchell cyclic nerves
\mbox{(cf.~\cite{BGT2, BM})}.

\begin{Definition}\label{Mitchell}
Let $\AAA$ be a~pointwise-cofibrant small $\RR$-spectral category.
Let $p\ge0$ be a~nonnegative integer.
Let
\begin{gather*}
\HH(\AAA)_p:=\bigvee_{(X_0,\dots,X_p)} \AAA(X_{p-1},X_p)\wedge_\RR\dots \wedge_\RR \AAA(X_0,X_1)\wedge_\RR \AAA(X_p,X_0).
\end{gather*}
The coproduct is taken over the set of~sequences
$(X_0,\dots,X_p)$ of~objects of~$\AAA$.
The composition in $\AAA$ determines degeneracy maps
$d_0,\dots,d_p\colon \HH(\AAA)_p\to \HH(\AAA)_{p-1}$,
and the unit map $\SSSS\to \AAA(X_i,X_i)$ determines
face maps $s_0,\dots,s_p\colon \HH(\AAA)_p\to \HH(\AAA)_{p+1}$.
The cyclic permutation
$(X_0,\dots,X_p)\mapsto (X_p,X_0,\dots,X_{p-1})$
gives rise to the action of~the cyclic group $\ZZ/(p+1)\ZZ$ on~$\HH(\AAA)_p$.
The family $\HH(\AAA)_\bullet:=\{\HH(\AAA)_p\}_{p\ge0}$ equipped with
degeneracy maps and face maps
form a~simplicial object in $\SPS(\RR)$.
If~we take into account the action of~$\ZZ/(p+1)\ZZ$ on $\HH(\AAA)_p$,
then $\HH(\AAA)_{\bullet}=\{\HH(\AAA)_p\}_{p\ge0}$ is promoted to
a cyclic object,
that is, a~functor $\Lambda^{\rm op}\to \SPS(\RR)$
from (the opposite category of) the cyclic category of~Connes
such that the composite $\Delta^{\rm op}\to \Lambda^{\rm op}\to \SPS(\RR)$
is the simplicial object.
Here $\Lambda$ denotes the Connes' cyclic category,
see \cite[Section~2]{Con}, \cite{L}
for the definition of~the cyclic category.
\end{Definition}

We~let $\Fun\big(\Lambda^{\rm op},\SPS(\RR)\big)$ denote the ordinary
functor category from $\Lambda^{\rm op}$ to $\SPS(\RR)$.
The category $\Fun\big(\Lambda^{\rm op},\SPS(\RR)\big)$ inherits
a symmetric monoidal structure given by~the pointwise tensor product
$F\otimes G([n])=F([n])\wedge_\RR G([n])$.

From the definition of~the tensor product of~$\RR$-spectral categories
and the construction of~$\HH(\AAA)_{p}$, it is straightforward
to check that
the assignment $\AAA\mapsto \HH(\AAA)_{\bullet}$ determines
a~symmetric monoidal functor
\begin{gather*}
\HH(-)_{\bullet}\colon \CAT_{\RR}^{\rm pc}\to \Fun\big(\Lambda^{\rm op},\SPS(\RR)\big) .
\end{gather*}
The image of~$\HH(-)_{\bullet}$ is contained in $\Fun\big(\Lambda^{\rm op},\SPS(\RR)^c\big)$
since the stable $\RR$-model structure satisfies
the axiom of~symmetric monoidal model categories.
Let $\SPS(\RR)^c\big[W^{-1}\big]$ be the symmetric monoidal $\infty$-category
obtained from
$\SPS(\RR)^c$
by~inverting stable equivalences.
The underlying $\infty$-category is presentable since
$\SPS(\RR)$ is a~combinatorial model category.
There is a~canonical symmetric monoidal functor
$\Fun\big(\Lambda^{\rm op},\SPS(\RR)^c\big)\to \Fun\big(\Lambda^{\rm op},\SPS(\RR)^c\big[W^{-1}\big]\big)$
induced by~the symmetric monoidal
functor $\SPS(\RR)^c\to \SPS(\RR)^c\big[W^{-1}\big]$.

We~recall the following results from \cite{Con, Hoy, L}:

\begin{Lemma}\label{cycsimp}\qquad
\begin{enumerate}\itemsep=0pt
\renewcommand{\labelenumi}{(\roman{enumi})}
\item[{\rm (i)}] Let $\Lambda\to \overline{\Lambda}$ be the groupoid completion.
Namely, it is induced by~a unit map of~the adjunction $\Cat\rightleftarrows \SSS\, {\colon}\!\iota$, where $\iota$ is the fully faithful inclusion.
Then $\overline{\Lambda}$ is equivalent to~$BS^1$ in $\SSS$.

\item[{\rm (ii)}] Let $\CCC$ be a~presentable $\infty$-category.
Let $F\colon \Lambda^{\rm op}\!\to \CCC$ be a~cyclic object
in $\CCC$.
Let \mbox{$F'\colon BS^1\!\to \CCC$} be a~functor.
Let $\Delta^0\to BS^1$ be the map determined by~the unique object
of~$BS^1$.
Consider the commutative diagram
\begin{gather*}
\xymatrix{
\Delta^{\rm op} \ar[d] \ar[r] & \Lambda^{\rm op} \ar[d] \\
\Delta^0 \ar[r] & BS^1,
}
\end{gather*}
where we regard $\Delta^0$ as the groupoid completion of~$\Delta^{\rm op}$
$(\Delta^{\rm op}$ is sifted so that the groupoid completion is given
by~the contractible space $\Delta^0$,
 cf.~{\rm \cite[Section~5.5.8]{HTT})}.
Then $F'$ is a~left Kan extension of~$F$ along
$\Lambda^{\rm op}\to BS^1$ if and only if
the composite $\Delta^0\to BS^1\to \CCC$ is a~colimit
of~the composite $\Delta^{\rm op}\to \Lambda^{\rm op}\to \CCC$.
\end{enumerate}
\end{Lemma}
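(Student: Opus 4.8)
The plan is to prove the two assertions separately: (i) is the classical computation of the classifying space of the cyclic category, which I would simply recall and attribute, and (ii) is then a formal manipulation resting on one cofinality statement. For (i), $\overline{\Lambda}$ is by definition the groupoid completion, i.e.\ the classifying space of the ordinary category $\Lambda$, and I would recall Connes' computation that this space is $BS^1=\CC P^\infty$ — for instance via the Fiedorowicz--Loday analysis of the crossed simplicial group of cyclic groups $\{\ZZ/(n+1)\ZZ\}_n$ whose associated category is $\Lambda$, together with the fact that $\overline{\Delta}\simeq\ast$ since $\Delta^{\rm op}$ is sifted (cf.\ \cite[Section~5.5.8]{HTT}) — and cite \cite{Con,L,Hoy}.

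For (ii), write $q\colon\Lambda^{\rm op}\to\overline{\Lambda^{\rm op}}$ for the groupoid completion; by the definition of the adjunction $\Cat\rightleftarrows\SSS$ this is the localization of $\Lambda^{\rm op}$ at all of its morphisms, and by (i) its target is $BS^1$ (here $\overline{\Lambda^{\rm op}}\simeq(BS^1)^{\rm op}\simeq BS^1$, spaces being their own opposites). Since $\Delta^{\rm op}$ is sifted, hence weakly contractible, the composite $\Delta^{\rm op}\to\Lambda^{\rm op}\stackrel{q}{\to}BS^1$ factors through the groupoid completion $\Delta^0$ of $\Delta^{\rm op}$; I would fix the basepoint $\ast\in BS^1$ so that this factorization is the map $\Delta^0\to BS^1$ in the statement, whereupon the commuting square produces a functor $\iota'\colon\Delta^{\rm op}\to\EE:=\Lambda^{\rm op}\times_{BS^1}\{\ast\}$ lying over the standard inclusion $\Delta^{\rm op}\hookrightarrow\Lambda^{\rm op}$. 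Because $\CCC$ is presentable it has all small colimits, so by the pointwise criterion for left Kan extensions \cite[Section~4.3.3]{HTT}, $F'$ is a left Kan extension of $F$ along $q$ iff for every object $x$ of $BS^1$ the canonical map
\[
\colim\bigl(\Lambda^{\rm op}\times_{BS^1}(BS^1)_{/x}\to\Lambda^{\rm op}\stackrel{F}{\to}\CCC\bigr)\longrightarrow F'(x)
\]
is an equivalence. As $BS^1$ is connected it suffices to take $x=\ast$; and $(BS^1)_{/\ast}$, being the overcategory of a Kan complex on a vertex, has a terminal object and is therefore contractible, with structure map to $BS^1$ homotopic to the inclusion of $\ast$, so the comma $\infty$-category above is equivalent to the fibre $\EE$. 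Hence (ii) is equivalent to the statement that the canonical cocone exhibits $F'(\ast)$ as $\colim_{\EE}(F\circ\pi)$, where $\pi\colon\EE\to\Lambda^{\rm op}$ is the projection.

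It then remains to show that $\iota'\colon\Delta^{\rm op}\to\EE$ is cofinal: granting this, $\colim_{\EE}(F\circ\pi)\simeq\colim_{\Delta^{\rm op}}(F\circ\pi\circ\iota')=\colim_{\Delta^{\rm op}}F|_{\Delta^{\rm op}}$ (the last identification because $\pi\circ\iota'$ is the standard inclusion), which is exactly the assertion. To address the cofinality I would first identify $\EE$ with the opposite of the paracyclic category $\Lambda_\infty$ — the $\ZZ$-cover of $\Lambda^{\rm op}$ in which each cyclic automorphism group $\ZZ/(n+1)\ZZ$ is replaced by the central extension $\ZZ\to\ZZ\to\ZZ/(n+1)\ZZ$, which one reads off from the fact that over the single vertex $\ast$ the fibre of $q$ has mapping spaces $\coprod_{\phi\in\Map_{\Lambda^{\rm op}}([n],[m])}\ZZ$ (noncanonically) — and then invoke the classical fact that the simplicial subcategory $\Delta^{\rm op}\hookrightarrow\Lambda_\infty^{\rm op}$ is cofinal, equivalently that the homotopy colimit of a cyclic object over $\Lambda^{\rm op}$ agrees with the realization of its underlying simplicial object, citing \cite{Con,L,Hoy}.

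\textbf{This cofinality is the one genuinely non-formal point; everything else is bookkeeping with the pointwise Kan extension formula, contractibility of slices of spaces, and siftedness of $\Delta^{\rm op}$.} If one wanted to prove it in place rather than cite it, the route is Quillen's Theorem~A \cite[Section~4.1.3]{HTT}: verify that $\Delta^{\rm op}\times_{\EE}\EE_{e/}$ is weakly contractible for each $e\in\EE$, which after unwinding the identification $\EE\simeq\Lambda_\infty^{\rm op}$ reduces to the contractibility of certain categories of subdivisions of the standard simplices — and this is where the real combinatorial work sits. A third, essentially equivalent, option is to bypass the comma-category analysis and quote directly the known description (e.g.\ \cite{Hoy}) of the composite $\Fun(\Lambda^{\rm op},\CCC)\stackrel{q_!}{\to}\Fun(BS^1,\CCC)\to\CCC$, forgetting the $S^1$-action, as the realization of the underlying simplicial object, and then repackage this as the stated ``if and only if'' by means of the pointwise criterion above.
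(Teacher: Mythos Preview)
Your proposal is correct and agrees in substance with the paper's treatment: the paper simply cites Connes \cite[Th\'eor\`eme~10]{Con} (and \cite{Hoy,L}) for (i) and \cite[Proposition~1.1]{Hoy} for (ii), whereas you additionally sketch the argument behind the latter citation via the pointwise Kan-extension criterion, the identification of the fiber $\Lambda^{\rm op}\times_{BS^1}\{\ast\}$ with $\Lambda_\infty^{\rm op}$, and the cofinality of $\Delta^{\rm op}\hookrightarrow\Lambda_\infty^{\rm op}$. These same ingredients in fact reappear later in the paper (Remark~\ref{paracyclic} and Lemma~\ref{redcolimit}, where the cofinality is attributed to \cite[Proposition~4.2.8]{LRo}), so your unpacking is entirely consistent with the paper's framework.
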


\begin{proof}The first assertion is a~result of~Connes, see \cite[Th\'eor\`eme~10]{Con},
\cite{Hoy,L}.
The second assertion is proved in~\cite[Proposition~1.1]{Hoy}.
\end{proof}

\begin{Lemma}\label{symmonoloc}
Let $R\colon \Fun\big(BS^1,\SPS(\RR)^c\big[W^{-1}\big]\big)\to
\Fun\big(\Lambda^{\rm op},\SPS(\RR)^c\big[W^{-1}\big]\big)$ be the functor induced by~composition with
$\Lambda^{\rm op}\to \overline{\Lambda}^{\rm op}\simeq BS^1$.
Let
\begin{gather*}
L\colon\ \Fun\big(\Lambda^{\rm op},\SPS(\RR)^c\big[W^{-1}\big]\big)\to
\Fun\big(BS^1,\SPS(\RR)^c\big[W^{-1}\big]\big)
\end{gather*}
be a~left adjoint
$($the existence of~a~left adjoint follows from the adjoint functor theorem~{\rm \cite{HTT}}
and the fact that both $\infty$-categories are presentable$)$. Then this left adjoint is symmetric monoidal.
\end{Lemma}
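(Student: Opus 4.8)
The plan is to deduce the symmetric monoidality of $L$ from two facts: that $L$, being a left adjoint of the \emph{strong} symmetric monoidal functor $R = p^{*}$ (restriction along $p\colon\Lambda^{\rm op}\to\overline{\Lambda}^{\rm op}\simeq BS^1$, for the pointwise symmetric monoidal structures), is automatically \emph{oplax} symmetric monoidal; and that its oplax comparison maps become equivalences after applying the conservative, strong symmetric monoidal evaluation functor $U\colon\Fun\big(BS^1,\SPS(\RR)^c[W^{-1}]\big)\to\SPS(\RR)^c[W^{-1}]$ at the unique object of $BS^1$. Write $\DDD=\SPS(\RR)^c[W^{-1}]$; by Example~\ref{spectralalgebra} this is equivalent as a symmetric monoidal $\infty$-category to $\Mod_R^\otimes$, so it is presentably symmetric monoidal and its tensor product preserves sifted colimits separately in each variable. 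Note also $L\simeq p_!$, left Kan extension along $p$.

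First I would record the oplax structure. Precomposition along any functor is strong symmetric monoidal for the pointwise structures, so $R=p^{*}$ is strong symmetric monoidal; hence its left adjoint $L$ carries a canonical oplax symmetric monoidal structure (left adjoints of strong symmetric monoidal functors are canonically oplax symmetric monoidal; see \cite{HA}), with comparison morphisms $L(\mathbf 1)\to\mathbf 1$ and $L(X\otimes Y)\to LX\otimes LY$. To promote these to equivalences it suffices to check they are equivalences after applying $U$: indeed $U$ is conservative (a morphism in $\Fun(BS^1,\DDD)$ is an equivalence iff it is so at the unique object) and strong symmetric monoidal (the pointwise structure evaluates to the given one on $\DDD$), so $U$ carries the oplax comparison maps of $L$ to the oplax comparison maps of $U\circ L$.

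The key input is Lemma~\ref{cycsimp}(ii): applying it to $F'=p_!F=LF$, which is the left Kan extension, its value at the unique object of $BS^1$, namely $U(LF)$, is the colimit $\colim_{\Delta^{\rm op}}(F|_{\Delta^{\rm op}})$ of the underlying simplicial object, naturally in $F$. In other words $U\circ L$ is naturally equivalent to the composite
\begin{gather*}
\Fun\big(\Lambda^{\rm op},\DDD\big)\xrightarrow{\ \rho\ }\Fun\big(\Delta^{\rm op},\DDD\big)\xrightarrow{\ |{-}|\ }\DDD,
\end{gather*}
where $\rho$ is restriction along $\Delta^{\rm op}\to\Lambda^{\rm op}$ and $|{-}|=\colim_{\Delta^{\rm op}}$. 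Now $\rho$ is strong symmetric monoidal (precomposition, pointwise structures), and $|{-}|$ is strong symmetric monoidal by the standard fact that a colimit indexed by a sifted $\infty$-category is symmetric monoidal when the tensor product preserves such colimits separately in each variable — here $\Delta^{\rm op}$ is sifted (\cite[Lemma~5.5.8.4]{HTT}) and $\DDD\simeq\Mod_R$. Thus $|{-}|\circ\rho$ is strong symmetric monoidal, so its oplax comparison maps are equivalences; transporting along $U\circ L\simeq|{-}|\circ\rho$ shows that $U$ applied to the oplax comparison maps of $L$ gives equivalences, and conservativity of $U$ then forces the comparison maps of $L$ themselves to be equivalences. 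Hence $L$ is strong symmetric monoidal.

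The step I expect to be the main obstacle is pinning down Lemma~\ref{cycsimp}(ii) precisely enough that the natural equivalence $U\circ L\simeq|{-}|\circ\rho$ is an equivalence of oplax (equivalently here, symmetric monoidal) functors, rather than merely of underlying functors, so that it genuinely identifies the oplax comparison data of $L$ with that of $|{-}|\circ\rho$. This is a compatibility of mates: both oplax structures are the canonical ones attached to the adjunctions $L\dashv R$ and $|{-}|\dashv\mathrm{const}$ together with the base-change equivalence $U\circ p_!\simeq|{-}|\circ\rho$. Alternatively, one can avoid this bookkeeping entirely: since $p^{*}$ is fully faithful (as $p$ exhibits $BS^1$ as a localization of $\Lambda^{\rm op}$), $L$ is a localization functor, and by \cite[Proposition~2.2.1.9]{HA} it is symmetric monoidal once the strongly saturated class $\{f : \colim_{\Delta^{\rm op}}(\rho f)\ \text{is an equivalence}\}$ of morphisms it inverts (this description again via Lemma~\ref{cycsimp}(ii) and conservativity of $U$) is closed under tensoring with an arbitrary object — which is immediate, because $|{-}|$ is strong symmetric monoidal and every functor preserves equivalences.
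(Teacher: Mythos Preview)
Your proof is correct and follows essentially the same route as the paper's: both start from the oplax structure on $L$ (as left adjoint to the strong monoidal $R$), reduce via the conservative evaluation at the unique object of $BS^1$, invoke Lemma~\ref{cycsimp}(ii) to identify $U\circ L$ with geometric realization of the restriction to $\Delta^{\rm op}$, and finish using that $\Delta^{\rm op}$ is sifted and $\otimes$ preserves colimits in each variable. The only cosmetic difference is that the paper computes $L(F\otimes G)(*)$ and $L(F)(*)\otimes L(G)(*)$ directly as colimits over the diagonal $\Delta^{\rm op}\to\Delta^{\rm op}\times\Delta^{\rm op}$ and over $\Delta^{\rm op}\times\Delta^{\rm op}$ respectively, and then appeals to cofinality of the diagonal; this hands-on check of a single morphism sidesteps the mate-compatibility bookkeeping you flag as the ``main obstacle'', so that concern does not actually arise.
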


\begin{proof}Since the right adjoint is a~symmetric monoidal functor,
the left adjoint is an~oplax symmetric monoidal functor.
Thus it is enough to show that
$L(F\otimes G)\to L(F)\otimes L(G)$ is an~equivalence
in $\Fun\big(BS^1,\SPS(\RR)^c\big[W^{-1}\big]\big)$ for $F$, $G\colon \Lambda^{\rm op}\to \SPS(\RR)^c\big[W^{-1}\big]$.
To this end, note first that
$L(F),L(G)\colon BS^1\to \SPS(\RR)^c\big[W^{-1}\big]$
are given by~left Kan extensions of~$F$ and~$G$, respectively,
along $\Lambda^{\rm op}\to BS^1$.
By Lemma~\ref{cycsimp}(ii),
$F'\colon BS^1\to \SPS(\RR)^c\big[W^{-1}\big]$ is a~left Kan extension
of~$F\colon \Lambda^{\rm op}\to \SPS(\RR)^c\big[W^{-1}\big]$ if and only if
the image of~the unique object $\ast$ of~$BS^1$ under $F'$ is a~colimit of~the composite $F_{\Delta}\colon \Delta^{\rm op}\to \Lambda^{\rm op}\to \SPS(\RR)^c\big[W^{-1}\big]$.
Let $G_{\Delta}$ be the restriciton of~$G$ to $\Delta^{\rm op}$.
Then $L(F\otimes G)(\ast)$ is a~colimit of~the composite $\Delta^{\rm op}\stackrel{\textup{diag}}{\longrightarrow} \Delta^{\rm op} \times \Delta^{\rm op} \stackrel{F_{\Delta}\times G_{\Delta}}{\to} \SPS(\RR)^c[W^{-1}]\times \SPS(\RR)^c\big[W^{-1}\big]\stackrel{\otimes}{\to} \SPS(\RR)^c\big[W^{-1}\big]$.
On the other hand, $L(F)\otimes L(G)(\ast)$
is a~colimit of~$\Delta^{\rm op} \times \Delta^{\rm op} \stackrel{F_{\Delta}\times G_{\Delta}}{\to} \SPS(\RR)^c\big[W^{-1}\big]\times \SPS(\RR)^c\big[W^{-1}\big]\stackrel{\otimes}{\to} \SPS(\RR)^c\big[W^{-1}\big]$ since the tensor product
of~$\SPS(\RR)^c\big[W^{-1}\big]$ preserves small colimits separately in each variable.
Note that the diagonal functor $\Delta^{\rm op}\to \Delta^{\rm op}\times \Delta^{\rm op}$
is cofinal.
Thus the canonical morphism $L(F\otimes G)(\ast)\to L(F)\otimes L(G)(\ast)$ is an~equivalence.
This means that $L$ is symmetric monoidal.
\end{proof}

Assembling these lemmata and constructions, we obtain
a sequence of~symmetric monoidal functors
\begin{gather*}
H\colon\ \CAT_{\RR}^{\rm pc}\stackrel{\HH(-)_\bullet}{\longrightarrow}
\Fun\big(\Lambda^{\rm op},\SPS(\RR)^c\big)
\to \Fun\big(\Lambda^{\rm op},\SPS(\RR)^c\big[W^{-1}\big]\big)
\\ \hphantom{H\colon \ \CAT_{\RR}^{\rm pc}\ \ }
{}\simeq \Fun\big(\Lambda^{\rm op},\Mod_R\big) \stackrel{L}{\to} \Fun\big(BS^1,\Mod_R\big).
\end{gather*}
More explicitly, the composition of~$H$ and the forgetful functor
 $\Fun\big(BS^1,\Mod_R\big)\to \Mod_R\simeq \SPS(\RR)^c\big[W^{-1}\big]$
sends a~pointwise-cofibrant $\RR$-spectral category
$\AAA$ to the homotopy colimit of~the simplicial object
$\Delta^{\rm op}\hookrightarrow \Lambda^{\rm op}\stackrel{\HH(\AAA)_\bullet}{\longrightarrow} \SPS(\RR)$.

Next, we observe the invariance under Morita equivalences.

\begin{Lemma}\label{Moritainvert}
Let $\AAA$ and $\BBB$ be pointwise-cofibrant $\RR$-spectral categories.
Let $F\colon \AAA\to \BBB$ be an~\mbox{$\RR$-spec\-t\-ral} functor
which is a~Morita equivalence.
Then $H\colon \CAT_{\RR}^{\rm pc}\to \Fun\big(BS^1,\Mod_R\big)$
carries~$F$ to an~equivalence in $\Fun\big(BS^1,\Mod_R\big)$.
\end{Lemma}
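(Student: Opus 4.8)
The plan is to reduce the claim to the Morita invariance of topological Hochschild homology of associative ring spectra, routed through the $\RR$-spectral category of perfect modules. First, since the evaluation functor $\Fun(BS^1,\Mod_R)\to\Mod_R$ at the unique object of $BS^1$ is conservative, it suffices to prove that $H(F)$ is an equivalence of underlying $R$-module spectra. Unwinding the definition of $H$, the value of the functor $L$ of Lemma~\ref{symmonoloc} at the unique object of $BS^1$ computes, by Lemma~\ref{cycsimp}(ii), the geometric realization over $\Delta^{\rm op}$ of the underlying simplicial object; hence $H(F)$ is an equivalence in $\Mod_R$ if and only if $F$ induces an equivalence $\colim_{\Delta^{\rm op}}\HH(\AAA)_\bullet\to\colim_{\Delta^{\rm op}}\HH(\BBB)_\bullet$ in $\Mod_R$, where $\HH(-)_\bullet$ is the Hochschild--Mitchell cyclic nerve of Definition~\ref{Mitchell}, restricted to $\Delta^{\rm op}$ and then localized at stable equivalences. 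So this is what I will prove.

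Next I introduce the $\RR$-spectral category $\widehat{\AAA}_{\rm pe}$ of cofibrant-fibrant perfect $\AAA$-modules (those lying over $\DDD_{\rm pe}(\AAA)$), together with the Yoneda functor $\AAA\to\widehat{\AAA}_{\rm pe}$, and I record two facts. (a) The functor $\AAA\mapsto\colim_{\Delta^{\rm op}}\HH(\AAA)_\bullet$ carries Dwyer--Kan equivalences of pointwise-cofibrant $\RR$-spectral categories to equivalences in $\Mod_R$: by Proposition~\ref{flatspectral} the levelwise wedges and smash products of Definition~\ref{Mitchell} are homotopically meaningful, and the assertion is the (well known) Dwyer--Kan invariance of topological Hochschild homology, which can also be seen directly by factoring a DK-equivalence into a bijective-on-objects levelwise stable equivalence and a full inclusion adding only objects weakly equivalent to existing ones, cf.\ \cite{BM, BGT2}. (b) The Yoneda functor $\AAA\to\widehat{\AAA}_{\rm pe}$ is a Morita equivalence and induces an equivalence on $\colim_{\Delta^{\rm op}}\HH(-)_\bullet$; this is the heart of the argument. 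To prove (b), write $\widehat{\AAA}_{\rm pe}$ as the filtered union of the $\widehat{(\AAA')}_{\rm pe}$ over the full spectral subcategories $\AAA'\subset\AAA$ with finitely many objects; since the cyclic nerve and geometric realization commute with filtered colimits, one is reduced to the case $\AAA=B\mathbb{A}$ with $\mathbb{A}\in\Alg_{\assoc}(\SPS(\RR)^c)$. In that case $\widehat{(B\mathbb{A})}_{\rm pe}$ is a model for the thick closure of $B\mathbb{A}$ inside its module category, and the required equivalence is precisely the Morita invariance of topological Hochschild homology, deduced from additivity (invariance under adjoining a zero object and finite direct sums) and invariance under idempotent completion; this is available from \cite{BGT1, BGT2}.

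Finally, since $F\colon\AAA\to\BBB$ is a Morita equivalence, $\mathbb{L}F_!\colon\DDD_{\rm pe}(\AAA)\to\DDD_{\rm pe}(\BBB)$ is an equivalence of categories, so the induced spectral functor $\widehat{\AAA}_{\rm pe}\to\widehat{\BBB}_{\rm pe}$ (or a zig-zag modelling it) is a Dwyer--Kan equivalence, and the square with horizontal maps $F$ and $\widehat{\AAA}_{\rm pe}\to\widehat{\BBB}_{\rm pe}$ and vertical Yoneda maps commutes up to a canonical homotopy of spectral functors. Applying $\colim_{\Delta^{\rm op}}\HH(-)_\bullet$, the two vertical maps are equivalences by (b) and the bottom map is an equivalence by (a); hence the top map $\colim_{\Delta^{\rm op}}\HH(\AAA)_\bullet\to\colim_{\Delta^{\rm op}}\HH(\BBB)_\bullet$ is an equivalence in $\Mod_R$, which by the first paragraph yields the lemma. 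The main obstacle is fact (b): passage to perfect modules must be shown to preserve the cyclic nerve up to equivalence, and this is exactly where the additivity and Karoubi-invariance of THH (the reduction to the one-object case) are genuinely used. Fact (a), though it needs care because a DK-equivalence can alter the set of objects, is comparatively routine once pointwise-cofibrancy is available.
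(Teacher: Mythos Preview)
Your strategy is workable but takes a longer route than the paper and glosses over precisely the step the paper isolates as the actual content. The paper does not rederive Morita invariance of THH: it cites Blumberg--Mandell \cite[Theorems~5.9 and~5.11]{BM}, which already shows that $H(F)$ is an equivalence in $\Mod_R$ whenever $F$ is a Morita equivalence \emph{over $\RR$}, i.e., whenever the functor $(\mathbb{L}F_!)_\RR\colon\DDD_{\rm pe}(\AAA)_\RR\to\DDD_{\rm pe}(\BBB)_\RR$ built from $\RR$-module presheaves is an equivalence. The whole proof then reduces to a single comparison (Claim~\ref{Moritaclaim}): the perfect-module categories formed over $\RR$ and over the sphere spectrum agree, so that absolute Morita equivalence (the hypothesis of the lemma) coincides with Morita equivalence over $\RR$ (what the cited result needs).

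Your argument hides this base-change issue rather than resolving it. In the final step you assert that because $\mathbb{L}F_!\colon\DDD_{\rm pe}(\AAA)\to\DDD_{\rm pe}(\BBB)$ is an equivalence, the induced $\RR$-spectral functor $\widehat{\AAA}_{\rm pe}\to\widehat{\BBB}_{\rm pe}$ is a DK-equivalence. But $\DDD_{\rm pe}(\AAA)$ is defined via $\SSSS$-module presheaves, while $\widehat{\AAA}_{\rm pe}$, if it is to be $\RR$-enriched and fed into the $\RR$-linear cyclic nerve of Definition~\ref{Mitchell}, must be built from $\RR$-module presheaves. That these two notions of ``perfect'' match is exactly Claim~\ref{Moritaclaim}, and you have not supplied it. Your facts (a) and (b) are fine and essentially reconstruct the Blumberg--Mandell input; once you take that input as given, the only remaining content is the absolute-versus-relative comparison, which is the piece missing from your proof.
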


\begin{proof}It~will suffice to prove that
$H(F)$ is an~equivalence in $\Mod_R$.
By \cite[Theorems~5.9 and~5.11]{BM},
the image of~$H(F)$ in $\Mod_R\simeq \SPS(\RR)\big[W^{-1}\big]$ is an~equivalence
if $F\colon \AAA\to \BBB$ is a~Morita equivalence {\it over} $\RR$.
We~explain the notion of~a~Morita equivalence {\it over} $\RR$,
which we~distinguish from the notion of~Morita equivalences for the moment.
Let $\Fun_{\RR}\big(\AAA^{\rm op},\SPS(\RR)\big)$ be the $\RR$-spectral category
of~$\RR$-spectral functors. As in the case of~$\RR=\SSSS$,
it admits a~combinatorial $\RR$-spectral model structure
whose weak equivalences (resp.~fibrations) are objectwise
stable equivalences (resp.~fibrations).
Let $\DDD(\AAA)_{\RR}$ denote the homotopy (triangulated) category of~the full subcategory $\Fun_{\RR}\big(\AAA^{\rm op},\SPS(\RR)\big)^{\rm cf}$ spanned by~cofibrant and fibrant objects.
Let $\DDD_{\rm pe}(\AAA)_{\RR}$ be the smallest thick subcategory
that contains the image of~the Yoneda embedding $\AAA\to \DDD(\AAA)_\RR$.
We~define $\DDD_{\rm pe}(\BBB)_{\RR}$ in a~similar way.
The functor $F$ induces $(\mathbb{L}F_!)_{\RR}\colon \DDD_{\rm pe}(\AAA)_{\RR}\to \DDD_{\rm pe}(\BBB)_{\RR}$
as~$F$ induces $\mathbb{L}F_!\colon \DDD_{\rm pe}(\AAA)\to \DDD_{\rm pe}(\BBB)$.
We~say that $F$ is a~Morita equivalence over $\RR$ if
$(\mathbb{L}F_!)_{\RR}$ is an~equivalence.
Thus to prove our assertion, it is enough to show the following claim:

\begin{Claim}\label{Moritaclaim}
There exist equivalences $\DDD_{\rm pe}(\AAA)_{\RR}\simeq \DDD_{\rm pe}(\AAA)$
and $\DDD_{\rm pe}(\BBB)_{\RR}\simeq \DDD_{\rm pe}(\BBB)$
which identifies $(\mathbb{L}F_!)_{\RR}$ with
$\mathbb{L}F_!$ up to natural equivalence.
In~particular, $F$ is a~Morita equivalence over $\RR$ if and only if
$F$ is a~Morita equivalence.
\end{Claim}

\Proof~We~let $\DDD^{\Sigma}_{\rm pe}(\AAA)_{\RR}$
be the full subcategory of~$\Fun_{\RR}\big(\AAA^{\rm op},\SPS(\RR)\big)^{\rm cf}$
spanned by~those objects that belongs to $\DDD_{\rm pe}(\AAA)_{\RR}$.
The Yoneda embedding
$I\colon \AAA\to \DDD^{\Sigma}_{\rm pe}(\AAA)_{\RR}$ (after replacing $\AAA$ by~a fibrant one)
induces $I_!\colon \DDD_{\rm pe}^{\Sigma}(\AAA)\to \DDD_{\rm pe}^{\Sigma}\big(\DDD^{\Sigma}_{\rm pe}(\AAA)_{\RR}\big)$.
By \cite[Proposition~4.11]{BGT1},
we deduce that the canonical functor
$\DDD^{\Sigma}_{\rm pe}(\AAA)_{\RR}\to \DDD_{\rm pe}^{\Sigma}\big(\DDD^{\Sigma}_{\rm pe}(\AAA)_{\RR}\big)$
is a~DK-equivalence. Thus we have
$\DDD_{\rm pe}^{\Sigma}(\AAA)\to \DDD_{\rm pe}^{\Sigma}(\AAA)_{\RR}$
(in the homotopy category of~$\CAT_{\SSSS}$).
By the correspondence between
stable idempotent-complete $\infty$-categories and
spectral categories up to Morita equivalences
(\cite[Theorems~4.22 and~4.23]{BGT1}),
passing to stable (idempotent-complete) $\infty$-categories, we may replace
$\DDD_{\rm pe}^{\Sigma}(\AAA)\to \DDD_{\rm pe}^{\Sigma}(\AAA)_{\RR}$
by~the induced exact functor of~stable idempotent-complete
$\infty$-categories which we denote by~$f\colon \Perf(\AAA)\to \Perf(\AAA)_{\RR}$.
Namely, it is enough to show that $f$ is an~equivalence.
Taking account of~the Yoneda embedding, we see that
$f$ is fully faithful on the full subcategory spanned by~the image of~$\AAA$.
It~follows that $f$ is fully faithful on
the smallest stable subcategory of~$\Perf(\AAA)$, containing
$\AAA$. Note that $\Perf(\AAA)_{\RR}$
is the idempotent completion of~the smallest stable subcategory
that contains the image of~$\AAA$ under the Yoneda embedding.
Then we conclude that~$f$ is an~equivalence.
Similarly, $\DDD_{\rm pe}(\BBB)_{\RR}\simeq \DDD_{\rm pe}(\BBB)$.
Finally, using the construction of~these equivalences
and the functoriality of~left adjoints $(-)_!$,
we identify $(\mathbb{L}F_!)_{\RR}$ with
$\mathbb{L}F_!$.
\end{proof}

\begin{Corollary}\label{HHMoritainv}The symmetric monoidal functor $H\colon \CAT_{\RR}^{\rm pc}\to \Fun\big(BS^1,\Mod_R\big)$
factors as
\begin{gather*}
\xymatrix{
\CAT_{\RR}^{\rm pc} \ar[r] \ar[d] & \Fun\big(BS^1,\Mod_R\big) \\
\CAT_{\RR}^{\rm pc}\big[M^{-1}\big] \ar[ru]_{\overline{H}}.
}
\end{gather*}
\end{Corollary}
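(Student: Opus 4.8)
The plan is to read this off from the universal property of the symmetric monoidal localization $\CAT_{\RR}^{\rm pc}\to \CAT_{\RR}^{\rm pc}\big[M^{-1}\big]$ together with the Morita invariance recorded in Lemma~\ref{Moritainvert}. First I would recall that, by construction, $H$ is the composite of symmetric monoidal functors
\begin{gather*}
\CAT_{\RR}^{\rm pc}\stackrel{\HH(-)_{\bullet}}{\longrightarrow}\Fun\big(\Lambda^{\rm op},\SPS(\RR)^c\big)\to \Fun\big(\Lambda^{\rm op},\SPS(\RR)^c\big[W^{-1}\big]\big)\simeq \Fun\big(\Lambda^{\rm op},\Mod_R\big)\stackrel{L}{\to}\Fun\big(BS^1,\Mod_R\big),
\end{gather*}
so $H$ is itself symmetric monoidal (here $L$ is symmetric monoidal by Lemma~\ref{symmonoloc}, and the middle arrows are symmetric monoidal by construction).

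Next I would check that the class $M$ of Morita equivalences in the symmetric monoidal category $\CAT_{\RR}^{\rm pc}$ meets the hypotheses required to form the symmetric monoidal localization $\CAT_{\RR}^{\rm pc}\big[M^{-1}\big]^\otimes$: it contains all isomorphisms, and for every $\AAA\in \CAT_{\RR}^{\rm pc}$ and every Morita equivalence $C_1\to C_2$, the morphisms $\AAA\wedge_{\RR}C_1\to \AAA\wedge_{\RR}C_2$ and $C_1\wedge_{\RR}\AAA\to C_2\wedge_{\RR}\AAA$ are again Morita equivalences. The latter is precisely Proposition~\ref{flatspectral}(iii), applied using the fact that every object of $\CAT_{\RR}^{\rm pc}$ is pointwise-cofibrant. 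Hence $\CAT_{\RR}^{\rm pc}\big[M^{-1}\big]^\otimes$ and the symmetric monoidal localization functor exist, as already noted preceding the corollary.

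Then I would invoke Lemma~\ref{Moritainvert}, which says exactly that the underlying functor of $H$ sends each morphism of $M$ to an equivalence of $\Fun\big(BS^1,\Mod_R\big)$. By the universal property of the symmetric monoidal localization — the composition map $\Map^\otimes\big(\CAT_{\RR}^{\rm pc}\big[M^{-1}\big]^\otimes,\Fun\big(BS^1,\Mod_R\big)^\otimes\big)\to \Map^\otimes\big((\CAT_{\RR}^{\rm pc})^\otimes,\Fun\big(BS^1,\Mod_R\big)^\otimes\big)$ is fully faithful with essential image the symmetric monoidal functors inverting $M$ — the functor $H$ lies in this essential image, so it factors, uniquely up to a contractible space of choices, through a symmetric monoidal functor $\overline{H}\colon \CAT_{\RR}^{\rm pc}\big[M^{-1}\big]^\otimes\to \Fun\big(BS^1,\Mod_R\big)^\otimes$. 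Forgetting the monoidal structures yields the asserted commutative triangle. Every step is an appeal to an already-established result (the construction of $H$, Lemma~\ref{symmonoloc}, Proposition~\ref{flatspectral}, Lemma~\ref{Moritainvert}, and the universal property of $\CCC\big[S^{-1}\big]^\otimes$); the only point demanding care, and hence the main obstacle, is the verification that $M$ is closed under $\wedge_{\RR}$ with an arbitrary object, which is where pointwise-cofibrancy of all objects of $\CAT_{\RR}^{\rm pc}$ is essential.
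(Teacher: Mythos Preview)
Your proposal is correct and follows the same approach as the paper, which simply records in one line that the factorization follows from the universal property of $\CAT_{\RR}^{\rm pc}\to \CAT_{\RR}^{\rm pc}\big[M^{-1}\big]$ together with Lemma~\ref{Moritainvert}. You have spelled out the verification of the hypotheses (closure of $M$ under $\wedge_{\RR}$, symmetric monoidality of $H$) in more detail than the paper does, but the argument is the same.
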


\begin{proof}It~follows from the universal property of~$\CAT_{\RR}^{\rm pc}\to \CAT_{\RR}^{\rm pc}\big[M^{-1}\big]$
and Lemma~\ref{Moritainvert}.
\end{proof}

Composing with $\theta$, we obtain a~sequence of~symmetric monoidal functors
\begin{gather*}
\xymatrix{
\Mod_{\Perf_R}(\ST)\simeq \Mod_{\Perf_R}\big(\CAT_{\SSSS}^{\rm pc}\big[M^{-1}\big]\big) \ar[r]^(0.7){\theta} & \CAT_{\RR}^{\rm pc}\big[M^{-1}\big] \ar[r]^(0.4){\overline{H}} & \Fun\big(BS^1,\Mod_R\big).
}
\end{gather*}

\begin{Definition}\label{homologydef}
Let $\CCC$ be a~small $R$-linear stable $\infty$-category.
We~denote by~$\HH_\bullet(\CCC)$ the~image of~$\CCC$ in $\Fun\big(BS^1,\Mod_R\big)$
under the above composite $\overline{H}\circ \theta$.
We~often abuse notation by~wri\-ting $\HH_\bullet(\CCC)$ for its image
in $\Mod_R$. We~refer to $\HH_\bullet(\CCC)$
as Hoch\-schild homology $R$-module spectrum of~$\CCC$.
If~$\AAA$ is a~pointwise-cofibrant $\RR$-spectral category,
we refer to the image $\overline{H}(\AAA)$ in~$\Fun\big(BS^1,\Mod_R\big)$ or $\Mod_R$
as Hoch\-schild homology $R$-module spectrum of~$\AAA$.
\end{Definition}

We~record our construction as a~proposition:

\begin{Proposition}\label{hochschildhomologyfunctor}
There is a~sequence of~symmetric monoidal functors
\begin{gather*}
\Mod_{\Perf_R}(\ST)\stackrel{\theta}{\to}\CAT_{\RR}^{\rm pc}\big[M^{-1}\big]\stackrel{\overline{H}}{\longrightarrow} \Fun\big(BS^1,\Mod_R\big)\stackrel{\textup{forget}}{\to} \Mod_R,
\end{gather*}
which to $R$-linear stable $\infty$-categories or pointwise-cofibrant
$\RR$-spectral categories assigns Hoch\-schild homology $R$-module spectra.
In~particular, for any $\infty$-operad $\OO$ it gives rise to
\begin{gather*}
\Alg_{\OO}(\Mod_{\Perf_R}(\ST))\to\Alg_{\OO}\big(\CAT_{\RR}^{\rm pc}\big[M^{-1}\big]\big)\\
\hphantom{\Alg_{\OO}(\Mod_{\Perf_R}(\ST))}{} \to \Alg_\OO\big(\Fun\big(BS^1,\Mod_R\big)\big)\to \Alg_{\OO}(\Mod_R).
\end{gather*}
\end{Proposition}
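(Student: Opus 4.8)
The plan is to assemble the three symmetric monoidal functors that have already been constructed in this section and then to observe that the passage to $\OO$-algebras is purely formal. The first functor $\theta$ is the equivalence of symmetric monoidal $\infty$-categories exhibited immediately after Proposition~\ref{symequivalence}: it is the composite of the tautological equivalence $\Mod_{\Perf_R}^\otimes(\ST)\simeq \Mod_{\Perf_R}^\otimes(\CAT_{\SSSS}^{\rm pc}[M^{-1}])$ with the symmetric monoidal equivalence $\Mod_{B\RR}^\otimes(\CAT_{\SSSS}^{\rm pc}[M^{-1}])\simeq \CAT_{\RR}^{\rm pc}[M^{-1}]^\otimes$ of Proposition~\ref{symequivalence}, so in particular it is symmetric monoidal. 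The second functor $\overline H$ is supplied by Corollary~\ref{HHMoritainv}, which factors the symmetric monoidal functor $H$ (obtained by composing the cyclic-nerve construction $\HH(-)_\bullet$ with the symmetric monoidal localization functors, using Lemma~\ref{symmonoloc} and Lemma~\ref{cycsimp}) through the monoidal localization $\CAT_{\RR}^{\rm pc}\to\CAT_{\RR}^{\rm pc}[M^{-1}]$; the factorization uses exactly the Morita invariance proved in Lemma~\ref{Moritainvert}, and $\overline H$ is symmetric monoidal because $H$ is and because the localization functor is symmetric monoidal with the appropriate universal property.

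The only genuinely new point is that the forgetful functor $\Fun(BS^1,\Mod_R)\to\Mod_R$ is symmetric monoidal for the pointwise structure of Lemma~\ref{eonesequiv}, and this is immediate: the unique vertex of $BS^1$ gives a map $\Delta^0\to BS^1$, and restriction along it induces $\Fun(BS^1,\Mod_R^\otimes)\to\Fun(\Delta^0,\Mod_R^\otimes)=\Mod_R^\otimes$, hence a map of the corresponding fiber products over $\Gamma$ that is a symmetric monoidal functor and that carries a functor $BS^1\to\Mod_R$ to its underlying object. Composing $\theta$, $\overline H$, and this forgetful functor produces the asserted sequence of symmetric monoidal functors. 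The claim that this sequence assigns to an $R$-linear stable $\infty$-category $\CCC$, respectively to a pointwise-cofibrant $\RR$-spectral category $\AAA$, its Hochschild homology $R$-module spectrum is then nothing but Definition~\ref{homologydef} (which defines $\HH_\bullet(\CCC)$ as the image of $\CCC$ under $\overline H\circ\theta$, and $\overline H(\AAA)$ as the Hochschild homology of $\AAA$), together with the observation that composing further with the forgetful functor records the underlying object in $\Mod_R$.

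For the ``in particular'' assertion I would invoke the standard fact that a symmetric monoidal functor $\MM^\otimes\to\NNN^\otimes$ between symmetric monoidal $\infty$-categories induces, by postcomposition, a functor $\Alg_\OO(\MM)\to\Alg_\OO(\NNN)$ for every $\infty$-operad $\OO$ (see \cite{HA}); applying this to each of the three symmetric monoidal functors above yields the displayed chain $\Alg_\OO(\Mod_{\Perf_R}(\ST))\to\Alg_\OO(\CAT_{\RR}^{\rm pc}[M^{-1}])\to\Alg_\OO(\Fun(BS^1,\Mod_R))\to\Alg_\OO(\Mod_R)$. I do not expect a substantive obstacle here: all of the real work --- the spectral-category rectification of $\ST_R^\otimes$, the construction of the cyclic object $\HH(-)_\bullet$ and the monoidality of the localization to $BS^1$-diagrams, and the Morita invariance of Hochschild homology --- is already contained in Proposition~\ref{symequivalence}, Lemmas~\ref{cycsimp}, \ref{symmonoloc}, \ref{Moritainvert}, and Corollary~\ref{HHMoritainv}; the present statement merely packages them, the one fresh verification being the (trivial) symmetric monoidality of the forgetful functor and the consistency of the resulting composite with Definition~\ref{homologydef}.
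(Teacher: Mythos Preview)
Your proposal is correct and matches the paper's approach exactly: the paper introduces this proposition with the sentence ``We record our construction as a proposition'' and gives no separate proof, so the content is precisely the assembly of $\theta$ (from Proposition~\ref{symequivalence}), $\overline{H}$ (from Corollary~\ref{HHMoritainv} via Lemmas~\ref{symmonoloc} and~\ref{Moritainvert}), and the evident forgetful functor, together with the formal passage to $\OO$-algebras. Your write-up is in fact more explicit than the paper itself, which treats the statement as a summary of the preceding constructions rather than something requiring its own argument.
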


\section{Construction}\label{constructionsec}

In~this section, we prove Theorem~\ref{mainconstruction}. Namely, we construct the structure of~a~$\KS$-algebra on~the pair of~Hoch\-schild cohomology spectrum and Hoch\-schild homology spectrum. We~maintain the notation of~Section~\ref{homologysec}.

\begin{Definition}\label{onedimrec}
Let $M$ and $M'$ be (possibly empty) finite disjoint unions of~open interval
$(0,1)$ and the circle $\RRR/\ZZ=S^1$.
That is, $M=(0,1)^{\sqcup m}\sqcup \big(S^1\big)^{\sqcup n}$ and $M'=(0,1)^{\sqcup m'}\sqcup \big(S^1\big)^{\sqcup n'}$.
Let $\textup{Emb}^{\rm rec}(M,M')$ be
the space of~rectilinear embeddings.
A rectilinear embedding $M\to M'$ is an~open embedding
such that any restriction $(0,1)\to (0,1)$,
$(0,1)\to S^1$ and $S^1\to S^1$ is rectilinear, see Definition~\ref{recti}.
The topology is induced from
the compact-open topology (or~parameters of~rectilinear maps).

Let $\textup{Mfld}^{\rm rec}_1$ be the
the fibrant simplicial colored operad
whose set $\big(\textup{Mfld}^{\rm rec}_1\big)_{\rm col}$ of~colors consists of~(possibly empty)
finite disjoint unions of~$(0,1)$ and $S^1$.
For a~finite family $\{M_i\}_{i\in I}$ of~colors
and $N\in \big(\textup{Mfld}^{\rm rec}_1\big)_{\rm col}$,
the simplicial hom set $\mul_{\textup{Mfld}^{\rm rec}_1}(\{M_i\}_{i\in I},N)$ is
defined to be the singular complex
of~the space $\textup{Emb}^{\rm rec}(\sqcup_{i\in I}M_i,N)$ of~rectilinear embeddings. The composition is defined in the obvious way.
Then from Definition~\ref{associatedinfoperad},
we obtain the associated $\infty$-operad
$(\Mfld^{\rm rec}_1)^\otimes$ $\to \Gamma$, which is a~symmetric monoidal
$\infty$-category by~construction.
Informally, objects of~this symmetric monoidal $\infty$-category
are finite disjoint unions of~$(0,1)$ and $S^1$, and the
symmetric monoidal structure is given by~disjoint union.
The empty space is a~unit. The mapping spaces are spaces of~rectilinear
embeddings.
Let $\Mfld^{\rm rec}_1$ denote the underlying $\infty$-category.
Let $\Disk_1^{\rm rec}\subset \Mfld^{\rm rec}$ be the full subcategory spanned by~finite disjoint
unions of~$(0,1)$. It~is closed under taking tensor products
so that $\Disk^{\rm rec}_1$ is promoted to a~symmetric monoidal $\infty$-category
$(\Disk_1^{red})^\otimes$ (it~is equivalent to an~ordinary symmetric monoidal 1-category).
\end{Definition}

\begin{Remark}There are several variants which are equivalent to $\Mfld^{\rm rec}_1$.
Let $\Mfld^{\rm fr}_1$ be the $\infty$-category of~framed (or oriented) $1$-manifolds without boundaries whose mapping spaces
are spaces of~embeddings of~framed manifolds (see, e.g., \cite[Section~2]{AF}).
The symmetric monoidal structure is given by~disjoint union.
It~is easy to see that
there is an~equivalence $\Mfld^{\rm rec}_1\stackrel{\sim}{\to} \Mfld^{\rm fr}_1$
as symmetric monoidal $\infty$-categories.
If~we write $\Disk_1^{\rm fr}$
for the full subcategory of~$\Mfld^{\rm fr}_1$
spanned by~framed $1$-disks,
it also induces an~equivalence $\Disk_1^{\rm rec}\stackrel{\sim}{\to} \Disk_1^{\rm fr}$
of~symmetric monoidal \mbox{$\infty$-categories}.
\end{Remark}

From now on, for ease of~notation, we write $\Mfld_1$, $\Disk_1$,
and $\Disk^\otimes_1$ for $\Mfld_1^{\rm rec}$, $\Disk^{\rm rec}_1$,
and~$\big(\Disk_1^{\rm rec}\big)^\otimes$, respectively.

We~set $(\Disk_1)_{/S^1}:=\Disk_1\times_{\Mfld_1}(\Mfld_1)_{/S^1}$.
Let $\lsr$ be the full subcategory of~$\Mfld_1$ that consists of~$S^1$.
By the equivalence $\textup{Emb}^{\rm rec}\big(S^1,S^1\big)\simeq S^1$, it follows that
$\lsr$ is equivalent to $BS^1$, that is, the $\infty$-category
which has one object $\ast$
together with the mapping space \mbox{$\Map_{BS^1}(\ast,\ast)=S^1$}
endowed with the composition law induced by~the multiplication of~$S^1$.
Let $\Disk_1/ \lsr$ be the full subcategory of~$\Fun\big(\Delta^1,\Mfld_1\big)$
which consists of~those functors $h\colon \Delta^1\to \Mfld_1$
such that $h(0)\in \Disk_1$ and $h(1)\in \lsr$.
In~other words, $\Disk_1/ \lsr=\Disk_1\times_{\Mfld_1}\Fun\big(\Delta^1,\Mfld_1\big)\times_{\Mfld_1}\lsr$, where the functor from $\Fun\big(\Delta^1,\Mfld_1\big)$ to the left $\Mfld_1$
(resp.~the right $\Mfld_1$) is induced by~the restriction to the source
(resp.~the target).
The projection
\mbox{$\Delta^0=\{S^1\}\to \lsr=BS^1$}
induces
\begin{gather*}
(\Disk_1)_{/S^1}\simeq \Disk_1\times_{\Mfld_1}\Fun\big(\Delta^1,\Mfld_1\big)\times_{\Mfld_1}\big\{S^1\big\}
\\ \hphantom{(\Disk_1)_{/S^1}}
{}\to \Disk_1\times_{\Mfld_1}\Fun\big(\Delta^1,\Mfld_1\big)\times_{\Mfld_1}\lsr = \Disk_1/ \lsr,
\end{gather*}
where the left categorical equivalence follows from \cite[Proposition~4.2.1.5]{HTT}.

\begin{Lemma}\label{geometricConnes} Let $\Disk^\dagger_1$ be the full subcategory of~$\Disk_1$ spanned by~nonempty spaces (namely, the empty space is omitted from $\Disk_1$).
We~set $\Diskd_1/\lsr=\Diskd_1\times_{\Disk_1}\big(\Disk_1/\lsr\big)$.
Let $\Lambda$ be the cyclic category of~Connes {\rm \cite[Section~2]{Con}}.
There is an~equivalence of~categories $\Lambda^{\rm op}\simeq \Diskd_1/\lsr$.
\end{Lemma}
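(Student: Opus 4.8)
\emph{Overview.} The plan is to prove the equivalence in two steps: first to show that $\Diskd_1/\lsr$ is in fact (the nerve of) an ordinary category, whose Hom-sets can be read off explicitly, and then to identify that category with $\Lambda^{\rm op}$ using Connes' circle model.

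\emph{Step 1: $\Diskd_1/\lsr$ is $1$-categorical.} Since $\Disk_1/\lsr$ is a full subcategory of $\Fun(\Delta^1,\Mfld_1)$, for objects $e\colon(0,1)^{\sqcup m}\hookrightarrow S^1$ and $e'\colon(0,1)^{\sqcup m'}\hookrightarrow S^1$ with $m,m'\ge 1$ the mapping space is the homotopy pullback
\begin{gather*}
\Map_{\Diskd_1/\lsr}(e,e')\simeq \textup{Emb}^{\rm rec}\big((0,1)^{\sqcup m},(0,1)^{\sqcup m'}\big)\times_{\textup{Emb}^{\rm rec}((0,1)^{\sqcup m},S^1)}\textup{Emb}^{\rm rec}\big(S^1,S^1\big),
\end{gather*}
with the two maps given by postcomposition with $e'$ and precomposition with $e$. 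Now $\textup{Emb}^{\rm rec}((0,1)^{\sqcup m},(0,1)^{\sqcup m'})$ is homotopy discrete: its $\pi_0$ is the set of pairs $(\phi,\preceq)$ consisting of a map $\phi\colon\langle m\rangle^\circ\to\langle m'\rangle^\circ$ and a linear order $\preceq$ on each fibre of $\phi$, each component being a contractible convex cell (using that $\textup{Emb}^{\rm rec}((0,1),(0,1))$ is contractible), and $\textup{Emb}^{\rm rec}(S^1,S^1)=S^1$ is the space of rotations. The geometric heart is the analysis of $\textup{Emb}^{\rm rec}((0,1)^{\sqcup m},S^1)$: its connected components are indexed by the cyclic orders of the $m$ intervals around $S^1$, and each component is homeomorphic to $S^1\times P$ with $P$ the convex polytope of admissible interval- and gap-lengths and $S^1$ acting by simultaneous rotation; this action is free, so the orbit map $\textup{Emb}^{\rm rec}(S^1,S^1)=S^1\to\{\text{component of }e\}$ is a homotopy equivalence. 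Feeding this into the pullback, each point $(\phi,\preceq)$ of $\pi_0\textup{Emb}^{\rm rec}((0,1)^{\sqcup m},(0,1)^{\sqcup m'})$ contributes nothing when the cyclic order on $\langle m\rangle^\circ$ induced by $e'\circ(\phi,\preceq)$ differs from the one carried by $e$, and a contractible component (a pullback along an equivalence) otherwise. Hence $\Diskd_1/\lsr$ is equivalent to the ordinary category $\mathcal{C}$ whose objects are finite nonempty cyclically ordered sets, whose morphisms $\underline m\to\underline{m'}$ are those pairs $(\phi,\preceq)$ for which the cyclic order induced on $\underline m$ by $\phi$, $\preceq$ and the cyclic order of $\underline{m'}$ is the given one, and whose composition comes from composition of rectilinear embeddings.

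\emph{Step 2: $\mathcal{C}\simeq\Lambda^{\rm op}$.} On objects, send $e$ with $m$ intervals to $[m-1]$; this is bijective on isomorphism classes and matches automorphism groups, both being $\ZZ/m$. For the Hom-sets it is convenient to pass to the equivalent full subcategory on those $e$ whose image is $S^1$ minus finitely many points — every object is isomorphic to such an $e$, obtained by enlarging the intervals until their closures cover $S^1$ (an isomorphism, again because $\textup{Emb}^{\rm rec}((0,1),(0,1))$ is contractible) — so that an object becomes literally $S^1$ equipped with $m$ labelled marked points in cyclic order, which is exactly the datum of the $\Lambda$-object $[m-1]$ in Connes' circle model \cite[Section~2]{Con}. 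In that model a morphism $[p]\to[q]$ of $\Lambda$ is a homotopy class of monotone degree-one self-maps $g$ of $S^1$ carrying the marked points of $[p]$ into those of $[q]$; such a $g$ produces a rectilinear embedding over $S^1$ between the corresponding ``covering'' configurations by taking preimages ($g^{-1}$ of ``$S^1$ minus $q+1$ points'' is a disjoint union of $q+1$ open arcs sitting inside ``$S^1$ minus $p+1$ points''), and conversely one recovers $g$ from such an embedding by collapsing the complementary gaps. Tracking variance, a morphism $e\to e'$ of $\mathcal{C}$ with $m$ and $m'$ intervals corresponds to the $\Lambda$-morphism $[m'-1]\to[m-1]$, i.e.\ to a morphism $[m-1]\to[m'-1]$ of $\Lambda^{\rm op}$, and composition on both sides is composition of the maps $g$; this produces mutually inverse functors. (Alternatively one can argue purely combinatorially, matching the generators $\delta_i$, $\sigma_j$, $\tau_n$ of $\Lambda$ with the evident rectilinear embeddings and checking Connes' relations.)

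\emph{Main obstacle.} The delicate part is the bookkeeping in Step 2: one must check that the comparison functor is well defined independently of the chosen representatives and of the chosen monotone circle map $g$, that it respects composition, and — the crux — that it is bijective on every Hom-set, equivalently that the ``wrapping'' datum $\preceq$ attached to a rectilinear embedding over $S^1$ is precisely the extra information distinguishing a $\Lambda$-morphism from the underlying map of cyclically ordered sets. A clean way to organize this is to first establish the equivalence on the wide subcategories of morphisms lying over the identity rotation of $S^1$, which recover $\Delta^{\rm op}$ on both sides, and then observe that $\Diskd_1/\lsr$ and $\Lambda^{\rm op}$ are each generated over this subcategory by the cyclic automorphism groups $\ZZ/m$, matched via rotation of $S^1$.
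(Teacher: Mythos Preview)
Your proposal is correct and follows essentially the same approach as the paper. Both arguments first verify that $\Diskd_1/\lsr$ is equivalent to an ordinary category by analyzing the mapping spaces (you do this via the homotopy pullback formula for $\Fun(\Delta^1,-)$, the paper by directly noting that each component of $\Map_{\Mfld_1}\big((0,1)^{\sqcup p+1},S^1\big)$ is an $S^1$), and then pass to the ``circle minus finitely many points'' model to match morphisms with homotopy classes of monotone degree-one self-maps of $S^1$ via the preimage construction; the paper is simply more explicit in writing down the objects $J(p)$ and the embeddings $\iota_{i,\phi}$ implementing $\phi^*$, whereas you package the same data as the intermediate combinatorial category $\mathcal{C}$.
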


\begin{proof}
This is a~comparison between definitions which look different.
We~first recall that objects of~$\Lambda$
are $(p)$ for $p\ge0$, which is denoted by~$\Lambda_p$ in~\cite{Con}.
Let $\big(S^1,p\big)$ be the circle $S^1=\RRR/\ZZ$ equipped with
the set of~torsion points $\frac{1}{p+1}\ZZ/\ZZ$.
The hom set $\Hom_{\Lambda}((p),(q))$ is defined to be the set of~homotopy classes of~monotone degree one maps $\phi\colon S^1\to S^1$
such that $\phi\big(\frac{1}{p+1}\ZZ/\ZZ\big)\subset \frac{1}{q+1}\ZZ/\ZZ$.
We~denote points $\frac{0}{p+1},\dots,\frac{p}{p+1}\in\RRR/\ZZ$
by~$x^0_p,\dots,x^p_p$, respectively.
Let $I_p^i=\big\{x_p^i<x<x_p^{i+1}|x\in [0,1)\big\}$ be the open set in $S^1$,
where we use the obvious bijection of~sets $[0,1)\simeq \RRR/\ZZ$.
In~what follows, we regard the superscripts in $x_p^i$ and $I_p^i$
as elements of~$\ZZ/(p+1)\ZZ$.
For $u,v\in [0,1)$, $\{u<x<v\}$ means $\{x\in [0,1)\,|\, u<x<v\}$
if $u<v$, $\{x\in [0,1)\,|\, u<x<1$, \mbox{$0\le x<v\}$}
if $u>v>0$, and $\{x\in [0,1)\,|\, u<x<1\}$ if $u>v=0$.
Given $(p)\in \Lambda$, we think of~$j_p\colon (\RRR/\ZZ)\backslash\big(\frac{1}{p+1}\ZZ/\ZZ\big)=\big(I_p^0\sqcup \dots \sqcup I_p^{p}\big)\hookrightarrow \RRR/\ZZ=S^1$ as an~object of~$\Diskd_1/\lsr$.
We~fix $I^i_p\simeq (0,1)$ such that $I_{p}^i\hookrightarrow \RRR/\ZZ$
is equivalent to $(0,1)\hookrightarrow \RRR\to \RRR/\ZZ$.
We~write $J(p)$ for~it. We~note that every object of~$\Diskd_1/\lsr$
is equivalent to $J(p)$ for some $p\ge0$.
Since each component of~$\Map_{\Mfld_1}\big((0,1)^{\sqcup p+1},S^1\big)$ is naturally
equivalent to $S^1$,
the computation of~mapping spaces shows that $\Diskd_1/\lsr$ is
equivalent to the nerve of~a~$1$-category.
We~may and will abuse notation by~identifying $\Diskd_1/\lsr$
with its homotopy category.
Suppose that we are given a~monotone degree one map $\phi\colon S^1\to S^1$
such that $\phi\big(\frac{1}{p+1}\ZZ/\ZZ\big)\subset \frac{1}{q+1}\ZZ/\ZZ$.
Assume that $p\ge1$ and $\phi\big(j_p\big(I_p^i\big)\big)$ is not a~one-point space.
Let $\sigma(i,\phi)$ be
an element of~$\ZZ/(p+1)\ZZ$ such that
$x_q^{\sigma(i,\phi)}=\phi\big(x_p^i\big)$.
Consider a~rectilinear embedding
\begin{gather*}
\iota_{i,\phi}\colon\ I_q^{\sigma(i,\phi)}\sqcup I_{q}^{\sigma(i,\phi)+1}\sqcup \dots \sqcup I_{q}^{\sigma(i+1,\phi)-1}\simeq (0,1)\sqcup \dots \sqcup (0,1) \hookrightarrow (0,1)\simeq I^i_p
\end{gather*}
such that $\iota_{i,\phi}\big(I^{\sigma(i,\phi)}_q\big)<\dots < \iota_{i,\phi}\big(I_{q}^{\sigma(i+1,\phi)-1}\big)$ in $(0,1)$ (we here abuse notation: for two subsets $S$, $T\subset (0,1)$, $S<T$ if $s<t$ for any pair $(s,t)\in S\times T$).
When $p=0$, we define $\iota_{i,\phi}$
by~replacing $I_q^{\sigma(i,\phi)}\sqcup I_{q}^{\sigma(i,\phi)+1}\sqcup \dots \sqcup I_{q}^{\sigma(i+1,\phi)-1}$ by~$I_q^{\sigma(i,\phi)}\sqcup I_{q}^{\sigma(i,\phi)+1}\sqcup \dots \sqcup I_{q}^{\sigma(i,\phi)-1}$.
Such a~rectilinear embedding is unique up to equivalences.
Given $\phi\in \Hom_{\Lambda}((p),(q))$,
we define the class of~a~map $\phi^*\colon J(q)\to J(p)$ in $\Diskd_1/\lsr$
such that the fiber of~the induced morphism
$I_{q}^0\sqcup \dots \sqcup I_q^q\to I_p^0\sqcup \dots \sqcup I_p^p$
over the connected component $I_p^i$
is (equivalent to) $\iota_{i,\phi}$ if $\phi\big(I_p^i\big)$ is not a~one-point space,
and if otherwise there is no component which maps to $I^i_p$.
Notice that such a~class is unique.
It~is routine
to check that the assignments $(p)\mapsto J(p)$ and $\phi\mapsto \phi^*$
determine a~categorical equivalence $\Lambda^{\rm op}\stackrel{\sim}{\to} \Diskd_1/\lsr$,
where the target is identified with the homotopy category.
\end{proof}

\begin{Lemma}\label{completion}\sloppy
Let $\pi\colon \Diskd_1/\lsr\to \lsr$ be the projection.
It~is a~groupoid completion of~$\Diskd_1/\lsr$.
\end{Lemma}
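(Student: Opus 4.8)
The plan is to derive the statement from Connes' theorem (recalled in Lemma~\ref{cycsimp}(i)) via the geometric model of the cyclic category furnished by Lemma~\ref{geometricConnes}.

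First I would record the following formal reduction. Since $\lsr$ is an $\infty$-groupoid, $\pi$ factors, uniquely up to a contractible space of choices, through the groupoid completion $\eta\colon\Diskd_1/\lsr\to\overline{\Diskd_1/\lsr}$ as $\pi\simeq\bar\pi\circ\eta$; and $\pi$ exhibits $\lsr$ as a groupoid completion of $\Diskd_1/\lsr$ if and only if $\bar\pi$ is an equivalence. The groupoid completion of a small $\infty$-category is its classifying space, and applying the classifying space functor to $\pi\simeq\bar\pi\circ\eta$ identifies $\bar\pi$ with the induced map $|\pi|\colon|\Diskd_1/\lsr|\to|\lsr|\simeq\lsr$. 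So it suffices to prove that $|\pi|$ is an equivalence of spaces.

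Next I would use Lemma~\ref{geometricConnes} to transport the situation to $\Lambda^{\rm op}$: under the equivalence $\Lambda^{\rm op}\simeq\Diskd_1/\lsr$ there, the object $(p)\in\Lambda$ corresponds to $J(p)=\big(j_p\colon I_p^0\sqcup\dots\sqcup I_p^p\hookrightarrow S^1\big)$, and unwinding that construction shows that the composite $\Lambda^{\rm op}\stackrel{\sim}{\longrightarrow}\Diskd_1/\lsr\stackrel{\pi}{\longrightarrow}\lsr$ is the canonical ``ambient circle'' functor: it sends each $(p)$ to $S^1$ and a morphism to the underlying rectilinear self-embedding of $S^1$, and in particular it carries the cyclic automorphism group $\ZZ/(p+1)\ZZ=\Aut_{\Diskd_1/\lsr}(J(p))$ isomorphically onto the group $\frac{1}{p+1}\ZZ/\ZZ$ of $(p+1)$-torsion points inside $S^1=\Aut_{\lsr}(S^1)$. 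This is exactly the functor whose classifying space realizes Connes' equivalence $\overline{\Lambda}\simeq BS^1$ as in Lemma~\ref{cycsimp}(i) and its references. Since classifying spaces are unchanged under passing to opposite categories, $|\Lambda^{\rm op}|\simeq|\Lambda|\simeq\overline{\Lambda}\simeq BS^1\simeq\lsr$, and under these identifications $|\pi|$ becomes Connes' equivalence; hence $|\pi|$ is an equivalence and $\pi$ is a groupoid completion of $\Diskd_1/\lsr$.

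The step I expect to be the main obstacle is the identification in the third paragraph: checking carefully, against the explicit combinatorics of the equivalence of Lemma~\ref{geometricConnes}, that $\pi$ transported along it coincides with Connes' canonical functor $\Lambda^{\rm op}\to BS^1$ (keeping track of the passage to opposite categories throughout). The remaining ingredients---the factorization principle, op-invariance of the classifying space, and the appeal to Lemma~\ref{cycsimp}(i)---are purely formal.
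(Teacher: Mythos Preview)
Your formal reduction is correct, and transporting along Lemma~\ref{geometricConnes} is the right move. The gap lies exactly where you flag it, but it is more serious than a routine verification: Lemma~\ref{cycsimp}(i) only asserts an equivalence $\overline{\Lambda}\simeq BS^1$; it does not single out a canonical functor $\Lambda^{\rm op}\to BS^1$ against which you could compare $\pi$. Self-maps of $BS^1$ are classified by an integer degree, and the content of the lemma is precisely that the induced self-map has degree $\pm 1$; asserting that $\pi$ ``is exactly'' Connes' functor, without an independent construction of the latter from the cited references to check against, begs the question.

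The paper proceeds indirectly. Factor $\pi\simeq g\circ c$ with $c\colon\Lambda^{\rm op}\to BS^1$ a groupoid completion (Lemma~\ref{cycsimp}(i)) and show that $g\colon BS^1\to BS^1$ has degree $\pm 1$ by contradiction. If $g$ is induced by a map $S^1\to S^1$ of degree $n$ with $|n|=p+1>1$, then the restriction $h$ of $\pi$ to $B\Aut((p))=B\ZZ/(p+1)\ZZ$ is null (the image of $c$ on this automorphism group lies in the $(p+1)$-torsion, which the degree-$n$ map annihilates), so the fiber of $h$ over the basepoint is $B(\ZZ\times\ZZ/(p+1)\ZZ)$. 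But that fiber is also computed geometrically as the maximal subgroupoid of $(\Disk_1)_{/S^1}\simeq\Diskd_1/\lsr\times_{\lsr}\Delta^0$ spanned by $J(p)$, which is $B\ZZ$; contradiction.

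Your observation that $\pi$ carries $\ZZ/(p+1)\ZZ$ isomorphically onto the $(p+1)$-torsion of $S^1$ for every $p$ is correct and is in fact sufficient: it forces $\gcd(n,p+1)=1$ for all $p\geq 0$, hence $n=\pm 1$. This is the paper's argument read contrapositively. So the missing ingredient is not a comparison with a pre-existing ``canonical'' functor but this degree argument.
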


\begin{proof}By Lemmas~\ref{cycsimp}(i) and~\ref{geometricConnes},
there is
a groupoid completion $c\colon \Lambda^{\rm op}\simeq \Diskd_1/\lsr$ $\to BS^1$.
Thus, by~the universal property, there is a~canonical morphism
from $c\colon \Lambda^{\rm op}\to BS^1$
to~$\pi\colon \Lambda^{\rm op}\simeq \Diskd_1/\lsr\to \lsr$
in $(\CAT_\infty)_{\Lambda^{\rm op}/}$.
It~will suffice to show that the induced morphism
$g\colon BS^1\to \lsr\simeq BS^1$ is an~equivalence, equivalently, it is induced by~an equivalence $S^1\to S^1$ as $\eone$-monoid spaces.
To this end, assume that
$g\colon BS^1\to \lsr\simeq BS^1$ is
induced by~a map $S^1\to S^1$ of~degree $n$,
where $|n|=p+1$, $p>0$. We~will show that this gives rise to
a contradiction.
The automorphism group of~$(p)$
in $\Lambda^{\rm op}\simeq \Diskd_1/\lsr$ is
$\ZZ/(p+1)\ZZ$ so that there is the functor
$h\colon B\ZZ/(p+1)\ZZ\to BS^1$ induced by~$\pi$.
By the factorization $\Lambda^{\rm op}\stackrel{c}{\to} BS^1\stackrel{g}{\to} \lsr\simeq BS^1$
and our assumption, $h\colon B\ZZ/(p+1)\ZZ\to BS^1$
factors through the canonical morphism $\Delta^0\to BS^1$.
Thus, the fiber product of~$B\ZZ/(p+1)\ZZ\stackrel{h}{\to} BS^1\leftarrow \Delta^0=*$
is $B(\ZZ\times\ZZ/(p+1)\ZZ)$.
On the other hand, the space/$\infty$-groupoid
in
$(\Disk_1)_{/S^1}\simeq \Disk_1/\lsr\times_{\langle S^1\rangle}\Delta^0$
spanned by~$J(p)\colon (0,1)^{\sqcup p+1}$ $\to S^1$ (obtained by~discarding non-invertible morphisms) is equivalent to $B\ZZ$.
It~gives rise to a~contradiction $B(\ZZ\times\ZZ/(p+1)\ZZ)\simeq B\ZZ$.
\end{proof}

\begin{Remark}\label{paracyclic}There is another category relevant to the cyclic category:
the paracyclic category
$\Lambda_{\infty}$. Let us recall the definition of~the paracyclic category.
We~follow \cite{GJ}. The set of~objects of~$\Lambda_{\infty}$
is $\{(0)_{\infty},(1)_{\infty},\dots,(p)_{\infty},\dots\}_{p\ge0}$.
The hom set $\Hom_{\Lambda_{\infty}}((p)_{\infty},(q)_{\infty})$
is defi\-ned to be the set of~monotonically increasing maps $f\colon \ZZ\to \ZZ$ such that
$f(i+k(p+1))=f(i)+k(q+1)$ for any $k\in \ZZ$.
We~define a~functor $\Lambda_{\infty}\to \Lambda$
which carries $(p)_\infty$ to $(p)$.
The map
$\Hom_{\Lambda_{\infty}}((p)_{\infty},(q)_{\infty})\to \Hom_{\Lambda}((p),(q))$
carries $f$ to $\phi_f\colon S^1\to S^1$,
where $\phi_f$ is the class of~a~map such that
$\phi_f(x_p^i)=x_q^{f(i)}\in \frac{1}{q+1}\ZZ/\ZZ$
for $i\in \ZZ/(p+1)\ZZ$.
Here, we regard $f(i)$ as belonging to~$\ZZ/(q+1)\ZZ$.
This determines a~functor $\Lambda_{\infty}\to \Lambda$.
Unwinding the definition of~$\Lambda_{\infty}\to \Lambda$,
we see that it is a~(homotopy)
quotient morphism $\Lambda_{\infty}\to \Lambda_{\infty}/B\ZZ\simeq \Lambda$
that comes from
a free action of~$B\ZZ$ on $\Lambda_{\infty}$.
This free action of~$B\ZZ$ is determined by~the natural equivalence from the identity functor $\textup{id}_{\Lambda_{\infty}}$ to itself such that for any $p\ge0$,
the induced map
$(p)_{\infty}\to (p)_{\infty}$ is the map $i\mapsto i+p+1$
 (see \cite{GJ} for details).
The paracyclic category also has a~geometric description.
From the proof of~Lemma~\ref{redcolimit} below,
$\Delta^{\rm op}\to \Lambda^{\infty}$ is (left) cofinal
so that it induces an~equivalence between their groupoid completions.
Since the groupoid completion of~$\Delta^{\rm op}$ is contractible (note
that it's
sifted),
the groupoid completion $\overline{\Lambda}_{\infty}$ of~$\Lambda_\infty$
is a~contractible space.
It~follows that the geometric realization of~$\Lambda$ is equivalent to $BB\ZZ=BS^1$
(see also Lemma~\ref{cycsimp}(i)).
The composition with the opposite functor $\Lambda_{\infty}^{\rm op}\to \Lambda^{\rm op}\simeq \Diskd_1/\lsr \to \lsr=BS^1$ factors through the groupoid completion $\Lambda_{\infty}^{\rm op}\to \overline{\Lambda}_{\infty}^{\rm op}\simeq \Delta^0$.
Consequently, we have the induced functor
$\Lambda_\infty^{\rm op}\to \big(\Diskd_1\big)_{/S^1}\simeq \Diskd_1/\lsr\times_{\langle S^1\rangle}\Delta^0$. This is an~equivalence. Clearly, it is essentially surjective.
The map
$\Hom_{\Lambda_{\infty}^{\rm op}}((q)_\infty,(p)_{\infty})\to \Hom_{\Lambda^{\rm op}}((q),(p))$
is a~homotopy quotient map that comes from a~free action of~$\ZZ$.
We~see that $\Hom_{\Lambda_{\infty}^{\rm op}}((q)_\infty,(p)_{\infty})$
is a~(homotopy) fiber product $\Hom_{\Lambda_{\infty}^{\rm op}}((q)_\infty,(p)_{\infty})/\ZZ\times_{B\ZZ}\Delta^{0}\simeq \Hom_{\Lambda^{\rm op}}((q),(p))\times_{B\ZZ}\Delta^0$.
It~follows that $\Lambda_\infty^{\rm op}\to \big(\Diskd_1\big)_{/S^1}$ is~fully faithful.
Hence $\Lambda_\infty^{\rm op}\stackrel{\sim}{\to} \big(\Diskd_1\big)_{/S^1}$.
\end{Remark}

\begin{Lemma}\label{redcolimit}
Let $\CCC$ be a~presentable $\infty$-category.
Let $\Lambda_\infty$ be the paracyclic category, see~{\rm \cite{GJ}} or Remark~{\rm \ref{paracyclic}}.
Let $\Lambda_{\infty}^{\rm op}\simeq \big(\Diskd_1\big)_{/S^1}\to \Diskd_1/\lsr\simeq \Lambda^{\rm op}$
be the natural functor.
Let $f\colon \Diskd_1/\lsr\to \CCC$ and $g\colon \lsr\to \CCC$ be
functors and let $f\to \pi\circ g$ be a~natural transformation.
Then $g$ is a~left Kan extension of~$f$ along $\pi\colon \Diskd_1/\lsr \to \lsr$ if and only if
$\Delta^0\to BS^1\simeq \lsr \stackrel{g}{\to} \CCC$ determines
a colimit of~the composite $\big(\Diskd_1\big)_{/S^1}\to\Diskd_1/\lsr \stackrel{f}{\to} \CCC$.

Moreover, the inclusion $\big(\Diskd_1\big)_{/S^1}\to (\Disk_1)_{/S^1}$
is cofinal. Therefore, if we suppose that
$f\colon \Diskd_1/\lsr\to \CCC$ is the restriction of~a functor $\tilde{f}\colon \Disk_1/\lsr\to \CCC$,
the above condition
that $g$ is a~left Kan extension of~$f$
is also equivalent to the condition that
$\Delta^0\to BS^1\to \CCC$ determines
a colimit of~the composite $(\Disk_1)_{/S^1}\to\Disk_1/\lsr\stackrel{\tilde{f}}{\to} \CCC$,
where the first arrow $(\Disk_1)_{/S^1}\to\Disk_1/\lsr$ is the natural functor.
\end{Lemma}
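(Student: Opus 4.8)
The plan is to read off the first assertion from the pointwise formula for left Kan extensions, using that $\lsr$ is an $\infty$-groupoid, and to deduce the ``moreover'' part from the cofinality criterion \cite[Theorem~4.1.3.1]{HTT} together with the contractibility statements in Remark~\ref{paracyclic}. For the first assertion, recall that $\CCC$ is presentable, so $f$ admits a left Kan extension along $\pi\colon\Diskd_1/\lsr\to\lsr$ which is moreover computed pointwise; hence, for a functor $g$ equipped with the given natural transformation $f\to g\circ\pi$, being a left Kan extension of $f$ along $\pi$ is equivalent to the requirement that, at the essentially unique object $\ast=S^1$ of $\lsr$, the canonical map
\begin{gather*}
\colim\big(\,\Diskd_1/\lsr\times_{\lsr}(\lsr)_{/\ast}\to\Diskd_1/\lsr\stackrel{f}{\to}\CCC\,\big)\longrightarrow g(\ast)
\end{gather*}
be an equivalence (cf.\ \cite[Section~4.3]{HTT}). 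Now $\lsr\simeq BS^1$ is an $\infty$-groupoid, so $(\lsr)_{/\ast}$ is an $\infty$-groupoid too, and it is weakly contractible since it has the terminal object $\textup{id}_{\ast}$; thus $(\lsr)_{/\ast}$ is categorically equivalent to $\Delta^0$ and, over $\lsr$, its structure map becomes the inclusion $\{\ast\}\hookrightarrow\lsr$. Pulling this back along $\pi$ and invoking the identification recalled just before Lemma~\ref{geometricConnes} (with the base restricted to $\Diskd_1$) yields categorical equivalences
\begin{gather*}
\Diskd_1/\lsr\times_{\lsr}(\lsr)_{/\ast}\ \simeq\ \Diskd_1/\lsr\times_{\lsr}\big\{S^1\big\}\ \simeq\ \big(\Diskd_1\big)_{/S^1};
\end{gather*}
under them the diagram above is the composite $\big(\Diskd_1\big)_{/S^1}\to\Diskd_1/\lsr\stackrel{f}{\to}\CCC$, and, since $\pi$ is constant equal to $\ast$ on $\big(\Diskd_1\big)_{/S^1}$, its target $g(\ast)$ with the induced cocone is exactly the datum ``$\Delta^0\to BS^1\simeq\lsr\stackrel{g}{\to}\CCC$'' of the statement. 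This establishes the first equivalence. (Comparing it with Lemma~\ref{cycsimp}(ii) applied to the functor $\Diskd_1/\lsr\to\lsr$, which is the groupoid completion $\Lambda^{\rm op}\to BS^1$ by Lemmas~\ref{geometricConnes} and~\ref{completion}, and with the equivalence $\big(\Diskd_1\big)_{/S^1}\simeq\Lambda_\infty^{\rm op}$ over $\Diskd_1/\lsr$ from Remark~\ref{paracyclic}, one recovers the cofinality of $\Delta^{\rm op}\to\Lambda_\infty^{\rm op}$ for an arbitrary presentable target, as asserted in Remark~\ref{paracyclic}.)

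For the ``moreover'' part it suffices to prove that the full-subcategory inclusion $\iota\colon\big(\Diskd_1\big)_{/S^1}\hookrightarrow(\Disk_1)_{/S^1}$ is cofinal; indeed, $\colim$ over $(\Disk_1)_{/S^1}$ of the composite $(\Disk_1)_{/S^1}\to\Disk_1/\lsr\stackrel{\tilde{f}}{\to}\CCC$ then coincides with $\colim$ over $\big(\Diskd_1\big)_{/S^1}$ of $\big(\Diskd_1\big)_{/S^1}\to\Diskd_1/\lsr\stackrel{f}{\to}\CCC$, compatibly with the cocones to $g(\ast)$, whence the asserted equivalence of conditions follows from the first part. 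An object of $(\Disk_1)_{/S^1}$ is a rectilinear embedding $(0,1)^{\sqcup m}\hookrightarrow S^1$; it lies in $\big(\Diskd_1\big)_{/S^1}$ exactly when $m\ge 1$, and the single remaining object $\emptyset\hookrightarrow S^1$ is initial in $(\Disk_1)_{/S^1}$ because $\emptyset$ is initial in $\Disk_1$. By \cite[Theorem~4.1.3.1]{HTT} it is enough to check that $\big(\Diskd_1\big)_{/S^1}\times_{(\Disk_1)_{/S^1}}\big((\Disk_1)_{/S^1}\big)_{Y/}$ is weakly contractible for each $Y$. When $Y$ has $m\ge1$, fullness of $\iota$ identifies this comma category with the undercategory $\big(\big(\Diskd_1\big)_{/S^1}\big)_{Y/}$, which has the initial object $\textup{id}_Y$ and is hence weakly contractible. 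When $Y=\big(\emptyset\hookrightarrow S^1\big)$, initiality of $\emptyset$ makes $\big((\Disk_1)_{/S^1}\big)_{Y/}\to(\Disk_1)_{/S^1}$ a trivial fibration, so its base change along $\iota$ is a trivial fibration over $\big(\Diskd_1\big)_{/S^1}\simeq\Lambda_\infty^{\rm op}$, which is weakly contractible because its groupoid completion $\overline{\Lambda}_\infty$ is contractible (Remark~\ref{paracyclic}). Hence $\iota$ is cofinal.

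The step that requires care is the first one: identifying $\big(\Diskd_1\big)_{/S^1}$ with the fibre of $\pi$ over $S^1$ and, using that $\lsr$ is an $\infty$-groupoid, replacing the slice $(\lsr)_{/\ast}\to\lsr$ by $\{\ast\}\hookrightarrow\lsr$, so that the pointwise Kan-extension formula collapses to a colimit over $\big(\Diskd_1\big)_{/S^1}$; everything afterwards is bookkeeping. If one chooses instead to route the proof through Lemma~\ref{cycsimp}(ii) (that is, through $\Delta^{\rm op}$ rather than $\big(\Diskd_1\big)_{/S^1}$), the main obstacle shifts to a hands-on verification that $\Delta^{\rm op}\to\Lambda_\infty^{\rm op}$ is cofinal, which depends on the somewhat intricate combinatorics of under- and over-categories in the paracyclic category $\Lambda_\infty$.
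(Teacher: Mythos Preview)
Your proof is correct. For the first assertion you take a more direct route than the paper: you read off the Kan-extension condition from the pointwise formula, using that $\lsr\simeq BS^1$ is an $\infty$-groupoid so that the slice $(\lsr)_{/\ast}$ collapses to a point and the comma category becomes the fiber $\big(\Diskd_1\big)_{/S^1}$. The paper instead invokes Lemma~\ref{cycsimp}(ii) (which phrases the Kan-extension condition as a colimit over $\Delta^{\rm op}$) together with an external cofinality result $\Delta^{\rm op}\to\Lambda_\infty^{\rm op}$ from \cite{LRo}; your argument avoids both inputs, and as you note parenthetically, it in fact recovers that cofinality statement as a corollary. For the ``moreover'' part you and the paper argue identically via \cite[Theorem~4.1.3.1]{HTT}; you are simply more explicit in treating the case $m\ge 1$ (which the paper leaves implicit, since for $Y$ in the full subcategory the comma category trivially has an initial object) before handling the initial object $\emptyset\hookrightarrow S^1$ via weak contractibility of $\Lambda_\infty^{\rm op}$.
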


\begin{proof}
There is a~faithful functor $\Delta^{\rm op}\to \Lambda^{\rm op}_{\infty}$ that is (left) cofinal \cite[Proposition~4.2.8]{LRo}: it~is the same as the functor $m$ in Remark~\ref{globaldiagram}.
Thus, for any paracyclic object $F\colon \Lambda_{\infty}^{\rm op}\to \CCC$,
the canonical morphism $\colim_{[p]\in \Delta^{\rm op}}F([p]) \to \colim_{(p)_{\infty}\in \Lambda_{\infty}^{\rm op}}F((p)_{\infty})$ is an~equivalence.
Our first assertion now follows from this fact and Lemma~\ref{cycsimp}(ii).

To prove the second assertion, it will suffice to prove
that $\big(\big(\Diskd_1\big)_{/S^1}\big)_{e/}=\big(\Diskd_1\big)_{/S^1}$ $\times_{(\Diskd_1)_{/S^1}}((\Disk_1)_{/S^1})_{e/}$
is weakly contractible, where $e$ is the map $e\colon \phi\to S^1$
from the empty space to $S^1$.
Since $e$ is an~initial object in $(\Disk_1)_{/S^1}$, we are reduced to
proving
that $\big(\Diskd_1\big)_{/S^1}$ $\simeq \Lambda_\infty^{\rm op}$ is weakly contractible.
By Quillen's theorem A,
it is clear because
$\Delta^{\rm op}$ is weakly contractible and $\Delta^{\rm op}\to \Lambda_{\infty}^{\rm op}$ is (left) cofinal (it follows also from the fact that $\big(\Diskd_1\big)_{/S^1}$ is sifted).
\end{proof}

\begin{Remark}
\label{globaldiagram}
We~have the following commutative diagram of~categories:
\begin{gather*}
\xymatrix{
\big\{(0,1)\to S^1\big\} \ar[r] \ar[d]^{\simeq} & \big(\big(\Diskd_1\big)_{/S^1}\big)_{(0,1)\to S^1/} \ar[r]^{\textup{forget}} \ar[d]^{\simeq} & \big(\Diskd_1\big)_{/S^1} \ar[r] \ar[d]^{\simeq} & \Diskd_1/\lsr \ar[d]^{\simeq} \\
\{[0]\} \ar[r] & \Delta^{\rm op} \ar[r]^{m} & \Lambda_{\infty}^{\rm op} \ar[r] & \Lambda^{\rm op}.
}
\end{gather*}
It~is straightforward to observe that the composite
$\big(\big(\Diskd_1\big)_{/S^1}\big)_{(0,1)\to S^1/}\to \Lambda^{\rm op}$ is
a faithful and~essentially surjective functor
whose image is $\Delta^{\rm op}$ contained in $\Lambda^{\rm op}$.
\end{Remark}

Let $\textup{Mfld}^{\rm ic}_1$ be the
colored simplicial full suboperad of~$\textup{Mfld}^{\rm rec}_1$ (Definition~\ref{onedimrec})
whose set $\big(\textup{Mfld}^{\rm ic}_1\big)_{\rm col}$ of~colors is
$\big\{(0,1),S^1\big\}$.
Namely, for $M_i,N\in \{(0,1),S^1\}$,
the simplicial Hom set $\mul_{\textup{Mfld}^{\rm ic}_1}(\{M_i\}_{i\in I},N)$
is the singular complex
of~the space $\textup{Emb}^{\rm rec}(\sqcup_{i\in I}M_i,N)$ of~rectilinear embeddings. The superscript ``$ic$'' stands for the ``interval'' and the ``circle".
Let $\textup{Disk}_1$ be the full suboperad
whose set of~colors is $\{(0,1)\}$.
Notice that $\textup{Disk}_1$ is identical with $\textup{E}_1$ in Section~\ref{operadsec}.
From Definition~\ref{associatedinfoperad},
we obtain the associated $\infty$-operad
$\big(\textup{Mfld}^{\rm ic}_1\big)_{\Delta}\to \FIN$ of~simplicial categories
constructed from $\textup{Mfld}^{\rm ic}_1$.
Also, to $\textup{E}_1=\textup{Disk}_1\subset \textup{Mfld}^{\rm ic}_1$
we associate $(\textup{E}_1)_{\Delta}=(\textup{Disk}_1)_{\Delta}\to \FIN$.

\begin{Construction}\label{exttensor}
We~define
$\rho\colon (\textup{E}_1)_{\Delta}\times \big(\textup{Mfld}^{\rm ic}_1\big)_{\Delta} \to \DCYL_{\Delta}$
which makes the following diagram commute
\begin{gather*}
\xymatrix{
(\textup{E}_1)_{\Delta}\times \big(\textup{Mfld}^{\rm ic}_1\big)_{\Delta} \ar[r]^(0.7){\rho} \ar[d] & \DCYL_{\Delta} \ar[d] \\
\FIN\times \FIN \ar[r]^\wedge & \FIN.
}
\end{gather*}
Here $\DCYL_\Delta \to \FIN$ is the map of~simplicial categories
associated to $\DCYL$.
The lower hori\-zon\-tal arrow $\wedge\colon \FIN\times \FIN\to \FIN$ that sends
a pair $(\langle m\rangle, \langle n\rangle)$ to $\langle mn\rangle$
is defined in~\cite[Nota\-tion~2.2.5.1]{HA}.
Let $X=(\langle m \rangle,(L_1,\dots, L_m))$ be an~object
of~$(\textup{E}_1)_{\Delta}$, where $L_s=(0,1)$ for each $1\le s\le m$.
Let $Y=(\langle n \rangle,(M_1,\dots, M_n))$ be an~object of~$\big(\textup{Mfld}^{\rm ic}_1\big)_{\Delta}$
where $M_t$ is either $(0,1)$ or $S^1$ for each $1\le t\le n$.
We~define $\rho((X,Y))$ to be
$(\langle mn \rangle,(L_s\times M_t))_{\begin{subarray}{c}1\le s\le m \\ 1\le t\le n\end{subarray}}$,
where we abuse notation
by~the identifications $(0,1)^{2}=D$ and $(0,1)\times S^1=C$, see Definition~\ref{dcyldef}.

$X'=(\langle m' \rangle,(L'_1,\dots, L'_m))$ be another object
of~$(\textup{E}_1)_{\Delta}$.
Let $Y'=(\langle n' \rangle,(M'_1,\dots, M'_n))$
be~ano\-ther object of~$(\textup{Mfld}^{\rm ic}_1)_{\Delta}$.
Given a~pair of~morphisms $f\colon \langle m\rangle \to \langle m'\rangle$
and $g\colon \langle n\rangle \to \langle n'\rangle$,
we define a~map
\begin{gather*}
\xymatrix{
\textup{Emb}^{\rm rec}\big(\sqcup_{s\in f^{-1}(s')} L_s,L_{s'}'\big)\times \textup{Emb}^{\rm rec}\big(\sqcup_{t\in g^{-1}(t')} M_t,M_{t'}'\big) \ar[d] \\ \textup{Emb}^{\rm rec}\big(\sqcup_{(s,t)\in f^{-1}(s')\times g^{-1}(t')} L_s\times M_t,L_{s'}'\times M_{t'}'\big)
}
\end{gather*}
that sends $(\phi,\psi)$ to $\phi\times \psi$.
Taking the product parameterized by~$(s',t')$ with $1\le s'\le m'$, $1\le t'\le n'$ and taking simplicial
nerves, we obtain morphisms of~hom simplicial sets.
It~gives rise to a~functor $\rho\colon (\textup{E}_1)_{\Delta}\times \big(\textup{Mfld}^{\rm ic}_1\big)_{\Delta} \to \DCYL_{\Delta}$ which makes the diagram commute.
This construction is a~natural extension of~that in~\cite[Construction~5.1.2.1]{HA}:
Let $\langle D\rangle\subset \DCYL$ be the full suboperad
whose set of~colors is $\{D\}$.
Let $\langle D\rangle_{\Delta}\to \FIN$ be the correpsonding
simplicial full subcategory of~$\DCYL_{\Delta}$.
Then the restriction of~$\rho$ induces
$(\textup{E}_1)_{\Delta}\times (\textup{E}_1)_{\Delta} \to \langle D\rangle_{\Delta}$
lying over $\wedge\colon \FIN\times \FIN \to \FIN$,
which is defined in {\it loc.\ cit}.

Let $\mathbf{Mfld}_1$ be the simplicial nerve of~$\big(\textup{Mfld}_1^{\rm ic}\big)_{\Delta}$.
The simplicial nerves of~the above diagrams
give rise to the~commutative diagram
\begin{gather*}
\xymatrix{
\eone^\otimes\times \eone^\otimes \ar[r] \ar[d] & \etwo^\otimes \ar[d] \\
\eone^\otimes\times \mathbf{Mfld}_1 \ar[r]^{\rho} & \dcyl,
}
\end{gather*}
which lies over $\wedge\colon \Gamma\times \Gamma\to \Gamma$.
We~abuse notation by~writing $\rho$ for the associated map.
\end{Construction}

Given an~$\infty$-operad $\OO^\otimes\to \Gamma$,
there exist a~symmetric monoidal $\infty$-category $\operatorname{Env}(\OO^\otimes)\to \Gamma$ and a~map $\OO^\otimes\to \operatorname{Env}\big(\OO^\otimes\big)$
of~$\infty$-operads such that
for any symmetric monoidal $\infty$-category $\DDD^\otimes$,
the composition induces
a categorical equivalence $\Fun^\otimes\big(\operatorname{Env}\big(\OO^\otimes\big),\DDD^\otimes\big)\stackrel{\sim}{\to} \Alg_{\OO^\otimes}\big(\DDD^\otimes\big)$, see \cite[Section~2.2.4]{HA}.
Here $\Fun^\otimes\big(\operatorname{Env}\big(\OO^\otimes\big),\DDD^\otimes\big)$ denotes
the $\infty$-category of~symmetric monoidal functors.
We~shall refer to $\operatorname{Env}\big(\OO^\otimes\big)$
as the symmetric monoidal envelope of~$\OO^\otimes$
(in {\it loc.\ cit.}, it is referred to as the $\Gamma$-monoidal envelope).
Through the categorical equivalence, for a~map of~$\infty$-operads
$f\colon \OO^\otimes\to \DDD^\otimes$, there exists a~symmetric monoidal functor
$\tilde{f}\colon \operatorname{Env}\big(\OO^\otimes\big)\to \DDD^\otimes$ which is unique up
to a~contractible space of~choices. We~refer to $\tilde{f}$
as a~symmetric monoidal functor that corresponds to $f$.
Let $\OP$ be the $\infty$-category of~(small) $\infty$-operads \cite[Section~2.1.4]{HA}
and let $\CATI^\otimes$ be the $\infty$-category of~(small)
symmetric monoidal
$\infty$-categories whose morphisms are symmetric monoidal functors.
Then the construction of~symmetric monoidal envelopes gives
a~left adjoint $\OP\to \CATI^\otimes$ of~the canonical functor $\CATI^\otimes\to \OP$.
Here are some examples. The symmetric monoidal envelope $\widetilde{\mathbf{E}}_1^\otimes$
of~$\eone^\otimes$ is equivalent to $\Disk_1^\otimes$ as symmetric monoidal
\mbox{$\infty$-categories}.
Similarly, a~symmetric monoidal envelope $\widetilde{\mathbf{E}}_2^\otimes$
of~$\etwo^\otimes$
is equivalent to the symmetric monoidal $\infty$-category $\Disk_2^\otimes$
of~(possibly empty) finite disjoint unions of~$(0,1)^2$ defined
as in the case of~$\Disk_1^\otimes$: mapping spaces
are spaces of~rectilinear embeddings, and the tensor product is
again given by~disjoint union.
Another quick example of~symmetric monoidal envelopes
is $\mathbf{Mfld}_1\to \Mfld_1^\otimes$.

\begin{Construction}\label{symfunctorsym}
Let $q\colon \CCC^\otimes\to \Gamma$ be a~symmetric monoidal $\infty$-category.
Let $p\colon \mathcal{P}^\otimes\to \Gamma$ be a~symmetric monoidal
$\infty$-category (resp.~an $\infty$-operad).
We~construct a~symmetric monoidal structure on the $\infty$-category
$\Fun^\otimes\big(\PPP^\otimes, \CCC^\otimes\big)$ of~symmetric monoidal functors
(resp.~the $\infty$-category $\Alg_{\PPP^\otimes}\big(\CCC^\otimes\big)$
of~algebra objects),
see \cite[Section~3.2.4]{HA} for more details of~the case of~$\Alg_{\PPP^\otimes}\big(\CCC^\otimes\big)$.
We~define a~map $\Fun^\otimes\big(\PPP^\otimes, \CCC^\otimes\big)^\otimes\to\Gamma$
(resp.~$\Alg_{\PPP^\otimes}^\otimes\big(\CCC^\otimes\big)\to \Gamma$)
by~the universal property that for any $\alpha\colon K\to \Gamma$,
the set of~morphisms $K\to \Fun^\otimes\big(\PPP^\otimes, \CCC^\otimes\big)^\otimes$
over~$\Gamma$ (resp.~$K\to \Alg_{\PPP^\otimes}^\otimes\big(\CCC^\otimes\big)$
over~$\Gamma$)
is defined to be the set of~morphisms
$f\colon K\times \PPP^\otimes \to \CCC^\otimes$ such that
\begin{enumerate}\itemsep=0pt
\renewcommand{\labelenumi}{(\roman{enumi})}
\item the diagram
\begin{gather*}
\xymatrix{
K\times \PPP^\otimes \ar[r]^f \ar[d]_{(\alpha,\textup{id})} & \CCC^\otimes \ar[d]^q \\
\Gamma\times\PPP^\otimes \ar[r] & \Gamma
}
\end{gather*}
commutes; here the lower horizontal arrow is induced by~$\wedge\colon \Gamma\times \Gamma\to \Gamma$,

\item for any vertex $k$ of~$K$ and any $p$-coCartesian edge $\phi$
in $\PPP^\otimes$, $f(k,\phi)$ is a~$q$-coCartesian edge
(resp.~for any vertex $k$ of~$K$ and any inert morphism $\phi$
in $\PPP^\otimes$, $f(k,\phi)$ is an~inert morphism in~$\CCC^\otimes$).

\end{enumerate}
The morphism $\Alg_{\PPP^\otimes}^\otimes\big(\CCC^\otimes\big)\to\Gamma$
is a~symmetric monoidal $\infty$-ca\-te\-gory whose underlying $\infty$-cate\-gory
is $\Alg_{\PPP^\otimes}(\CCC)$, cf.~\cite[Proposition~3.2.4.3]{HA}.
Similarly, $\Fun^\otimes\big(\PPP^\otimes, \CCC^\otimes\big)^\otimes\to\Gamma$
is a~symmetric monoidal $\infty$-category whose underlying $\infty$-cate\-gory
is $\Fun^\otimes\big(\PPP^\otimes, \CCC^\otimes\big)$: the proof~of~\cite[Pro\-po\-si\-tion~3.2.4.3]{HA}
based on the theory of~categorical patterns
can also be applied to $\Fun^\otimes\big(\PPP^\otimes, \CCC^\otimes\big)^\otimes$.
An edge $\Delta^1\to \Fun^\otimes\big(\PPP^\otimes, \CCC^\otimes\big)^\otimes$
is a~coCartesian edge if and only if for any $X\in \PPP$,
the
composite $\Delta^1\times \{X\}\subset \Delta^1\times \PPP^\otimes\to \CCC^\otimes$ determines a~$q$-coCartesian edge (this means that the tensor product $F\otimes G$
of~two symmetric monoidal functors
$F\colon \PPP^\otimes \to \CCC^\otimes$ and $G\colon \PPP^\otimes\to \CCC^\otimes$
is informally given by~objectwise tensor products
$(F\otimes G)(X)=F(X)\otimes G(X)$).

Let $\OO^\otimes\to \Gamma$ be an~$\infty$-operad and let $\widetilde{\OO}^\otimes\to \Gamma$
be the symmetric monoidal envelope. The composition with the inclusion $\OO^\otimes\to \widetilde{\OO}^\otimes$ induces a~map over $\Gamma$
\begin{gather*}
\Fun^\otimes\big(\widetilde{\OO}^\otimes, \CCC^\otimes\big)^\otimes\to \Alg^\otimes_{\OO^\otimes}\big(\CCC^\otimes\big)
\end{gather*}
that preserves coCartesian edges, namely, it is a~symmetric monoidal functor.
Since the underlying functor is an~equivalence \cite[Proposition~2.2.4.9]{HA},
it gives rise to a~symmetric monoidal equivalence.
That is, the categorical equivalence $\Fun^\otimes\big(\widetilde{\OO}^\otimes, \CCC^\otimes\big)\simeq \Alg_{\OO^\otimes}\big(\CCC^\otimes\big)$ is promoted to a~symmetric monoidal equivalence in the natural way.
\end{Construction}

Let $A$ be an~$\etwo$-algebra in $\Mod_R$.
By definition,
it is a~map of~$\infty$-operads $A\colon \etwo^\otimes \to \Mod_R^\otimes$
over $\Gamma$. We~denote by~\begin{gather*}
i_!(A)\colon\ \dcyl\to \Mod_R^\otimes
\end{gather*}
the operadic left Kan extension of~$A$ along the inclusion $i\colon \etwo^\otimes\hookrightarrow \dcyl$. If~we think of~the color $S^1$ as an~object in the fiber $(\mathbf{Mfld}_1)_{\langle 1\rangle}$ of~$\mathbf{Mfld}_1\to \Gamma$ over $\langle 1\rangle$,
the full subcategory $\big\langle S^1\big\rangle$ spanned by~$S^1$
determines the inclusion $\iota\colon BS^1\simeq \big\langle S^1\big\rangle\hookrightarrow (\mathbf{Mfld}_1)_{\langle 1\rangle}\subset \mathbf{Mfld}_1$.
Then we have the following diagram
\begin{gather*}
\xymatrix{
\eone^\otimes \times BS^1\ar[r]^{\textup{id}\times \iota} \ar[d] & \eone^\otimes \times \mathbf{Mfld}_1\ar[r]^{\rho} \ar[d] & \dcyl\ar[r]^{i_!(A)} \ar[d] & \Mod_R^\otimes \ar[d] \\
\Gamma\times\{\langle 1\rangle\}\ar[r] & \Gamma\times \Gamma \ar[r]^{\wedge} & \Gamma \ar[r]^{\textup{id}} & \Gamma.
}
\end{gather*}
See Construction~\ref{exttensor} for $\rho$.
The composite $\Gamma\simeq \Gamma\times\{\langle 1\rangle\}\to \Gamma$ of~lower horizontal arrows is the identity map.
Note that the composite $\rho\circ (\textup{id}\times \iota)\colon \eone^\otimes \times BS^1\to \dcyl$ is the map
$z\colon \eone^\otimes \times BS^1\to \cyl\subset \dcyl$
which was defined in the discussion before Proposition~\ref{cyleones}.
Taking into account the above diagram, Proposition~\ref{cyleones} and Lemma~\ref{eonesequiv},
we have the induced functors
\begin{gather*}
\Alg_{\dcyl}(\Mod_R)\to \Alg_{\cyl}(\Mod_R)\stackrel{\sim}{\to} \Alg_{\eone}\big(\Fun\big(BS^1,\Mod_R\big)\big),
\end{gather*}
and we write $i_!(A)_C$ for the image of~$i_!(A)$ in $\Alg_{\eone}\big(\Fun\big(BS^1,\Mod_R\big)\big)\simeq \Alg_{\cyl}(\Mod_R)$.

\begin{Remark}The image of~$C=(0,1)\times S^1$ under $i_!(A)\colon \dcyl\to \Mod_{R}^\otimes$
can be viewed as the factorization homology $\int_{C}A$ in $\Mod_R$ in this context, cf.~\cite{AF, HA}.
\end{Remark}

We~continue to suppose that $A$ is an~$\etwo$-algebra in $\Mod_R$.
Let us consider the Hoch\-schild homology $R$-module spectrum of~$A$
defined as follows.
Let $\Alg_{\assoc}\big(\SPS(\RR)^{c}\big)$ be the category of~associative algebra objects of~$\SPS(\RR)^{c}$,
where $\RR$ be a~(cofibrant) commutative symmetric ring spectrum that
represents~$R$, and $\SPS(\RR)^{c}$ is the full subcategory of~$\SPS(\RR)$ spanned by~cofibrant objects (cf.~Section~\ref{homologysec}).
The ordinary category $\Alg_{\assoc}\big(\SPS(\RR)^{c}\big)$ admits
a symmetric monoidal structure given by~$\mathbb{A}\otimes \mathbb{B}=\mathbb{A}\wedge_{\RR}\mathbb{B}$.
Define a~symmetric monoidal functor $\Alg_{\assoc}\big(\SPS(\RR)^{c}\big)\to
\CAT_{\RR}^{\rm pc}$ which carries $\mathbb{A}$ to $B\mathbb{A}$, where $B\mathbb{A}$ is the
$\RR$-spectral category having one object $\ast$ with
the morphism spectrum $\mathbb{A}=B\mathbb{A}(\ast,\ast)$.
We~define $\HHH(\mathbb{A})$ to be the Hoch\-schild homology $R$-module spectrum of~$B\mathbb{A}$.
Namely, we use canonical symmetric monoidal functors
\begin{gather*}
\Alg_{\assoc}\big(\SPS(\RR)^{c}\big)\to \CAT_{\RR}^{\rm pc}\to \CAT_{\RR}^{\rm pc}\big[M^{-1}\big]\stackrel{\overline{H}}{\longrightarrow} \Fun\big(BS^1,\Mod_R\big),
\end{gather*}
see Corollary~\ref{HHMoritainv}.
By inverting weak equivalences we obtain symmetric monoidal functors
\begin{gather*}
\Alg_{\assoc}(\Mod_R)\simeq \Alg_{\assoc}\big(\SPS(\RR)^{c}\big)\big[W^{-1}\big] \to \CAT_{\RR}^{\rm pc}\big[M^{-1}\big]\to \Fun\big(BS^1,\Mod_R\big),
\end{gather*}
see Example~\ref{spectralalgebra} for the first symmetric equivalence.
This functor sends $A\in \Alg_{\assoc}(\Mod_R)$ to $\HHH(A)$.
Note that there is
a canonical categorical equivalence
\begin{gather*}
\Alg_{\etwo}(\Mod_R)\simeq \Alg_{\assoc}\Alg_{\assoc}(\Mod_R)
\end{gather*}
that follows from the trivial fibration $\eone^\otimes\to \assoc^\otimes$ and the equivalence $\etwo^\otimes\simeq \eone^\otimes\otimes \eone^\otimes$ (Dunn additivity theorem).
Thus, we have the induced functor
\begin{gather*}
\Alg_{\etwo}(\Mod_R)\simeq \Alg_{\assoc}\Alg_{\assoc}(\Mod_R)\to \Alg_{\assoc}\big(\Fun\big(BS^1,\Mod_R\big)\big).
\end{gather*}
Given $A\in \Alg_{\etwo}(\Mod_R)$,
we define $\HH_\bullet(A)$ to be the image of~$A$ in $\Alg_{\assoc}\big(\Fun\big(BS^1,\Mod_R\big)\big)$.

\begin{Proposition}\label{identificationprop}
There is a~canonical equivalence \begin{gather*}
\HH_\bullet(A)\stackrel{\sim}{\to} i_!(A)_C
\end{gather*}
in $\Alg_{\assoc}(\Fun(BS^1,\Mod_R))$.
\end{Proposition}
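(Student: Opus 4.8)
The plan is to identify both sides with the factorization homology of $A$ over the open annulus $(0,1)\times S^1$, realized as an explicit operadic colimit, and then to match that colimit with the (para)cyclic bar construction entering the definition of $\HH_\bullet(A)$, keeping track of the associative structure and of the $S^1$-action throughout.

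First I would unwind $i_!(A)_C$. By the commutative diagram preceding the statement and the identity $z=\rho\circ(\mathrm{id}\times\iota)$ recorded there, $i_!(A)_C\in\Alg_{\eone}(\Fun(BS^1,\Mod_R))\simeq\Alg_{\cyl}(\Mod_R)$ is obtained by restricting $i_!(A)\circ\rho\colon\eone^\otimes\times\mathbf{Mfld}_1\to\Mod_R^\otimes$ to $\eone^\otimes\times\lsr$ and invoking Lemma~\ref{eonesequiv} and Proposition~\ref{cyleones}. On the full subcategory $\eone^\otimes\times\Disk_1=\eone^\otimes\times\eone^\otimes$, Construction~\ref{exttensor} shows that $i_!(A)\circ\rho$ factors through $\etwo^\otimes$ via the Dunn bifunctor $\eone^\otimes\times\eone^\otimes\to\etwo^\otimes$; since $i_!$ is fully faithful with identity unit, its restriction there is $A$ itself, that is, $A$ regarded as an $\eone$-algebra in $\eone$-algebras under $\Alg_{\etwo}(\Mod_R)\simeq\Alg_{\assoc}\Alg_{\assoc}(\Mod_R)$ --- precisely the datum that enters the construction of $\HH_\bullet(A)$.

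The geometric core is the evaluation $i_!(A)(C)$. Using the operadic left Kan extension formula, just as in the proofs of Propositions~\ref{another} and~\ref{another2}, together with the hypothesis that $\otimes_R$ preserves colimits separately in each variable, I would write $i_!(A)(C)$ as an ordinary colimit over $K:=\etwo^\otimes\times_{\dcyl}\dcyl^{\mathrm{act}}_{/C}$ of $A$ pulled back along the projection; the objects of $K$ are rectilinear embeddings $\big((0,1)^2\big)^{\sqcup l}\hookrightarrow(0,1)\times S^1$. Collapsing the vertical $(0,1)$-coordinate yields a functor $K\to(\Disk_1)_{/S^1}$, and together with the vertical stacking orders it exhibits $K$, with its $A$-diagram, as the lift to $\eone$-algebras of the geometric cyclic picture $(\Diskd_1)_{/S^1}\simeq\Lambda_\infty^{\mathrm{op}}\to\Diskd_1/\lsr\simeq\Lambda^{\mathrm{op}}$ of Lemma~\ref{geometricConnes} and Remark~\ref{paracyclic}. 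Restricting along the cofinal functors $\Delta^{\mathrm{op}}\to\Lambda_\infty^{\mathrm{op}}$ and $(\Diskd_1)_{/S^1}\to(\Disk_1)_{/S^1}$ (Remark~\ref{globaldiagram}, Lemma~\ref{redcolimit}) then identifies this diagram with the cyclic bar simplicial object $p\mapsto A^{\otimes p+1}$ of the $E_1$-algebra underlying $A$; the built-in $\Lambda$-structure supplies the Connes cyclic operators, hence via the functor $L$ the $S^1$-action (Lemmata~\ref{cycsimp},~\ref{symmonoloc},~\ref{completion}), while the vertical stacking direction supplies the residual associative multiplication. Matching this with the defining cyclic diagram of $\HHH$ (Definitions~\ref{Mitchell} and~\ref{homologydef}) and with the outer $\Alg_{\assoc}$-structure in the definition of $\HH_\bullet(A)$ --- term by term and compatibly with faces, degeneracies, cyclic operators, and the residual $\eone$-multiplication --- and passing to colimits, yields the asserted equivalence in $\Alg_{\cyl}(\Mod_R)\simeq\Alg_{\assoc}(\Fun(BS^1,\Mod_R))$, naturally in $A$.

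The main obstacle is making this last identification precise: matching the purely geometric indexing category $K$, with its rotation action and its vertical-stacking direction, to the algebraically defined (para)cyclic bar diagram with its Connes operator and its second $E_1$-direction --- equivalently, verifying that operadic left Kan extension along $\etwo^\otimes\hookrightarrow\dcyl$, followed by restriction along $\rho$, genuinely computes factorization homology over $S^1$ of the $\eone$-algebra-in-$\eone$-algebras underlying $A$. This is where the cofinality arguments of Lemma~\ref{redcolimit} and the geometric models of $\Lambda$ and $\Lambda_\infty$ do the real work; the remaining bookkeeping with symmetric monoidal envelopes and the equivalences of Section~\ref{operadsec} is routine.
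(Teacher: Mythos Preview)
Your strategy is essentially the paper's: both sides are factorization homology of $A$ over the annulus, and the heart of the argument is a cofinality step relating the $2$-dimensional indexing category $K\simeq(\Disk_2)_{/C}$ to the $1$-dimensional one $(\Disk_1)_{/S^1}$ via ``collapse the vertical interval.'' That step is exactly the content of Lemma~\ref{cofinalitylemma}, where the paper shows that $\textup{Disj}(S^1)\to\textup{Disj}^{\rm rec}(C)$, $U\mapsto(0,1)\times U$, is left cofinal; combined with the (para)cyclic identifications of Lemmas~\ref{geometricConnes}--\ref{redcolimit} and Lemma~\ref{observationmap}, this gives the equivalence at the level of underlying objects.

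The one place where the paper's execution is sharper than your outline is in how the comparison map is produced and how the residual $\eone$-structure and the $S^1$-action are carried along. Rather than computing $i_!(A)(C)$ as a bare colimit and then ``matching term by term'' with the cyclic bar diagram, the paper transposes everything through the bifunctor $\rho$ of Construction~\ref{exttensor} to obtain two $\infty$-operad maps $\hat A_\flat,\hat A_\sharp\colon\mathbf{Mfld}_1\to\Alg_{\eone}^\otimes(\Mod_R)$: the first is the operadic left Kan extension of $A_\flat\colon\eone^\otimes\to\Alg_{\eone}^\otimes(\Mod_R)$, and the second is the composite through $\Alg_{\eone}^\otimes(\widetilde{\dcyl})$ induced by $\widetilde{i_!(A)}$. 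The universal property of operadic left Kan extension then supplies a \emph{canonical} map $\hat A_\flat\to\hat A_\sharp$, which upon restriction to $\lsr\simeq BS^1$ is automatically a morphism in $\Alg_{\eone}(\Fun(BS^1,\Mod_R))$; Lemma~\ref{cofinalitylemma} checks it is an equivalence on underlying objects (using siftedness of $(\Disk_1)_{/S^1}$ to pass through the forgetful functor). This packaging is what your last paragraph flags as the ``main obstacle'': your plan to assemble compatibility with faces, degeneracies, cyclic operators, and the second $\eone$-direction by hand can be made to work, but the paper's $\hat A_\flat\to\hat A_\sharp$ device produces the map for free and makes the naturality in $A$ and the $\Alg_{\assoc}(\Fun(BS^1,\Mod_R))$-structure automatic.
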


Let us consider $\Alg_{\assoc}\left(\SPS(\RR)^{c}\right)\stackrel{B(-)}{\longrightarrow} \CAT_{\RR}^{\rm pc}\stackrel{\HH(-)_\bullet}{\longrightarrow} \Fun\left(\Lambda^{\rm op},\SPS(\RR)^c\right)$, see Definition~\ref{Mitchell} for $\HH(-)_\bullet$.
We~write $\HHH^{\Lambda}(-)$ for the composite.
Let $\widetilde{\assoc}^\otimes$ be the symmetric monoidal envelope
of~$\assoc^\otimes$.
The equivalence $\eone^\otimes\stackrel{\sim}{\to} \assoc$ induces
$\Disk_1^\otimes\stackrel{\sim}{\to}\widetilde{\assoc}^\otimes$.
There is a~canonical symmetric monoidal equivalence
$\Fun^\otimes\big(\widetilde{\assoc}^\otimes,\big(\SPS(\RR)^{c}\big)^{\otimes}\big)\simeq \Alg_{\assoc}\big(\SPS(\RR)^{c}\big)$, see Construction~\ref{symfunctorsym}.
We~write $\Disk_1\hookrightarrow \Disk_1^\otimes$ for the inclusion of~the fiber of~the coCartesin fibration $\Disk_1^\otimes\to \Gamma$
over $\langle 1\rangle$.
Using Lemma~\ref{geometricConnes}, we have
\begin{gather*}
\xi\colon\ \Lambda^{\rm op}\simeq \Diskd_1/\langle S^1\rangle\stackrel{\textup{forget}}{\longrightarrow} \Disk_1\hookrightarrow \Disk_1^\otimes\simeq \widetilde{\assoc}^\otimes.
\end{gather*}
The composition with $\xi$
induces
\begin{gather*}
g\colon\ \Alg_{\assoc}\big(\SPS(\RR)^{c}\big)\simeq \Fun^\otimes\big(\widetilde{\assoc}^\otimes,\big(\SPS(\RR)^{c}\big)^{\otimes}\big)\to \Fun\big(\Lambda^{\rm op},\SPS(\RR)^{c}\big),
\end{gather*}
which is a~symmetric monoidal functor.

\begin{Lemma}\label{observationmap}The functor $\HHH^\Lambda(-)\colon \Alg_{\assoc}\big(\SPS(\RR)^{c}\big)\to \Fun\big(\Lambda^{\rm op},\SPS(\RR)^{c}\big)$ can be identified with
\begin{gather*}
g\colon\ \Alg_{\assoc}\big(\SPS(\RR)^{c}\big)\simeq \Fun^\otimes\big(\widetilde{\assoc}^\otimes,\big(\SPS(\RR)^{c}\big)^\otimes\big)\to \Fun\big(\Lambda^{\rm op},\SPS(\RR)^{c}\big)
\end{gather*}
in the natural way.
In~particular,
$\Alg_{\assoc}\big(\SPS(\RR)^{c}\big)\big[W^{-1}\big]\to \Fun\big(\Lambda^{\rm op},\SPS(\RR)^c\big[W^{-1}\big]\big)$ induced by~$\HH(-)_\bullet$ can be identified with
\begin{gather*}
\Alg_{\assoc}\big(\SPS(\RR)^{c}\big)\big[W^{-1}\big]\simeq \Fun^\otimes\big(\widetilde{\assoc}^\otimes,\SPS(\RR)^{c}\big[W^{-1}\big]^\otimes\big)\to \Fun\big(\Lambda^{\rm op},\SPS(\RR)^{c}\big[W^{-1}\big]\big)
\end{gather*}
induced by~the composition with $\xi$.
\end{Lemma}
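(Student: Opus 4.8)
The plan is to reduce the statement to the one-object case and then compare the two functors out of $\Lambda^{\rm op}$ term by term. First I would recall that by definition $\HHH^\Lambda(\mathbb{A})=\HH(B\mathbb{A})_\bullet$, and that since $B\mathbb{A}$ has a single object with morphism spectrum $\mathbb{A}$, the coproduct in Definition~\ref{Mitchell} runs over the one-element set of constant sequences; hence $\HH(B\mathbb{A})_p$ is canonically the $(p+1)$-fold smash power $\mathbb{A}^{\wedge_\RR (p+1)}$, its face maps induced by the multiplication $\mathbb{A}\wedge_\RR\mathbb{A}\to\mathbb{A}$, its degeneracies by the unit $\SSSS\to\mathbb{A}$, and the $\ZZ/(p+1)\ZZ$-action by cyclic permutation of the factors. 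In other words $\HHH^\Lambda(\mathbb{A})$ is the usual cyclic bar construction of the associative algebra $\mathbb{A}$.

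Next I would unwind $g(\mathbb{A})$. Under the symmetric monoidal equivalence $\Alg_{\assoc}(\SPS(\RR)^c)\simeq\Fun^\otimes\big(\widetilde{\assoc}^\otimes,(\SPS(\RR)^c)^\otimes\big)$ of Construction~\ref{symfunctorsym} together with $\widetilde{\assoc}^\otimes\simeq\Disk_1^\otimes$, the algebra $\mathbb{A}$ corresponds to a symmetric monoidal functor $F_\mathbb{A}\colon \Disk_1^\otimes\to(\SPS(\RR)^c)^\otimes$ with $F_\mathbb{A}((0,1))=\mathbb{A}$, the associative structure being witnessed by the rectilinear embeddings $(0,1)^{\sqcup 2}\hookrightarrow(0,1)$ and $\varnothing\hookrightarrow(0,1)$ exactly as in Remark~\ref{eoneass}. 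Since $\xi$ sends $(p)\in\Lambda$ to the color $J(p)\simeq(0,1)^{\sqcup p+1}$ of $\Disk_1$ (Lemma~\ref{geometricConnes}) and disjoint union is the tensor product of $\Disk_1^\otimes$, we get $g(\mathbb{A})((p))=F_\mathbb{A}\big((0,1)^{\sqcup p+1}\big)\simeq\mathbb{A}^{\wedge_\RR(p+1)}$, matching the previous paragraph on objects.

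The core step is to check that under these identifications $g(\mathbb{A})$ and $\HHH^\Lambda(\mathbb{A})$ agree on morphisms, naturally in $\mathbb{A}$. I would organize this as two sub-checks, it being enough to treat a generating set of morphisms of $\Lambda$: (a) the underlying simplicial objects agree, and (b) the cyclic operators agree. For (a), a simplicial structure map corresponds, via the proof of Lemma~\ref{geometricConnes}, to a disjoint union of the rectilinear embeddings $\iota_{i,\phi}$, i.e.\ to a single rectilinear embedding $(0,1)^{\sqcup q+1}\to(0,1)^{\sqcup p+1}$; since $F_\mathbb{A}$ is symmetric monoidal and classifies the associative algebra $\mathbb{A}$, it sends such an embedding to the evident composite of multiplication maps (for target components receiving several intervals), unit insertions (for target components receiving none) and coordinate permutations (dictated by the order of the images), which is precisely the corresponding structure map of the cyclic bar construction — this is the classical identification. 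For (b) one runs the same computation for the cyclic operator $t_p$, whose image under $\xi$ is the automorphism of $J(p)$ induced by the rotation of $S^1$ by $\tfrac1{p+1}$, and checks it goes to the cyclic permutation of the smash factors. Naturality in $\mathbb{A}$ is automatic, since $\mathbb{A}\mapsto F_\mathbb{A}$ is the functorial straightening of a universal construction and $\mathbb{A}\mapsto\HH(B\mathbb{A})_\bullet$ is manifestly functorial, while the identifications above are built only from the defining data. Finally, the ``in particular'' statement follows formally: inverting the stable equivalences on both sides and using that $\Fun(\Lambda^{\rm op},-)$ commutes with the localization while $\xi$ is unchanged, the established natural equivalence descends to the asserted one after localization.

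The step I expect to be the main obstacle is (b): matching the cyclic operator. The face and degeneracy data are routine, but one must reconcile the geometric ``rotation of the circle'' packaged into the maps $\iota_{i,\phi}$ with the combinatorial cyclic permutation generating the $\ZZ/(p+1)\ZZ$-action on $\mathbb{A}^{\wedge_\RR(p+1)}$, carefully tracking orientations and the identification of $I^p_p$ with its neighbour under the bijection $[0,1)\simeq\RRR/\ZZ$ fixed in Lemma~\ref{geometricConnes}; an orientation slip here would yield the cyclic object with the opposite cyclic structure.
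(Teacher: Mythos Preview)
Your proposal is correct and follows essentially the same approach as the paper: both arguments identify $\HH(B\mathbb{A})_p$ with $\mathbb{A}^{\wedge_\RR(p+1)}$ and then verify agreement on a generating set of morphisms of $\Lambda$ (face maps, degeneracy maps, and the cyclic rotation) by tracing through the geometric description of $\xi$ from Lemma~\ref{geometricConnes}. The paper simply writes out the three generator types explicitly---the monotone maps $\phi_{p-1,i}$ and $\psi_{p,i}$ and the rotation $r_p$---and reads off the induced maps on smash powers, without singling out the cyclic operator as more delicate; your worry about an orientation slip in step (b) is reasonable caution, but once the convention $I_p^i\simeq(0,1)$ is fixed as in Lemma~\ref{geometricConnes} the rotation $r_p$ visibly induces the cyclic permutation of smash factors, so no genuine obstacle arises there.
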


\begin{proof} We~use the notation in Lemma~\ref{geometricConnes}. Let $\phi_{p-1,i}\colon S^1\to S^1$ be a~monotone degree one map which we think of~as a morphism $(p-1)\to (p)$ in
$\Lambda$
such that
$\phi_{p-1,i}\big(x^k_{p-1}\big)=x_p^k$ for $0\le k\le i-1$,
and $\phi_{p-1,i}\big(x^k_{p-1}\big)=x_p^{k+1}$ for $i\le k\le p-1$
(in particular, $x^i_p$ does not lie in the image of~$\big\{x_p^k\big\}_{0\le k \le p-1}$).
Let $\mathbb{A}$ be an~object of~$\Alg_{\assoc}\big(\SPS(\RR)^{c}\big)$.
Consider the composition $\Lambda^{\rm op}\simeq \Diskd_1/\lsr\to \widetilde{\assoc}^\otimes \to \SPS(\RR)^{c}$, where the final map is a~symmetric monoidal
functor corresponding to $\mathbb{A}$.
By inspection, if $\{\phi_{p-1,i}\}_{0\le i \le p}$
are regarded as morphisms $(p)\to (p-1)$ in $\Lambda^{\rm op}$,
their images in $\SPS(\RR)^{c}$ define $(p+1)$ degeneracy maps $\mathbb{A}^{\wedge p+1}\to \mathbb{A}^{\wedge p}$
given by~the multiplication $\mathbb{A}\wedge \mathbb{A}\to \mathbb{A}$.
Let $\psi_{p,i}\colon S^1\to S^1$ be a~monotone degree one map which we think of~as a~morphism
$(p)\to (p-1)$
such that $\psi_{p,i}\big(x^k_p\big)=x_{p-1}^k$ for $k<i+1$,
and $\psi_{p,i}\big(x^k_p\big)=x^{k-1}_{p-1}$ for $k\ge i+1$.
As in the case of~$\{\phi_{p-1,i}\}_{0\le i \le p}$,
these maps give rise to $p$ face maps $\mathbb{A}^{\wedge p}\to \mathbb{A}^{\wedge p+1}$ given by~the unit $\RR \to \mathbb{A}$.
Consider the rotation $r_{p}\colon S^1\to S^1$
which sends $x^{k}_p$ to $x^{k+1}_{p}$ for $k\in \ZZ/(p+1)\ZZ$.
We~regard $r_p$ as an~isomorphism $(p)\to (p)$.
It~yields the action of~$\ZZ/(p+1)\ZZ$ on $\mathbb{A}^{\wedge p+1}$ given by~the cyclic permutation of~factors.
It~is straightforward to check that these maps constitute a~cyclic object
that coincides with the cyclic object obtained from $B\mathbb{A}$ in
Definition~\ref{Mitchell}.
\end{proof}

\begin{proof}[Proof~of~Proposition~\ref{identificationprop}]
Taking into account Lemma~\ref{observationmap} and
$\SPS(\RR)^{c}\big[W^{-1}\big]^\otimes\simeq \Mod_R^\otimes$,
for an~$\etwo$-algebra $A$, we consider
the image of~$A$ under
\begin{gather*}
h\colon\ \Alg_{\etwo}(\Mod_R)\simeq \Alg_{\assoc}\big(\Fun^\otimes\big(\widetilde{\assoc}^\otimes,\Mod_R^\otimes\big) \big) \to \Alg_{\assoc}\big(\Fun\big(\Lambda^{\rm op},\Mod_R\big)\big),
\end{gather*}
where the right functor is induced by~the composition with $\xi\colon \Lambda^{\rm op}\to \widetilde{\assoc}^\otimes$.
We~abuse notation by~writing $\HH^{\Lambda}_\bullet(A)$ for the image of~$A$
under $h$.
In~the following discussion, we will use the canonical identification
$\Alg_{\eone}^\otimes(-)\simeq \Alg_{\assoc}^\otimes(-)$ which comes from the canonical equivalence \mbox{$\eone^\otimes\simeq \assoc^\otimes$} of~$\infty$-operads.
Let us consider
\begin{gather*}
\Alg_{\etwo}(\Mod_R)\simeq \Alg_{\eone}\big(\Alg_{\assoc}^\otimes(\Mod_R)\big) \simeq \Alg_{\assoc}\big(\Alg_{\eone}^\otimes(\Mod_R)\big)
\\ \hphantom{\Alg_{\etwo}(\Mod_R)}
{}\stackrel{\sim}{\leftarrow} \Fun^\otimes \big(\widetilde{\assoc}^\otimes,\Alg^\otimes_{\eone}(\Mod_R)\big)
\to \Fun \big(\Lambda^{\rm op},\Alg_{\eone}(\Mod_R)\big).
\end{gather*}
The second equivalence follows from Construction~\ref{symfunctorsym},
and the third functor is induced by~$\xi\colon$ $\Lambda^{\rm op}\to \widetilde{\assoc}^\otimes$.
The composition is identified with $h$ via the equivalence
\begin{gather*}
\Alg_{\eone}\big(\Fun\big(\Lambda^{\rm op},\Mod_R\big)\big)\simeq \Fun \big(\Lambda^{\rm op},\Alg_{\eone}(\Mod_R)\big).
\end{gather*}
Let $A_\flat\colon \assoc^\otimes \to \Alg^\otimes_{\eone}(\Mod_R)$
be a~map of~$\infty$-operads that corresponds to $A\in \Alg_{\etwo}(\Mod_R)\simeq \Alg_{\assoc}\big(\Alg_{\eone}\big(\Mod_R^\otimes\big)\big)$.
We~let $\widetilde{A}_\flat\colon \widetilde{\assoc}^\otimes \to \Alg^\otimes_{\eone}(\Mod_R)$ be a~symmetric monoidal functor from the symmetric monoidal envelope $\widetilde{\assoc}^\otimes$ that corresponds to $A_\flat$ (namely, the composite $\assoc^\otimes \to \widetilde{\assoc}^\otimes \to \Alg^\otimes_{\eone}(\Mod_R)$ is equivalent to $A_\flat$).
Observe that the composite $\Lambda^{\rm op}\stackrel{\xi}{\to} \widetilde{\assoc}^\otimes\stackrel{\widetilde{A}_\flat}{\to}\Alg^\otimes_{\eone}(\Mod_R)$
gives rise to a~functor $\Lambda^{\rm op}\to \Alg_{\eone}(\Mod_R)$
which is equivalent to $\HH^{\Lambda}_\bullet(A)$ in $\Fun\big(\Lambda^{\rm op},\Alg_{\eone}(\Mod_R)\big)\simeq \Alg_{\assoc}\big(\Fun\big(\Lambda^{\rm op},\Mod_R\big)\big)$.
Note that $\HH_\bullet(A)$ is defined to be the image of~$\HH^{\Lambda}_\bullet(A)$ under the functor $\Alg_{\assoc}\big(\Fun\big(\Lambda^{\rm op},\Mod_R\big)\big)\to \Alg_{\assoc}\big(\Fun\big(BS^1,\Mod_R\big)\big)$ induced by~the symmetric monoidal functor
$L\colon \Fun\big(\Lambda^{\rm op},\Mod_R\big)\to\Fun\big(BS^1,\Mod_R\big)$ in Lem\-ma~\ref{symmonoloc}.
Here $L$ is a~left adjoint of~the symmetric monoidal
functor $\Fun\big(BS^1,\Mod_R\big) \to\Fun\big(\Lambda^{\rm op},\Mod_R\big)$ induced
by~the composition with $\Lambda^{\rm op}\to BS^1$.
Thus, $\HH_\bullet(A)$ can also be regarded as the image under the (left adjoint) functor $\Fun\big(\Lambda^{\rm op},\Alg_{\eone}(\Mod_R)\big)\to \Fun\big(BS^1,\allowbreak\Alg_{\eone}(\Mod_R)\big)$ given by~left Kan extensions along $\Lambda^{\rm op}\to BS^1$.
Consequently, $\HH_\bullet(A)\colon BS^1\to \Alg_{\eone}(\Mod_R)$ is
a left Kan extension of~$\Lambda^{\rm op}\stackrel{\xi}{\to}\widetilde{\assoc}^\otimes\stackrel{\widetilde{A}_\flat}{\to} \Alg_{\eone}(\Mod_R)$
along $\Lambda^{\rm op}\to BS^1$.
Next, we
let $\widehat{A}_\flat\colon \mathbf{Mfld}_1\to \Alg^\otimes_{\eone}(\Mod_R)$ be an~operadic left Kan extension of~$A_\flat\colon \eone^\otimes\simeq \assoc^\otimes \to \Alg^\otimes_{\eone}(\Mod_R)$.
Let $A'_\flat\colon \Mfld^\otimes_1\to \Alg^\otimes_{\eone}(\Mod_R)$
be a~symmetric monoidal functor which corresponds to $\widehat{A}_\flat$.
The composite
$\Disk_1^\otimes\to \Mfld^\otimes_1\to \Alg^\otimes_{\eone}(\Mod_R)$
is equivalent to $\widetilde{A}_\flat\colon \Disk_1^\otimes\simeq \widetilde{\assoc}^\otimes\to \Alg^\otimes_{\eone}(\Mod_R)$.
Consider the diagram of~$\infty$-categories:
\begin{gather*}
\xymatrix{
\Disk_1/\lsr\ar[r] \ar[d] & \Disk_1 \ar[d] \ar[r] & \Alg_{\eone}(\Mod_R) \\
\lsr \ar[r] & \Mfld_1. \ar[ru] &
}
\end{gather*}
The upper left horizontal arrow is induced by~the restriction to the source.
The left vertical arrow is induced by~the restriction to the target.
The upper right arrow is the underlying functor of~$\widetilde{A}_\flat$.
The arrow $\Mfld_1\to \Alg_{\eone}(\Mod_R)$ is the underlying functor
of~$A'_\flat$. The right triangle commutes whereas the left square does not
commute (but it admits a~canonical natural transformation
induced by~the evaluation map $\Delta^1\times\Fun\big(\Delta^1,\Mfld_1\big)\to \Mfld_1$).
The functor $\Mfld_1\to \Alg_{\eone}(\Mod_R)$
carries $S^1$ to $\colim_{U\to S^1\in (\Disk_1)_{/S^1}}\widetilde{A}_{\flat}(U)$
which means a~colimit of~$\eone^\otimes\times_{\mathbf{Mfld}_1}\big(\mathbf{Mfld}^{\textup{act}}_1\big)_{/S^1} \simeq (\Disk_1)_{/S^1}\to \Disk_1\stackrel{\widetilde{A}_{\flat}}{\to} \Alg_{\eone}(\Mod_R)$.
By Lemma~\ref{redcolimit}, the composite
$\lsr\to \Alg_{\eone}(\Mod_R)$
is a~left Kan extension of~$\Lambda^{\rm op}\simeq\Diskd_1/\lsr \to \Alg_{\eone}(\Mod_R)$.
Since $\Lambda^{\rm op}\simeq\Diskd_1/\lsr \to \lsr\simeq BS^1$ is a~groupoid
completion by~Lemma~\ref{completion},
it follows that the composite
$BS^1\simeq \lsr\to \Alg_{\eone}(\Mod_R)$
is equivalent to $\HH_\bullet(A)$.
In~other words, $\HH_\bullet(A)$ is equivalent to $BS^1\simeq \lsr\hookrightarrow \mathbf{Mfld}_1\stackrel{\widehat{A}_\flat}{\longrightarrow} \Alg_{\eone}(\Mod_R)$.

Next, we relate $i_!(A)_C$ with
$\HH_\bullet(A)\colon BS^1\stackrel{\iota}{\to} \mathbf{Mfld}_1\stackrel{\widehat{A}_\flat}{\longrightarrow} \Alg_{\eone}(\Mod_R)$.
For this purpose, we consider the following setting.
Let $\dcyl \to \widetilde{\dcyl}$ be a~symmetric monoidal
envelope of~$\dcyl$.
Composing with maps into symmetric monoidal envelopes,
we have the left diagram
\begin{gather*}
\xymatrix{
\eone^\otimes\times \eone^\otimes \ar[r] \ar[d] & \widetilde{\mathbf{E}}_2^\otimes \ar[d] & & \eone^\otimes \ar[r] \ar[d] & \Alg_{\eone}^\otimes\big(\widetilde{\mathbf{E}}_2^\otimes\big) \ar[d] \\
\eone^\otimes \times \mathbf{Mfld}_1 \ar[r] & \widetilde{\dcyl}, & & \mathbf{Mfld}_1 \ar[r] & \Alg_{\eone}^\otimes\big(\widetilde{\dcyl}\big)
}
\end{gather*}
lying over $\wedge\colon \Gamma\times \Gamma\to \Gamma$.
Then by~the universal property of~the tensor product
of~$\infty$-operads, it induces the right commutative diagram consisting of~maps of~$\infty$-operads
over $\Gamma$, where $\Alg_{\eone}^\otimes\big(\widetilde{\mathbf{E}}_2^\otimes\big)$
and $\Alg_{\eone}^\otimes\big(\widetilde{\dcyl}\big)$ are symmetric monoidal
$\infty$-categories (defined over $\Gamma$), and the right vertical arrow
is a~symmetric monoidal (fully faithful) functor.
In~the following discussion, we replace $\Mod_R^\otimes$
by~an arbitrary symmetric monoidal presentable $\infty$-category
$\MMM^\otimes$
whose tensor product $\MMM\times \MMM\to \MMM$
preserves small colimits separately in each variable.
The example of~$\MMM^\otimes$ we keep in mind is $\Mod_R^\otimes$.
Let $A$ be an~$\etwo$-algebra object in $\MMM^\otimes$, that is,
a map $A\colon \etwo^\otimes\to \MMM^\otimes$ of~$\infty$-operads over $\Gamma$.
The inclusion $i\colon \etwo^\otimes\hookrightarrow \dcyl$
gives rise to the adjoint pair $i_!\colon \Alg_{\etwo}(\MMM)\rightleftarrows \Alg_{\dcyl}(\MMM)\,{\colon}\!i^*$.
Let $\widetilde{A}\colon \widetilde{\mathbf{E}}_2^\otimes\to \MMM^\otimes$
and $\widetilde{i_!(A)}\colon \widetilde{\dcyl}\to \MMM^\otimes$
be symmetric monoidal functors that correspond to $A$ and $i_!(A)$, respectively.
We~have the diagram of~$\infty$-operads
\begin{gather*}
\xymatrix{
\eone^\otimes \ar[r] \ar[d] & \Alg^\otimes_{\eone}\big(\widetilde{\mathbf{E}}_2^\otimes\big) \ar[d] \ar[r]^{\Alg_{\eone}(\widetilde{A})} & \Alg^\otimes_{\eone}\big(\MMM^\otimes\big) \\
\mathbf{Mfld}_1 \ar[r] & \Alg^\otimes_{\eone}\big(\widetilde{\dcyl}\big) \ar[ru]_{\Alg_{\eone} (\widetilde{i_!(A)} )}. &
}
\end{gather*}
As before,
we let $A_\flat\colon \eone^\otimes \to \Alg_{\eone}\big(\MMM^\otimes\big)$ be the composite
of~top horizontal arrows,
which amounts to $A\colon \etwo^\otimes\to \MMM^\otimes$ since $\eone^\otimes\otimes\eone^\otimes\simeq \etwo^\otimes$.
Let $\hat{A}_{\flat}\colon \mathbf{Mfld}_1\to \Alg_{\eone}^\otimes\big(\MMM^\otimes\big)$
be the operadic left Kan extension of~$A_{\flat}$ along $\eone^\otimes\hookrightarrow \mathbf{Mfld}_1$.
Let $\hat{A}_\sharp\colon \mathbf{Mfld}_1\to \Alg_{\eone}^\otimes\big(\MMM^\otimes\big)$ be
the composite.
We~note that $i_!(A)_C$ is equivalent to $BS^1\to \Alg_{\eone}\big(\MMM^\otimes\big)$ determined by~the composite $BS^1\simeq \lsr\stackrel{\iota}{\hookrightarrow} \mathbf{Mfld}_1\stackrel{\hat{A}_\sharp}{\to} \Alg_{\eone}^\otimes\big(\MMM^\otimes\big)$.
The universal property \cite[Proposition~3.1.3.2]{HA} induces
a canonical morphism $\hat{A_{\flat}}\to \hat{A}_\sharp$.
It~suffices to
prove that the restriction $\hat{A_{\flat}}|_{\langle S^1\rangle}\to \hat{A}_{\sharp}|_{\langle S^1\rangle}$ to $\lsr$ is an~equivalence. (It~gives rise to an~equivalence
$\HH_\bullet(A)\simeq i_!(A)_C$
in $\Fun\big(BS^1,\Alg_{\eone}(\Mod_R)\big)$.)
To this end, it is enough to show the following lemma, which completes
the proof of~Proposition~\ref{identificationprop}.
\end{proof}

\begin{Lemma}\label{cofinalitylemma}
The induced morphism $\hat{A}_\flat\big(S^1\big) \to \hat{A}_\sharp\big(S^1\big)$ is an~equivalence
in the $\infty$-category $\Alg_{\eone}\big(\MMM^\otimes\big)$.
\end{Lemma}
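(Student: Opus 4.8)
The plan is to exhibit both $\hat{A}_\flat(S^1)$ and $\hat{A}_\sharp(S^1)$ as colimits of explicit diagrams, to recognize the comparison morphism as the one induced by a functor between the indexing categories, and to prove that this functor is cofinal. First one checks that the canonical morphism $\hat{A}_\flat\to\hat{A}_\sharp$ of the statement exists. Indeed $\hat{A}_\sharp|_{\eone^\otimes}$ is the composite of $\eone^\otimes\hookrightarrow\mathbf{Mfld}_1\to\Alg_{\eone}^\otimes\big(\widetilde{\dcyl}\big)$ with $\Alg_{\eone}(\widetilde{i_!(A)})$; using the left square of the displayed diagram preceding the lemma, together with the fact that the restriction of $\widetilde{i_!(A)}$ to $\widetilde{\mathbf{E}}_2^\otimes$ corresponds to $i_!(A)|_{\etwo^\otimes}=A$ (the unit of $i_!\dashv i^*$ is the identity, since $i_!$ is fully faithful), this composite is identified with $A_\flat$. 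As $\hat{A}_\flat$ is an operadic left Kan extension of $A_\flat$, its universal property then supplies $\hat{A}_\flat\to\hat{A}_\sharp$, and since $\Alg_{\eone}(\MMM)\to\MMM$ is conservative it is enough to prove that the underlying morphism in $\MMM$ is an equivalence.

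I would then compute the two underlying objects. By \cite[Proposition~3.1.3.2]{HA}, combined with \cite[Propositions~3.1.1.15 and~3.1.1.16]{HA} (applicable because $\otimes$ on $\MMM$ preserves small colimits separately in each variable), the underlying object of $\hat{A}_\flat(S^1)$ in $\MMM$ is the colimit of
\[
\mathcal D_1:=\eone^\otimes\times_{\mathbf{Mfld}_1}\big(\mathbf{Mfld}^{\textup{act}}_1\big)_{/S^1}\longrightarrow\eone^\otimes\stackrel{A_\flat}{\longrightarrow}\Alg_{\eone}^\otimes\big(\MMM^\otimes\big)\longrightarrow\MMM^\otimes,
\]
where the last arrow is restriction along $\triv^\otimes\hookrightarrow\eone^\otimes$, and $\mathcal D_1$ unwinds to $(\Disk_1)_{/S^1}$. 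On the other side, $\hat{A}_\sharp(S^1)$ is the value of $\Alg_{\eone}(\widetilde{i_!(A)})$ on the $\eone$-algebra object of $\widetilde{\dcyl}$ assigned to $S^1$ by $\mathbf{Mfld}_1\to\Alg_{\eone}^\otimes\big(\widetilde{\dcyl}\big)$, namely $C=(0,1)\times S^1$ with the $\eone$-structure coming from $\rho$; since $\widetilde{i_!(A)}$ is symmetric monoidal, the underlying object in $\MMM$ is $\widetilde{i_!(A)}(C)=i_!(A)(C)$, which by \cite[Proposition~3.1.3.2]{HA} is the colimit of
\[
\mathcal D_2:=\etwo^\otimes\times_{\dcyl}\dcyl^{\textup{act}}_{/C}\longrightarrow\etwo^\otimes\stackrel{A}{\longrightarrow}\MMM^\otimes,
\]
and $\mathcal D_2$ unwinds to $(\Disk_2)_{/C}$, with $\Disk_2$ the finite disjoint unions of $D=(0,1)^2$.

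The comparison morphism is induced by a functor $\Phi\colon\mathcal D_1\to\mathcal D_2$. Fixing the first argument of $\rho$ (Construction~\ref{exttensor}) at the generating color $D_1$ of $\eone^\otimes$ yields $G:=\rho(D_1,-)\colon\mathbf{Mfld}_1\to\dcyl$, which sends $S^1$ to $C$ and $(0,1)$ to $D$, preserves active morphisms (it lies over $\wedge(\langle1\rangle,-)$), and restricts on $\eone^\otimes\subset\mathbf{Mfld}_1$ to the canonical inclusion $\eone^\otimes\hookrightarrow\etwo^\otimes$ underlying Dunn additivity. Hence $G$ induces $\Phi$; geometrically $\Phi$ is ``product with $(0,1)$'', thickening each finite disjoint collection of intervals embedded in $S^1$ to the corresponding collection of vertical strips $(0,1)\times(\textup{interval})$ in $C$. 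The commutative square of Construction~\ref{exttensor} (comparing $\eone^\otimes\times\eone^\otimes\to\etwo^\otimes$ with $\eone^\otimes\times\mathbf{Mfld}_1\to\dcyl$) and the definition of $A_\flat$ through $\eone^\otimes\otimes\eone^\otimes\simeq\etwo^\otimes$ show that the $\mathcal D_2$-indexed diagram restricts along $\Phi$ canonically to the $\mathcal D_1$-indexed diagram, and unwinding \cite[Proposition~3.1.3.2]{HA} identifies the morphism $\hat{A}_\flat(S^1)\to\hat{A}_\sharp(S^1)$ in $\MMM$ with the natural map $\colim_{\mathcal D_1}\to\colim_{\mathcal D_2}$ determined by $\Phi$. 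It therefore suffices to show that $\Phi$ is cofinal.

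The cofinality of $\Phi$ is where the real work lies. I would prove it either by reducing to cofinalities already available --- since $\Delta^{\rm op}\to(\Diskd_1)_{/S^1}\simeq\Lambda_\infty^{\rm op}$ and $(\Diskd_1)_{/S^1}\to\mathcal D_1$ are cofinal (Remark~\ref{paracyclic} and Lemma~\ref{redcolimit}), it suffices to see that $\Delta^{\rm op}\to\mathcal D_1\stackrel{\Phi}{\longrightarrow}\mathcal D_2$ is cofinal, whereupon both colimits reduce to the same bar-type colimit and the comparison map is an equivalence by inspection --- or directly by Quillen's theorem A. In the latter form one must show that for each $V=\big(\big((0,1)^2\big)^{\sqcup l}\hookrightarrow C\big)\in\mathcal D_2$ the category $\mathcal D_1\times_{\mathcal D_2}(\mathcal D_2)_{V/}$ is weakly contractible: an object of it is a decomposition of $S^1$ into finitely many disjoint intervals together with a refinement of the configuration of the $l$ given rectangles into the corresponding vertical strips; such refinements always exist (project the rectangles to $S^1$ and pick a covering family of disjoint arcs) and any two have a common refinement (pass to a finer arc-decomposition of $S^1$, sliding endpoints by rotations), so the relevant poset is filtered and hence weakly contractible. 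The genuine obstacle is to make the ``thickening'' precise on the mapping spaces of the rectilinear-embedding operads; this parallels, and refines to account for the circle, the cofinality argument at the core of Dunn's additivity theorem \cite[Section~5.1.2]{HA} --- the lemma being, morally, the Fubini-type equivalence $\int_{(0,1)\times S^1}A\simeq\int_{S^1}\int_{(0,1)}A$.
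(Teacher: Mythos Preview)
Your overall strategy matches the paper's: reduce to the underlying objects in $\MMM$, express both sides as colimits over $(\Disk_1)_{/S^1}$ and $(\Disk_2)_{/C}$ respectively, and identify the comparison map with the one induced by the ``thickening'' functor $\Phi$ sending $U\mapsto (0,1)\times U$. The divergence is in how the cofinality is established, and this is where your argument has a genuine gap.

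You attempt to verify Quillen's criterion for $\Phi\colon (\Disk_1)_{/S^1}\to (\Disk_2)_{/C}$ directly, arguing that the relevant comma categories are ``filtered''. But these comma categories are honest $\infty$-categories with non-discrete mapping spaces (configuration-type spaces), so a 1-categorical filtered argument does not apply as stated; you acknowledge this at the end as ``the genuine obstacle'' but do not resolve it. Moreover, your heuristic ``project the rectangles to $S^1$ and pick a covering family of disjoint arcs'' already fails at the level of objects: for a general $V\in(\Disk_2)_{/C}$, the $S^1$-projections of the rectangles may overlap, and their union may even be all of $S^1$, in which case no disjoint family of arcs covers them.

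The paper circumvents both issues by first replacing the indexing $\infty$-categories with posets via known cofinalities: $\textup{Disj}(S^1)\to(\Disk_1)_{/S^1}$ (Lurie, \cite[Proposition~5.5.2.13]{HA}) and, on the other side, a variant $\textup{Disj}^{\rm rec}(C)\to(\Disk_2)_{/C}$. The crucial point is the \emph{definition} of $\textup{Disj}^{\rm rec}(C)$: it is the poset of open subsets $V\subset C$ arising as images of rectilinear embeddings \emph{whose projection to $S^1$ is not surjective}. With this non-surjectivity condition built in, the thickening functor $\textup{Disj}(S^1)\to\textup{Disj}^{\rm rec}(C)$, $U\mapsto (0,1)\times U$, becomes cofinal for a trivial reason: for any $V\in\textup{Disj}^{\rm rec}(C)$, its $S^1$-projection $W$ lies in $\textup{Disj}(S^1)$ (this is exactly what non-surjectivity guarantees) and $V\subset (0,1)\times W$, so the comma poset has an initial object. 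Your projection idea is essentially this, but without the non-surjectivity restriction and without the passage to posets it does not go through. The alternative route you sketch via $\Delta^{\rm op}$ would still require proving cofinality of $\Delta^{\rm op}\to(\Disk_2)_{/C}$, which is not easier.
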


\begin{proof}We~first consider $\hat{A}_\flat\colon \mathbf{Mfld}_1\to \Alg^\otimes_{\eone}\big(\MMM^\otimes\big)$.
The operadic left Kan extension gives
$\hat{A}_\flat\big(S^1\big)=\colim_{U\to S^1\in (\Disk_1)_{/S^1}}\widetilde{A}_\flat(U)$.
Here, $\colim_{U\to S^1\in (\Disk_1)_{/S^1}}\widetilde{A}_\flat(U)$
means a~colimit of~$\eone^\otimes\times$ ${}_{\mathbf{Mfld}_1}(\mathbf{Mfld}_1)^{\textup{act}}_{/S^1}\simeq (\Disk_1)_{/S^1}\to \Disk_1\stackrel{\widetilde{A}_\flat}{\to} \Alg_{\eone}\big(\MMM^\otimes\big)$.
By \cite[Corollary~3.22]{AF} or~\cite[Pro\-po\-si\-tion~5.5.2.15]{HA}, $(\Disk_1)_{/S^1}$ is sifted
(strictly speaking, in the statement in~\cite[Proposition~5.5.2.15]{HA},
mapping spaces between disks are spaces of~(not necessarily rectilinear)
open embeddings, but the overcategory $(\Disk_1)_{/S^1}$ is equivalent to
a nonrectilinear version in {\it loc.~cit.}).
The forgetful functor $\Alg_{\eone}\big(\MMM^\otimes\big)\to \MMM$
preserves sifted colimits.
Consequently, the image of~$\hat{A}_\flat\big(S^1\big)$ in $\MMM$
is a~colimit of~$(\Disk_1)_{/S^1}\to \Alg_{\eone}\big(\MMM^\otimes\big)\to \MMM$.
Given a~topological $r$-manifold $T$,
we let $\textup{Disj}(T)$ be the poset that consists of~open sets $U\subset T$
such that $U$ is homeomorphic to a~finite disjoint union of~$(0,1)^r$.
We~think of~$\textup{Disj}(T)$ as a~category.
Then
according to~\cite[Proposition~5.5.2.13]{HA},
the natural functor $\textup{Disj}\big(S^1\big)\to (\Disk_1)_{/S^1}$ is left cofinal.
Thus, $\colim_{U\in \textup{Disj}(S^1)}\widetilde{A}_\flat(U) \simeq \colim_{U\to S^1\in (\Disk_1)_{/S^1}}\widetilde{A}_\flat(U)$ in $\MMM$, where
$\colim_{U\in \textup{Disj}(S^1)}A_\flat(U)$ is a~co\-limit of~$\textup{Disj}\big(S^1\big)\to (\Disk_1)_{/S^1}\to \MMM$
(this equivalence also follows from the fact that
$(\Disk_1)_{/S^1}$ is obtained from $\textup{Disj}\big(S^1\big)$ by~localizing
with respect to those inclusions $U\subset U'$ that are isotopic to
an isomorphism \cite[Proposition~2.19]{AF}).

Next, we consider the image of~$\hat{A}_\sharp\big(S^1\big)$ in $\MMM$.
If~$\Disk_2$ denotes the underlying $\infty$-category of~$\Disk_2^\otimes$
and we set $C=(0,1)\times S^1$, then
the image of~$C$ in $\MMM$ under the operadic left Kan extension
$i_!(A)$ is $\colim_{V\to C\in (\Disk_2)_{/C}}A(V)$, that is,
a colimit of~$\etwo^\otimes\times_{\dcyl}(\dcyl)^{\textup{act}}_{/C}\simeq (\Disk_2)_{/C}\to \Disk_2\stackrel{\widetilde{A}}{\to} \MMM$, where the latter functor
is the underlying functor of~the induced symmetric monoidal functor
$\widetilde{A}\colon \Disk_2^\otimes\to \MMM^\otimes$ (we abuse notation).

Let $\textup{Disj}^{\rm rec}(C)$ be
the full subcategory (poset) of~$\textup{Disj}(C)$ spanned by~those open sets $V\subset C$
such that $V$ is the image of~a~rectilinear embedding,
and the composite $V \hookrightarrow C=(0,1)\times S^1$ $\stackrel{\textup{pr}}{\to} S^1$
is not surjective.
By applying the argument of~\cite[Proposition~5.5.2.13]{HA}
to $\textup{Disj}^{\rm rec}(C)\to (\Disk_2)_{/C}$,
we see that $\textup{Disj}^{\rm rec}(C)\to (\Disk_2)_{/C}$ is left cofinal.
It~follows that there
is a~canonical equivalence $\colim_{V\in \textup{Disj}^{\rm rec}(C)}\widetilde{A}(V)\simeq \colim_{V\to C\in (\Disk_2)_{/C}}\widetilde{A}(V)$.

To prove that $\hat{A}_\flat\big(S^1\big) \to \hat{A}_\sharp\big(S^1\big)$ is an~equivalence,
it is enough to show that
\begin{gather*}
\colim_{U\in \textup{Disj}(S^1)}\widetilde{A}_\flat(U)\to \colim_{V\in \textup{Disj}^{\rm rec}(C)}\widetilde{A}(V)
\end{gather*}
is an~equivalence in $\MMM$.
Unwinding the definition,
this morphism is the composite of~\begin{gather*}
\colim_{U\in \textup{Disj}(S^1)}\widetilde{A}_\flat(U)\to \colim_{(0,1)\times U \in \textup{Disj}^{\rm rec}(C)}\widetilde{A}((0,1)\times U)\to \colim_{V\in \textup{Disj}^{\rm rec}(C)}\widetilde{A}(V),
\end{gather*}
where the right arrow
is induced by~the universal property of~the colimit,
and the left arrow is an~equivalence because $\widetilde{A}_\flat(U)\simeq \widetilde{A}((0,1)\times U)$.
To see that the right arrow is an~equivalence,
it~will suffice to prove that $\textup{Disj}\big(S^1\big)\to \textup{Disj}^{\rm rec}(C)$
that sends $U$ to $(0,1)\times U$ is left cofinal:
for any $V\in \textup{Disj}^{\rm rec}(C)$, the category
$\textup{Disj}\big(S^1\big)\times_{\textup{Disj}^{\rm rec}(C)} \textup{Disj}^{\rm rec}(C)_{V/}$
is weakly contractible.
Consider the image $W$ of~$V$ under the projection
$(0,1)\times S^1\to S^1$. Then $W$ belongs to
$\textup{Disj}\big(S^1\big)$ since
$V \hookrightarrow C\to S^1$ is not surjective. It~follows that $\textup{Disj}\big(S^1\big)\times_{\textup{Disj}^{\rm rec}(C)} \textup{Disj}^{\rm rec}(C)_{V/}$ has an~initial object
so that the opposite category is filtered. Thus, by~\cite[Proposition~5.5.8.7]{HTT},
$\textup{Disj}\big(S^1\big)\times_{\textup{Disj}^{\rm rec}(C)} \textup{Disj}^{\rm rec}(C)_{V/}$
is weakly contractible as desired.
\end{proof}

\begin{Theorem}\label{mainconstruction}
Let $R$ be a~commutative ring spectrum.
Let $\CCC$ be a~small $R$-linear stable idem\-potent-complete
$\infty$-category, that is, an~object of~$\ST_R=\Mod_{\Perf_R}(\ST)$.
Let $\HH^\bullet(\CCC)$ be the Hoch\-schild cohomology $R$-module spectrum
which belongs to $\Alg_{\etwo}\!(\Mod_R)$, see Definitions~{\rm \ref{cohomologydef}}~and~{\rm \ref{cohomologydefsmall}}.
Let $\HH_\bullet(\CCC)$ be the Hoch\-schild homology $R$-module
spectrum which lies in $\Fun\big(BS^1,\Mod_R\big)$, see Definition~{\rm \ref{homologydef}}.
Then $(\HH^\bullet(\CCC),\HH_\bullet(\CCC))$ is promoted to
an object of~$\Alg_{\KS}(\Mod_R)$ in a~natural way.
\end{Theorem}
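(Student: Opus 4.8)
The plan is to produce a point of the fibre product of Corollary~\ref{algebraequivalence} and transport it across that equivalence. Taking $\MM^\otimes=\Mod_R^\otimes$ (which satisfies the hypotheses of Corollary~\ref{algebraequivalence}), exhibiting a $\KS$-algebra $(\HH^\bullet(\CCC),\HH_\bullet(\CCC))$ amounts to providing: (a) an $\etwo$-algebra, which will be $\HH^\bullet(\CCC)\in\Alg_{\etwo}(\Mod_R)$ of Definitions~\ref{cohomologydef} and~\ref{cohomologydefsmall}; (b) an object $(B,M)\in\LMod(\Fun(BS^1,\Mod_R))$ with $M=\HH_\bullet(\CCC)$; and (c) an equivalence $B\simeq i_!(\HH^\bullet(\CCC))_C$ of underlying associative algebras in $\Fun(BS^1,\Mod_R)$, where $i_!(-)_C$ denotes the structural functor $\Alg_{\etwo}(\Mod_R)\to\Alg_{\eone}(\Fun(BS^1,\Mod_R))$ of that corollary. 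Part (c) will be immediate once (b) is in hand, since Proposition~\ref{identificationprop} gives a canonical equivalence $i_!(\HH^\bullet(\CCC))_C\simeq\HH_\bullet(\HH^\bullet(\CCC))$ in $\Alg_{\assoc}(\Fun(BS^1,\Mod_R))$, where $\HH_\bullet(\HH^\bullet(\CCC))$ is the image of the $\etwo$-algebra $\HH^\bullet(\CCC)$ under the functor defined just before that proposition; so the real content is (b): building the module and identifying the algebra acting on it with $\HH_\bullet(\HH^\bullet(\CCC))$.

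First I would realize the categorical associative action of $\HH^\bullet(\CCC)$ on $\CCC$ sketched in Section~\ref{strategysec}. Put $\DDD=\Ind(\CCC)\in\PR_R$ and let $\operatorname{\mathcal{E}{\rm nd}}_R(\DDD)\in\Alg_{\assoc}(\PR_R)$ be the monoidal presentable $R$-linear $\infty$-category of colimit-preserving endofunctors, which acts on $\DDD$ by Lemma~\ref{linearfunctorcategory}. Applying $\Alg_{\assoc}(-)$ to the colocalization $(I,E)$ of Section~\ref{cohomologysec} and using $\Alg_{\etwo}(\Mod_R)\simeq\Alg_{\assoc}\Alg_{\assoc}(\Mod_R)$, the counit at $\operatorname{\mathcal{E}{\rm nd}}_R(\DDD)$ is a monoidal functor $\RMod^\otimes_{\HH^\bullet(\CCC)}\to\operatorname{\mathcal{E}{\rm nd}}_R(\DDD)$; restricting the $\operatorname{\mathcal{E}{\rm nd}}_R(\DDD)$-action along it exhibits $\DDD$ as a left $\RMod^\otimes_{\HH^\bullet(\CCC)}$-module object in $\PR_R$, i.e.\ a point of $\LMod(\PR_R)$. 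Passing to the full monoidal subcategory of compact objects in each coordinate --- which is compatible with the action, since the functor above carries the compact generating unit module to $\textup{id}_\DDD$, a compact-preserving endofunctor --- yields a point $(\RPerf^\otimes_{\HH^\bullet(\CCC)},\CCC)$ of $\LMod(\ST_R)$.

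Next I would apply the symmetric monoidal functor $\HH_\bullet(-)\colon\ST_R^\otimes\to\Fun(BS^1,\Mod_R)^\otimes$ of Proposition~\ref{hochschildhomologyfunctor}, that is the composite $\overline{H}\circ\theta$. It induces $\LMod(\ST_R)\to\LMod(\Fun(BS^1,\Mod_R))$ and sends $(\RPerf^\otimes_{\HH^\bullet(\CCC)},\CCC)$ to $(\HH_\bullet(\RPerf_{\HH^\bullet(\CCC)}),\HH_\bullet(\CCC))$, so that $\HH_\bullet(\CCC)$ acquires a left module structure in $\Fun(BS^1,\Mod_R)$ over the associative algebra $\HH_\bullet(\RPerf_{\HH^\bullet(\CCC)})$. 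It then remains to identify this algebra with $\HH_\bullet(\HH^\bullet(\CCC))$, compatibly with the $\etwo$-algebra $\HH^\bullet(\CCC)$. This follows because the two symmetric monoidal functors $\Alg_{\assoc}(\Mod_R)\to\Fun(BS^1,\Mod_R)$ given by $A\mapsto\HH_\bullet(\RPerf_A)$ and by $A\mapsto\HH_\bullet(A)$ (the functor defining Hochschild homology of algebras) both arise by inverting weak and Morita equivalences from $\Alg_{\assoc}(\SPS(\RR)^c)\stackrel{B(-)}{\longrightarrow}\CAT_{\RR}^{\mathrm{pc}}\stackrel{\overline{H}}{\longrightarrow}\Fun(BS^1,\Mod_R)$, once one observes that $A\mapsto\RPerf_A$ is $B(-)$ followed by $\CAT_{\RR}^{\mathrm{pc}}\to\CAT_{\RR}^{\mathrm{pc}}[M^{-1}]\simeq\ST_R$ (cf.\ the proof of Proposition~\ref{symequivalence}). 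Taking $\Alg_{\assoc}(-)$ and precomposing with $\Alg_{\etwo}(\Mod_R)\simeq\Alg_{\assoc}\Alg_{\assoc}(\Mod_R)$ then gives $\HH_\bullet(\RPerf_A)\simeq\HH_\bullet(A)$ as associative algebras for every $\etwo$-algebra $A$; together with Proposition~\ref{identificationprop} this assembles the point $(\HH^\bullet(\CCC),(\HH_\bullet(\HH^\bullet(\CCC)),\HH_\bullet(\CCC)))$ of the fibre product, and Corollary~\ref{algebraequivalence} turns it into the desired $\KS$-algebra $(\HH^\bullet(\CCC),\HH_\bullet(\CCC))$ in $\Mod_R$.

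The main obstacle is coherence rather than any individual step: the module of (b) and the identifications of (c) must be produced not objectwise but as a single point of the homotopy fibre product, so that its two projections, to $\Alg_{\etwo}(\Mod_R)$ and to $\Alg_{\eone}(\Fun(BS^1,\Mod_R))$, agree on the nose. The way to keep this under control is to run the whole construction through (lax) symmetric monoidal functors --- the counit of the induced adjunction $(I,E)\colon\Alg_{\etwo}(\Mod_R)\rightleftarrows\Alg_{\assoc}(\PR_R)$, the functor $\HH_\bullet(-)$ of Proposition~\ref{hochschildhomologyfunctor}, and the Morita invariance of Corollary~\ref{HHMoritainv} --- so that naturality is built in, and to read off the fibre-product point only at the very end. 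In this scheme Proposition~\ref{identificationprop} is precisely the ingredient that forces the two a priori different descriptions of the acting circle-algebra $\HH_\bullet(\HH^\bullet(\CCC))$ to coincide as $\eone$-algebras in $\Fun(BS^1,\Mod_R)$, which is exactly what makes the fibre-product point well defined.
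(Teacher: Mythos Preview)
Your proposal is correct and follows essentially the same route as the paper: the paper's proof (Construction~\ref{constmodule}, Proposition~\ref{smallleftmodule}, Construction~\ref{constalg}) builds the $\RPerf^\otimes_{\HH^\bullet(\CCC)}$-module $\CCC$ from the counit of the adjunction $(I,E)$, applies the symmetric monoidal Hochschild homology functor, invokes Morita invariance to identify $\HH_\bullet(\RPerf_{\HH^\bullet(\CCC)})\simeq\HH_\bullet(\HH^\bullet(\CCC))$, and then uses Proposition~\ref{identificationprop} and Corollary~\ref{algebraequivalence} exactly as you outline. The only place your sketch is thinner than the paper is the passage to compact objects: the paper (Proposition~\ref{smallleftmodule}) checks that the action $\RMod_{\HH^\bullet(\CCC)}\times\DDD\to\DDD$ carries $\RPerf_{\HH^\bullet(\CCC)}\times\CCC$ into $\CCC$ by a thick-subcategory argument starting from the unit, which is the precise content behind your remark that the unit maps to $\textup{id}_\DDD$.
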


\begin{Remark}By Corollary~\ref{algebraequivalence}, we have
\begin{gather*}
\Alg_{\KS}(\Mod_R) \simeq \Alg_{\etwo}(\Mod_R)\times_{\Alg_{\eone}(\Fun(BS^1,\Mod_R))}\LMod\big(\Fun\big(BS^1,\Mod_R\big)\big) \\
\hphantom{\Alg_{\KS}(\Mod_R)}{} \to \Alg_{\etwo}(\Mod_R)\times \Fun\big(BS^1,\Mod_R\big).
\end{gather*}
Theorem~\ref{mainconstruction} means that we can obtain
an object of~$\Alg_{\KS}(\Mod_R)$ which ``lies over''
the pair $(\HH^\bullet(\CCC),\HH_\bullet(\CCC))\in \Alg_{\etwo}(\Mod_R)\times \Fun\big(BS^1,\Mod_R\big)$.
\end{Remark}

The proof~proceeds in Construction~\ref{constmodule}, Proposition~\ref{smallleftmodule} and Construction~\ref{constalg}.

\begin{Construction}\label{constmodule}
We~write $\DDD$ for the $\Ind$-category
$\Ind(\CCC)$, which is an~$R$-linear compactly generated
stable $\infty$-category.
Let $\HH^\bullet(\CCC)=\HH^\bullet(\DDD)$ be the Hoch\-schild cohomology $R$-module
spectrum.
Recall that $\HH^\bullet(\DDD)=E(\operatorname{\mathcal{E}{\rm nd}}_R(\DDD))$.
The counit map of~the adjunction
$I\colon \Alg_{\etwo}(\Mod_R)\rightleftarrows \Alg_{\assoc} \big(\PR_R\big){\colon}\!E$
induces to an~associative monoidal functor
\begin{gather*}
\RMod_{\HH^\bullet(\CCC)}^\otimes=\RMod_{\HH^\bullet(\DDD)}^\otimes \to \operatorname{\mathcal{E}{\rm nd}}_R^\otimes(\DDD),
\end{gather*}
that is, a~morphism in $\Alg_{\assoc}\big(\PR_R\big)$,
where $\RMod_{\HH^\bullet(\CCC)}^\otimes:=\RMod_{\HH^\bullet(\CCC)}^\otimes(\Mod_R)$ is the associative monoidal $\infty$-category of~right modules of~$\HH^\bullet(\CCC)$.
According to~\cite[Corollary~4.7.1.40]{HA},
the associative monoidal $\infty$-category
$\operatorname{\mathcal{E}{\rm nd}}_R^\otimes(\DDD)$
naturally acts on $\DDD$. More precisely,
$\DDD$ is a~left module of~$\operatorname{\mathcal{E}{\rm nd}}_R^\otimes(\DDD)$,
that is, an~object of~$\LMod_{\operatorname{\mathcal{E}{\rm nd}}_R^\otimes(\DDD)}\big(\PR_R\big)$ (this action is universal in an~appropriate sense, cf.~\cite[Section~4.7.1]{HA}).
Then the associative monoidal functor $\RMod_{\HH^\bullet(\CCC)}^\otimes\to \operatorname{\mathcal{E}{\rm nd}}_R^\otimes(\DDD)$
induces a~left $\RMod_{\HH^\bullet(\CCC)}^\otimes$-module
structure on $\DDD$. That is, $\DDD$ is promoted to
 an~object of~$\LMod_{\RMod_{\HH^\bullet(\CCC)}^\otimes}\big(\PR_R\big)$.

Let $\RPerf_{\HH^\bullet(\CCC)}$ be the full subcategory
of~$\RMod_{\HH^\bullet(\CCC)}$ spanned by~compact objects.
This subcategory is the smallest stable subcategory which
contains $\HH^\bullet(\CCC)$ (regarded as a~right module)
and is closed under retracts.
Hence $\RPerf_{\HH^\bullet(\CCC)}$ inherits an~associative monoidal structure
from the structure on $\RMod_{\HH^\bullet(\CCC)}$. We~denote by~$\RPerf^\otimes_{\HH^\bullet(\CCC)}$ the resulting associative
monoidal small $R$-linear stable idempotent-complete
$\infty$-category which we regard as an~object of~$\Alg_{\assoc}(\ST_R)$.
\end{Construction}

\begin{Proposition}
\label{smallleftmodule}
We~continue to assume
that $\CCC$ is a~small $R$-linear stable idempotent-complete
$\infty$-category.
If~we think of~$\DDD=\Ind(\CCC)$
as the left $\RMod_{\HH^\bullet(\CCC)}^\otimes$-module
$($as above$)$,
the restriction exhibits
$\CCC$ as a~left $\RPerf^\otimes_{\HH^\bullet(\CCC)}$-module,
that is, an~object of~$\LMod_{\RPerf^\otimes_{\HH^\bullet(\CCC)}}(\ST_R)$.
In~parti\-cu\-lar, $\CCC$ is promoted to
an object of~$\LMod_{\RPerf^\otimes_{\HH^\bullet(\CCC)}}(\ST_R)$.
\end{Proposition}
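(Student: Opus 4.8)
The plan is to realize the desired structure by the ``restriction to compact objects'' informally described in Section~\ref{strategysec}. Write $\mathcal{K}\subset\PR_R$ for the (non\nobreakdash-full) subcategory of compactly generated $R$-linear stable $\infty$-categories with colimit-preserving, compact-object-preserving functors; by the discussion of $\ST_R\to\PR_R$ in Section~\ref{cohomologysec} the functor $\Ind$ identifies $\ST_R^\otimes$ with a symmetric monoidal subcategory $\mathcal{K}^\otimes\subset\PR_R^\otimes$. Since $\HH^\bullet(\CCC)$, as a right module over itself, is a compact generator of $\RMod_{\HH^\bullet(\CCC)}$, we have $\RPerf_{\HH^\bullet(\CCC)}=\RMod_{\HH^\bullet(\CCC)}^\omega$, so $\Ind$ carries the associative algebra $\RPerf^\otimes_{\HH^\bullet(\CCC)}\in\Alg_{\assoc}(\ST_R)$ of Construction~\ref{constmodule} to $\RMod^\otimes_{\HH^\bullet(\CCC)}\in\Alg_{\assoc}(\PR_R)$ and carries the object $\CCC$ to $\DDD$. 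Thus it suffices to promote the left $\RMod^\otimes_{\HH^\bullet(\CCC)}$-module structure on $\DDD$ of Construction~\ref{constmodule} to one over $\mathcal{K}^\otimes$, i.e.\ to lift it along $\mathcal{K}^\otimes\hookrightarrow\PR_R^\otimes$.

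Next I would reduce the statement to a single compact-object check. The left module $\DDD$ is classified by a map of $\infty$-operads $F\colon\LM^\otimes\to\PR_R^\otimes$, and it is enough to show that $F$ factors through $\mathcal{K}^\otimes$; the resulting object of $\LMod_{\RMod^\otimes_{\HH^\bullet(\CCC)}}(\mathcal{K})\simeq\LMod_{\RPerf^\otimes_{\HH^\bullet(\CCC)}}(\ST_R)$ then has underlying object $\CCC$, as wanted. All objects in the image of $F$ are among $\{\RMod_{\HH^\bullet(\CCC)},\DDD\}\subset\mathcal{K}$, so the only issue is that $F$ send morphisms of $\LM^\otimes$ into $\mathcal{K}^\otimes$. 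Factoring a morphism of $\LM^\otimes$ as an inert morphism followed by an active one, the inert ones go to inert morphisms of $\PR_R^\otimes$ (partial projections, which lie in $\mathcal{K}^\otimes$ once the components do), while the active morphisms of $\LM^\otimes$ are generated, under composition and the symmetric monoidal structure, by the nullary unit $\Mod_R\to\RMod_{\HH^\bullet(\CCC)}$, the binary multiplication $\RMod_{\HH^\bullet(\CCC)}\otimes_R\RMod_{\HH^\bullet(\CCC)}\to\RMod_{\HH^\bullet(\CCC)}$, and the binary action $a\colon\RMod_{\HH^\bullet(\CCC)}\otimes_R\DDD\to\DDD$. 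The first two already lie in $\mathcal{K}$ because $\RPerf^\otimes_{\HH^\bullet(\CCC)}$ is an associative algebra in $\ST_R$ (Construction~\ref{constmodule}), so everything comes down to showing that $a$ preserves compact objects.

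To check that $a$ preserves compact objects, recall that by construction $a$ is the composite of the monoidal functor $\Phi\colon\RMod^\otimes_{\HH^\bullet(\CCC)}\to\operatorname{\mathcal{E}{\rm nd}}^\otimes_R(\DDD)$ (the counit of $I\dashv E$) with the universal evaluation action $\operatorname{\mathcal{E}{\rm nd}}^\otimes_R(\DDD)\otimes_R\DDD\to\DDD$ of \cite[Corollary~4.7.1.40]{HA}. Since $\Phi$ and $\otimes_R$ preserve colimits, the induced pairing $\RMod_{\HH^\bullet(\CCC)}\times\DDD\to\DDD$, $(M,X)\mapsto\Phi(M)(X)$, preserves colimits separately in each variable, and the compact objects of $\RMod_{\HH^\bullet(\CCC)}\otimes_R\DDD$ are generated under finite colimits and retracts by objects $M\otimes X$ with $M\in\RPerf_{\HH^\bullet(\CCC)}$ and $X\in\CCC$. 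Hence it suffices to see that, for each compact $X$, the colimit-preserving functor $M\mapsto\Phi(M)(X)$ carries compacts to compacts; this holds because it carries the compact generator $\HH^\bullet(\CCC)$ of $\RMod_{\HH^\bullet(\CCC)}$ to $\Phi(\HH^\bullet(\CCC))(X)=\textup{id}_{\DDD}(X)=X$, using that a monoidal functor sends the unit $\HH^\bullet(\CCC)$ to the unit $\textup{id}_{\DDD}$ of $\operatorname{\mathcal{E}{\rm nd}}^\otimes_R(\DDD)$. (Equivalently: the identity functor preserves compacts, the colimit-preserving endofunctors of $\DDD$ that preserve compacts form a stable idempotent-complete subcategory of $\operatorname{\mathcal{E}{\rm nd}}_R(\DDD)$, and the exact functor $\Phi$ therefore sends the smallest such subcategory containing $\HH^\bullet(\CCC)$, namely $\RPerf_{\HH^\bullet(\CCC)}$, into it.) The part I expect to require the most care is the reduction in the second paragraph --- packaging ``the action preserves compact objects'' into an honest left-module structure in $\ST_R$ that is compatible with $\Ind$ on the algebra side and with all the higher coherences; the compact-object computation itself is short.
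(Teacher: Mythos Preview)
Your proposal is correct and follows essentially the same route as the paper: both reduce to showing that the action $\RMod_{\HH^\bullet(\CCC)}\times\DDD\to\DDD$ carries $\RPerf_{\HH^\bullet(\CCC)}\times\CCC$ into $\CCC$, and both verify this via the thick-subcategory argument (the unit $\HH^\bullet(\CCC)$ acts as the identity, and $\RPerf_{\HH^\bullet(\CCC)}$ is the smallest stable retract-closed subcategory containing it). The only difference is that you spell out the reduction step---factoring the $\LM^\otimes$-map through the subcategory $\mathcal{K}^\otimes\simeq\ST_R^\otimes$ by checking it on inert morphisms and on the generating active morphisms---whereas the paper simply asserts ``it will suffice to prove that the action functor sends $\RPerf_{\HH^\bullet(\CCC)}\times\CCC$ to $\CCC$'' and moves directly to the compact-object check; your caution about the packaging is well-placed but the argument you sketch is adequate.
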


\begin{proof}
We~may and will suppose that $\CCC$ is the full subcategory of~compact objects
in $\DDD$.
The tensor product functor
\begin{gather*}
\RMod_{\HH^\bullet(\CCC)}\times \RMod_{\HH^\bullet(\CCC)}\to \RMod_{\HH^\bullet(\CCC)}
\end{gather*}
sends
$\RPerf_{\HH^\bullet(\CCC)} \times \RPerf_{\HH^\bullet(\CCC)}$ to $\RPerf_{\HH^\bullet(\CCC)}\subset \RMod_{\HH^\bullet(\CCC)}$.
It~will suffice to prove that
the action functor
\begin{gather*}
m\colon\ \RMod_{\HH^\bullet(\CCC)}\times\DDD\to \DDD
\end{gather*}
sends $\RPerf_{\HH^\bullet(\CCC)}\times \CCC$ to $\CCC$.
Let $\PPP$ be the full subcategory of~$\RMod_{\HH^\bullet(\CCC)}$
spanned by~those objects $P$ such that the essential
image of~$\{P\}\times \CCC$
is contained in $\CCC$.
Note that $m$ preserves the shift functors ($\Sigma$ or $\Omega$)
and small colimits separately in each variable.
Moreover, the stable subcategory $\CCC\subset \DDD$ is closed
under retracts.
Thus, we see that $\PPP$ is a~stable subcategory which is closed under
retracts.
Since $\HH^\bullet(\CCC)$ is a~unit object,
$\HH^\bullet(\CCC)$ lies in $\PPP$.
Keep in mind that $\RPerf_{\HH^\bullet(\CCC)}$ is
the smallest stable subcategory which
contains $\HH^\bullet(\CCC)$
and is closed under retracts.
It~follows that $\RPerf_{\HH^\bullet(\CCC)}\subset \PPP$.
\end{proof}

\begin{Construction}\label{constalg}
Take $\OO$ to be $\LM$ in Proposition~\ref{hochschildhomologyfunctor}.
By definition, $\LMod(\Mod_{\Perf_R}(\ST))$
is $\Alg_{\LM}(\Mod_{\Perf_R}(\ST))$, and $\LMod\big(\Fun\big(BS^1,\Mod_R\big)\big)=\Alg_{\LM}\big(\Fun\big(BS^1,\Mod_R\big)\big)$.
We then have
\begin{gather*}
\xymatrix{
\LMod(\Mod_{\Perf_R}(\ST))\ar[r] \ar[d] & \LMod\big(\Fun\big(BS^1,\Mod_R\big)\big) \ar[d] \\
\Alg_{\assoc}(\Mod_{\Perf_R}(\ST)) \ar[r] & \Alg_{\assoc}\big(\Fun\big(BS^1,\Mod_R\big)\big),
}
\end{gather*}
where the vertical arrows are given by~the restriction along
$\assoc^\otimes \hookrightarrow \LM$.
By Proposition~\ref{smallleftmodule},
we think of~$\CCC$ as an~object of~$\LMod_{\RPerf^\otimes_{\HH^\bullet(\CCC)}}(\Mod_{\Perf_R}(\ST))$.
Applying the above functor to $\CCC$, we obtain
$\HH_\bullet(\CCC)$ which belongs to $\LMod_{\HH_\bullet(\RPerf_{\HH^\bullet(\CCC)})}\big(\Fun\big(BS^1,\Mod_R\big)\big)$.
The lower horizontal arrow carries the associative monoidal $\infty$-category
$\RPerf^\otimes_{\HH^\bullet(\CCC)}$
to an~object of~$\Alg_{\assoc}\big(\Fun\big(BS^1,\Mod_R\big)\big)$.
That is,
$\HH_\bullet(\RPerf_{\HH^\bullet(\CCC)})$ is
an associative algebra object
in $\Fun\big(BS^1,\Mod_R\big)$.
Consequently, the Hoch\-schild homology $R$-module spectrum
$\HH_\bullet(\CCC)$ is a~left $\HH_\bullet(\RPerf_{\HH^\bullet(\CCC)})$-module object in $\Fun\big(BS^1,\Mod_R\big)$.
Next, we set $A=\HH^\bullet(\CCC)$ in $\Alg_{\etwo}(\Mod_R)$.
By the invariance of~Hoch\-schild homology under Morita equivalences
(cf.~Lem\-ma~\ref{Moritainvert}),
$\HH_\bullet(A)\simeq \HH_\bullet(\RPerf_{\HH^\bullet(\CCC)})$ in
$\Alg_{\assoc}\big(\Fun\big(BS^1,\Mod_R\big)\big)$.
Let
\begin{gather*}
\Alg_{\etwo}(\Mod_R)\stackrel{i_!}\simeq \Alg_{\dcyl}^{\mathbf{D}}(\Mod_R)\to \Alg_{\cyl}(\Mod_R)\simeq \Alg_{\eone}\big(\Fun\big(BS^1,\Mod_R\big)\big)
\end{gather*}
be a~sequence of~functors such that the first one is induced by~left Kan extensions along $i\colon \etwo^\otimes \hookrightarrow \dcyl$
(cf.~the discussion before Proposition~\ref{another}),
the second one is the restriction along $\cyl \to \dcyl$,
and the third functor (equivalence) comes from Corollary~\ref{equivariant}.
By definition, the image of~$A$ in $\Alg_{\eone}\big(\Fun\big(BS^1,\Mod_R\big)\big)$
is $i_!(A)_C$ defined in the discussion before Proposition~\ref{identificationprop}. According to Proposition~\ref{identificationprop},
we have the canonical equivalence $\HH_\bullet(A)\simeq i_!(A)_C$
in $\Alg_{\eone}\big(\Fun\big(BS^1,\Mod_R\big)\big)\simeq \Alg_{\assoc}\big(\Fun\big(BS^1,\Mod_R\big)\big)$.
Therefore, $\HH^\bullet(\CCC)=A\in \Alg_{\etwo}(\Mod_R)$
and the left $\HH_\bullet(A)$-module $\HH_\bullet(\CCC)$
together with $i_!(A)_C\simeq \HH_\bullet(A)$ determines
an object of~\begin{gather*}
\Alg_{\etwo}(\Mod_{R})\times_{\Alg_{\assoc}(\Fun(BS^1,\Mod_R))}\LMod\big(\Fun\big(BS^1,\Mod_R\big)\big)\simeq \Alg_{\KS}(\Mod_R),
\end{gather*}
where the equivalence comes from
the canonical equivalences in Corollary~\ref{algebraequivalence}.
In~other words,
it defines an~object of~$\Alg_{\dcylm}^{\mathbf{D}}(\Mod_R)\subset \Alg_{\dcylm}(\Mod_R)$ which induces a~$\KS$-algebra via the restriction.
Thus, we obtain the desired object of~$\Alg_{\KS}(\Mod_R)$.
\end{Construction}

\section{The action}
\label{actionsec}

In~this section, we study the maps induced by~the action of~Hoch\-schild
cohomology spectrum $\HH^\bullet(\CCC)$ on $\HH_\bullet(\CCC)$, constructed in Theorem~\ref{mainconstruction}.

{\bf 8.1.} Let $R$ be a~commutative ring spectrum.
We~let $\CCC$ be a~small stable $R$-linear $\infty$-category.
We~let $A\in \Alg_{\assoc}(\Mod_R)$ and
suppose that $\CCC=\RPerf_A$.
In~other words, we assume that $\Ind(\CCC)$ admits a~single compact
generator.
In~this setting, we can describe morphisms induced by~module actions
by~means of~concrete algebraic constructions.
For ease of~notation, we write $\HHC(A)$
for $\HHC(\RPerf_A)=\HHC(\RMod_A)$.
We~can safely confuse $\HHH(\RMod_A)$
with the Hoch\-schild homology $R$-module spectrum $\HHH(A)$ of~$A$
because of~the invariance under Morita equivalences.
We~do not distinguish between the notation $\HHH(\RMod_A)$ and $\HHH(A)$: we write $\HHH(A)$ for $\HHH(\RMod_A)$ as well.
Write $A^e$ for $A^{\rm op}\otimes_R A$.
As before, by~$\otimes$ we mean the tensor product over $R$
when we treat the tensor products of~objects in $\Mod_R$ or $\Alg_{\assoc}(\Mod_R)$.

We~define a~morphism $\HHC(A)\otimes \HHH(A)\to \HHH(A)$
which we refer to as the contraction morphism:

\begin{Definition}
Consider the functor
\begin{gather*}
(-)\otimes_{A^e}(-)\colon\ \RMod_{A^e}\times \LMod_{A^e}\to \Mod_R
\end{gather*}
which is informally given by~the two-sided
bar construction $(P,Q) \mapsto P\otimes_{A^e}Q$.
Note that $\RMod_{A^e}$ is left-tensored over $\Mod_R^\otimes$.
If~we regard $\HHC(A)$ as an~object of~$\Mod_R$,
there is a~morphism $\HHC(A)\otimes A \to A$ in $\RMod_{A^e}$,
which exhibits $\HHC(A)$ as a~morphism object from $A$ to itself
in $\RMod_{A^e}$
(i.e., hom $R$-module),
see Corollary~\ref{centercor}.
Let $(\HHC(A)\otimes A)\otimes_{A^e}A\to A\otimes_{A^e}A$
be the morphism induced by~the morphism $\HHC(A)\otimes A\to A$ in $\RMod_{A^e}$.
We~identify $\HH_\bullet(A)$ with $A\otimes_{A^e}A$ in $\Mod_R$,
see Lemma~\ref{twosidedbar}. It~gives rise to
\begin{gather*}
\sigma\colon\ \HHC(A)\otimes \HH_\bullet(A) = \HHC(A)\otimes (A\otimes_{A^e}A) \to A\otimes_{A^e}A = \HHH(A).
\end{gather*}
We~shall refer to it as the contraction morphism.
\end{Definition}

We~denote by~$(\HH^\bullet(\CCC),\HH_\bullet(\CCC))$
the pair endowed with the $\KS$-algebra structure
construc\-ted in Theorem~\ref{mainconstruction}: we will think
that the pair is promoted to an~object of~$\Alg_{\KS}(\Mod_R)$.
Let $D$ and $C_M$ be colors in the colored operad $\textup{KS}$.
There is a~class of~an active mor\-phism
$f_j\colon (\langle 2\rangle,D,C_M)\!\to\! (\langle 1\rangle, C_M)$ in $\KS$
lying over the active morphism
$\rho\colon \langle 2\rangle\!\to\! \langle 1\rangle$ (with \mbox{$\rho^{-1}(\ast)\!=\!\ast$}).
Such a~morphism $f_j$ is unique up to equivalences.
This is induced by~an open embedding
$j\colon (0,1)^2\sqcup (0,1)\times S^1\to (0,1)\times S^1$ such that
$j_1\colon (0,1)^2\to (0,1)\times S^1$ is rectilinear
and $j_2\colon (0,1)\times S^1\to (0,1)\times S^1$ is a~shrinking embedding,
cf.~Definition~\ref{recti}.
If~$h\colon \KS\to \Mod_R^\otimes$ denotes a~map of~$\infty$-operads that encodes
$(\HH^\bullet(\CCC),\HH_\bullet(\CCC))$,
passing to $\Mod_R$ via a~coCartesian natural
transformation, the image of~$f_j$ induces a~morphism in $\Mod_R$:
\begin{gather*}
u\colon\ \HH^\bullet(\CCC)\otimes \HH_\bullet(\CCC)\simeq h(D)\otimes h(C_M)\to h(C_M)\simeq \HH_\bullet(\CCC).
\end{gather*}

\begin{Theorem}
\label{maincontraction}
Let $A$ be an~associative algebra in $\Mod_R$ and let $\CCC$ be $\RPerf_A$.
Then $u\colon \HH^\bullet(\CCC)\otimes \HH_\bullet(\CCC)\to \HH_\bullet(\CCC)$
is equivalent to the contraction morphism $\sigma\colon \HH^\bullet(A)\otimes \HH_\bullet(A) \to \HH_\bullet(A)$ as a~morphism in $\Mod_R$.
\end{Theorem}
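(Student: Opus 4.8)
The plan is to trace the $\KS$-algebra structure built in Constructions~\ref{constmodule}--\ref{constalg} back through the equivalence of Corollary~\ref{algebraequivalence}, thereby obtaining an intrinsic description of $u$, and then, in the special case $\CCC=\RPerf_A$, to match that description with the bimodule-theoretic definition of $\sigma$.

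First I would pin down $u$ abstractly. In $\dcylm$ the active operation $f_j\colon(\langle2\rangle,D,C_M)\to(\langle1\rangle,C_M)$ factors as the composite of the unary operation $D\to C$ (a rectilinear embedding of a square into a cylinder) with the $\LM$-type action $(C,C_M)\to C_M$. Under $\Alg_{\KS}(\Mod_R)\simeq\Alg_{\etwo}(\Mod_R)\times_{\Alg_{\eone}(\Fun(BS^1,\Mod_R))}\LMod(\Fun(BS^1,\Mod_R))$ the first operation becomes, after applying $i_!$ along $\etwo^\otimes\hookrightarrow\dcyl$ and restricting along $\cyl\hookrightarrow\dcyl$, the structure map $\epsilon\colon\HHC(\CCC)\to i_!(\HHC(\CCC))_C$; by Proposition~\ref{identificationprop} together with Lemma~\ref{cofinalitylemma} (here $C=(0,1)\times S^1$, so $i_!(\HHC(\CCC))_C$ is factorization homology of the circle) this is canonically the inclusion $\HHC(\CCC)\to\HHH(\HHC(\CCC))$ of the $0$-th term of the cyclic bar complex. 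The second operation becomes the left-module structure map of $\HHH(\CCC)$ over $\HHH(\HHC(\CCC))$ in $\Fun(BS^1,\Mod_R)$, which by Constructions~\ref{constmodule} and~\ref{constalg} is the image under the symmetric monoidal functor $\HHH(-)$ of Proposition~\ref{hochschildhomologyfunctor} of the left $\RPerf^\otimes_{\HHC(\CCC)}$-module $\CCC$ of Proposition~\ref{smallleftmodule}, composed with the Morita identification $\HHH(\RPerf_{\HHC(\CCC)})\simeq\HHH(\HHC(\CCC))$ of Lemma~\ref{Moritainvert}. Thus $u$ is precisely the structure map of the $\HHC(\CCC)$-module structure on $\HHH(\CCC)$ obtained by restricting the $\HHH(\HHC(\CCC))$-action along $\epsilon$; since the assertion only concerns morphisms in $\Mod_R$, I would forget all circle actions from here on.

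Now take $\CCC=\RPerf_A$, so that $\DDD=\Ind(\CCC)=\RMod_A$ has $A$ as a single compact generator and $\HHC(\CCC)=\HHC(A)$ is the endomorphism algebra of $A$ regarded as an object of $\RMod_{A^e}$, with the tautological action exhibiting it as a morphism object (Corollary~\ref{centercor}). The counit $\RMod^\otimes_{\HHC(A)}\to\mathcal{E}\textup{nd}_R(\DDD)$ of Construction~\ref{constmodule} sends the unit $\HHC(A)$ to the identity functor $\textup{id}_{\RMod_A}$, i.e.\ to the $A$-$A$-bimodule $A$; being monoidal and colimit-preserving it is $M\mapsto M\otimes_{\HHC(A)}A$, and the induced action on $N\in\RMod_A$ is $N\mapsto N\otimes_A(M\otimes_{\HHC(A)}A)$, where $A$ is used as a module over $\HHC(A)\otimes A^e$. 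To compute $\HHH$ of this action I would pass to one-object spectral models: $\RPerf_A$ and $\RPerf_{\HHC(A)}$ are Morita equivalent to $B\mathbb{A}$ and $B\mathbb{E}$ for representatives $\mathbb{A},\mathbb{E}\in\Alg_{\assoc}(\SPS(\RR)^{c})$ of $A$ and $\HHC(A)$ (Example~\ref{spectralalgebra}), the module structure admits a spectral-category model realizing the left $\mathbb{E}$-action on $\mathbb{A}$ provided by Corollary~\ref{centercor}, and the Hochschild--Mitchell cyclic nerve of $B\mathbb{A}$ is $[p]\mapsto\mathbb{A}^{\wedge p+1}$ (Definition~\ref{Mitchell}). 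In this model the $\HHH(\HHC(A))$-action on $\HHH(A)$ is realized by the levelwise action of $\mathbb{E}$ on the distinguished $\mathbb{A}$-factor, so its restriction along the $0$-th level inclusion $\epsilon$ is the smash product of $\mathbb{E}$ with this cyclic nerve equipped with the same levelwise action; passing to $\Mod_R$, realizing, and identifying $A\otimes_{A^e}A$ with $\HHH(A)$ by Lemma~\ref{twosidedbar}, this is exactly the map $\HHC(A)\otimes(A\otimes_{A^e}A)\to A\otimes_{A^e}A$ induced by the morphism $\HHC(A)\otimes A\to A$ in $\RMod_{A^e}$, that is, $\sigma$.

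The main obstacle is this last identification: one must verify that the left action of $\HHC(A)$ on the bimodule $A=\textup{id}_{\RMod_A}$ occurring inside $\mathcal{E}\textup{nd}_R(\RMod_A)$ agrees, after transport through the symmetric monoidal equivalence $\theta$ of Section~\ref{homologysec} and the spectral-model replacements, with the morphism-object action of Corollary~\ref{centercor} that defines $\sigma$ --- in other words, the coherence between the counit of Construction~\ref{constmodule} and the center/morphism-object description of $\HHC(A)$. The remaining bookkeeping --- that $\epsilon$ is the $0$-simplex inclusion (via the geometric map $D\to C$ and Lemma~\ref{cofinalitylemma}), and that $\HHH(-)$ commutes with the module action and with restriction along $\epsilon$ --- is then a diagram chase using only the symmetric monoidality of the functors involved and the explicit form of $f_j$.
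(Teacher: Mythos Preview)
Your proposal is correct and follows essentially the same route as the paper's proof. The paper likewise passes to the operadic left Kan extension $h'\colon\dcylm\to\Mod_R^\otimes$ and factors $f_j$ as $(\langle2\rangle,D,C_M)\to(\langle2\rangle,C,C_M)\to(\langle1\rangle,C_M)$, identifies $h'(D)\to h'(C)$ with the $0$-th level inclusion $Z=\HHC(A)\to\HHH(Z)$ via Lemma~\ref{cofinalitylemma} and the cofinality of $\Delta^{\rm op}\to\Lambda_\infty^{\rm op}$, and then computes the composite in symmetric-spectrum models, arriving at $(\ZZZ\wedge\AB)\wedge_{\AB^e}B_\bullet(\AB,\AB,\AB)\to\AB\wedge_{\AB^e}B_\bullet(\AB,\AB,\AB)$, which is $\sigma$ by Lemma~\ref{twosidedbar}.

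The one place where the paper is more explicit than your sketch is precisely your ``main obstacle''. Rather than arguing an abstract coherence between the counit of Construction~\ref{constmodule} and the morphism-object action of Corollary~\ref{centercor}, the paper observes that on one-object spectral categories the $B\ZZZ$-module structure on $B\AB$ is encoded by an honest map of ring spectra $\bar\alpha\colon\ZZZ\wedge\AB\to\AB$ representing the algebra map $\alpha\colon\HHC(A)\otimes A\to A$ constructed before Lemma~\ref{centerlemma}; consequently the $(C,C_M)\to C_M$ action map is identified with $\HHH^\Delta(\ZZZ\wedge\AB)\to\HHH^\Delta(\AB)$ induced by $\bar\alpha$, and the composite with $\ZZZ_{cst}\to\HHH^\Delta(\ZZZ)$ is then literally termwise $\bar\alpha$ on one tensor factor. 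Your phrase ``levelwise action of $\mathbb{E}$ on the distinguished $\mathbb{A}$-factor'' is a slight imprecision---the full $\HHH(\ZZZ)$-action is factorwise, and it is only after restriction along $\epsilon$ that one obtains the action on a single factor---but this is exactly what the paper's computation makes precise.
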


\begin{Example}
If~$\CCC=\RPerf_A$ has a~Calabi--Yau structure of~dimension $d$,
then there is a~morphism $w\colon \Sigma^d R\to \HHH(A)$ in $\Mod_R$
(which we can think of~as an~analogue of~a global section of~a~volume form)
such that the composite $\Sigma^d \HHC(A)\simeq \Sigma^dR\otimes \HHH(A) \stackrel{w\otimes \textup{id}}{\longrightarrow} \HHC(A)\otimes \HHH(A)\stackrel{\sigma}{\to} \HHH(A)$ is an~equivalence. Here $\Sigma$ indicates the suspension.
It~follows from Theorem~\ref{maincontraction}
that $\Sigma^d \HHC(A)\to \HHH(A)$ induced by~$w$
and $u$ instead of~$\sigma$ also gives an~equivalence.
\end{Example}

\begin{Construction}We~set $\CCC=\RPerf_A$.
Consider a~morphism
\begin{gather*}
e\colon\ \Mod_R\to \mathcal{M}{\rm or}_R(\RMod_A,\RMod_A)
\end{gather*}
in $\PR_R$
which carries $R$ to the identity functor, cf.~Lemma~\ref{linearfunctorcategory}.
Applying the adjunction
\begin{gather*}
I\colon\ \Alg_{\assoc}(\Mod_R)\rightleftarrows \left(\PR_R\right)_{\Mod_R/}\ {\colon}\!E
\end{gather*}
(see Section~\ref{cohomologysec}) to the morphism $\Mod_R\to \mathcal{M}{\rm or}_R(\RMod_A,\RMod_A)$,
we have the morphisms in~$\left(\PR_R\right)_{\Mod_R/}$
\begin{gather*}
\RMod_{\HH^\bullet(A)\otimes A}\simeq \RMod_{\HH^\bullet(A)} \otimes_R \RMod_A
\\ \hphantom{\RMod_{\HH^\bullet(A)\otimes A}}
{}\to \mathcal{M}{\rm or}_R(\RMod_A,\RMod_A)\otimes_R\RMod_A \to \RMod_A,
\end{gather*}
where the right arrow is the canonical morphism,
and the middle arrow is induced by~the counit map $\RMod_{\HH^\bullet(\RMod_A)}\to \mathcal{M}{\rm or}_R(\RMod_A,\RMod_A)$ of~the adjunction.
Here, $\RMod_{\HH^\bullet(A)}$ is endowed with $p_{\HHC(A)}\colon \Mod_R\to \RMod_{\HH^\bullet(A)}$ which carries $R$ to $\HH^\bullet(A)$.
The morphisms from $\Mod_R$ are omitted from the notation.
Recall that $I\colon \Alg_{\assoc}(\Mod_R)\rightarrow \left(\PR_R\right)_{\Mod_R/}$ that
sends $A$ to $p_A\colon \Mod_R\to \RMod_A$ with $p_A(R)=A$ is fully faithful
so that the full subcategory of~$\left(\PR_R\right)_{\Mod_R/}$
spanned by~objects of~the form $p_A\colon \Mod_R\to \RMod_A$ is equivalent to~$\Alg_{\assoc}(\Mod_R)$.
Thus, the composite $\RMod_{\HH^\bullet(A)\otimes A} \to \RMod_A$
in $\left(\PR_R\right)_{\Mod_R/}$ gives rise to a~morphism of~associative algebras
\begin{gather*}
\alpha\colon\ \HH^\bullet(A)\otimes A\to A,
\end{gather*}
that is, a~morphism in $\Alg_{\assoc}(\Mod_R)$).
Since
\begin{gather*}
\RMod_A\simeq \Mod_R\otimes_R\RMod_A\stackrel{e\otimes\textup{id}}{\to} \mathcal{M}{\rm or}_R(\RMod_A,\RMod_A)\otimes_R\RMod_A\to \RMod_A
\end{gather*}
is naturally
equivalent to
the identity functor, we have
a homotopy from the composite $A\to \HH^\bullet(A)\otimes A\stackrel{\alpha}{\to} A$ to the identity morphism of~$A$, where $A\to \HH^\bullet(A)\otimes A$
is induced by~the morphism from
the unit algebra $R\to \HH^\bullet(A)$.
\end{Construction}

We~can make the following observation:

\begin{Lemma}\label{centerlemma}$\HH^\bullet(A)$ is
a center of~$A$. See {\rm \cite[Section~5.3.1]{HA}} or the proof~below for centers and centralizers.
\end{Lemma}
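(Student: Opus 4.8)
The plan is to identify $\HHC(A)$, together with the action $\alpha\colon\HHC(A)\otimes A\to A$ produced just above, with Lurie's center $\mathfrak{Z}(A)=\mathfrak{Z}(\mathrm{id}_A)$ of $A$ as an associative algebra in $\Mod_R$, in the sense of \cite[Section~5.3.1]{HA}. Recall that $\mathfrak{Z}(A)$ is the centralizer of the identity map $\mathrm{id}_A\colon A\to A$: it exists because $\Mod_R$ is presentable, and it is the terminal object of the $\infty$-category of pairs $(B,g)$ with $B\in\Alg_{\assoc}(\Mod_R)$ and $g\colon B\otimes A\to A$ a morphism of algebras whose restriction along the unit $A\simeq R\otimes A\to B\otimes A$ is homotopic to $\mathrm{id}_A$; moreover, by the computation of centralizers in \emph{loc.\ cit.}, it can be computed as the endomorphism object of $A$ regarded as the unit of the monoidal $\infty$-category ${}_A\operatorname{BMod}_A(\Mod_R)$ of $A$-$A$-bimodules (with the relative tensor product over $A$), the $\mathbf{E}_2$-structure being the canonical one on the endomorphisms of a unit object. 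The construction preceding the lemma produces exactly a pair of the first kind, namely $\bigl(\HHC(A),\alpha\bigr)$ — the homotopy from $A\to\HHC(A)\otimes A\stackrel{\alpha}{\to}A$ to $\mathrm{id}_A$ being part of the data — so there is a canonical comparison map $\HHC(A)\to\mathfrak{Z}(A)$ in $\Alg_{\etwo}(\Mod_R)$, and it remains to check it is an equivalence.

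First I would bring in Morita theory \cite[Section~4.8.4]{HA}: the monoidal $\infty$-category ${}_A\operatorname{BMod}_A(\Mod_R)$ is equivalent to $\operatorname{\mathcal{E}{\rm nd}}_R(\RMod_A)$, the $R$-linear colimit-preserving endofunctors of $\RMod_A$ under composition, via an equivalence carrying the bimodule $A$ — the unit object — to the identity functor, again the unit object; this is the $\RMod_A$-instance of the morphism object of Lemma~\ref{linearfunctorcategory}. Passing to endomorphism algebras of unit objects and unwinding the definition of $E$, this gives $\mathrm{End}_{{}_A\operatorname{BMod}_A(\Mod_R)}(A)\simeq E\bigl(\operatorname{\mathcal{E}{\rm nd}}_R(\RMod_A)\bigr)=\HHC(A)$ as $\mathbf{E}_2$-algebras, so $\mathfrak{Z}(A)$ and $\HHC(A)$ are abstractly identified.

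The remaining — and, I expect, only genuinely delicate — point is that this identification is compatible with the module structures on $A$ on both sides, hence agrees with the comparison map of the first paragraph rather than being an unrelated equivalence of Hochschild-cohomology spectra. Here I would argue that both the universal central action $\mathfrak{Z}(A)\otimes A\to A$ and the action $\alpha$ arise by the same recipe from the tautological left action of $\operatorname{\mathcal{E}{\rm nd}}_R(\RMod_A)$ on $\RMod_A$: $\alpha$ is extracted from the monoidal functor $\RMod_{\HHC(A)}^\otimes\to\operatorname{\mathcal{E}{\rm nd}}_R(\RMod_A)$ that is the counit of the adjunction $I\dashv E$ of Section~\ref{cohomologysec}, and Lurie's construction of the centralizer of $\mathrm{id}_A$ in \cite[Section~5.3.1]{HA} is precisely this, transported across the Morita equivalence. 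An alternative that avoids the comparison map is to show directly that $\bigl(\HHC(A),\alpha\bigr)$ is terminal, using that the universal property of $B\mapsto\RMod_B^\otimes$ (cf.~Section~\ref{cohomologysec}, \cite[Theorem~4.8.5.11]{HA}) identifies central actions of $B$ on $A$ with monoidal functors $\RMod_B^\otimes\to\operatorname{\mathcal{E}{\rm nd}}_R(\RMod_A)$, hence with algebra maps $B\to E(\operatorname{\mathcal{E}{\rm nd}}_R(\RMod_A))=\HHC(A)$. Either way, the whole difficulty lies in the bookkeeping that matches Lurie's definition of $\mathfrak{Z}(\mathrm{id}_A)$ with the adjunction-theoretic construction of $\HHC(A)$ and of $\alpha$; the rest is immediate from results already recalled.
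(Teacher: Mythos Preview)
Your ``alternative'' at the end is essentially what the paper does, but you misstate it, and the misstatement points to a genuine confusion between the $\eone$- and $\etwo$-levels of the adjunction. The universal property of the center is tested against arbitrary $B\in\Alg_{\assoc}(\Mod_R)$, i.e., $\eone$-algebras; for such $B$ there is no monoidal structure on $\RMod_B$, so ``monoidal functors $\RMod_B^\otimes\to\operatorname{\mathcal{E}{\rm nd}}_R(\RMod_A)$'' is not a meaningful phrase. What one actually uses is the $\eone$-level adjunction $I\colon\Alg_{\assoc}(\Mod_R)\rightleftarrows(\PR_R)_{\Mod_R/}\,{:}E$ from Section~\ref{cohomologysec}: full faithfulness of $I$ identifies $\Map_{\Alg_{\assoc}}(B\otimes A,A)$ with $\Map_{(\PR_R)_{\Mod_R/}}(\RMod_B\otimes_R\RMod_A,\RMod_A)$ (pointed at $A$), the internal-hom property of $\mathcal{M}{\rm or}_R(\RMod_A,\RMod_A)$ rewrites this as $\Map_{(\PR_R)_{\Mod_R/}}(\RMod_B,\mathcal{M}{\rm or}_R(\RMod_A,\RMod_A))$ (pointed at $\textup{id}$), and the counit of $(I,E)$ identifies the latter with $\Map_{\Alg_{\assoc}}(B,\HHC(A))$. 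Passing to the fiber over $\textup{id}_A\in\Map(A,A)$ on both sides yields $c(B,A)\simeq\Map(B,\HHC(A))$, and by construction this is induced by $(\HHC(A),\alpha)$. That is the paper's proof in its entirety: short, direct, and entirely at the $\eone$-level.

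Your main route via Morita theory and $\mathfrak{Z}(A)\simeq\End_{{}_A\!\operatorname{BMod}_A}(A)$ is not wrong, but it manufactures the very compatibility problem you then flag as ``genuinely delicate'' and do not resolve. Once you have an abstract equivalence $\HHC(A)\simeq\mathfrak{Z}(A)$ of $\etwo$-algebras, you still owe a proof that it intertwines $\alpha$ with the universal central action; your sketch ``both arise by the same recipe from the tautological action'' is exactly the content of the direct argument above, so you gain nothing by the detour. The cleanest fix is to drop the Morita step and run the direct verification of the representing property, taking care to stay at the pointed/$\eone$-level rather than the monoidal/$\etwo$-level.
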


\begin{proof}The statement of~this lemma is slightly imprecise.
Given $B\in \Alg_{\assoc}(\Mod_R)$,
we define $c(B,A)$ to be the fiber product
$\Map(B\otimes A,A)\times_{\Map(A,A)}\{\textup{id}_A\}$, where $\Map(-,-)$ means the mapping space in
$\Alg_{\assoc}(\Mod_R)$, and $\Map(B\otimes A,A)\to\Map(A,A)$
is induced by~the composition with $R\otimes_R A\to B\otimes_RA$.
The assignment $B\mapsto c(B,A)$ induces a~functor
$c(-,A)\colon \Alg_{\assoc}(\Mod_R)\to \SSS$.
A center of~$A$ is $Z\in \Alg_{\assoc}(\Mod_R)$ that
represents $c(-,A)$, that is, a~centralizer of~the identity morphism
$A\to A$ (cf.~\cite{HA}).
Through the equivalence $\Map(Z,Z)\simeq c(Z,A)$,
the identity of~$Z$ determines
$Z\otimes A\to A$ and a~homotopy/equivalence from the composite
$A\to Z\otimes A\to A$ to~$\textup{id}_A$.
Our lemma claims that $\HH^\bullet(A)$ together with $\alpha\colon \HH^\bullet(A)\otimes A\to A$ and the homotopy induces an~equivalence in $\SSS$:
\begin{gather*}
\theta_B\colon\ \Map(B,\HH^\bullet(A))\to \Map(B\otimes A,A)\times_{\Map(A,A)}\{\textup{id}_A\}
\end{gather*}
for any $B\in \Alg_{\assoc}(\Mod_R)$. Here,
$\Map(B,\HH^\bullet(A))\to \Map(B\otimes A,A)$ is the functor
which sends
$f\colon B\to \HH^\bullet(A)$ to the composite $B\otimes A\stackrel{f\otimes \textup{id}_A}{\to} \HH^\bullet(A)\otimes A\stackrel{\alpha}{\to} A$.

We~will prove our claim. Namely, we show that $\theta_B$ is an~equivalence.
For ease of~notation, we set $p_B\colon \Mod_R\to \RMod_B=\MMM_B$ and $p_A\colon \Mod_R\to \RMod_A=\MMM_A$.
Let $X=\Map_{\PR_R}(\MMM_B\otimes_R\MMM_A,\MMM_A)\times_{\MMM_A^{\simeq}}\{A\}$, where $A$ is the right $A$-module determined by~the multiplication of~$A$, and
$\Map_{\PR_R}(\MMM_B\otimes_R\MMM_A,\MMM_A)\to \Map_{\PR_R}(\Mod_R,\MMM_A)\simeq \MMM_A^{\simeq}$ is induced by~the composition
with~$\Mod_R=\Mod_R\otimes_R\Mod_R\stackrel{p_B\otimes p_A}{\to} \MMM_B\otimes_R\MMM_A$.
Similarly, we define $Y=\Map_{\PR_R}(\MMM_A,\MMM_A)\times_{\MMM_A^{\simeq}}\{A\}$.
Note that
\begin{gather*}
X\simeq \Map_{\left(\PR_R\right)_{\Mod_R/}}(\RMod_B\otimes_R\RMod_A,\RMod_A)\simeq \Map(B\otimes A,A).
\end{gather*}
Similarly, $Y\simeq \Map_{\left(\PR_R\right)_{\Mod_R/}}(\RMod_A,\RMod_A)\simeq \Map(A,A)$.
The morphism $\Map_{\PR_R}(\MMM_B\otimes_R\MMM_A,\MMM_A)\to \Map_{\PR_R}(\MMM_A,\MMM_A)$
given by~the composition with $\Mod_R\to \MMM_B$ determines $X\to Y$.
Let $\Delta^0\to Y$ be the map determined by~the identity functor $\MMM_A$
and the identity morphism of~$A$.
We~have a~canonical equivalence
\begin{gather*}
X\times_Y\Delta^0\simeq \Map(B\otimes A,A)\times_{\Map(A,A)}\{\textup{id}_A\}=c(B,A).
\end{gather*}
By the universal property of~$\mathcal{M}{\rm or}_R(\MMM_A,\MMM_A)$,
\begin{gather*}
\Map_{\PR_R}(\MMM_B,\mathcal{M}{\rm or}_R(\MMM_A,\MMM_A))\simeq \Map_{\PR_R}(\MMM_B\otimes_R\MMM_A,\MMM_A).
\end{gather*}
Using this equivalence, we deduce that
\begin{gather*}
X\times_Y\Delta^0\simeq \Map_{\PR_R}(\MMM_B,\mathcal{M}{\rm or}_R(\MMM_A,\MMM_A))\times_{\mathcal{M}{\rm or}_R(\MMM_A,\MMM_A)^{\simeq}}\{\textup{id}_{\MMM_A}\}.
\end{gather*}
Moreover, taking into account the universal property of~$\RMod_{\HH^\bullet(A)}\to \mathcal{M}{\rm or}_R(\MMM_A,\MMM_A)$ in $\left(\PR_R\right)_{\Mod_R/}$
(we omit $p_{\HH^\bullet(A)}\colon \Mod_R\to \RMod_{\HH^\bullet(A)}$
and $e\colon \Mod_R\to \mathcal{M}{\rm or}_R(\MMM_A,\MMM_A)$ from the notation),
we see that
the composition gives rise to an~equivalence
\begin{gather*}
\Map_{\PR_R}(\MMM_B,\RMod_{\HH^\bullet(A)})\times_{\RMod_{\HH^\bullet(A)}^{\simeq}}\{\HH^\bullet(A)\}
\\ \qquad
{}\simeq\Map_{\PR_R}(\MMM_B,\mathcal{M}{\rm or}_R(\MMM_A,\MMM_A))\times_{\mathcal{M}{\rm or}_R(\MMM_A,\MMM_A))^{\simeq}}
\{\textup{id}_{\MMM_A}\},
\end{gather*}
where the left hand side is naturally equivalent to $\Map(B,\HH^\bullet(A))$.
Unwinding the construction, we have the desired equivalence $\theta_B\colon \Map(B,\HH^\bullet(A)) \simeq c(B,A)$.
\end{proof}

\begin{Corollary}\label{centercor}
Let us regard $\HHC(A)\otimes A$ as a~right $A^e$-module induced by~that of~$A$ $($that is, $\HHC(A)\otimes (-)$ means the tensor product with $\HHC(A)\in \Mod_R)$ and regard
$\alpha\colon \HHC(A)\otimes A\to A$ as a~morphism of~right
$A^e$-moudles in the natural way.
Then the morphism $\alpha\colon \HHC(A)\otimes A\to A$
exhibits $\HHC(A)$ as a~morphism object from $A$ to itself in $\RMod_{A^e}$.
\end{Corollary}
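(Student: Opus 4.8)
The plan is to obtain the statement by transporting the defining universal property of $\HHC(A)$ along Morita theory. Recall from Section~\ref{cohomologysec} that $\HHC(A)=\HHC(\RMod_A)=E(\operatorname{\mathcal{E}{\rm nd}}_R(\RMod_A))$, where $\operatorname{\mathcal{E}{\rm nd}}_R^\otimes(\RMod_A)$ is the associative monoidal $\infty$-category $\operatorname{\mathcal{M}{\rm or}}_R(\RMod_A,\RMod_A)$ equipped with the composition product, whose unit is the identity functor $\mathrm{id}_{\RMod_A}$. Unwinding the adjunction $(I,E)$ between $\Alg_{\assoc}(\Mod_R)$ and $\Alg_{\assoc}(\PR_R)$ from Section~\ref{cohomologysec}, the object $\HHC(A)$ together with the counit morphism $\RMod_{\HHC(A)}^\otimes\to\operatorname{\mathcal{E}{\rm nd}}_R^\otimes(\RMod_A)$ exhibits $\HHC(A)$ as the morphism object from $\mathrm{id}_{\RMod_A}$ to itself in $\operatorname{\mathcal{E}{\rm nd}}_R(\RMod_A)$ regarded as left-tensored over $\Mod_R$; moreover the morphism $\alpha$ of the Construction preceding Lemma~\ref{centerlemma} is, by its very definition, the image of this counit datum after tensoring with $\RMod_A$ over $R$ and composing with the canonical action $\operatorname{\mathcal{M}{\rm or}}_R(\RMod_A,\RMod_A)\otimes_R\RMod_A\to\RMod_A$ --- that is, $\alpha$ is the $\Mod_R$-adjoint of the action $\HHC(A)\otimes_R\mathrm{id}_{\RMod_A}\to\mathrm{id}_{\RMod_A}$.

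Next I would invoke Morita theory for $R$-linear presentable stable $\infty$-categories (see \cite[Section~4.8]{HA} and \cite[Section~4]{BGT1}) to produce a $\Mod_R$-linear monoidal equivalence $\operatorname{\mathcal{E}{\rm nd}}_R^\otimes(\RMod_A)\simeq\RMod_{A^e}^\otimes$, where the right-hand side is the $\infty$-category of $A$-bimodules with the relative tensor product over $A$, whose unit is the diagonal bimodule $A$; under the Eilenberg--Watts correspondence a colimit-preserving endofunctor of $\RMod_A$ goes to the bimodule computing it, and $\mathrm{id}_{\RMod_A}$ goes to $A$. The morphism object from $A$ to itself in $\RMod_{A^e}$ exists because $(-)\otimes_R A\colon\Mod_R\to\RMod_{A^e}$ preserves small colimits, just as in Lemma~\ref{linearfunctorcategory}. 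Transporting the universal property recalled above across this equivalence then shows that $\HHC(A)$, together with the image of the action map, is the morphism object from $A$ to itself in $\RMod_{A^e}$; here $\HHC(A)\otimes_R A$ carries the right $A^e$-module structure obtained from the diagonal bimodule $A$ by tensoring over $R$ with $\HHC(A)\in\Mod_R$, which is precisely the structure named in the statement, and the image of the action map is a morphism $\HHC(A)\otimes_R A\to A$ of right $A^e$-modules.

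It then remains to identify this transported action map with $\alpha$ itself. This is a diagram chase: both morphisms are the images of the single datum $\mathrm{id}_{\HHC(A)}$, pushed through, on the one hand, the equivalence $\RMod_{\HHC(A)}\otimes_R\RMod_A\simeq\RMod_{\HHC(A)\otimes_R A}$ together with the identifications of Section~\ref{cohomologysec}, and, on the other, the Morita equivalence, so their agreement reduces to checking that these translations are mutually compatible as $\Mod_R$-linear functors and carry $\mathrm{id}_{\RMod_A}$, its evaluation map, and the $A^e$-actions to the matching bimodule data. I expect this last piece of bookkeeping to be the main obstacle, rather than anything conceptually deep. An alternative, slightly shorter route is to combine Lemma~\ref{centerlemma} with the identification, in \cite[Section~5.3.1]{HA}, of the center of $A$ with the endomorphism object of the diagonal $A$-bimodule; but even then one must still verify that $\alpha$ coincides with the associated evaluation map, so the same compatibility check reappears.
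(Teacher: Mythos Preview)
Your alternative route is exactly what the paper does: it invokes Lemma~\ref{centerlemma} to see that $\HHC(A)$ together with $\alpha$ (and the homotopy to the identity) is a center of $A$, and then cites \cite[Theorem~5.3.1.30]{HA}, which asserts that the structure map of a center, regarded as a morphism of right $A^e$-modules, exhibits the center as the morphism object from $A$ to itself. The compatibility check you flag as residual is in fact absorbed by this citation: Lemma~\ref{centerlemma} shows that the \emph{specific} map $\alpha$ is the center's structure map, and \cite[Theorem~5.3.1.30]{HA} is a statement about that very map, so no further identification is required. Passing from the algebra map $\alpha$ to a map of right $A^e$-modules ``in the natural way'' is just the observation that an algebra map under $A$ is in particular $A$-bilinear, which is what that theorem uses.

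Your main route via the Morita equivalence $\operatorname{\mathcal{E}{\rm nd}}_R(\RMod_A)\simeq\RMod_{A^e}$ is a legitimate and conceptually clean alternative: it transports the defining universal property of $\HHC(A)$ (as the endomorphism object of $\mathrm{id}_{\RMod_A}$) directly, rather than detouring through the notion of center. The cost, as you correctly anticipate, is the bookkeeping needed to match the transported evaluation map with the concretely constructed $\alpha$; this amounts to tracing the counit of the $(I,E)$ adjunction through the Eilenberg--Watts equivalence and checking it lands on the diagonal bimodule with its canonical evaluation. The paper's approach sidesteps this by packaging the same identification inside \cite[Theorem~5.3.1.30]{HA}, where it has already been carried out in the language of centralizers.
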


\begin{proof}By Lemma~\ref{centerlemma},
$\HHC(A)$ (endowed with $\alpha$ and an~identification $\sigma$
between
$A\to \HHC(A)$ $\otimes A\to A$ and the identity morphism) is a~center
of~$A$.
According to~\cite[Theorem~5.3.1.30]{HA}, the morphism
$\HHC(A)\otimes A\to A$ of~the right $A^e$ modules, that is obtained from
the center, exhibits $\HHC(A)$ as a~morphism object from $A$ to $A$.
Thus, our claim follows.
\end{proof}

We~describe the bar construction $P\otimes_{A^e}A$
by~means of~symmetric spectra.
Let $\RR$ be a~cofibrant commutative symmetric ring spectrum.
Let $\AB$ be a~cofibrant associative symmetric ring \mbox{$\RR$-module} spectrum
which represents $A$, cf.~Example~\ref{spectralalgebra}.
We~write $\wedge$ for the wedge/tensor pro\-duct $\wedge_{\RR}$ over $\RR$.
Let $B_\bullet(\AB,\AB,\AB)$ be a~simplicial diagram
$\Delta^{\rm op}\to \SPS(\RR)^c$
of~symmetric spectra (called the bar construction),
which is given by~$[p] \mapsto \AB\wedge \dots \wedge \AB=\AB\wedge \AB^{\wedge p}\wedge \AB$.
We~refer to~\mbox{\cite[Definition~4.1.8]{Shhoch}} for the explicit formula of~$B_\bullet(\AB,\AB,\AB)$.
The degeneracy maps $\AB^{\wedge p+2}\to \AB^{\wedge p+1}$
is induced by~the multiplication of~$\AB\wedge \AB\to \AB$,
and face maps $\AB^{\wedge p+2}\to \AB^{\wedge p+3}$
is induced by~the unit map $\RR\to \AB$.
Each term $\AB\wedge \AB^{\wedge p}\wedge \AB$ is a~free left $\AB^e:=\AB^{\rm op}\wedge \AB$-module generated by~$\AB^{\wedge p}=\RR\wedge \AB^{\wedge p}\wedge \RR$.
In~addition, $B_\bullet(\AB,\AB,\AB)$ can be thought of~as a~simplicial
diagram of~left $A^e$-modules.
The homotopy colimit of~$B_\bullet(\AB,\AB,\AB)$ is naturally equivalent
to $\AB$ with respect to stable equivalences \cite[Lemma~4.1.9]{Shhoch}
so that the colimit of~the induced diagram in $\LMod_{A^e}$ is~$A$.
Let $\PP$ be a~right $\AB^e$-module which is cofibrant as an~$\RR$-module.
Let $\PP\wedge_{\AB^e} B_\bullet(\AB,\AB,\AB)$
be a~simplicial diagram induced by~$B_\bullet(\AB,\AB,\AB)$
which carries $[p]$
to $\PP\wedge_{\AB^e} (\AB\wedge \AB^{\wedge p}\wedge \AB) \simeq \PP\wedge \AB^{\wedge p}$.
Consider the composition
$\Alg_{\assoc}\left(\SPS(\RR)^{c}\right)\stackrel{B(-)}{\longrightarrow} \CAT_{\RR}^{\rm pc}\stackrel{\HH(-)_\bullet}{\longrightarrow} \Fun\left(\Lambda^{\rm op},\SPS(\RR)^c\right)\to \Fun\left(\Delta^{\rm op},\SPS(\RR)^c\right)$, where the final morphism is determined by~the restriction $\Delta^{\rm op}\subset \Lambda^{\rm op}$.
We~write $\HH_\bullet^{\Delta}(-)$ for the composite.
Note that $\HH_\bullet^{\Delta}(\AB)$ gives rise to a~simplicial
diagram in $\Mod_R$ whose colimit is $\HH_\bullet(A)$.
The standard computation shows that
$\AB \wedge_{\AB^e} B_\bullet(\AB,\AB,\AB)$ can be identified with~$\HH_\bullet^{\Delta}(\AB)$.

\begin{Lemma}
\label{twosidedbar}
Let $\PP$ be a~right $\AB^e$-module symmetric spectrum which is cofibrant as an~$\RR$-module.
We~write $P$ for the image of~$\PP$ in $\RMod_{A^e}$.
Then $P\otimes_{A^e}A$ can be identified with
a colimit of~the simplicial
diagram induced by~$\PP\wedge_{\AB^e} B_\bullet(\AB,\AB,\AB)$.
In~particular, $\HHH(A)$ can be identified with $A\otimes_{A^e}A$ in $\Mod_R$.
\end{Lemma}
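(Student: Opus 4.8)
The plan is to compute $P\otimes_{A^e}A$ by resolving the left $\AB^e$-module $A$ (i.e.\ the $A$-bimodule $A$) by the standard bar resolution $B_\bullet(\AB,\AB,\AB)$ and invoking that $(-)\otimes_{A^e}(-)$ commutes with the resulting geometric realization in the module variable. First I would recall from \cite[Section~4.4]{HA} that the functor $P\otimes_{A^e}(-)\colon\LMod_{A^e}\to\Mod_R$ preserves geometric realizations (it is itself assembled from a two-sided bar construction, and more generally preserves all colimits in the module variable). Thus it suffices to exhibit the image of $B_\bullet(\AB,\AB,\AB)$ in $\LMod_{A^e}$ as a simplicial object with colimit $A$, and to identify $P\otimes_{A^e}(-)$ on its terms.

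For the first point, the augmented simplicial symmetric spectrum $B_\bullet(\AB,\AB,\AB)\to\AB$ admits an extra degeneracy as a simplicial right $\AB$-module, hence is a split augmented simplicial object, so its geometric realization in $\Mod_R$ is $A$ (this is the content of \cite[Lemma~4.1.9]{Shhoch} recalled before the statement). The degeneracy and face maps of $B_\bullet(\AB,\AB,\AB)$ are bimodule maps (multiplications in the interior, units), so this is genuinely a simplicial object of $\LMod_{A^e}$; since the forgetful functor $\LMod_{A^e}\to\Mod_R$ is conservative and creates geometric realizations (the tensor product of $\Mod_R$ preserving them), the induced simplicial object in $\LMod_{A^e}$ has colimit $A$ as well.

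For the second point, each term $B_p(\AB,\AB,\AB)=\AB\wedge\AB^{\wedge p}\wedge\AB$ is by construction the free left $\AB^e$-module on the $\RR$-module $\AB^{\wedge p}$, and $\AB^{\wedge p}$ is cofibrant over $\RR$ because $\AB$ is; hence its image in $\LMod_{A^e}$ is the free $A^e$-module on the image of $\AB^{\wedge p}$ (the model-categorical free-module adjunction rectifies the $\infty$-categorical one). Consequently $P\otimes_{A^e}B_p(\AB,\AB,\AB)\simeq P\otimes_R(\text{image of }\AB^{\wedge p})$, which is modeled by the canonical isomorphism of symmetric spectra $\PP\wedge_{\AB^e}B_p(\AB,\AB,\AB)\cong\PP\wedge\AB^{\wedge p}$, using that $\PP$ is cofibrant over $\RR$ so the wedge is already derived. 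These identifications are natural in $[p]$, so passing to the colimit over $\Delta^{\rm op}$ identifies $P\otimes_{A^e}A$ with the colimit of the simplicial object induced by $\PP\wedge_{\AB^e}B_\bullet(\AB,\AB,\AB)$, as claimed. For the displayed ``in particular'' assertion I would take $\PP=\AB$ with the bimodule structure from its multiplication; then $\AB\wedge_{\AB^e}B_\bullet(\AB,\AB,\AB)\cong\HH^{\Delta}_\bullet(\AB)$ by the standard computation recalled just before the lemma, and $\colim_{\Delta^{\rm op}}\HH^{\Delta}_\bullet(\AB)\simeq\HHH(A)$ by the construction of Hochschild homology (Definitions~\ref{Mitchell} and~\ref{homologydef}), whence $\HHH(A)\simeq A\otimes_{A^e}A$ in $\Mod_R$.

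The main obstacle will be the bookkeeping in the second point: carefully matching the model-categorical data in $\SPS(\RR)$ — the bar construction, free $\AB^e$-modules, and the smash products $\wedge_{\RR}$ — with the $\infty$-categorical relative tensor product, free modules, and colimits. In particular one must check that $B_p(\AB,\AB,\AB)$ is cofibrant enough (free on a cofibrant $\RR$-module, hence cofibrant as an $\AB^e$-module) for $\PP\wedge_{\AB^e}(-)$ to compute the derived tensor, and that all these identifications are compatible with the simplicial structure maps so that they survive passage to the colimit.
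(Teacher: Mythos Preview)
Your proposal is correct and follows essentially the same approach as the paper's proof: use that the relative tensor product $(-)\otimes_{A^e}(-)$ preserves colimits in each variable, that the bar resolution $B_\bullet(\AB,\AB,\AB)$ has colimit $A$ in $\LMod_{A^e}$, and that each term $\PP\wedge_{\AB^e}(\AB\wedge\AB^{\wedge p}\wedge\AB)\simeq\PP\wedge\AB^{\wedge p}$ computes $P\otimes_{A^e}(A\otimes A^{\otimes p}\otimes A)\simeq P\otimes A^{\otimes p}$. The paper's proof is a terse two-sentence version of exactly this; your additional care with the extra-degeneracy argument, cofibrancy conditions, and the model-categorical/$\infty$-categorical rectification is just filling in the details the paper leaves implicit.
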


\begin{proof}Note that the two-sided bar construction preserves colimits in each variable.
Moreover, the colimit of~$B_\bullet(\AB,\AB,\AB)$
is $A$ after passing to $\LMod_{A^e}$,
and each $\PP\wedge_{\AB^e} \big(\AB\wedge \AB^{\wedge p}\wedge \AB\big)\simeq \PP\wedge \AB^{\wedge p}$ computes
$P\otimes A^{\otimes p}\simeq P\otimes_{A^e}\big(A \otimes A^{\otimes p}\otimes A\big)$.
Therefore, lemma follows.
\end{proof}

\begin{proof}[Proof~of~Theorem~\ref{maincontraction}]
We~use the notation in the discussion about the setting of~Theorem~\ref{maincontraction}.
We~first consider an~operadic left Kan extension
$h'\colon \dcylm \to \Mod_R^\otimes$ of~$h\colon \KS\to \Mod_R^\otimes$ over $\Gamma$
along $\KS\hookrightarrow \dcylm$, cf.~Proposition~\ref{another2}.
The open embedding $j\colon (0,1)^2\sqcup (0,1)\times S^1\stackrel{j_1\sqcup j_2}{\longrightarrow} (0,1)\times S^1$
factors as the composition of~two open embeddings
$(0,1)^2\sqcup (0,1)\times S^1\stackrel{d\sqcup \textup{id}}{\longrightarrow} (0,1)\times S^1\sqcup (0,1)\times S^1\stackrel{r \sqcup j_2}{\to} (0,1)\times S^1$.
See Definition~\ref{cylmdef} for the notation and convention.
The embedding $d\colon (0,1)^2\to (0,1)\times S^1$
and $r\colon (0,1)\times S^1\to (0,1)\times S^1$
are recti-linear embeddings such that the composite $r\circ d$ is $j_1$
(of~course, the image of~$r$ does not intersect with that of~the shrinking embedding $j_2$). Thus, $f_j$ factors as $(\langle 2\rangle,D,C_M)\to (\langle 2\rangle,C,C_M)\to (\langle 1\rangle, C_M)$ in $\dcylm$, that lie over $\langle 2\rangle=\langle2\rangle \stackrel{\rho}{\to}\langle 1\rangle$.
It~follows that $h(D)\otimes h(C_M)\to h(C_M)$ induced by~$f_j$
factors as
\begin{gather*}
h(D)\otimes h(C_M)\simeq h'(D)\otimes h'(C_M)\to h'(C)\otimes h'(C_M)\to h'(C_M)\simeq h(C_M),
\end{gather*}
where $h'(D)\to h'(C)$ and $h'(C)\otimes h'(C_M)\to h'(C_M)$
are induced by~$d$ and $r\sqcup j_2$, respectively.

Next, we describe $h'(D)\to h'(C)$ in an~explicit way.
If~$\HHC(A)$ is encoded by~the map of~an $\infty$-operads
$\etwo^\otimes \to \Mod_R^\otimes$,
we let
$i_!(\HHC(A))\colon \dcyl\to \Mod_R^\otimes$ be its operadic left Kan extension
along $i\colon \etwo^\otimes \to \dcyl$.
Let $i_!(\HHC(A))((\langle 1\rangle,D))\to i_!(\HHC(A))((\langle1\rangle,C))$
be a~morphism in $\Mod_R$, that is
determined by~$d$. For simplicity,
we set $Z:=\HHC(A)$ and we write $i_!(Z)(D)\to i_!(Z)(C)$ for
this morphism.
If~we think of~$Z$ as the underlying $\eone$-algebra
given by~the composite $\eone^\otimes \to \etwo^\otimes\to \Mod_R^\otimes$,
we denote by~$l_!(Z)\colon \mathbf{Mfld}_1\to\Mod_R^\otimes$ its ope\-ra\-dic left Kan extension along
$l\colon \eone^\otimes \hookrightarrow \mathbf{Mfld}_1$.
A rectilinear embedding $(0,1)\to S^1$ gives rise to
$l_!(Z)((0,1))\to l_!(Z)\big(S^1\big)$ in $\Mod_R$ (we abuse notation as above).
More explicitly, $l_!(Z)((0,1))\to l_!(Z)\big(S^1\big)$
is $Z=\tilde{Z}((0,1)) \to \colim_{U\to S^1\in (\Disk_1)_{/S^1}} \tilde{Z}(U)$,
where $Z$ means the underlying $R$-module,
and $\tilde{Z}\colon \Disk_1\to \Mod_R$ is the underlying functor of~the
symmetric monoidal functor $\tilde{Z}\colon \Disk_1^\otimes\to \Mod_R^\otimes$
corresponding to the composite
$\eone^\otimes \to \etwo^\otimes\to \Mod_R^\otimes$.
Since
the forgetful functor $\Alg_{\eone}(\Mod_R)\to \Mod_R$
preserves colimits over the sifted category $(\Disk_1)_{/S^1}$,
it follows from Lemma~\ref{cofinalitylemma}
that
$i_!(Z)(D)\to i_!(Z)(C)$ is naturally equivalent to
$Z=l_!(Z)((0,1))\to l_!(Z)\big(S^1\big)=\colim_{(\Disk_1)_{/S^1}}\tilde{Z}(U)$
as a~morphism in $\Mod_R$.
Therefore, we may identify $h'(D)\to h'(C)$ with $Z=l_!(Z)((0,1))\to l_!(Z)(S^1)=\colim_{(\Disk_1)_{/S^1}}\tilde{Z}(U)$.
Henceforth,
we regard $Z$ as an~associative algebra in $\Mod_R$,
which is given by~a symmetric monoidal functor $\widetilde{\assoc}^\otimes\to \Mod^\otimes_R$.
The map $\xi\colon \Lambda^{\rm op}\to \widetilde{\assoc}^{\otimes}$ (see the discussion
proceeding to Lemma~\ref{observationmap})
and $\Delta^{\rm op}\hookrightarrow \Lambda^{\rm op}_\infty\to \Lambda^{\rm op}$
induces
\begin{gather*}
\Alg_{\assoc}(\Mod_R) \simeq \Fun^\otimes\big(\widetilde{\assoc}^\otimes,\Mod_R^\otimes\big)\to \Fun\big(\Lambda^{\rm op},\Mod_R\big)
\\ \hphantom{\Alg_{\assoc}(\Mod_R)}
{}\to \Fun\big(\Lambda_\infty^{\rm op},\Mod_R\big) \to \Fun\big(\Delta^{\rm op},\Mod_R\big).
\end{gather*}
It~follows from Lemma~\ref{redcolimit}
that
the image of~$Z$ in $\Fun(\Lambda_\infty^{\rm op},\Mod_R)$
is the composite $\Lambda_\infty^{\rm op}\simeq \big(\Diskd_1\big)_{/S^1}\to \Disk_1\to \Mod_R$
whose colimit is naturally equivalent to $l_!(Z)\big(S^1\big)$.
Moreover, from the cofinality of~$\Delta^{\rm op}\to \Lambda^{\rm op}_\infty$,
$l_!(Z)\big(S^1\big)$
is naturally equivalent to the
colimit of~$c\colon \Delta^{\rm op}\to \Lambda_\infty^{\rm op}\to \Mod_R$.
Taking into account Remark~\ref{globaldiagram}, $Z\to l_!(Z)\big(S^1\big)$ can be identified with
$Z=c([0])\to \colim_{[p]\in \Delta^{\rm op}}c([p])$.

Next let us consider $h'(C)\otimes h'(C_M)\to h'(C_M)$.
According to Construction~\ref{constalg},
its under\-lying morphism in $\Mod_R$
can naturally be
identified with
$\HHH(Z)\otimes \HHH(A)\simeq \HHH(Z\otimes A)\to \HHH(A)$
induced by~$\alpha\colon Z\otimes A\to A$.
Let $\ZZZ$ and $\AB$ be cofibrant and fibrant associative ring symmetric $\RR$-module
spectra that represent $Z$ and $A$, respectively
(namely, they are objects in~$\Alg_{\assoc}\!\big(\SPS\big(\RR\big)\big)$
which are both cofibrant and fibrant with respect to
the projective model structure).
Let $\bar{\alpha}\colon \ZZZ\wedge \AB\to \AB$ be a~morphism in $\Alg_{\assoc}\!\left(\SPS(\RR)^c\right)$
which represents $\alpha$.
The com\-posite $\AB \simeq \RR\wedge \AB\to \ZZZ\wedge \AB\to \AB$
induced by~$\RR\to \ZZZ$ is equivalent to the identity morphism of~$\AB$.
Let $\HHH^\Delta(\ZZZ)\wedge \HHH^\Delta(\AB)$ denote
the bisimplicial diagram induced by~the wedge product
and $\big(\HHH^\Delta(\ZZZ)\wedge \HHH^\Delta(\AB)\big)^{\rm diag}$
the associated diagonal simplicial diagram.
The morphism $\bar{\alpha}$ induces the following morphism of~simplicial diagrams
\begin{gather*}
\big(\HHH^\Delta(\ZZZ)\wedge \HHH^\Delta(\AB)\big)^{\rm diag} \simeq \HHH^\Delta(\ZZZ\wedge \AB)\to \HHH^\Delta(\AB)
\end{gather*}
whose colimit is equivalent to
$\HHH(Z)\otimes \HHH(A)\simeq \HHH(Z\otimes A)\to \HHH(A)$
(notice that $\Delta^{\rm op}$ is sifted).

Note that
$l_!(Z)\big(S^1\big)= \colim_{[p]\in \Delta^{\rm op}}c([p])\simeq \HHH(Z)$.
If~$\ZZZ_{cst}$ denotes the constant simplicial diagram
taking the value $\ZZZ$,
$\{[0]\}\to \Delta^{\rm op}$ induces $\ZZZ_{cst}\to\HHH^\Delta(\ZZZ)$
whose colimit is $Z=c([0])\to \colim_{[p]\in \Delta^{\rm op}}c([p])\simeq \HHH(Z)$.
Consider $\ZZZ_{cst}\wedge \HHH^\Delta(\AB) \to \big(\HHH^\Delta(\ZZZ)\wedge \HHH^\Delta(\AB)\big)^{\rm diag} \to \HHH^\Delta(\AB)$.
Their colimits give $Z\otimes \HHH(A)\to \HHH(Z)\otimes \HHH(A)\to \HHH(A)$,
which is equivalent to $h'(D)\otimes h'(C_M)\to h'(C)\otimes h'(C_M)\to h'(C_M)$.
Observe that the composition $\ZZZ_{cst}\wedge \HHH^\Delta(\AB)=(\ZZZ\wedge \AB)\wedge_{\AB^e}B_\bullet(\AB,\AB,\AB)\to\AB\wedge_{\AB^e}B_\bullet(\AB,\AB,\AB)=\HHH^\Delta(\AB)$
is induced by~$\bar{\alpha}\colon \ZZZ\wedge \AB\to \AB$.
The contraction morphism
$(Z\otimes A)\otimes_{A^e}A\to A\otimes_{A^e}A$
is obtained from
$\ZZZ\wedge (\AB\wedge_{\AB^e}B_\bullet(\AB,\AB,\AB))=(\ZZZ\wedge \AB)\wedge_{\AB^e}B_\bullet(\AB,\AB,\AB)\to\AB\wedge_{\AB^e}B_\bullet(\AB,\AB,\AB)$ by~taking colimits. This completes the proof.
\end{proof}

{\bf 8.2.} Let $k$ be a~field.
We~suppose that $R$ is the Eilenberg--MacLane spectrum of~$k$.
We~write $k$ for $R$.
In~this context, we will give a~concrete model of~the contraction morphism
\begin{gather*}
\sigma\colon\ \HHC(A)\otimes \HH_\bullet(A)\to \HH_\bullet(A)
\end{gather*}
as a~morphism of~chain complexes of~$k$-vector spaces.
Let $\Comp^\otimes(k)$ be the symmetric monoidal
category of~chain complexes of~$k$-vector
spaces, whose tensor product is given by~the standard tensor
product of~chain complexes.
There is a~symmetric monoidal (projective) model structure
on $\Comp(k)$
such that a~morphism is a~weak equivalence (resp.\ a fibration)
if it is a~quasi-isomorphism (resp.\ a termwise surjective),
see, e.g., \cite[Proposition~7.1.2.11]{HA}.
Since~$k$ is a~field, every object is
both cofibrant and fibrant.
Let $\Comp(k)\big[W^{-1}\big]^\otimes$ be the symmetric monoidal $\infty$-category
obtained by~inverting quasi-isomorphisms. We~fix a~symmetric monoidal equivalence $\Mod_k^\otimes\simeq \Comp(k)\big[W^{-1}\big]^\otimes$,
see \cite[Theorem~7.1.2.13]{HA}.
Thus it gives rise to $\Alg_{\assoc}(\Comp(k))\to \Alg_{\assoc}\big(\Comp(k)\big[W^{-1}\big]\big)\simeq \Alg_{\assoc}(\Mod_k)$.
We~here regard $\Alg_{\assoc}(\Comp(k))$ as (the nerve of) the category of~differential graded $k$-algebras in an~obvious way.
Let $\AADG$ be an~associative (cofibrant) differential graded
$k$-algebra, i.e., an~object of~$\Alg_{\assoc}(\Comp(k))$
that represents $A\in \Alg_{\assoc}(\Mod_k)$.
Let $\AADG^e:=\AADG^{\rm op}\otimes \AADG$.
The natural functor $\Comp(k)^\otimes\to \Comp(k)\big[W^{-1}\big]^\otimes \simeq \Mod_k^\otimes$ induces
\begin{gather*}
\RMod_{\AADG^e}(\Comp(k)) \times \LMod_{\AADG^e}(\Comp(k))\to \RMod_{A^e}(\Mod_k)\times \LMod_{A^e}(\Mod_k)\to \Mod_k,
\end{gather*}
where the right functor is informally given by~the bar construction
(i.e., the relative tensor product) $(-)\otimes_{A^e}(-)$.
We~describe the contraction morphism by~using explicit resolutions.
Let $B^{\rm dg}_\bullet(\AADG):=B^{\rm dg}_\bullet(\AADG,\AADG,\AADG)$
be the right $\AADG^e$-module associated to
the total complex of~the simplicial diagram of~$\AADG^{e}$-modules $[p]\mapsto \AADG\otimes \AADG^{\otimes p}\otimes \AADG$ (defined as in $B_\bullet(\AB,\AB,\AB)$).
The asso\-ci\-a\-ted total complex $B^{\rm dg}_\bullet(\AADG)$ computes a~homotopy
colimit of~the simplicial
diagram, and there is a~canonical morphism of~right $\AADG^e$-modules
$B^{\rm dg}_\bullet(\AADG)\to \AADG$ which is a~quasi-isomorphism.
Let $\Hom_{\AADG^e}\big(B^{\rm dg}_\bullet(\AADG),B^{\rm dg}_\bullet(\AADG)\big)$
be the hom chain complex of~right $\AADG^e$-modules.
By Lemma~\ref{barcofibrant} below,
$\Hom_{\AADG^e}\big(B^{\rm dg}_\bullet(\AADG),B^{\rm dg}_\bullet(\AADG)\big)$ represents/computes the (derived) hom complex from
$\AADG$ to $\AADG$ in~$\RMod_{\AADG^e}(\Comp(k))$.
Let us consider an~evaluation morphism of~right $\AADG^e$-modules
\begin{gather*}
\textup{Ev}\colon\ \Hom_{\AADG^e}(B^{\rm dg}_\bullet(\AADG),B^{\rm dg}_\bullet(\AADG))\otimes B^{\rm dg}_\bullet(\AADG)\to B^{\rm dg}_\bullet(\AADG)
\end{gather*}
defined in the obvious way,
where $\Hom_{\AADG^e}\big(B^{\rm dg}_\bullet(\AADG),B^{\rm dg}_\bullet(\AADG)\big)\otimes B^{\rm dg}_\bullet(\AADG)$ comes equipped with the right $\AADG^e$-module structure
induced by~$B^{\rm dg}_\bullet(\AADG)$.
Let $\RMod_{\AADG^e}(\Comp(k))\big[W^{-1}\big]$ denote the \mbox{$\infty$-category}
obtained by~inverting quasi-isomorphisms (after restricting to
cofibrant objects).
By the universal property of~the morphism $\big(\Hom_{\AADG^e}\big(B^{\rm dg}_\bullet(\AADG),B^{\rm dg}_\bullet(\AADG)\big),\textup{Ev}\big)$
and the equivalence
\begin{gather*}
\RMod_{\AADG^e}(\Comp(k))\big[W^{-1}\big]\simeq \RMod_{A^e}(\Mod_k)
\end{gather*}
\cite[Theorem~4.3.3.17]{HA} together with Lemma~\ref{barcofibrant},
it represents a~morphism object from $A$ to $A$
in $\RMod_{A^e}$. Note that
$B^{\rm dg}_\bullet(\AADG)\otimes_{\AADG^e}\AADG$ is the chain complex associated to the simplicial chain complex given by~$[p] \mapsto \big(\AADG\otimes \AADG^{\otimes p}\otimes \AADG\big)\otimes_{\AADG^e}\AADG\simeq \AADG^{\otimes p}\otimes \AADG$ so that the associated complex
is a~model of~$A\otimes_{A^e}A=\HHH(A)$. That is,
its image in $\Mod_k$ is naturally equivalent
to $B^{\rm dg}_\bullet(\AADG)\otimes_{\AADG^e}B^{\rm dg}_\bullet(\AADG)\stackrel{\sim}{\rightarrow} \AADG\otimes_{\AADG^e}B^{\rm dg}_\bullet(\AADG)=\HHH(A)$.
By the Morita theory (cf., e.g., \cite[Section~4]{BFN}),
$\mathcal{M}{\rm or}_k(\RMod_A,\RMod_A)\simeq \RMod_{A^e}$,
where the identity functor amounts to $A\in \RMod_{A^e}$ having the diagonal
module structure. Consequently,
$\Hom_{\AADG^e}\big(B^{\rm dg}_\bullet(\AADG),B^{\rm dg}_\bullet(\AADG)\big)\in \Comp(k)$
is a~model of~$\HHC(A)\in \Mod_k$.
Moreover, $B^{\rm dg}_\bullet(\AADG)\to \AADG$ induces
a quasi-isomorphism
$\Hom_{\AADG^e}\big(B^{\rm dg}_\bullet(\AADG),B^{\rm dg}_\bullet(\AADG)\big)\to
\Hom_{\AADG^e}\big(B^{\rm dg}_\bullet(\AADG),\AADG\big)$
(perhaps, the latter is more common Hoch\-schild cochain complex).
We~conclude:

\begin{Proposition}\label{explicitdg}
The above evaluation morphism induces
\begin{gather*}
\Hom_{\AADG^e}\big(B^{\rm dg}_\bullet(\AADG),B^{\rm dg}_\bullet(\AADG)\big)\otimes B^{\rm dg}_\bullet(\AADG)\otimes_{\AADG^e}\AADG\to B^{\rm dg}_\bullet(\AADG)\otimes_{\AADG^e}\AADG,
\end{gather*}
which is equivalent to the contraction morphism
$\sigma\colon \HHC(A)\otimes \HHH(A)\to \HHH(A)$.
In~par\-ti\-cu\-lar, it is one of~(explicit) models of~$u$ $($cf.~Theorem~{\rm \ref{maincontraction})}.
\end{Proposition}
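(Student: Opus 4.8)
The plan is to deduce the statement from Theorem~\ref{maincontraction}: that result already identifies the operadically defined action morphism $u$ with the contraction morphism $\sigma\colon\HHC(A)\otimes\HHH(A)\to\HHH(A)$, so it suffices to check that the displayed evaluation morphism of chain complexes is one presentation of $\sigma$ at the level of $\Mod_k$. Accordingly the work is entirely in recognizing the strict dg gadget $\big(\Hom_{\AADG^e}(B^{\rm dg}_\bullet(\AADG),B^{\rm dg}_\bullet(\AADG)),\textup{Ev}\big)$, tensored with $\AADG$ over $\AADG^e$, as the image under the rectification equivalence of the $\infty$-categorical data defining $\sigma$.

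First I would record the homotopical properties of the resolutions. By Lemma~\ref{barcofibrant}, the augmentation $B^{\rm dg}_\bullet(\AADG)\to\AADG$ is a cofibrant replacement of $\AADG$ in the projective model structure on $\RMod_{\AADG^e}(\Comp(k))$, and each term of $B^{\rm dg}_\bullet(\AADG)$ is free over $\AADG^e$; hence the strict hom complex $\Hom_{\AADG^e}(B^{\rm dg}_\bullet(\AADG),B^{\rm dg}_\bullet(\AADG))$ computes the derived hom, and the functor $(-)\otimes_{\AADG^e}B^{\rm dg}_\bullet(\AADG)$ computes the derived tensor product. Passing to $\infty$-categories via the rectification equivalence $\RMod_{\AADG^e}(\Comp(k))\big[W^{-1}\big]\simeq\RMod_{A^e}(\Mod_k)$ of \cite[Theorem~4.3.3.17]{HA}, I would argue that $\Hom_{\AADG^e}(B^{\rm dg}_\bullet(\AADG),B^{\rm dg}_\bullet(\AADG))$ together with $\textup{Ev}$ satisfies the universal property of a morphism object from $A$ to itself in $\RMod_{A^e}$, i.e.\ it corresponds under rectification to the object of Lemma~\ref{linearfunctorcategory}/Corollary~\ref{centercor}: the strict evaluation of cofibrant--fibrant dg modules models the counit of the internal-hom adjunction. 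Combined with the Morita identification $\mathcal{M}{\rm or}_k(\RMod_A,\RMod_A)\simeq\RMod_{A^e}$ sending the identity functor to the diagonal bimodule $A$ (cf.~\cite[Section~4]{BFN}), this shows that $\Hom_{\AADG^e}(B^{\rm dg}_\bullet(\AADG),B^{\rm dg}_\bullet(\AADG))$ presents $\HHC(A)$ and that $\textup{Ev}$ presents the canonical morphism $\HHC(A)\otimes A\to A$ in $\RMod_{A^e}$ entering the definition of $\sigma$.

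Next I would apply the derived functor $(-)\otimes_{A^e}A$ to $\textup{Ev}$. By the flatness of $B^{\rm dg}_\bullet(\AADG)$ this is modeled strictly by $\textup{Ev}\otimes_{\AADG^e}\AADG$, and $B^{\rm dg}_\bullet(\AADG)\otimes_{\AADG^e}\AADG\stackrel{\sim}{\to}\AADG\otimes_{\AADG^e}\AADG$ is a model of $\HHH(A)=A\otimes_{A^e}A$ by the identification used in Lemma~\ref{twosidedbar}. Since $\HHC(A)$ lies in $\Mod_k$ and acts on $B^{\rm dg}_\bullet(\AADG)$ through the outer tensor factor, $(-)\otimes_{A^e}A$ commutes with $\HHC(A)\otimes(-)$, so applying it to $\HHC(A)\otimes A\to A$ produces exactly $\HHC(A)\otimes(A\otimes_{A^e}A)\to A\otimes_{A^e}A$, which is $\sigma$. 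Hence the displayed map $\Hom_{\AADG^e}(B^{\rm dg}_\bullet(\AADG),B^{\rm dg}_\bullet(\AADG))\otimes B^{\rm dg}_\bullet(\AADG)\otimes_{\AADG^e}\AADG\to B^{\rm dg}_\bullet(\AADG)\otimes_{\AADG^e}\AADG$ is equivalent to $\sigma$ in $\Mod_k$, and the last sentence of the proposition follows from Theorem~\ref{maincontraction}, which gives $u\simeq\sigma$.

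The hard part will be the recognition step in the second paragraph: showing that the \emph{strict} evaluation morphism $\textup{Ev}$ of dg modules corresponds, under the rectification of \cite[Theorem~4.3.3.17]{HA}, to the $\infty$-categorical counit that exhibits the morphism object in $\RMod_{A^e}$. This amounts to checking that the derived hom-complex functor on $\RMod_{\AADG^e}(\Comp(k))$ refines to the internal hom of the $\infty$-category $\RMod_{A^e}$ and that $\textup{Ev}$ is the derived evaluation; concretely one should present $\Hom_{\AADG^e}(B^{\rm dg}_\bullet(\AADG),B^{\rm dg}_\bullet(\AADG))$ as a homotopy end and verify the universal property used in Lemma~\ref{linearfunctorcategory} and Corollary~\ref{centercor} (via \cite[Corollary~4.7.1.40]{HA}). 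The remaining ingredients — cofibrancy of $B^{\rm dg}_\bullet(\AADG)$, exactness of the relevant tensor functors, and the bookkeeping identifying the outer $\otimes$ with $\HHC(A)\otimes(-)$ — are routine once these dg models are in place.
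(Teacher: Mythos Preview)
Your proposal is correct and follows essentially the same route as the paper: the argument there is the discussion immediately preceding the proposition (the statement is given as a conclusion, with no separate proof environment), and it uses exactly the ingredients you list---Lemma~\ref{barcofibrant} for cofibrancy of $B^{\rm dg}_\bullet(\AADG)$, the rectification equivalence $\RMod_{\AADG^e}(\Comp(k))[W^{-1}]\simeq\RMod_{A^e}(\Mod_k)$, the universal property identifying $(\Hom_{\AADG^e}(B^{\rm dg}_\bullet(\AADG),B^{\rm dg}_\bullet(\AADG)),\textup{Ev})$ with the morphism object, the Morita identification $\mathcal{M}{\rm or}_k(\RMod_A,\RMod_A)\simeq\RMod_{A^e}$, and finally Theorem~\ref{maincontraction}. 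Your ``hard part'' is precisely what the paper asserts from the universal property without further elaboration, so your level of detail already matches or exceeds the paper's.
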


\begin{Lemma}\label{barcofibrant}
Let us consider $\RMod_{\AADG^e}(\Comp(k))$
to be a~category endowed with $\Comp(k)$-enriched projective
model structure, where a~morphism is a~weak equivalence
$($resp.\ a fibration$)$ if it is a~quasi-isomorphism $($resp.\ a termwise surjective$)$, see, e.g., {\rm \cite[Theorem~3.3]{Six}}.
Then $B^{\rm dg}_\bullet(\AADG)$ is cofibrant
with respect to this model structure.
\end{Lemma}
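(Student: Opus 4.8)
The plan is to realise $B^{\mathrm{dg}}_\bullet(\AADG)$ as a \emph{semifree} right $\AADG^e$-module: I will produce an exhaustive increasing filtration by subcomplexes $0=\mathsf{F}_{-1}\subseteq\mathsf{F}_0\subseteq\mathsf{F}_1\subseteq\cdots$ each inclusion of which is a cofibration, so that $0\to B^{\mathrm{dg}}_\bullet(\AADG)=\colim_n\mathsf{F}_n$ is a (countable) composite of cofibrations, hence a cofibration. The filtration is the one by simplicial degree of the underlying simplicial $\AADG^e$-module $[p]\mapsto\AADG\otimes\AADG^{\otimes p}\otimes\AADG$: concretely, writing the totalisation as $B^{\mathrm{dg}}_\bullet(\AADG)=\bigoplus_{p\ge0}(\AADG\otimes\AADG^{\otimes p}\otimes\AADG)[p]$ with total differential given by the alternating sum of the face maps plus the internal differential of $\AADG$, I set $\mathsf{F}_n=\bigoplus_{p\le n}(\AADG\otimes\AADG^{\otimes p}\otimes\AADG)[p]$. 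This is a subcomplex because the face maps strictly lower $p$ while the internal differential $d_\AADG$ preserves $p$, and $\colim_n\mathsf{F}_n=B^{\mathrm{dg}}_\bullet(\AADG)$ since the totalisation is a direct sum. (If the \emph{normalised} total complex is intended instead, the argument below is identical with $\AADG^{\otimes n}$ replaced by $\overline{\AADG}^{\otimes n}$, a complex of $k$-vector spaces, which changes nothing.)

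First I would record the relevant cofibrations in $\RMod_{\AADG^e}(\Comp(k))$ with its $\Comp(k)$-enriched projective model structure. The free functor $(-)\otimes\AADG^e\colon\Comp(k)\to\RMod_{\AADG^e}(\Comp(k))$ is left Quillen; since $k$ is a field every complex of $k$-vector spaces is cofibrant, so $V\otimes\AADG^e$ is a cofibrant $\AADG^e$-module for every $V$, and $(V'\to V)\otimes\AADG^e$ is a cofibration of $\AADG^e$-modules whenever $V'\to V$ is a cofibration of complexes. In particular, writing $S^{m-1}\hookrightarrow D^m$ for the standard generating cofibration of $\Comp(k)$ (the sphere complex concentrated in degree $m-1$ included into the contractible disk complex supported in degrees $m,m-1$) and using that $\Comp(k)$ is a monoidal model category, the map $(\AADG^{\otimes n}\otimes S^{m-1})\otimes\AADG^e\to(\AADG^{\otimes n}\otimes D^m)\otimes\AADG^e$ is a cofibration of $\AADG^e$-modules for all $n,m$.

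The heart of the argument is the claim that for each $n\ge0$ the inclusion $\mathsf{F}_{n-1}\hookrightarrow\mathsf{F}_n$ fits into a pushout square of right $\AADG^e$-modules
\begin{gather*}
\xymatrix{
(\AADG^{\otimes n}\otimes S^{n-1})\otimes\AADG^e \ar[r]\ar[d] & \mathsf{F}_{n-1}\ar[d] \\
(\AADG^{\otimes n}\otimes D^n)\otimes\AADG^e \ar[r] & \mathsf{F}_n,
}
\end{gather*}
in which the left-hand vertical map is the cofibration of the previous paragraph, and the top map is the free module on $\AADG^{\otimes n}$, placed in degree $n-1$, mapping into $\mathsf{F}_{n-1}$ by the alternating sum $\sum_i(-1)^i d_i$ of the face maps of the bar construction (this is $\AADG^e$-linear and a chain map, since $d_\AADG$ commutes with each $d_i$: $d_\AADG$ is a derivation, the $d_i$ are built from the algebra multiplication and the unit, and $d_\AADG(1)=0$). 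Granting the square, $\mathsf{F}_{n-1}\hookrightarrow\mathsf{F}_n$ is a pushout of a cofibration, hence a cofibration; since $\mathsf{F}_{-1}=0$ is cofibrant, the composite $0\to B^{\mathrm{dg}}_\bullet(\AADG)$ is a transfinite (here sequential) composite of cofibrations, hence a cofibration, which is the assertion of the lemma.

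The main obstacle is establishing this pushout square, which is the differential graded incarnation of the classical fact that a simplicial object is assembled from its skeleta by coning off ``cells''. The only point specific to the dg setting — and precisely what makes the standard simplicial bookkeeping carry over verbatim — is that the total differential decomposes into a part (the faces) strictly lowering simplicial degree and a part ($d_\AADG$) preserving it and fixing the unit $1\in\AADG$; one then checks directly that $\mathsf{F}_n/\mathsf{F}_{n-1}\cong(\AADG^{\otimes n}\otimes\AADG^e)[n]$ with the induced differential equal to the internal one alone, and that this degreewise‑split extension of complexes of $\AADG^e$-modules is classified exactly by the attaching chain map $\sum_i(-1)^i d_i$, so that it is the pushout displayed above. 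The remaining ingredients — the pushout–product axiom and left properness of $\Comp(k)$, that $(-)\otimes\AADG^e$ is left Quillen, and closure of cofibrations under transfinite composition — are standard and require no input beyond the model structure cited in the statement.
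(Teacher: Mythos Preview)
Your argument is correct and follows the same strategy as the paper: exhibit $B^{\mathrm{dg}}_\bullet(\AADG)$ as semifree via the simplicial-degree filtration, so that each successive inclusion is obtained by attaching a free $\AADG^e$-module cell and hence is a cofibration. The paper states the filtration with associated graded $\AADG\otimes\overline{\AADG}^{\otimes p+1}\otimes\AADG$ (the normalised variant you mention parenthetically) and then, rather than writing down the pushout squares, invokes the criterion of \cite[Definition~9.17, Theorem~9.20]{Six} to conclude cofibrancy in the $r$-model structure, finishing by noting that over a field the $r$-model and projective model structures coincide; your version simply does this cell-attachment step by hand in the projective model structure, which is a little more self-contained but amounts to the same thing.
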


\begin{proof}
Since $B^{\rm dg}_\bullet(\AADG)$ is obtained from the simplicial
chain complex $[p]\mapsto \AADG\otimes \AADG^{\otimes p}\otimes \AADG$,
there is an~increasing filtration of~$\AADG^e$-submodules
$0=F_{-1}\big(B^{\rm dg}_\bullet(\AADG)\big)\hookrightarrow F_{0}\big(B^{\rm dg}_\bullet(\AADG)\big)\hookrightarrow F_{1}\big(B^{\rm dg}_\bullet(\AADG)\big)\hookrightarrow \cdots$
such that $\cup_{p\ge0} F_p\big(B^{\rm dg}_\bullet(\AADG)\big)=B^{\rm dg}_\bullet(\AADG)$.
The quotient $F_{p+1}\big(B^{\rm dg}_\bullet(\AADG)\big)/F_p\big(B^{\rm dg}_\bullet(\AADG)\big)$
is isomorphic to~$\AADG\otimes \overline{\AADG}^{\otimes p+1}\otimes \AADG$
as a~right $\AADG^e$-module, where $\overline{\AADG}$ is the cokernel of~a unit morphism $k\to \AADG$. That is,
$F_{p+1}(B^{\rm dg}_\bullet(\AADG))/F_p\big(B^{\rm dg}_\bullet(\AADG)\big)$
is a~free right $\AADG^e$-module generated by~$\overline{\AADG}^{\otimes p+1}\in \Comp(k)$.
By~\mbox{\cite[Definition 9.17, Theorem~9.20]{Six}}, the existence of~this
filtration implies that $B^{\rm dg}_\bullet(\AADG)$
is cofibrant with respect to the $r$-model structure in~\cite[Section~4]{Six}.
We~deduce from the assumption that $k$ is a~field
that the projective model structure coincides with this $r$-model
structure (the~projective model structure is the same as the $q$-model
structure in~\cite[Section~3]{Six}).
This completes the proof.
\end{proof}

\begin{Remark}
\label{otheraction}
There are other operations between
$\HHC(A)\otimes \HHH(A)$ and $\HHH(A)$,
which is induced by~the $\KS$-algebra structure
on $(\HHC(A),\HHC(A))$.
We~continue to work with the coefficient field $R=k$.
We~further assume that $k$ is of~characteristic zero.
Observe first that the Kan complex
$\mul_{\textup{KS}}(\{D,C_M\},C_M)$
is equivalent to the product of~the circles $S^1\times S^1$
in $\SSS$, where $S^1\times S^1$ is regarded as an~object of~$\SSS$.
The left factor $S^1$ is homotopy equivalent to
the space of~configuration of~one point on $(0,1)\times S^1$,
which we regard as the space of~rectilinear embeddings
from $(0,1)^2$ into $(0,1)\times S^1$.
The right factor $S^1$ can be identified with
the mapping space from~$C_M$ to itself, that is,
the space of~shrinking embeddings $(0,1)\times S^1\to (0,1)\times S^1$.
The map
of~$\infty$-operads $h\colon \KS\to \Mod_k^\otimes$ encoding
the $\KS$-algebra $(\HH^\bullet(A),\HH_\bullet(A))$
in Theorem~\ref{mainconstruction}
induces morphisms $h_{(\{D,C_M\},C_M)}\colon \mul_{\textup{KS}}(\{D,C_M\},C_M)\to \Map_{\Mod_k}(\HHC(A)\otimes \HHH(A),\HHH(A))$
and $h_{(\{C_M\},C_M)}\colon \mul_{\textup{KS}}(\{C_M\},C_M)\to \Map_{\Mod_k}(\HHH(A),\HHH(A))$ in $\SSS$.
To discuss operations induced by~these morphisms,
we adopt the differential graded setting (we~simplify the problem).
By taking the singular chain complex of~the simplicial sets
$\mul_{\textup{KS}}(-,-)$ with coefficients in $k$,
we obtain the differential graded (dg)
operad $C_\bullet(\textup{KS})$ from $\textup{KS}$,
i.e., an~operad in the symmetric monoidal category $\Comp(k)$.
We~refer to~\cite{Hin} for the relation between
dg operads and $\infty$-operads.
By a~rectification result of~Hinich \cite[Theorem~4.1.1]{Hin},
there is a~canonical equivalence between $\Alg_{\KS}(\Mod_k)$
and the $\infty$-category
of~$C_\bullet(\textup{KS})$-algebras in $\Comp(k)$ in the ``conventional" sense
so that the
$\KS$-algebra $(\HHC(A),\HHH(A))$ gives rise to
a (an essentially unique) $C_\bullet(\textup{KS})$-algebra
in $\Comp(k)$
in the ``conventional'' sense: see \cite[Section~2.2.4]{Hin} (we here use
the assumption of~characteristic zero).
We~denote this $C_\bullet(\textup{KS})$-algebra by~$(HH^\bullet(A),HH_\bullet(A))$.
Then
the $C_\bullet(\textup{KS})$-algebra $(HH^\bullet(A),HH_\bullet(A))$
induces morphisms into hom chain complexes:
\begin{gather*}
h_{(\{D,C_M\},C_M)}^{\rm dg}\colon\ C_\bullet(\mul_{\textup{KS}}(\{D,C_M\},C_M))\to \Hom_k(HH^\bullet(A)\otimes HH_\bullet(A),HH_\bullet(A))
\end{gather*}
and
\begin{gather*}
h_{(\{C_M\},C_M)}^{\rm dg}\colon\ C_\bullet(\mul_{\textup{KS}}(\{C_M\},C_M))\to \Hom_k(HH_\bullet(A),HH_\bullet(A))
\end{gather*} in $\Comp(k)\big[W^{-1}\big]$.
Given a~simplicial set $S$,
we write $H_*(S)$ for the homology $H_*(C_\bullet(S))$ of~the singular chain complex $C_\bullet(S)$ of~$S$
with coefficients in $k$.
Passing to homology, we obtain a~morphism of~graded $k$-vector spaces
\begin{gather*}
H_*\big(h_{(\{D,C_M\},C_M)}^{\rm dg}\big)\colon\ H_*(\mul_{\textup{KS}}(\{D,C_M\},C_M))
\\ \hphantom{H_*\big(h_{(\{D,C_M\},C_M)}^{\rm dg}\big)\colon}{} \
 \to H_*(\Hom_k(HH^\bullet(A)\otimes HH_\bullet(A),HH_\bullet(A)))
\end{gather*}
from $h_{(\{D,C_M\},C_M)}^{\rm dg}$.
Note that by~definition the left-hand side is connective.
This map can also be obtained
by~taking homology of~$C_\bullet(h_{(\{D,C_M\},C_M)})$ up to isomorphisms.
Since the mapping space $\mul_{\textup{KS}}(\{D,C_M\},C_M)$ is homotopy equivalent to $S^1\times S^1$,
it follows that
\begin{gather*}
H_0(\mul_{\textup{KS}}(\{D,C_M\},C_M))\simeq k,
\\
H_1(\mul_{\textup{KS}}(\{D,C_M\},C_M))\simeq k\oplus k,
\\
H_2(\mul_{\textup{KS}}(\{D,C_M\},C_M))\simeq k,
\end{gather*}
and the other parts are zero.

Let $[0,1]\subset \RRR$ be the closed interval and let
$\phi\colon [0,1]\times S^1=[0,1]\times \RRR/\ZZ \to \RRR/\ZZ=S^1$
be the continuous map given by~$(t,x\ \textup{mod}\ \ZZ) \mapsto x+t\ \textup{mod}\ \ZZ$. We~think of~$\phi$ as a~homotopy from the identity map $S^1\to S^1$
to itself. The product $\operatorname{id}_{(0,1)}\times \phi$
determines a~homotopy $\overline{\phi}\colon [0,1]\times (0,1)\times S^1\to (0,1)\times S^1$
from the identity map of~$(0,1)\times S^1$ to itself.
Let $r_1:[0,1]\times (0,1)^2\stackrel{\textup{id}\times j_1}\to [0,1]\times (0,1)\times S^1\stackrel{\overline{\phi}}{\to}(0,1)\times S^1$
be the homotopy from $j_1$ to $j_1$, see discussion before Theorem~\ref{maincontraction} for the maps $j$, $j_1$, and $j_2$.
Let $r_2\colon [0,1]\times (0,1)\times S^1 \stackrel{\textup{pr}}{\to} (0,1)\times S^1\stackrel{j_2}{\to} (0,1)\times S^1$ be the trivial
homotopy from $j_2$ and $j_2$.
Define
\begin{gather*}
l:=r_1\sqcup r_2\colon\ [0,1]\times \big((0,1)^2\sqcup (0,1)\times S^1\big) \to (0,1)\times S^1.
\end{gather*}
We~think of~$l$ as a~homotopy from $j$ to $j$.
If~we regard $\overline{\phi}$ as a~homotopy from the identity to itself
in $\mul_{\textup{KS}}(\{C_M\}, C_M)$,
it determines an~element $e_B$ of~$H_1(\mul_{\textup{KS}}(\{C_M\}, C_M))$.
If~we think of~$l$ as a~homotopy from $f_j$ to itself
in $\mul_{\textup{KS}}(\{D,C_M\},C_M)$,
it determines an~ele\-ment $e_{L}$ of~$H_1(\mul_{\textup{KS}}(\{D,C_M\},C_M))$.
The $C_\bullet(\textup{KS})$-algebra $(HH^\bullet(A),HH_\bullet(A))$
(or equivalently the $\KS$-algebra $(\HHC(A),\HHH(A))$
in Theorem~\ref{mainconstruction}) defines $L\in H_1(\Hom_k(HH^\bullet(A)\otimes_k HH_\bullet(A),HH_\bullet(A)))$ for $e_L$:
$L$ is defined to be the image of~$e_L$ under $H_*\big(h_{(\{D,C_M\},C_M)}^{\rm dg}\big)$.
Similarly, $e_B$ determines an~element
$B$ in $H_1(\Hom_k(HH_\bullet(A),HH_\bullet(A)))$.

The morphism $f_j$ arising from
$j=j_1\sqcup j_2$
is a~generator of~the $0$-th homology group.
The image of~$f_j$ under $h_{(\{D,C_M\},C_M)}^{\rm dg}$
can be identified with $u\colon \HHC(A)\otimes \HHH(A)\to \HHH(A)$ which is equivalent to the contraction morphism, cf.~Theorem~\ref{maincontraction}.
By a~comparison result of~Hoyois \mbox{\cite[Theorem~2.3]{Hoy}},
$B$ in $H_1(\Hom_k(HH^\bullet(A),HH_\bullet(A)))$
may be viewed as the Connes operator:
if $\AADG$ is an~associative
(cofibrant) differential graded algebra which represents $A$
through $\Alg_{\assoc}(\Mod_k^\otimes)\simeq \Alg_{\assoc}(\Comp(k)[W^{-1}])$,
the classical Connes operator
$b\colon B_\bullet^{\rm dg}(\AADG)\otimes_{\AADG^e}\AADG\to B_\bullet^{\rm dg}(\AADG)\otimes_{\AADG^e}\AADG$ (see, e.g.,~\cite{Hoy, L})
is equivalent to $B$, up to the multiplication by~$\pm1$,
through $B_\bullet^{\rm dg}(\AADG)\otimes_{\AADG^e}\AADG\simeq \HHH(A)\simeq HH_\bullet(A)$.

The homotopy $l$ and the composition
$\overline{\phi}\circ (\textup{id}_{[0,1]}{\times} j)\colon [0,1]{\times} \big((0,1)^2\sqcup(0,1){\times} S^1\big)\to (0,1){\times} S^1$ generate the $k$-vector space
$k\oplus k\simeq H_1(\mul_{\textup{KS}}(\{D,C_M\},C_M))$.
Let $\mathbf{1}$ denote the ele\-ment of~$H_0(\Hom_{k}(\HHC(A),\HHC(A)))$ that corresponds to the identity morphism.
By~relations of~homotopies, we see that
\begin{gather*}
L=B\circ u +u\circ (\mathbf{1}\otimes (-B))=B\circ u -u\circ (\mathbf{1}\otimes B)
\end{gather*}
in $H_1(\Hom_k(HH^\bullet(A)\otimes HH_\bullet(A),HH_\bullet(A)))$.
Here ``$\circ$'' indicates the composition. This relation is
known as Cartan homotopy/magic formula.
In~the dg setting over $k$, the shifted complex
$HH^\bullet(A)[1]$ inherits the structure of~an $L_\infty$-algebra,
i.e., an~algebra over a~(cofibrant) Lie operad in $\Comp(k)$,
from the $\etwo$-algebra structure on $\HHC(A)\simeq HH^\bullet(A)$.
The morphism
\begin{gather*}
L\colon\ HH^\bullet(A)[1]\otimes HH_\bullet(A)\to HH_\bullet(A)
\end{gather*}
induced by~$L$ appears as the Lie algebra
action morphism on $HH_\bullet(A)$
(see, e.g.,~\cite{DTT}).
Since $u$ and $B$ can be explicitly described ($B$ is equivalent to Connes' operator), thus $L$ also has an~explicit presentation.
Finally, $H_2(\mul_{\textup{KS}}(\{D,C_M\},C_M))$ is generated by~$B\circ u\circ (\mathbf{1}\otimes B)$.
\end{Remark}

\section{Equivariant context}\label{equivariantsec}

Our construction in Theorem~\ref{mainconstruction} can easily be
generalized
to an~equivariant setting:
Let $G$ be a~group object in $\SSS$
and $BG\in \SSS$ the classifying space.
Let $\CCC\in \ST_R$, that is, a~small stable $R$-lienar idempotent-complete
$\infty$-category.
Suppose that $G$ acts on $\CCC$, i.e., an~left action on $\CCC$.
Namely, $\CCC$ is an~object of~$\Fun(BG,\ST_R)$
whose image under the forgetful functor $\Fun(BG,\ST_R)\to \ST_R$ is $\CCC$.
In~this setting, we have

\begin{Theorem}
\label{mainequivariant}
The pair $(\HHC(\CCC),\HHH(\CCC))$ of~Hoch\-schild cohomology and homology $R$-module spectra has the structure of~a~$\KS$-algebra in $\Fun(BG,\Mod_R)$.
In~other words, $(\HHC(\CCC),\HHH(\CCC))$ is promoted to $\Alg_{\KS}(\Fun(BG,\Mod_R))$.\end{Theorem}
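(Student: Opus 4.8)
The plan is to repeat Construction~\ref{constalg} verbatim with $\Mod_R^\otimes$ replaced by $\Fun(BG,\Mod_R)^\otimes$, equipped with its pointwise symmetric monoidal structure. First note that $\Fun(BG,\Mod_R)$ is presentable, its colimits are computed pointwise, and the tensor product preserves small colimits separately in each variable; the same holds for $\Fun\big(BS^1,\Fun(BG,\Mod_R)\big)\simeq \Fun\big(BS^1\times BG,\Mod_R\big)$. Hence Corollary~\ref{algebraequivalence} applies with $\MM^\otimes=\Fun(BG,\Mod_R)^\otimes$ and gives
\begin{gather*}
\Alg_{\KS}(\Fun(BG,\Mod_R))\simeq \Alg_{\etwo}(\Fun(BG,\Mod_R))\times_{\Alg_{\eone}(\Fun(BS^1\times BG,\Mod_R))}\LMod\big(\Fun\big(BS^1\times BG,\Mod_R\big)\big).
\end{gather*}
Arguing exactly as in the proof of Lemma~\ref{eonesequiv} (comparison of the universal properties defining the pointwise structure), for any $\infty$-operad $\OO$ and any symmetric monoidal $\infty$-category $\NNN^\otimes$ one has $\Alg_{\OO}\big(\Fun(BG,\NNN)^\otimes\big)\simeq \Fun\big(BG,\Alg_{\OO}(\NNN^\otimes)\big)$, and likewise $\LMod\big(\Fun(BG,\NNN)^\otimes\big)\simeq \Fun\big(BG,\LMod(\NNN^\otimes)\big)$. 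Since $\Fun(BG,-)$ preserves the fiber product, the right-hand side above is identified with $\Fun\big(BG,\Alg_{\KS}(\Mod_R)\big)$. Thus a $\KS$-algebra in $\Fun(BG,\Mod_R)$ is the same datum as a functor $BG\to \Alg_{\KS}(\Mod_R)$, and it suffices to produce one restricting to the given $G$-action on $(\HHC(\CCC),\HHH(\CCC))$.

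Next I would carry out Constructions~\ref{constmodule}, \ref{constalg} and Proposition~\ref{smallleftmodule} over $\Fun(BG,\Mod_R)$. The input $\CCC\in \Fun(BG,\ST_R)$ becomes, under $\Fun(BG,\Ind)$, an object $\DDD=\Ind(\CCC)\in \Fun(BG,\PR_R)$; applying $\Fun(BG,-)$ to the symmetric monoidal adjunction $I\colon \Alg_{\etwo}(\Mod_R)\rightleftarrows \Alg_{\assoc}\big(\PR_R\big)\,{\colon}E$ of Section~\ref{cohomologysec} produces the $G$-equivariant Hochschild cohomology $\HHC(\CCC):=E(\mathcal{E}\textup{nd}_R(\DDD))\in \Alg_{\etwo}(\Fun(BG,\Mod_R))$ together with the counit $\RMod^\otimes_{\HHC(\CCC)}\to \mathcal{E}\textup{nd}_R^\otimes(\DDD)$, all in $\Fun(BG,-)$. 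As in Construction~\ref{constmodule} this exhibits $\DDD$ as a left $\RMod^\otimes_{\HHC(\CCC)}$-module in $\Fun(BG,\PR_R)$, and the proof of Proposition~\ref{smallleftmodule} (which only uses that the action functor preserves shifts, separate colimits, and retracts, hence compact objects) goes through pointwise to give $\CCC\in \LMod_{\RPerf^\otimes_{\HHC(\CCC)}}(\Fun(BG,\ST_R))$. Post-composing with the symmetric monoidal Hochschild homology functor $\HHH(-)\colon \ST_R^\otimes\to \Fun(BS^1,\Mod_R)^\otimes$ of Proposition~\ref{hochschildhomologyfunctor}, which after $\Fun(BG,-)$ remains symmetric monoidal for the pointwise structures, yields $\HHH(\CCC)$ as a left $\HHH(\RPerf_{\HHC(\CCC)})$-module object in $\Fun(BS^1\times BG,\Mod_R)$, with $\HHH(\RPerf_{\HHC(\CCC)})\in \Alg_{\assoc}\big(\Fun\big(BS^1\times BG,\Mod_R\big)\big)$ carrying the induced $G$-action.

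Finally I would glue this left-module datum to the $\etwo$-algebra $\HHC(\CCC)$ exactly as in Construction~\ref{constalg}. Both $A\mapsto \HHH(A)$ and $A\mapsto i_!(A)_C$ are functors $\Alg_{\etwo}(\Mod_R)\to \Alg_{\assoc}\big(\Fun\big(BS^1,\Mod_R\big)\big)$, and the equivalence $\HHH(A)\simeq i_!(A)_C$ of Proposition~\ref{identificationprop} is natural in $A$ (being assembled from universal properties of operadic left Kan extensions and cofinality statements); likewise the Morita identification $\HHH(A)\simeq \HHH(\RPerf_{\HHC(\CCC)})$ of Lemma~\ref{Moritainvert} is natural. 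Evaluating $\Fun(BG,-)$ of these on the $G$-equivariant $A=\HHC(\CCC)$ gives $\HHH(\HHC(\CCC))\simeq i_!(\HHC(\CCC))_C\simeq \HHH(\RPerf_{\HHC(\CCC)})$ in $\Alg_{\eone}\big(\Fun\big(BS^1\times BG,\Mod_R\big)\big)$. The triple consisting of $\HHC(\CCC)\in\Alg_{\etwo}(\Fun(BG,\Mod_R))$, the left $\HHH(\HHC(\CCC))$-module $\HHH(\CCC)$, and this identification then defines an object of the fiber product of the first paragraph, hence the desired $\KS$-algebra in $\Fun(BG,\Mod_R)$; by construction its image under the forgetful functor to $\Alg_{\etwo}(\Fun(BG,\Mod_R))\times \Fun(BS^1\times BG,\Mod_R)$ is $(\HHC(\CCC),\HHH(\CCC))$ with its $G$-action. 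The main obstacle is precisely this last naturality point: one must verify that the Hochschild homology functor built from spectral categories in Section~\ref{homologysec} and the geometric identification of Proposition~\ref{identificationprop} are genuinely functorial in the source category (equivalently, natural transformations of functors, which since $BG$ is a space need only be checked on equivalences), so that they can be post-composed with the classifying map $BG$; this holds because each was produced as a composite of functors and natural equivalences rather than pointwise.
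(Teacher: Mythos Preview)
Your proposal is correct and follows essentially the same route as the paper's Construction~\ref{equivariantconstruction}: apply $\Fun(BG,-)$ to each ingredient of Constructions~\ref{constmodule}--\ref{constalg} and Proposition~\ref{smallleftmodule}, and then assemble via Corollary~\ref{algebraequivalence} with $\MM^\otimes=\Fun(BG,\Mod_R)^\otimes$. The one noteworthy difference is in how the identification $\HHH(A)\simeq i_!(A)_C$ is upgraded to the $G$-equivariant setting: the paper simply replaces $\Mod_R$ by $\Fun(BG,\Mod_R)$ and re-runs the proof of Proposition~\ref{identificationprop} in that symmetric monoidal $\infty$-category (which is legitimate since the argument only uses presentability and that the tensor product preserves colimits in each variable), whereas you instead invoke naturality of the equivalence in $A$ and then apply $\Fun(BG,-)$. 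Both work, but the paper's version sidesteps the need to verify functoriality of the chain of equivalences in Proposition~\ref{identificationprop}; your version has the advantage of making explicit the equivalence $\Alg_{\KS}(\Fun(BG,\Mod_R))\simeq \Fun(BG,\Alg_{\KS}(\Mod_R))$, which the paper leaves implicit.
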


\begin{Remark}\label{equivrem}
The forgetful functor $\Alg_{\KS}(\Fun(BG,\Mod_R))\to \Alg_{\KS}(\Mod_R)$
sends the $\KS$-algebra in Theorem~\ref{mainequivariant}
to a~$\KS$-algebra equivalent to the $\KS$-algebra constructed in Theorem~\ref{mainconstruction}.
\end{Remark}

Theorem~\ref{mainequivariant} follows from the following:

\begin{Construction}\label{equivariantconstruction}
The construction is almost the same as that of~Theorem~\ref{mainconstruction}. Thus, we highlight necessary modifications.

(i) Let $\DDD:=\Ind(\CCC)$ be the Ind-category which is an~$R$-linear
compactly generated
$\infty$-category. In~particular, $\DDD$ belongs to $\PR_R$.
Since $\CCC\mapsto \Ind(\CCC)$ is functorial,
the left action of~$G$ on $\CCC$ induces a~left action on $\DDD$.
Namely, $\DDD$ is promoted to $\Fun\big(BG,\PR_R\big)$.
The functor category $\Fun\big(BG,\PR_R\big)$
inherits a~(pointwise) symmetric monoidal structure from that
of~$\PR_R$. Let $\mathcal{M}{\rm or}_R^G(\DDD,\DDD)$ be
an internal hom object in the symmetric monoidal $\infty$-category $\Fun\big(BG,\PR_R\big)$. This is explicitly described as follows:
The internal hom object
$\mathcal{M}{\rm or}_R(\DDD,\DDD)$ (Lemma~\ref{linearfunctorcategory}) in $\PR_R$
has the left action of~$G^{\rm op}\times G$ induced by~the functoriality of~the internal hom object and the action of~$G$ on $\DDD$ (here $G^{\rm op}$ denotes
the opposite group). The homomorphism $G\to G^{\rm op}\times G$ informally given by~$g\mapsto \big(g^{-1},g\big)$ determines a~left action of~$G$ on $\mathcal{M}{\rm or}_R(\DDD,\DDD)$. By the universal property of~$\mathcal{M}{\rm or}_R(\DDD,\DDD)$, $\mathcal{M}{\rm or}_R(\DDD,\DDD)$ endowed with the $G$-action is an~internal hom object from $\DDD$ to $\DDD$ in $\Fun\big(BG,\PR_R\big)$. As in Lemma~\ref{linearfunctorcategory}, $\mathcal{M}{\rm or}_R^G(\DDD,\DDD)$ is promoted to $\Alg_{\assoc}(\Fun(BG,\Mod_R))\simeq \Fun(BG,\Alg_{\assoc}(\Mod_R))$. Here the equivalence follows from
the definition of~the pointwise symmetric monoidal strucutre on
$\Fun(BG,\Mod_R)$ \cite[Remark~2.1.3.4]{HA}.

(ii) Recall the adjunction $I\colon \Alg_{\assoc}(\Mod_R)\rightleftarrows \left(\PR_R\right)_{\Mod_R/}\, {\colon}\!E$ induces an~adjunction
\begin{gather*}
\Alg_{\etwo}(\Mod_R)\simeq \Alg_{\assoc}\big(\Alg_{\assoc}\big(\Mod_R\big)\big)\rightleftarrows \Alg_{\assoc}\big(\PR_R\big)\simeq \Alg_{\eone}\big(\PR_R\big)
\end{gather*}
(see Section~\ref{cohomologysec}).
Applying $\Fun(BG,-)$ to this adjunction
we get an~adjunction
\begin{gather*}
\Fun(BG,\Alg_{\etwo}(\Mod_R))\rightleftarrows \Fun\big(BG,\Alg_{\assoc}\big(\PR_R\big)\big)\simeq \Alg_{\assoc}\big(\Fun\big(BG,\PR_R\big)\big).
\end{gather*}
We~define the Hoch\-schild cohomology $R$-module spectrum
\begin{gather*}
\HHC(\CCC):=\HHC(\DDD) \in \Fun(BG,\Alg_{\etwo}(\Mod_R))\simeq \Alg_{\etwo}(\Fun(BG,\Mod_R))
\end{gather*}
to be the image of~$\mathcal{M}{\rm or}_R^G(\DDD,\DDD)$ under the right adjoint.
For ease of~notation, we write $A$ for $\HHC(\CCC)$.
Write $\RMod_{A}^{G,\otimes}$ for the image of~$A=\HHC(\CCC)=\HHC(\DDD) \in \Fun(BG,\Alg_{\etwo}(\Mod_R))$ under the left adjoint.
Consider counit map $\RMod_{A}^{G,\otimes}\!\to \mathcal{M}{\rm or}_R^G(\DDD,\DDD)$
in $\Fun\big(BG,\Alg_{\assoc}\!\big(\PR_R\big)\big)$ which we regard as a~$G$-equivariant
associative monoidal functor.
As in the non-equivariant case, the natural action of~$ \mathcal{M}{\rm or}_R^G(\DDD,\DDD)$ on $\DDD$ gives rise to a~left action of~$\RMod_{A}^{G,\otimes}$
on $\DDD$. More precisely, $\DDD$ is promoted to an~object of~$\LMod_{\RMod_{A}^{G,\otimes}}(\Fun\left(BG,\PR_R\right))$.
Let $\RPerf^{G,\otimes}_{A}$ be the symmetric monoidal full subcategory
spanned by~compact objects.
By definition, $\RPerf^{G,\otimes}_{A}$ is the associative
monoidal small $R$-linear $\infty$-category $\RPerf_A^\otimes$
endowed with the $G$-action given informally by~the $G$-action on $A$.
As in Proposition~\ref{smallleftmodule},
the restriction exhibits
$\CCC$ as a~left $\RPerf^{G,\otimes}_{A}$-module, that is,
an object of~$\LMod_{\RPerf^{G,\otimes}_{A}}(\Fun(BG,\ST_R))$.

(iii) Let $\HHH(-)\colon \ST_R\to \Fun(BS^1,\Mod_R)$ denote the symmetric monoidal functor
which carries $\mathcal{E}$ to $\HHH(\mathcal{E})$ (see Definition~\ref{homologydef}). Applying $\Fun(BG,-)$ to it, we have a~symmetric monoidal functor
$\Fun(BG,\ST_R)\to \Fun(BG,\Fun(BS^1,\Mod_R))$.
We~define $\HHH(\CCC)$ to be the image of~$\CCC$
in $\Fun(BG,\Fun(BS^1,\Mod_R))$.
By the induced functor
\begin{gather*}
\LMod_{\RPerf^{G,\otimes}_{A}}(\Fun(BG,\ST_R))\to \LMod_{\HHH\left(\RPerf^{G,\otimes}_{A}\right)}\big(\Fun\big(BG,\Fun\big(BS^1,\Mod_R\big)\big)\big),
\end{gather*}
we regard $\HHH(\CCC)$ as a~left $\HHH\big(\RPerf^{G,\otimes}_{A}\big)$-module.
That is to say, $\HHH(\CCC)$ is an~object of~\begin{gather*}
\LMod_{\HHH\left(\RPerf^{G,\otimes}_{A}\right)}\big(\Fun\big(BG,\Fun\big(BS^1,\Mod_R\big)\big)\big)
\\ \qquad
{}\simeq \LMod_{\HHH\left(\RPerf^{G,\otimes}_{A}\right)}\big(\Fun\big(BS^1,\Fun(BG,\Mod_R)\big)\big).
\end{gather*}
According to the Morita invariance, $\HHH\big(\RPerf^{G,\otimes}_{A}\big)$
can naturally be identified with $\HHH(A)$, where $\HHH(A)$ comes equipped with
a left action of~$G$ induced by~the $G$-action on $A=\HHC(\CCC)$.
Let $i_!(A)\colon \dcyl\to \Fun(BG,\Mod_R)^\otimes$ denote the operadic left Kan extension of~the map
of~$\infty$-operads $A\colon \etwo^\otimes\to \Fun(BG,\Mod_R)^\otimes$ over $\Gamma$
along $i\colon \etwo^\otimes \to \dcyl$ where $\Fun(BG,\Mod_R)^\otimes\to \Gamma$
is the pointwise symmetric monoidal $\infty$-category induced by~the structure on $\Mod_R^\otimes$. Let $i_!(A)_C$ be the restriction to $\cyl\subset \dcyl$ which
we think of~as an~object of~$\Alg_{\cyl}(\Fun(BG,\Mod_R))$.
If~we replace $\Mod_R$ by~$\Fun(BG,\Mod_R)$
in the proof~of~Proposition~\ref{identificationprop},
the argument yields a~canonical equivalence $\HHH(A)\simeq i_!(A)_C$ in
$\Fun(BS^1,\Fun(BG,\Mod_R))$.
Then $A=\HHC(\CCC)\in \Alg_{\etwo}(\Fun(BG,\Mod_R))$, the left $\HHH\big(\RPerf^{G,\otimes}_{A}\big)$-module $\HHH(\CCC)$, and $\HHH(A)\simeq i_!(A)_C$
determine an~object of~\begin{gather*}
\Alg_{\etwo}(\Fun(BG,\Mod_{R}))\!\times_{\Alg_{\assoc}(\Fun(BS^1,\Fun(BG,\Mod_R)))}\!\LMod\!\big(\Fun\!\big(BS^1\!,\Fun(BG,\Mod_R)\big)\big),
\end{gather*}
which is naturally equivalent to $\Alg_{\KS}(\Fun(BG,\Mod_R))$
(see Corollary~\ref{algebraequivalence}).
Consequently,
we~obtain a~$\KS$-algebra in $\Fun(BG,\Mod_R)$ having the property
described in Remark~\ref{equivrem}.
\end{Construction}

\begin{Remark}\label{remarkfunctoriality}The argument in Construction~\ref{equivariantconstruction} can be applied to show
other functorialities.
For example, suppose that we are given an~equivalence $f\colon \CCC_1\stackrel{\sim}{\to} \CCC_2$ in $\ST_R$.
Then it gives rise to an~equivalence
\begin{gather*}
(\HH^\bullet(\CCC_1),\HH_\bullet(\CCC_1))\stackrel{\sim}{\longrightarrow} (\HH^\bullet(\CCC_2),\HH_\bullet(\CCC_2))
\end{gather*}
 in $\Alg_{\KS}(\Mod_R)$.
To see this, consider the $\infty$-category $I$ which consists of~two objects $\{1,2\}$
such that for any $i,j\in \{1,2\}$, the mapping space $\Map_{I}(i,j)$ is a~contractible space
(so that $I$ is (the nerve of) an~ordinary groupoid).
The equivalence $f$ amounts to a~functor $I\to \ST_R$ which carries
$1$ and $2$ to $\CCC_1$ and $\CCC_2$, respectively, and carries
the unique morphism $1\to 2$
to $f$. Namely, we have an~object of~$\Fun(I,\ST_R)$ so that
the construction of~Ind-categories gives rise to an~object $F\in \Fun\left(I,\PR_R\right)$.
Now apply the argument in Construction~\ref{equivariantconstruction} by~replacing
$BG$ with $I$:
Namely, we first
consider the internal hom object from $F$ to $F$ in $\Fun\left(I,\PR_R\right)$.
Note that by~\cite[Corollary~3.3.3.2]{HTT} and the fact that $I$ is a~Kan complex (i.e., $\infty$-groupoid), $\Fun\left(I,\PR_R\right)$ can be identified with a~limit of~the constant diagram
$I\to \wCat$ with value $\PR_R$.
There exists an~internal hom object from $F$ to $F$ that is given by~the tensor product $F\otimes F^{\vee}$ of~$F$
and its dual object $F^{\vee}$, where $F^{\vee}$ is given by~taking termwise dual objects.
According to~\cite[Corollary~4.7.1.40]{HA}, $F\otimes F^{\vee}$ is promoted to an~endomorphism
algebra $\mathcal{E}{\rm nd}(F)\in \Alg_{\assoc}\big(\Fun\big(I,\PR_R\big)\big)$ equipped with a~left action on $F$ in an~essentially unique way.
The evaluation at each $1,2\in I$ gives us
the endomorphism algebras $\mathcal{E}{\rm nd}_R(\Ind(\CCC_1))$ and $\mathcal{E}{\rm nd}_R(\Ind(\CCC_2))$, respectively.
Thus, replacing $BG$ with $I$ in the procedure in Construction~\ref{equivariantconstruction},
we obtain an~object in
$\Alg_{\KS}(\Fun(I,\Mod_R))\simeq \Fun(I,\Alg_{\KS}(\Mod_R))$, which brings us
an induced equivalence
$(\HH^\bullet(\CCC_1),\HH_\bullet(\CCC_1))\stackrel{\sim}{\rightarrow} (\HH^\bullet(\CCC_2),\HH_\bullet(\CCC_2))$.
Finally, we remark that this argument can be applied to any diagram
$K\to \ST_R$ such that $K$ is a~Kan complex (i.e., an~$\infty$-groupoid).
That is, if we are given a~diagram/functor $P\colon K\to \ST_R$ from an~$\infty$-groupoid $K$,
it gives rise to a~diagram $P_{\KS}\colon K\to \Alg_{\KS}(\Mod_R)$ of~$\KS$-algebras
such that for any $k\in K$, $P_{\KS}(k)$ is equivalent to the $\KS$-algebra
constructed from $P(k)\in \ST_R$ in~Theorem~\ref{mainconstruction}.
\end{Remark}

\subsection*{Acknowledgements}

The author would like to thank Takuo Matsuoka for valuable and inspiring
conversations related to the subject of~this paper.
He would like to thank everyone
who provided constructive feedback at
his talks about the main content of~this paper.
He would also like to thank the referees for~their useful suggestions.
This work is supported by~JSPS KAKENHI grant 17K14150.

\pdfbookmark[1]{References}{ref}
\LastPageEnding

\end{document}